\documentclass[twoside, 11pt]{article}
\usepackage{geometry}
\usepackage{tikz}
\usepackage{tikz-cd}
\usetikzlibrary{babel,arrows.meta,positioning,calc}

\usepackage{times}
\usepackage[utf8]{inputenc}
\usepackage[vietnamese,english]{babel}
\usepackage[
    backend=biber,
    style=alphabetic,
    sorting=anyt,
    sortlocale=vi-VN,
    maxbibnames=99,     
    maxalphanames=4,    
    minalphanames=3     
]{biblatex}
\usepackage[displaymath, tightpage]{preview}
\usepackage[all, cmtip]{xy}

\usepackage{amsmath, bm}
\usepackage{amsxtra}
\usepackage{amscd}
\usepackage{amsthm}
\usepackage{amsfonts}
\usepackage{amssymb}
\usepackage{bbm}
\usepackage{mathtools}
\usepackage{enumitem}
\usepackage{tikz}
\usepackage{tikz-cd}
\usetikzlibrary{decorations.pathmorphing}

\usepackage[bitstream-charter]{mathdesign}
\usepackage[T1]{fontenc}
\usepackage[new]{old-arrows}
\usepackage[pagewise]{lineno}

\usepackage{fancyhdr}
\usepackage{sectsty}
\sectionfont{\fontsize{13}{15}\selectfont}

\usepackage[margin=1cm]{caption}
\usepackage[hidelinks]{hyperref}
\usepackage{cleveref}
\usepackage{aliascnt}

\numberwithin{equation}{section}

\newtheoremstyle{theorem}
{3mm}
{1mm}
{\normalfont\itshape}
{}
{\normalfont\scshape}
{.}
{.5em}
{\thmname{#1}\thmnumber{ #2}\thmnote{ (#3)}}
\theoremstyle{theorem}

\newtheorem{theorem}{Theorem}
\numberwithin{theorem}{section}

\newaliascnt{lemma}{theorem}
\newtheorem{lemma}[lemma]{Lemma}
\aliascntresetthe{lemma}
\crefname{lemma}{Lemma}{Lemmas}
\Crefname{lemma}{Lemma}{Lemmas}

\newaliascnt{proposition}{theorem}
\newtheorem{proposition}[proposition]{Proposition}
\aliascntresetthe{proposition}
\crefname{proposition}{Proposition}{Propositions}
\Crefname{proposition}{Proposition}{Propositions}

\newaliascnt{corollary}{theorem}
\newtheorem{corollary}[corollary]{Corollary}
\aliascntresetthe{corollary}
\crefname{corollary}{Corollary}{Corollaries}
\Crefname{corollary}{Corollary}{Corollaries}

\newaliascnt{claim}{theorem}

\aliascntresetthe{claim}
\crefname{claim}{Claim}{Claims}
\Crefname{claim}{Claim}{Claims}

\newaliascnt{assumption}{theorem}

\aliascntresetthe{assumption}
\crefname{assumption}{Assumption}{Assumptions}
\Crefname{assumption}{Assumption}{Assumptions}

\newaliascnt{conjecture}{theorem}
\newtheorem{conjecture}[conjecture]{Conjecture}
\aliascntresetthe{conjecture}
\crefname{conjecture}{Conjecture}{Conjectures}
\Crefname{conjecture}{Conjecture}{Conjectures}

\newaliascnt{question}{theorem}

\aliascntresetthe{question}
\crefname{question}{Question}{Questions}
\Crefname{question}{Question}{Questions}

\newtheoremstyle{definition}
{3mm}
{1mm}
{\normalfont\normalfont}
{}
{\normalfont\scshape}
{.}
{.5em}
{\thmname{#1}\thmnumber{ #2}\thmnote{ (#3)}}
\theoremstyle{definition}

\newaliascnt{definition}{theorem}
\newtheorem{definition}[definition]{Definition}
\aliascntresetthe{definition}
\crefname{definition}{Definition}{Definitions}
\Crefname{definition}{Definition}{Definitions}

\newaliascnt{notation}{theorem}

\aliascntresetthe{notation}
\crefname{notation}{Notation}{Notations}
\Crefname{notation}{Notation}{Notations}

\newaliascnt{example}{theorem}
\newtheorem{example}[example]{Example}
\aliascntresetthe{example}
\crefname{example}{Example}{Examples}
\Crefname{example}{Example}{Examples}

\newaliascnt{remark}{theorem}
\newtheorem{remark}[remark]{Remark}
\aliascntresetthe{remark}
\crefname{remark}{Remark}{Remarks}
\Crefname{remark}{Remark}{Remarks}

\DeclareMathOperator{\ad}{ad}

\DeclareMathOperator{\Tr}{Tr}

\newcommand{\pa}{\partial}

\newcommand{\C}{\mathbb C}
\DeclareMathOperator{\eucl}{eucl}
\newcommand{\D}{\mathcal D}

\newcommand\dqd{\foreignlanguage{vietnamese}{Đinh Quý Dương}}

\usepackage{comment}
\DeclareUnicodeCharacter{039F}{O}

\title{Lagrangian correspondences for moduli spaces of Higgs bundles and holomorphic connections}
\author{Panagiotis Dimakis, \dqd, Shengjing Xu}
\date{}

\begin{document}
\maketitle

\begin{abstract}
On a compact connected Riemann surface $C$ of genus at least $2$,
we construct Lagrangian correspondences between moduli spaces of rank-$n$ Higgs bundles (respectively, holomorphic connections) and the Hilbert schemes of points on $T^\ast C$ (respectively, the twisted cotangent bundles of $C$).
Central to these constructions are Higgs bundles (respectively, holomorphic connections) which are transversal to line subbundles of the underlying bundles:
these naturally induce divisors on $C$ together with auxiliary parameters, namely lifts to divisors on spectral curves for Higgs bundles and residue parameters of apparent singularities for holomorphic connections.
We discuss the evidence showing that the Dolbeault geometric Langlands correspondence is generically realized by these Lagrangian correspondences; we expect that the de Rham geometric Langlands correspondence can be realized by their quantization, following Drinfeld's construction of Hecke eigensheaves. 
We also discuss the relations of our constructions to various topics, including reductions of Kapustin-Witten equations, the conformal limit, separation of variables and degenerate fields in conformal field theories. 
\end{abstract}

\tableofcontents

\section{Introduction}
Let $C$ be a compact connected Riemann surface of genus $g \geq 2$. 
The main purpose of this paper is to construct families of Lagrangian correspondences between the Hitchin moduli spaces of Higgs bundles 
(respectively, the de Rham moduli spaces of holomorphic connections) on $C$ 
and the Hilbert scheme of the complex surface $T^\ast C$
(respectively, the Hilbert scheme of the twisted cotangent bundle of $C$). 
Central to these constructions are families of Lagrangians $\mathbb{L}_H(D)$ and $\mathbb{L}_{dR}(D)$ in the respective moduli spaces induced by fixed effective divisors $D$ on $C$.
We achieve these results by studying Higgs bundles and holomorphic connections that are transversal to the line subbundles of their underlying vector bundles.
Such objects are introduced in \Cref{sect-triples}, and the main results of the paper are stated in Sections \ref{sect-intro-lag-property} and \ref{sect-intro-lag-cor}.

In \Cref{sect-intro-discussion}, we discuss the relations of our results to related topics, in particular the Dolbeault and de Rham geometric Langlands correspondence (GLC). We conjecture that the Dolbeault GLC is generically realized by the Fourier transforms induced by the Lagrangian correspondences for Hitchin moduli spaces, and the de Rham GLC is generically realized by the corresponding quantized Fourier transforms in the spirit of Drinfeld's formulation \cite{Drinfeld}. 
It is also likely that parts of the quantum GLC could be realized by quantizing the Lagrangian correspondences for de Rham moduli spaces.

\subsection{Higgs bundles and flat connections transversal to line subbundles}
\label{sect-triples}
When studying moduli problems related to bundles $E$ of rank $n$ on $C$, possibly with additional structure such as Higgs fields or holomorphic connections, keeping track of the extra data of line subbundles $L \hookrightarrow E$ has often enabled explicit parametrization of the problems \cite{Tyurin1967, yoshi97, EnriquezRubtsov2007}. 
This strategy has been most successful when $n = 2$ \cite{LN83, Hit87a};
in particular, in the first example of the geometric Langlands correspondence (GLC), Drinfeld constructed Hecke eigensheaves associated to flat bundles on $C$ by considering the auxiliary moduli spaces of pairs of rank-$2$ bundles and line subbundles \cite{Drinfeld}.
For $n > 2$, this strategy so far has been less effective and often comes with constraints on $E$ and $L$:
for example, Tyurin's parametrization of the moduli space of rank-$n$ stable bundles $E$ works for $\deg(E) = ng$ and $L = \mathcal{O}_C$ \cite{Tyurin1967}.

In this paper, our central objects are Higgs bundles $(E, \phi)$ and holomorphic connections $(E, \nabla)$ of rank-$n$ on $C$, equipped with the data of a line subbundle $L \hookrightarrow E$. 
Specifically, we consider triples of the form 
\begin{equation}\label{intro-triples}
(L \overset{i}{\hookrightarrow} E, \phi) \qquad  \text{ and } \qquad 
(L \overset{i}{\hookrightarrow} E, \nabla),
\end{equation}
satisfying some generic constraints.
These triples were considered in the context of a reduction of the Kapustin-Witten equations called extended Bogomolny equations (EBE) on $C \times \mathbb{R}_+$: they are in set-theoretic correspondence with solutions to EBE subjected to particular boundary and asymptotic conditions, as shown by He-Mazzeo \cite{HM20}. 

Following \cite{HM20}, given triples of the form \((L \overset{i}{\hookrightarrow} E, \phi)\), we consider the induced $\mathcal{O}_C$-linear morphisms
\begin{align*}
    s_i(\phi): L^n &\longrightarrow \det(E) K_C^{n\choose 2} \\ 
    f_1 \otimes f_2 \otimes f_3 \otimes \dots \otimes f_n 
    &\longmapsto  \det \left( i(f_1), \phi\circ i(f_2), \phi^2 \circ i(f_3), \dots, \phi^{n-1} \circ i(f_n) \right),
\end{align*}
where $K_C$ is the canonical line bundle of $C$. 
For $s_i(\phi)$ not identically zero, which is the case for $(E, \phi)$ stable, denote by $D_i(\phi)$ the corresponding divisors.
Similarly, given triples of the form \((L \overset{i}{\hookrightarrow} E, \nabla)\) 
we define the $\mathcal{O}_C$-linear morphisms 
\begin{align*}
    s_i(\nabla): L^n &\longrightarrow \det(E) K_C^{n\choose 2}. 
\end{align*}
The construction of $s_i(\nabla)$ is analogous to that of $s_i(\phi)$;
we refer to Section 4.1 for the detailed definition.
For $s_i(\nabla)$ not identically zero, which is the case for $(E, \nabla)$ irreducible, denote by $D_i(\nabla)$ the corresponding divisors.

We note that such sections $s_i(\nabla)$ and divisors $D_i(\nabla)$ have been considered in the context of the Riemann-Hilbert problem, where $D_i(\nabla)$ are the so-called \textit{apparent singularities}: to realize a generic monodromy representation in terms of meromorphic differential equations on $C$, one needs to include such singularities in the differential equations \cite{yoshi97, iwasaki92}. 
In this paper, we will call points $p_k < D_i(\nabla)$ \textit{apparent singularities} associated to the triples $(L \overset{i}{\hookrightarrow} E, \nabla)$. Such triples have been discussed by Gaiotto-Witten in the context of the Kapustin-Witten equations in \cite{GW11}; in that context, such triples were called \textit{opers with apparent singularities}. 
In fact, the line subbundle $L \overset{i}{\hookrightarrow} E$ equips $(E, \nabla)$ with an oper structure 
\footnote{The case of general complex connected reductive group $G$ is proved by Arinkin: any irreducible $G$-connection admits a \textit{generic oper structure} \cite{Ar16}.}
away from $D_i(\nabla)$.
In Section 4.2 of this paper, we will make this analogy precise. Given a reduced divisor $D$ on $C$, we will show that triples $(L \overset{i}{\hookrightarrow} E, \nabla)$ with $D_{i}(\nabla) = D$ are equivalent to meromorphic opers on $C$ which are tamely ramified on $D$ and have trivial monodromy around points $p < D$.

\subsection{Lagrangian subvarieties in the moduli of Higgs bundles and flat connections}
\label{sect-intro-lag-property}
Given a fixed line bundle $\Lambda$ of degree $\ell$ on $C$, we consider the moduli space $\mathcal{M}_{H}(n, \Lambda)$ of stable rank-$n$ Higgs bundles $(E, \phi)$ where the bundles $E$ have fixed determinant $\det(E) = \Lambda$ and the Higgs fields are traceless.  
Fix an effective divisor $D$ on $C$ with $\deg(D) \equiv \ell \pmod{n}$. For each such divisor we define a subvariety $\mathbb{L}_{H}(D) \subset \mathcal{M}_{H}(n, \Lambda)$ by setting 
\begin{align*}
\mathbb{L}_{H}(D) = 
\left\{ (E, \phi) \in \mathcal{M}_{H}(n, \Lambda) 
\mid \exists i: L \hookrightarrow E, \text{ } s_i(\phi) \neq 0,  \text{ } D_i(\phi) = D\right\}.
\end{align*}

Similarly, we consider the de Rham moduli space $\mathcal{M}_{dR}(n, \mathcal{O}_C)$ of irreducible $\mathrm{SL}_n$ holomorphic connections $(E, \nabla)$ where $\det(E) \cong \mathcal{O}_C$ and $\nabla$ induces the trivial connection on $\mathcal{O}_C$. 
Fix an effective divisor $D$ on $C$ with $\deg(D) \equiv 0 \pmod{n}$.
\footnote{In fact, loosening the congruence condition on $\deg(D)$ is equivalent to considering projective connections whose monodromy representations do not lift to $\mathrm{SL}_n$.}
For each such divisor $D$ we define a subvariety $\mathbb{L}_{dR}(D) \subset \mathcal{M}_{dR}(n, \mathcal{O}_C)$ by setting 
\begin{align*}
&\mathbb{L}_{dR}(D) = \left\{ (E, \nabla) \in \mathcal{M}_{dR}(n, \mathcal{O}_C)  
\mid \exists i: L \hookrightarrow E, \text{ } s_i(\nabla) \neq 0,  \text{ } D_i(\nabla) = D \right\}.
\end{align*}

Our first main results explain how these constructions define holomorphic Lagrangian subvarieties in the respective moduli spaces.

\begin{theorem}\label{intro-main-thm-1} 
(a) For any effective divisor $D$ of degree $\deg(D) \equiv \ell$ mod $n$, $\mathbb{L}_H(D)$ is non-empty and has an open dense subset which is Lagrangian in $\mathcal{M}_H(n, \Lambda)$. 
\\
(b) For any reduced effective divisor $D$ of degree divisible by $n$, $\mathbb{L}_{dR}(D)$ is non-empty and has an open dense subset which is co-isotropic in $\mathcal{M}_{dR}(n, \mathcal{O}_C)$.
For a generic reduced effective divisor $D$ of degree divisible by $n$, an open dense subset of $\mathbb{L}_{dR}(D)$ is Lagrangian in $\mathcal{M}_{dR}(n, \mathcal{O}_C)$.
\end{theorem}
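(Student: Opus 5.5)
Part (a) will follow from a parametrization of $\mathbb{L}_H(D)$ through the spectral correspondence, followed by a dimension count and an isotropy check. Set $N=\dim\mathcal{M}_H(n,\Lambda)=2(n^2-1)(g-1)$.

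Let $(E,\phi)$ have smooth spectral curve $\pi:\Sigma\to C$, with $E=\pi_\ast\mathcal{F}$ for a line bundle $\mathcal{F}$ on $\Sigma$. A line subbundle $i:L\hookrightarrow E$ is the same as a nonzero map $\pi^\ast L\to\mathcal{F}$, i.e.\ an effective divisor $\tilde D$ on $\Sigma$ with $\mathcal{F}\cong\pi^\ast L(\tilde D)$. I expect a direct local computation, diagonalising $\phi$ away from the branch points, to show that $s_i(\phi)$ is a Vandermonde-type determinant whose zero divisor is $\pi_\ast\tilde D$, up to a fixed contribution that is absorbed once one tracks $\det E=\det\pi_\ast\mathcal{O}_\Sigma\otimes\mathrm{Nm}(\cdot)$. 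The degree condition $\deg D\equiv\ell\pmod n$ then expresses $\deg\mathcal{F}$ consistently.

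So generic points of $\mathbb{L}_H(D)$ are given by a spectral curve $\Sigma$ together with a lift $\tilde D\subset\Sigma$ of $D$ satisfying $\pi_\ast\tilde D=D$. Then $L$ is determined up to finitely many choices by the requirement $\mathrm{Nm}(\pi^\ast L(\tilde D))=\Lambda\otimes(\text{fixed})$, i.e.\ $L^n\cong\Lambda(-D)\otimes(\text{fixed})$. The conditions on $(\Sigma,\tilde D)$ are that $\Sigma$ passes through prescribed fibres of $T^\ast C$ over $D$ with multiplicities. For reduced $D=\sum p_k$, each point $p_k$ imposes exactly one linear condition on the Hitchin base $B=\bigoplus_{j\ge2}H^0(K^j)$, namely $\det(\lambda-\phi)(p_k)=0$, where $\lambda\in T^\ast_{p_k}C$ is the chosen lift. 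The family is then locally a bundle over an open subset of $\{(a,\tilde D)\}$. Its dimension is $\dim B$, because the choice of $\tilde D$ in the fibre over each $p_k$ adds one parameter and the condition removes one. This gives $\dim\mathbb{L}_H(D)=N/2$.

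Isotropy follows because the restriction of $\mathbb{L}_H(D)$ to each Hitchin fibre is finite, being the finitely many bundles $\mathcal{O}(\tilde D)\otimes\pi^\ast L$. The Lagrangian should therefore be a section-like object. Concretely, I will write the symplectic form via the spectral (Beauville–Mukai) description $\mathcal{M}_H\dashrightarrow \mathrm{Hilb}^{m}(T^\ast C)$, by representing $\mathcal{F}$ through a divisor $\tilde D+\pi^\ast(\cdot)$. In these Darboux coordinates, the Hurtubise/Krichever separation-of-variables picture, the form is $\sum dx_k\wedge dy_k$ over the points of $\tilde D$. Fixing $D$ fixes all the $x_k$, so the form vanishes. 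For non-reduced $D$ I will argue by closure plus constancy of the isotropic condition, or directly with multiplicity coordinates. Non-emptiness comes from choosing any $a$ with $\Sigma$ smooth through chosen points over $D$, which is possible since the linear conditions are independent for $\deg D$ small relative to $\dim H^0(K^n)$. For large $\deg D$, I would instead take $\tilde D$ to contain full fibres $\pi^\ast p$, which are free.

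Part (b) runs parallel. Via the equivalence with opers having apparent singularities at $D$ (Section 4.2), a point of $\mathbb{L}_{dR}(D)$ is an oper $\partial^n+q_2\partial^{n-2}+\dots+q_n$ with regular singularities at $p_k<D$, exponents making $p_k$ apparent, and residue parameters $\nu_k$. The trivial-monodromy condition at each $p_k$ imposes polynomial conditions. Counting gives at least $N/2$ free parameters, which yields co-isotropy once we show the Atiyah–Bott–Goldman form pulled back in coordinates $(p_k,\nu_k)$ is again $\sum dp_k\wedge d\nu_k$, the de Rham analogue of separation of variables. Fixing $p_k$ then makes the symplectic-orthogonal contained in the tangent space. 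The \emph{main obstacle} is showing that the apparent-singularity conditions are independent for generic reduced $D$, so that the dimension is exactly $N/2$. I would try to prove this by degenerating to the Higgs case via the $\hbar$-connection family, where the corresponding conditions are the independent linear conditions of part (a), and then using semicontinuity of the rank to get independence for generic $D$.
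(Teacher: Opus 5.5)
Part (a) follows essentially the paper's route. You use lifts $\widetilde D$ on the spectral curve with $\mathcal L\cong\pi^\ast L(\widetilde D)$. The paper gets $\pi_\ast\widetilde D=D_i(\phi)$ exactly, with no correction term, by pushing forward $0\to\pi^\ast L\to\mathcal L\to\mathcal L|_{\widetilde D}\to 0$ and noting $\det(\pi_\ast f)=s_i(\phi)$. You then use finiteness over the Hitchin section, and isotropy from Hurtubise's residue formula: this gives $\Omega_H=-\omega^{[d]}$ on the correspondence, which vanishes once $\pi_\ast\widetilde D$ is fixed.

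Your non-emptiness step fails as written, though. For lifts prescribed over $D$, the conditions on $B$ cannot be independent once $d>\dim B$. The fallback of putting full fibres $\pi^\ast p$ into $\widetilde D$ is impossible for reduced $D$, since it forces multiplicity $\ge n$ at $p$. In any case it makes $L\to E$ vanish at $p$, so $L$ is not a subbundle; its saturation $L(p)$ has divisor $D-np$. Reverse the order, as the paper does. Start from any point of $\mathrm{Hit}_L$ with smooth spectral curve, and choose any lift $\widetilde D$ of $D$ containing no full fibre. Then $\mathfrak r_D:\mathbb L_H(L,D)\to\mathrm{Hit}_L$ is finite with non-empty fibres over the smooth locus, which gives non-emptiness and half-dimensionality for every $D$. (Your own observation that the incidence variety projects to $B$ with finite fibres already contains this.)

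Part (b) has genuine gaps.

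First, you never establish that $\mathbb L_{dR}(D)\neq\varnothing$. Subtracting the number of apparent-singularity conditions from the dimension of the space of regular-singular opers only bounds the dimension of components that exist. The paper's key input is the following. Fix a reference oper and polar differentials $\xi_i^{(j,k)}$. The Dubrovin--Mazzocco recursion then expresses all polar parts through $b\in B$ and the residue parameters $\boldsymbol v\in\mathbb C^d$. This leaves one equation per point, with top-degree part $v_k^n$. Since $\boldsymbol v\mapsto(v_1^n,\dots,v_d^n)$ has degree $n^d$ on large spheres, the system has a non-empty finite solution set for every $b$. This gives non-emptiness and $\dim\widetilde{\mathbb{L}_{dR}}(L,D)=\dim B$ for every reduced $D$. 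So the ``independence of conditions'' you flag as the main obstacle is not where the difficulty lies.

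Second, all your counts are for the space of triples, not for its image in $\mathcal M_{dR}(n,\mathcal O_C)$. A given $(E,\nabla)$ might carry a positive-dimensional family of $L\hookrightarrow E$ with $D_i(\nabla)=D$. What is needed is generic finiteness of the forgetful map $([i]\colon L\hookrightarrow E,\nabla)\mapsto(E,\nabla)$. The paper proves this for generic $D$ by applying your degeneration idea to that map rather than to the conditions: at $\lambda=0$ the forgetful map is finite, upper semicontinuity of fibre dimension gives generic finiteness for small $\lambda$, and $\mathbb C^\ast$-rescaling carries it to $\lambda=1$.

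Relatedly, vanishing of the pulled-back form gives isotropy, $T\subset T^\perp$, not co-isotropy. Neither co-isotropy nor the Lagrangian property follows until the image is known to have dimension $\tfrac12\dim\mathcal M_{dR}(n,\mathcal O_C)$.

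Finally, the identity $\Omega_{AB}=\sum dp_k\wedge d\nu_k$ that you defer is the whole content of the paper's de Rham correspondence theorem. It is proved there by a residue computation with the local gauge $G_i=(z-u_i)^{\omega_1^\vee}N_i$. For isotropy of $\mathbb L_{dR}(D)$ alone the paper uses a short direct argument. With $L$ and $D$ fixed, the graded pieces and the subdiagonal of $\nabla$ are fixed. So in an adapted gauge $\delta\bar\partial_E$ is strictly upper triangular and $\delta\nabla$ is upper triangular, whence $\int_C\mathrm{tr}(\mathfrak b_1\wedge\mathfrak n_2+\mathfrak n_1\wedge\mathfrak b_2)=0$.
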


Analogous constructions hold for the moduli spaces $\mathcal{M}_H(n, \ell)$ of Higgs bundles $(E, \phi)$ of rank-$n$ and with $\deg(E) = \ell$, and the moduli spaces $\mathcal{M}_{dR}(n)$ of irreducible connections $(E, \nabla)$ of rank-$n$ on $C$. In these cases, one can construct Lagrangians $\mathbb{L}(L, D) \subset \mathcal{M}_H(n, \ell)$ and $\mathbb{L}_{dR}(L, D) \subset \mathcal{M}_{dR}(n)$ by fixing both the line subbundle $L$ and the induced divisors $D_i(\phi)$ and $D_i(\nabla)$.

Special cases of \Cref{intro-main-thm-1} can be extracted from previous works in the literature. 
In the special case $n=2$ and $\deg(D) = 3g-3 = \frac{1}{2} \dim \mathcal{M}_H(2, \Lambda)$, part (a) of \Cref{intro-main-thm-1} follows from \cite{DT23} while part (b) follows from \cite{iwasaki92, pinchbeck, D24-lambda}. 
Analogues of part (a) of \Cref{intro-main-thm-1} for $\mathcal{M}_H(n, \ell)$ with $\deg(D) = n^2(g-1) + 1 = \frac{1}{2} \dim \mathcal{M}_H(n, \ell)$ and $L = \mathcal{O}_C$ follow from the main results 
\footnote{It follows from the congruence condition on $\deg(D)$ that \cite{GNR01} actually treated the case where $\ell$ is congruent to $n^2(g-1) + 1$ modulo $n$.}
in \cite{GNR01}. 
These are instances of the application of \textit{Separation of Variables},
a powerful method in the theory of integrable systems, 
to the moduli spaces of Higgs bundles and holomorphic connections:
fixing the divisors $D_i(\phi)$ and $D_i(\nabla)$ in these cases amounts to fixing half of the Darboux coordinates or \textit{separated variables}.
On the other hand, for $\deg(D)$ sufficiently small,  
$\mathbb{L}_H(D)$ and $\mathbb{L}_{dR}(D)$ are precisely the intersections with the fibers over $\lambda = 0$ and $\lambda = 1$, respectively, of upward flows to Hodge bundles induced by the $\mathbb{C}^\ast$ action on the Hodge moduli space of $\lambda$-connections (cf. \Cref{sect-CL}). 
These subvarieties were shown to be Lagrangians in the respective moduli spaces by Hausel-Hitchin \cite{HH22} and Simpson \cite{Simpson08}. 

These instances suggested that subvarieties defined by fixing the divisors $D_i(\phi)$ and $D_i(\nabla)$ might be Lagrangians for a larger range of divisors, perhaps for any divisors of any degree congruent to $\ell = \deg(E)$ modulo $n$.
It was in \cite{Dim24} that this possibility was first checked, 
where the first author proved part (a) of \Cref{intro-main-thm-1} for the case $n = 2$;
the technical challenge there from the perspective of extended Bogomolny equations was to show that $\mathbb{L}_H(D)$ is non-empty for any $D$. 
In hindsight, part (a) even for general $n$ follows readily from the spectral correspondence and Hecke transforms of Higgs bundles (more specifically, Hecke transforms of Higgs bundles that belong to a Hitchin section), and it is the de Rham analogue in part (b) of \Cref{intro-main-thm-1} that is more technically involved. It is however the extended Bogomolny equations perspective that highlights the importance of triples of the form \eqref{intro-triples} with transversality conditions and ultimately led the authors to the statement of \Cref{intro-main-thm-1}.  

In the special case where the sections $s_i(\phi)$ and $s_i(\nabla)$ are constant sections of the trivial line bundle $\mathcal{O}_C$, i.e. $D_i(\phi) = D_i(\nabla)=\varnothing$, the Lagrangians $\mathbb{L}_H(\varnothing)$ and $\mathbb{L}_{dR}(\varnothing)$ are respectively the Hitchin sections and the spaces of holomorphic opers (i.e. opers \textit{without} apparent singularities). Such Lagrangians are also the lowest strata defined by the $\mathbb{C}^\ast$ action and the induced Bia\l ynicki-Birula stratification on the Hodge moduli space. The Lagrangians $\mathbb{L}_H(D)$ and $\mathbb{L}_{dR}(D)$ for general $D$ can be regarded as ``excitations'' of these lowest strata. While $\mathbb{L}_H(D)$ can essentially be obtained by applying Hecke transforms to the Hitchin section $\mathbb{L}_H(\varnothing)$, there seems to be no obvious geometric mechanism to ``excite'' the space $\mathbb{L}_{dR}(\varnothing)$ of holomorphic opers into  $\mathbb{L}_{dR}(D)$. 

\begin{figure}[h!]
    \centering
    \includegraphics[width=0.87\linewidth]{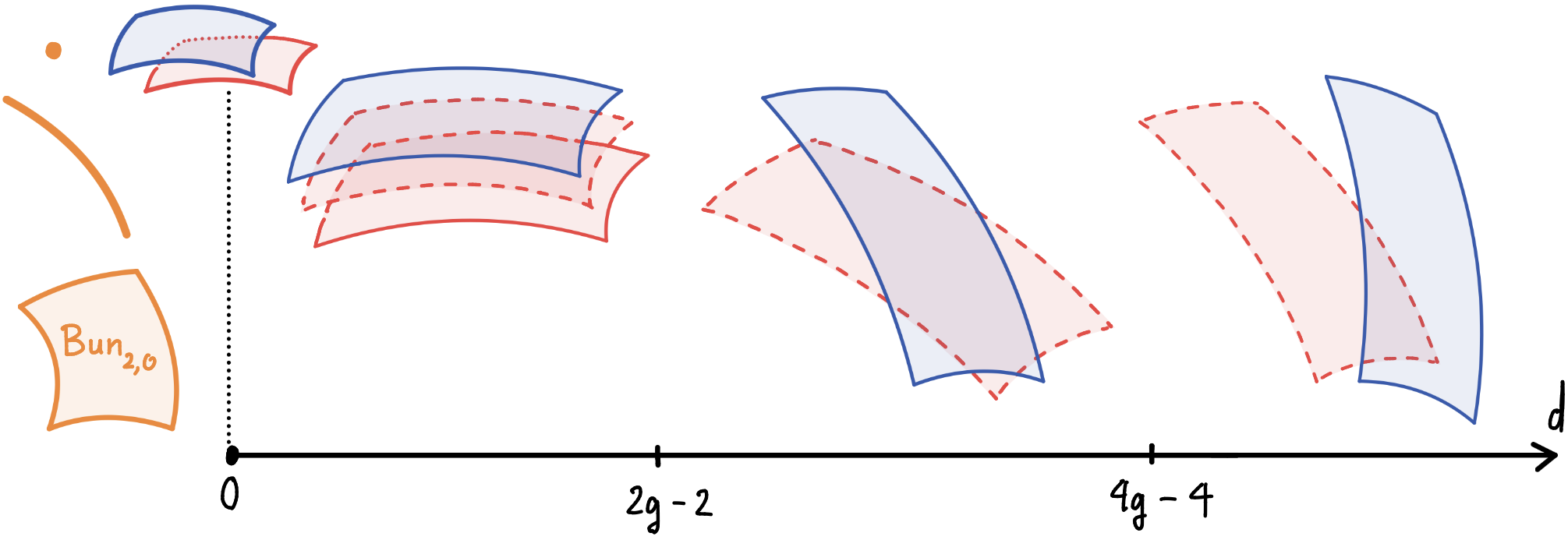}
    \caption{Illustration of Lagrangians $\textcolor{red}{\mathbb{L}_H(D)} \subset \mathcal{M}_H(2, \mathcal{O}_C)$ and $\textcolor{blue}{\mathbb{L}_{dR}(D)} \subset \mathcal{M}_{dR}(2, \mathcal{O}_C)$ as $d = \deg(D)$ varies. Shapes with dashed boundaries indicate subvarieties that are not closed in the moduli spaces. 
    Projecting to the components of the space $\mathcal{M}_H(2, \mathcal{O}_C)^{\mathbb{C}^\ast}$ of Hodge bundles corresponds to projecting to the \textcolor{orange}{orange shapes} on the left vertical.    
    As $d$ increases, the images of these projections increase dimension; in particular, the projections from $\mathbb{L}_{dR}(D)$ are dominant for generic $D$ of degree $d \geq 4g-4$. See \Cref{sect-Lag-rk2} for details.}
    \label{intro-fig-lag}
\end{figure}

\vspace{-5pt}
We remark that for a general complex reductive group $G$, one can similarly construct Lagrangians as in \Cref{intro-main-thm-1}. For example, one can again consider the Hitchin section in the moduli space $\mathcal{M}_H(G)$ of $G$-Higgs bundles: the Hecke transform of Higgs bundles in the Hitchin section should again, upon appropriate normalization, define a Lagrangian in $\mathcal{M}_H(G)$.

\subsection{Lagrangian correspondences to (twisted) Hilbert schemes}
\label{sect-intro-lag-cor}
Our second set of main results provides constructions of Lagrangian correspondences between the moduli spaces of Higgs bundles (respectively, holomorphic connections) and Hilbert schemes of the surface $T^\ast C$ (respectively, the Hilbert scheme of the twisted cotangent bundle of $C$).

In the case of Higgs bundles, the construction of Lagrangian correspondences follows from the following: 
given a triple $(L \overset{i}{\hookrightarrow} E, \phi)$ where the spectral curve $\widetilde{C} \overset{\pi}{\rightarrow} C$ associated to $(E, \phi)$ is smooth,
the spectral line bundle $\mathcal{L}_{(E, \phi)}$ can be canonically written as
\begin{subequations}\label{intro-Dtilde}
\begin{equation}
    \mathcal{L}_{(E, \phi)} \cong \pi^\ast(L) \otimes \mathcal{O}_{\widetilde{C}}(\widetilde{D}_i(\phi)),  
\end{equation}
for some effective divisor $\widetilde{D}_i(\phi)$ on $\widetilde{C}$.
Indeed, if $\widetilde{C}$ is smooth then the composition
\begin{equation}
    \pi^\ast(L) \longrightarrow \pi^\ast(E) \longrightarrow \mathcal{L}_{(E, \phi)}
\end{equation}
\end{subequations}
is nonzero; in this case, we consider the corresponding zero divisor $\widetilde{D}_i(\phi)$ on $\widetilde{C}$ and check that the norm $\pi_\ast(\widetilde{D}_i(\phi))$ coincides with $D_i(\phi)$ (cf. \Cref{prop-Nm(D)}).
Such divisors $\widetilde{D}_i(\phi)$ have been considered by Hitchin in his seminal paper introducing the moduli spaces of Higgs bundles \cite{Hit87a}, recently discussed by Hausel-Hitchin in their study of  \textit{wobbly Higgs bundles} \cite{HH22}, and are called Baker-Akhiezer divisors in \cite{DT23} due to their relation to Baker-Akhiezer functions in the theory of integrable systems. 
They define points $\widetilde{D}$ in the Hilbert scheme $\mathrm{Hilb}^d(T^\ast C)$ via the embedding $\widetilde{C} \hookrightarrow T^\ast C$.
Given $d \equiv \ell$ mod $n$, consider the subscheme  
\begin{align*}
\mathbb{L}_{H}(d) = \left\{
    \left( \widetilde{D}, (E, \phi) \right) \in \mathrm{Hilb}^d(T^*C) \times \mathcal{M}_{H}(n, \Lambda) \,\middle|\,
    \begin{array}{l}
    \text{$(E,\phi)$ has smooth spectral curve;}\\
    \text{there exist $L \stackrel{i}{\hookrightarrow} E$ with $\deg(\widetilde{D}_{i}(\phi)) = d$}\\
    \text{and such that $\widetilde{D}_{i}(\phi)=\widetilde{D}$}
    \end{array}
\right\}
\end{align*}
of $\mathrm{Hilb}^d(T^*C) \times \mathcal{M}_{H}(n, \Lambda)$.

\begin{theorem}\label{intro-main-result-lag-cor-Higgs}
$\mathbb{L}_H(d)$ defines a Lagrangian correspondence 
\[
\xymatrix{
& \mathbb{L}_H(d) 
  \ar[dl]_{\mathrm{pr}_1} \ar[dr]^{\mathrm{pr}_2} & \\
\operatorname{Hilb}^d(T^*C) && \mathcal{M}_{H}(n, \Lambda)
}
\]
i.e. $\mathbb{L}_H(d)$ is a Lagrangian subscheme of $\mathrm{Hilb}^d(T^\ast C) \times \mathcal{M}_H(n, \Lambda)$. 
\end{theorem}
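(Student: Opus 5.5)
We work over the open locus $\mathcal{M}_H^{\circ}\subset \mathcal{M}_H(n,\Lambda)$ of Higgs bundles with smooth spectral curve. There the spectral correspondence identifies $\mathcal{M}_H^{\circ}$ with the family of Prym-type varieties: pairs $(\widetilde C\subset T^\ast C,\ \mathcal{L})$ with $\widetilde C$ in the Hitchin base $\mathcal{A}$ and $\mathcal{L}$ a line bundle on $\widetilde C$ satisfying $\det \pi_\ast\mathcal{L}\cong\Lambda$. The plan is to describe $\mathbb{L}_H(d)$ entirely in spectral terms, prove $\dim \mathbb{L}_H(d) = d + \dim \mathcal{A}$, which is half of $2d+\dim\mathcal{M}_H$, and then check isotropy.

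\emph{Spectral description and dimension.} By \eqref{intro-Dtilde} and \Cref{prop-Nm(D)}, a point of $\mathbb{L}_H(d)$ is determined by a smooth spectral curve $\widetilde C$ and an effective divisor $\widetilde D\subset\widetilde C$ of degree $d$, for which there is a line bundle $L$ with $\mathcal{L}\cong\pi^\ast L(\widetilde D)$. The determinant condition then reads $L^n\otimes \mathcal{O}_C(\pi_\ast\widetilde D)\otimes \det\pi_\ast\mathcal{O}_{\widetilde C}\cong\Lambda$, using $\det \pi_\ast(\pi^\ast L(\widetilde D))\cong L^n(\mathrm{Nm}\,\widetilde D)\det\pi_\ast\mathcal{O}_{\widetilde C}$. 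Here $\det\pi_\ast\mathcal{O}_{\widetilde C}\cong K_C^{-\binom n2}$, so $L$ is determined up to $n$-torsion, i.e. finitely many choices. Conversely, given $(\widetilde C,\widetilde D)$ and such an $L$, set $E=\pi_\ast(\pi^\ast L(\widetilde D))$; the section $1\in H^0(\mathcal{O}(\widetilde D))$ gives $L\to E$ by adjunction. This map is a subbundle inclusion with $s_i(\phi)\neq 0$ once we know the composite $\pi^\ast L\to\mathcal{L}$ is nonzero, and that holds by construction. Hence $\mathbb{L}_H(d)$ is, up to a finite cover, the relative Hilbert scheme $\mathrm{Hilb}^d(\mathcal{C}/\mathcal{A}^{\circ})$ of the universal smooth spectral curve. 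This scheme has dimension $d+\dim\mathcal{A}=d+\tfrac12\dim\mathcal{M}_H$. The map to $\mathrm{Hilb}^d(T^\ast C)$ is $\widetilde D\mapsto\widetilde D$ (the pair $(\widetilde C,\widetilde D)$ with $\widetilde D\subset\widetilde C\subset T^\ast C$), and the map to $\mathcal{M}_H$ is $(\widetilde C,\widetilde D,L)\mapsto(\widetilde C,\pi^\ast L(\widetilde D))$. To see that the pair of maps is an immersion, fix $\widetilde C$: the divisor $\widetilde D$ is recovered from the first factor.

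\emph{Isotropy.} The symplectic form on $\mathrm{Hilb}^d(T^\ast C)$ is the one induced by the Liouville form $\omega_{T^\ast C}=d\theta$. On $\mathcal{M}_H^\circ$ we use the action–angle description: the Hitchin map is Lagrangian, and in local terms the symplectic form is $\sum d(\text{Abel--Jacobi of }\mathcal{L})\wedge d a$, as in the separation of variables picture. I would argue along Hurtubise's lines. Locally choose a reference divisor, and note that $\mathcal{L}\mapsto$ the Abel–Jacobi image of $\widetilde D$ (shifted by $\pi^\ast L$) relates the two forms. Concretely, the pullback of $\omega_{\mathrm{Hilb}}$ to the family $\mathrm{Hilb}^d(\mathcal{C}/\mathcal{A})$ equals $\sum_j \omega_{T^\ast C}|_{(x_j,y_j)}$ for the points of $\widetilde D$. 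Meanwhile the pullback of $\omega_{\mathcal{M}_H}$ is computed via the standard formula: for a deformation $\dot a$ of the curve, the infinitesimal change of $\mathcal{L}$ is paired by residues with $\dot a/\partial_y P$. This yields $\sum_j d y_j\wedge dx_j$ evaluated on the same vectors (the classical Sklyanin/Hurtubise identity that $\widetilde D$ provides Darboux coordinates, together with the pullback of a $\pi^\ast L$ part). The $\pi^\ast L$ part contributes zero because $L$ is locally constant once $\mathrm{Nm}\,\widetilde D$ is fixed; more precisely, $dL$ is determined by $d(\mathrm{Nm}\,\widetilde D)$, and the form pairing $\pi^\ast(H^1(\mathcal{O}_C))$ with base directions vanishes, since traceless Higgs fields pair trivially with pullbacks from $C$. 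So the two pullbacks agree, and the form $\mathrm{pr}_1^\ast\omega-\mathrm{pr}_2^\ast\omega$ vanishes, with sign conventions chosen accordingly. The general case then follows by reducing to the known case $d=\tfrac12\dim\mathcal{M}_H$ from \cite{GNR01,DT23}, or by a direct residue computation.

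The main obstacle is this identity of symplectic forms for arbitrary $d$, including when $\widetilde D$ is non-reduced, where the points are not separate coordinates. I would first prove it on the dense open locus where $\widetilde D$ is reduced and avoids the ramification of $\pi$, using the residue formula. I would then extend by closure, since a closed $2$-form vanishing on a dense open set of an irreducible scheme vanishes on its smooth locus. Finally, Lagrangian-ness follows from isotropy together with the dimension count.
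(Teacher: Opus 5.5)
Your proposal follows essentially the paper's route: a dimension count through the spectral description, then Hurtubise's residue formula for $\Omega_H$ evaluated on two-parameter deformations with $\widetilde D$ reduced and disjoint from the ramification, which gives $\Omega_H=-\omega^{[d]}$ on $\mathbb{L}_H(d)$. Your argument that the varying $\pi^\ast L$ contributes nothing, because $\sum_j\partial_t z^j=-\partial_t a_1$ vanishes in the traceless case, justifies a term the paper simply omits.

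Two small corrections. First, the adjunction map $L\to\pi_\ast(\pi^\ast L(\widetilde D))$ is saturated only when $\widetilde D$ contains no full fibre $\pi^\ast p$; this is an open dense condition, so it does not affect the dimension count. Second, fix the sign rather than leaving it to convention: the theorem specifies $\omega^{[d]}_{T^\ast C}\oplus\Omega_H$, and the paper's computation gives $\mathrm{pr}_1^\ast\omega^{[d]}+\mathrm{pr}_2^\ast\Omega_H=0$.
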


To construct Lagrangian correspondences for the moduli spaces of holomorphic connections, we will need the notion of residue parameters of apparent singularities.
For $n = 2$, these parameters were discussed in  \cite{iwasaki92, pinchbeck, D24-lambda} in the cases of Riemann surfaces of genus $g \geq 2$ (possibly with marked points), 
and, for general $n$, by Dubrovin-Mazzocco in a local sense  \cite{DM07}.  
We refer to Section 4.3 for the detailed definition and discussion of residue parameters of the apparent singularities induced by triples $(L \overset{i}{\hookrightarrow} E, \nabla)$ on $C$.
The key point is that, given a triple $(L \overset{i}{\hookrightarrow} E, \nabla)$ with a reduced divisor $D_i(\nabla) = \sum_{k=1}^d p_k$, the apparent singularities $p_k$ together with their residue parameters $\nu_k$ define points $\widetilde{D}_i(\nabla) \in \mathrm{Hilb}^d(\mathcal{S})$, where $\mathcal{S}$ is the total space of an affine bundle modelled on $T^\ast C$.
One shall regard points on $\mathcal{S}$ as deformation of points on $T^\ast C$; this is compatible with the usual point of view on holomorphic connections as deformations of Higgs bundles.
Given a natural number $d$ divisible by $n$, consider the subscheme
\begin{align*}
\mathbb{L}_{dR}(d) = \left\{
    \left( \widetilde{D}, (E, \nabla) \right) \in \mathrm{Hilb}^d(\mathcal{S}) \times \mathcal{M}_{dR}(n, \mathcal{O}_C)  \,\middle|\,
    \begin{array}{l}
        \text{there exist $L \overset{i}{\hookrightarrow} E$ such that 
        $\widetilde{D}_i(\nabla) = \widetilde{D}$,} \\
        \deg(D_i(\nabla)) = d,\ \text{and $D_i(\nabla)$ is reduced}
    \end{array}
\right\}
\end{align*}
of $\mathrm{Hilb}^d(\mathcal{S}) \times \mathcal{M}_{dR}(n, \mathcal{O}_C) $.

\begin{theorem}\label{intro-main-result-lag-cor-dR}
$\mathbb{L}_{dR}(d)$ defines a Lagrangian correspondence  
\[
\xymatrix{
& \mathbb{L}_{dR}(d) 
  \ar[dl]_{\mathrm{pr}_1} \ar[dr]^{\mathrm{pr}_2} & \\
\operatorname{Hilb}^d(\mathcal{S}) && \mathcal{M}_{dR}(n, \mathcal{O}_C) 
} 
\]
i.e. $\mathbb{L}_{dR}(d)$ is a Lagrangian subscheme of $\mathrm{Hilb}^d(\mathcal{S}) \times \mathcal{M}_{dR}(n, \mathcal{O}_C) $.
\end{theorem}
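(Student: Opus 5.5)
The plan is to exhibit explicit Darboux-type coordinates on an open dense subset of $\mathbb{L}_{dR}(d)$ and to reduce the Lagrangian property to a symplectic identity, modelled on the Higgs case of \Cref{intro-main-result-lag-cor-Higgs}.

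\textbf{Dimension count.} A point of $\mathbb{L}_{dR}(d)$ is a triple $(L \overset{i}{\hookrightarrow} E, \nabla)$ with reduced $D_i(\nabla)=\sum_{k=1}^d p_k$, considered up to isomorphism. By the equivalence of Section 4.2, the remaining data of the triple are the residue parameters $\nu_k$ together with the non-apparent parameters of the corresponding meromorphic oper. These are the coefficients of an $\mathrm{SL}_n$ differential operator on $L^{-1}$ with tame singularities at the $p_k$ and trivial local monodromy there. The parameter count is
\[
\dim\mathrm{Pic}^{\deg L} + \dim\bigl(\text{opers with apparent singularities at } D\bigr) + d = d + \tfrac12\dim\mathcal{M}_{dR}(n,\mathcal{O}_C),
\]
where the factor $\mathrm{Pic}^{\deg L}$ is cut down by the constraint $\det E=\mathcal{O}_C$, i.e.\ $L^n\cong K_C^{-\binom n2}(D)$, which leaves finitely many choices. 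This is the content of part (b) of \Cref{intro-main-thm-1} for generic $D$, now with the $\nu_k$ allowed to vary. The total $d+\tfrac12\dim\mathcal{M}_{dR}(n,\mathcal{O}_C)$ is exactly half of $\dim \mathrm{Hilb}^d(\mathcal{S}) + \dim\mathcal{M}_{dR}(n,\mathcal{O}_C)$, so it remains to prove isotropy.

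\textbf{Isotropy.} For isotropy I would use the product form $\mathrm{pr}_1^\ast\omega_{\mathrm{Hilb}} - \mathrm{pr}_2^\ast\omega_{dR}$. On the open locus of reduced divisors, $\omega_{\mathrm{Hilb}} = \sum_k d\nu_k\wedge dz(p_k)$ in local affine coordinates on $\mathcal{S}$. The claim to prove is
\[
\mathrm{pr}_2^\ast\omega_{dR} = \sum_{k=1}^d d\nu_k\wedge dp_k \quad\text{on } \mathbb{L}_{dR}(d).
\]
To establish this, I would compute $\omega_{dR}$ on a tangent vector $(\dot E,\dot\nabla)$ by the Atiyah–Bott/Goldman pairing, which on a Čech–de Rham representative takes the form
\[
\textstyle\int_C \mathrm{Tr}(\dot A_1\wedge\dot A_2) \;+\; \text{Čech terms}.
\]
I would choose the Čech cover to consist of small discs $U_k$ around each $p_k$ together with $C\setminus D$. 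On $C\setminus D$ the oper gauge identifies $E$ with a jet bundle of $L^{-1}$, and deformations are given by variations of the oper coefficients. Two facts then control the computation. First, variations within fixed $D$ contribute zero, since by \Cref{intro-main-thm-1}(b) the fibers $\mathbb{L}_{dR}(D)$ are isotropic. Second, variations of $L$ contribute zero, by the same argument as in the Higgs case. Hence only the gluing terms at the $p_k$ survive, and they reduce to residue computations of the form $\mathrm{Res}_{p_k}$ of $\dot q\cdot \dot{\text{(gauge)}}$. Writing the local normal form of an apparent singularity (Dubrovin–Mazzocco), the gauge transformation near $p_k$ depends on $(p_k,\nu_k)$. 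The residue pairing should then give exactly $d\nu_k\wedge dp_k$, with the normalisation of $\nu_k$ in Section 4.3 fixed precisely so that this holds; the twist of $\mathcal{S}$ absorbs the coordinate dependence.

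\textbf{Alternative route and main obstacle.} A cleaner alternative is a deformation argument. One would run the same construction for $\lambda$-connections, obtaining a family over $\mathbb{C}$ whose fiber at $\lambda=0$ is \Cref{intro-main-result-lag-cor-Higgs} and at $\lambda=1$ is the present statement. Isotropy is a closed condition, so it propagates over the family, provided one checks that the family of correspondences is flat, which could be done via the $\mathbb{C}^\ast$-action relating all $\lambda\neq0$ fibers. I would try this first. However, I expect the main obstacle to be the explicit local residue computation at apparent singularities, namely showing that no cross terms between distinct $p_k$, and no cross terms with the oper parameters, survive. The deformation route also needs care because the Higgs divisor $\widetilde D$ is not required to be reduced. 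This forces a restriction to the reduced, smooth-spectral-curve open locus and a closure argument at the end.
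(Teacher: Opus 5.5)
Your main route is the paper's route: a \v{C}ech model with discs $U_k$ around the $p_k$ and $C\setminus D$, oper gauge away from $D$, the apparent-singularity normal form near each $p_k$, and a residue pairing. However, the proposal stops exactly where the proof happens, and the reason you give for localizing at the $p_k$ does not hold up.

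Isotropy of the slices $\mathbb{L}_{dR}(D)$ only kills the form on $V_0\times V_0$, where $V_0$ is the tangent space to a fixed-$D$ slice. The identity you want says $\mathrm{pr}_2^\ast\omega_{dR}(X_0,Y)=\sum_k d\nu_k(X_0)\,dp_k(Y)$ for $X_0\in V_0$. This is nonzero in general, since the $\nu_k$ vary along $\mathbb{L}_{dR}(D)$. So fixed-$D$ variations do not ``contribute zero'' when paired with variations that move $D$. Likewise, in the $\mathrm{SL}_n$ setting $L$ has no independent variations: it is determined up to $n$-torsion by $L^n\cong K_C^{\binom n2}(-D)$ (you wrote the inverse).

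What actually localizes the computation in the paper is passing to $\mathrm{PSL}_n$. There $\mathbb{P}(E)$ is a modification along $D$ of the fixed bundle $\mathbb{P}(J^{n-1}(K_C^{-(n-1)/2})^\vee)$, whose transition data on $C\setminus D$ do not depend on the deformation parameters. So the deformation of the bundle is carried entirely by the gluing maps $G_i=(z-u_i)^{\omega_1^\vee}N_i$, and $\Omega_{\mathrm{AB}}$ becomes $\sum_i\operatorname{Res}\operatorname{Tr}\bigl(\partial_{t_1}A_i\,\partial_{t_2}G_i\,G_i^{-1}-\partial_{t_2}A_i\,\partial_{t_1}G_i\,G_i^{-1}\bigr)$. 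Evaluating this needs three inputs you do not supply:
\begin{enumerate}
\item The normal form \eqref{e: gauge form for nabla Ui} of $\nabla^\vee$ at $p_i$, with $\pm v_i$ in the first two diagonal slots and $-(z-u_i)$ beneath. \Cref{l: standard form near div dr} derives it by matching the $\partial^{n-2}$-coefficient of the scalar operator against $a_2=v_i/z+\dots$. This is what ties $\nu_k$, defined as a coefficient of $\mathbf D$, to the gauge data; the normalisation is something to be verified, not a free choice.
\item The explicit unipotent factor \eqref{e:form of Ni}, whose $(1,2)$ entry $-\frac{n-1}{n(z-u_i)^2}+\frac{v_i}{z-u_i}$ is forced by the gauge equation.
\item The check that only the upper-left $2\times2$ block produces a residue, which is exactly your worry about cross terms with the oper parameters.
\end{enumerate}
Until these are carried out, ``the residue pairing should give exactly $d\nu_k\wedge dp_k$'' is a restatement of the theorem.

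The deformation route you would try first does not work. Isotropy is a closed condition: it passes from a family to its limit, not from the special fibre $\lambda=0$ to the general one. The $\mathbb{C}^\ast$-action does not rescue it. It carries $Z_1$ onto $Z_\lambda$ and multiplies both $\omega_{\mathcal S}^{[d]}$ and the symplectic form on $\lambda$-connections by $\lambda$. So the restricted form on $Z_\lambda$, transported back to $Z_1$, is just $\lambda\,\eta$, where $\eta$ is the restriction at $\lambda=1$. Knowing the fibre at $\lambda=0$ is isotropic therefore places no constraint on $\eta$.

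Finally, two points on your dimension count. First, the extra $d$ comes from moving the points $p_k$, not from letting the $\nu_k$ vary. For fixed $D$ the $\nu_k$ are cut out by the trivial-monodromy equations \eqref{nonlinear equ for v}, which have finitely many solutions for each $b\in B$. Second, identifying points of $\mathbb{L}_{dR}(d)$ with triples uses the generic finiteness of forgetting $i$ (\Cref{p: generic finite dR}). With both in place, the count is a fibration over $\operatorname{Sym}^d(C)\setminus\Delta$ with fibres of dimension $\tfrac12\dim\mathcal{M}_{dR}(n,\mathcal{O}_C)$, which is what you need.
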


We note that by fixing the line subbundle $L$, we can similarly construct Lagrangian correspondences between $\mathcal{M}_H(n, \ell)$ (respectively, $\mathcal{M}_{dR}(n)$) and $\mathrm{Hilb}^d(T^\ast C)$ (respectively, $\mathrm{Hilb}^d(\mathcal{S})$).

\subsection{Discussion}
\label{sect-intro-discussion}
We now discuss the relation between our main results and other topics. We do not pursue these connections in this paper and leave these explorations for future works.  

\subsubsection{Relation to reductions of Kapustin-Witten equations}
In this subsection we discuss how elements of $\mathbb L_H(D)$ and $\mathbb L_{dR}(D)$ naturally arise from a particular set of solutions to the dimensional reduction of the family of Kapustin-Witten (KW) equations. 

Given a $4$-manifold $M$ equipped with a Riemannian metric $g_M$ and a complex rank $n$ bundle $E$ over $M$ equipped with a Hermitian metric $H$, let $A$ denote a unitary connection on $E$ and $\Phi$ and $\ad(E)$-valued $1$-form. Then the family of KW equations takes the form 

\begin{equation}
e^{i\theta}F_{\mathcal A} = \overline{\star e^{i\theta}F_{\mathcal A}}
\end{equation}

where $\mathcal A = A+i\Phi$ is a complex connection, $F_{\mathcal A}$ is its curvature, $\star$ denotes the Hodge star operator and $\theta\in [0,\pi/2]$. The usual KW equations correspond to $\theta = \pi/4$. In \cite{Kapustin-Witten06, GW11} it was observed that when $M \cong \mathbb R_{x_1}\times C\times \mathbb R_{+,y}$, with product metric $g_M = \eucl_{x_1}\oplus g_C\oplus \eucl_{y}$ then solutions invariant in the $x_1$-direction which further satisfy the condition $A_1-\tan(\beta)\Phi_1 = 0$ with $\tan(\theta) = \tan(\pi/4-3\beta/2)$ satisfy a different set of equations which have a Hermitian Yang-Mills structure, namely the family of the extended Bogomolny equations. With respect to the complex structure on $C$ given by the conformal class of $g_C$, they have the following form:

\begin{equation}\label{EBE}
\begin{split}
[\D_i,\D_j] = 0 \text{ for } i,j\in \{1,2,3\}&\\
\frac{i}{2}\Lambda([\D_1,\D_1^{\dagger_H}] + [\D_2,\D_2^{\dagger_H}]) + [\D_3,\D_3^{\dagger_H}] &= 0
\end{split}
\end{equation}
where $\forall y>0$, $\sec(\beta)\D_1$ restricted to the bundle $E|_{C\times\{y\}}$ is a Dolbeault operator, $\csc(\beta)\D_2$ is a holomorphic connection for the same bundle, $\D_3 = \pa_y +A_y - i\Phi_1$ is a parallel transport operator in the $y$ direction, $H$ is a given background Hermitian metric on $E$ and $\Lambda$ denotes the contraction with the K\"ahler form. 

\begin{remark}
Notice that the existence of $\D_3$ implies that $E$ is a pullback of a vector bundle $V$ on $C$ with fibers over different values of $y$ identified by the parallel transport operator. Thus without loss of generality, we can also require the background Hermitian metric $H$ to be independent of $y$. 
\end{remark}

Therefore, a solution to the EB equations amounts to finding $\D_1,\D_2,\D_3$ satisfying \eqref{EBE}. In order to find solutions, one has to impose boundary and asymptotic conditions as $y\to 0$ and $y\to \infty$ respectively. As $y\to \infty$ the physically motivated requirement is that the solution converges to a $y$-independent irreducible $\lambda$-connection $(\sec(\beta)\D_1|_{y = \infty}, \tan(\beta)\D_2|_{y=\infty})$ for $\lambda = \tan(\beta)$ and as $y\to 0$ it becomes asymptotic to a singular solution which has been studied in detail in \cite{Witten2011,GW11,MW2014,Mikhaylov2012,MW2017,Dim22} in various degrees of generality. In the simplest possible case when $\beta=0$ and the rank of the bundle is $n=2$, this condition requires that in an open half ball $B_p$ around a point $p\in C\times\{0\}$ 
\begin{equation}
\D_3 = \pa_y+ \frac{1}{y}\begin{pmatrix}1 & 0\\ 0 & -1  \end{pmatrix} + \mathcal O(y^{-1+\epsilon})
\end{equation}
for some small $\epsilon$ in some local holomorphic frame $s_1,s_2$ for $E$. For a local holomorphic section $s = a_1s_1 + a_2s_2$ of $E|_{y_0}$ consider the solution to the parallel transport equation $\D_3s(y) = 0$ with initial condition $s(y_0) = s$. Then 
\begin{equation}
s(y) =  (a_1y^{-1} + \mathcal O(y^{-1+\epsilon}))s_1 + (a_2y + \mathcal O(y^{1+\epsilon}))s_2 \text{ as } y\to 0. 
\end{equation}
The span of the section $s_2$ is distinguished because its parallel transport vanishes as $y\to 0$. Gluing these local ``small'' sections leads to the invariant description of the \textit{vanishing line bundle}

\begin{equation}
L:= \{\Gamma(E) : \D_3s= 0, ~\lim\limits_{y\to 0} |y^{-1+a}s| = 0\}
\end{equation}

for any $0 <a<1$. If $p$ is not a divisor point then in the same local holomorphic frame 
\begin{equation}
\D_2 = \frac{1}{y}\begin{pmatrix} 0&1\\0&0\end{pmatrix}\,dz + \mathcal O(y^{-1+\epsilon})
\end{equation}
and we can define the line bundle $L_1 = \D_2(L)\otimes K^{-1}$ which is isomorphic to $L$. The two line bundles are independent away from the divisor $D$. In a neighborhood of a reduced point $p$ on the divisor, $\D_2$ is of the form 
\begin{equation}
\D_2 = \frac{1}{y}\begin{pmatrix} 0&z\\0&0\end{pmatrix}\,dz + \mathcal O(y^{-1+\epsilon})
\end{equation}
and therefore the sections defining $L$ and $L_1$ become linearly dependent at $p$. Globally this gives a map 
\begin{equation}
1\wedge \D_2: L^2\to K 
\end{equation}
whose zeroes are precisely the divisor $D$. When $\beta = 0$ this is precisely the condition that the zeroes of $1\wedge \Phi :L^2\to K$ give rise to the divisor $D$ while for $\beta\neq 0$ the equivalent boundary condition leads to the condition that the zeroes of the divisor are given by the zeroes of the map $1\wedge \D_2:L^2\to K$. This picture can be generalized to arbitrary rank and for divisors of arbitrary degree and this led to the conjecture that the submanifolds $\mathbb L_H(D)$ and $\mathbb L_{dR}(D)$ are holomorphic Lagrangian within $M_H$ and $M_{dR}$ respectively. Another physically meaningful question to ask is whether it is possible to describe the moduli space of the equations to \eqref{EBE} when the divisor is allowed to move. We partially answer this question by showing that triples with varying divisor give a Lagrangian correspondence in $\mathrm{Hilb}^d(T^\ast C) \times \mathcal{M}_H(n, \Lambda)$ for $\beta=0$ and in $\mathrm{Hilb}^d(\mathcal{S}) \times \mathcal{M}_{dR}(n, \mathcal{O}_C)$ when $\beta\neq 0$.

\subsubsection{Relation to the Dolbeault geometric Langlands correspondence}
The de Rham geometric Langlands correspondence (de Rham GLC) is a categorical equivalence between $\mathcal{D}$-modules on the moduli stack $\mathrm{Bun}_G$ of $G$-bundles on $C$ and coherent sheaves 
\footnote{Subjected to \textit{nilpotent singular support} conditions.}
on the moduli stack of flat bundles on $C$ 
with gauge group being the Langlands dual $\check{G}$ \cite{BD92, GLC-proof}.
Generically its classical limit is an equivalence of derived categories 
\begin{equation}\label{Dolbeault-GLC}
D^b\left( \mathcal{M}_H(G) \right) \cong D^b\left( \mathcal{M}_H(\check{G}) \right).    
\end{equation}
Here $\mathcal{M}_H(G)$ is the moduli space of principal $G$-Higgs bundles on $C$.
We call this the Dolbeault geometric Langlands correspondence due to the symmetric presence of the (Dolbeault) moduli of Higgs bundles on both sides of the correspondence. 
In Donagi-Pantev's first formulation of the correspondence, they showed that for any reductive group $G$, $\mathcal{M}_H(G)$ and $\mathcal{M}_H(\check{G})$ share the same Hitchin base with generic fibers being torsors over dual abelian varieties: in this case \eqref{Dolbeault-GLC} reduces to the twisted relative Fourier-Mukai transforms associated to the dual families of abelian varieties \cite{DonagiPantev2012}. 
It is expected that the correspondence \eqref{Dolbeault-GLC} will have to be modified when one includes singular Hitchin fibers into consideration; 
exciting progress on this issue was made in \cite{ArinkinFedorov2016, Arinkin2012, Li2018} and in the recent works of Pădurariu-Toda \cite{PT25}.

We would like to suggest that the Lagrangian correspondence in \Cref{intro-main-result-lag-cor-Higgs} generically realizes the Dolbeault GLC.
One first step to establish a relation between our Lagrangian correspondence and the GLC is the following observation. Given the data of a Higgs bundle $(E, \phi) \in \mathcal{M}_H(n, \Lambda)$ 
and a line subbundle $L \hookrightarrow E$, twisting these data by an $n$th-root of $\mathcal{O}_C$ does not change the induced point $\widetilde{D}$ in  $\mathrm{Hilb}^d(T^\ast C)$. At least for $d$ sufficiently large, for $\widetilde{D} \in \mathrm{im}(\mathrm{pr}_1)$, $\mathrm{pr}_{2} \circ \mathrm{pr}_1^{-1}(\widetilde{D})$ consists of precisely $n^{2g}$ Higgs bundles that differ from each other by such a twist. 
This suggests that the passage from the moduli of Higgs bundles to the Hilbert schemes in our construction involves dualizing the gauge group  $\mathrm{SL}_n$ into its Langlands dual $\mathrm{PSL}_n$: choosing one among $n^{2g}$ such data corresponds to choosing a lift from a $\mathrm{PSL}_n$-object to an $\mathrm{SL}_n$-one
\footnote{From the character variety perspective, this corresponds to choosing a lift to $\mathrm{SL}_n$ of a monodromy representation $\pi_1 \rightarrow \mathrm{PSL}_n$.}.

In the rank-2 case, the fact that the Lagrangian correspondence $\mathbb{L}_H(d)$ generically realizes the Dolbeault GLC is very natural from the point of view of Drinfeld's construction of Hecke eigensheaves \cite{Drinfeld}.
To see this, let us summarize an adaptation 
\footnote{Drinfeld constructed Hecke eigensheaves on moduli stacks for $G = \mathrm{GL}_2(F)$ over the function field $F$ of a curved defined over finite fields.}
of Drinfeld's construction for $G = \mathrm{SL}_2$.
For simplicity, we will not distinguish between moduli stacks and moduli spaces in this discussion.
Starting with a $\mathrm{PSL}_2$-flat connection $\sigma$ on $C$, we want to construct a $\mathcal{D}$-module $\mathcal{F}_\sigma$ on the moduli space $\mathrm{Bun}_{\mathcal{O}_C} \equiv \mathrm{Bun}_{\mathrm{SL}_2}$ of $\mathrm{SL}_2$-bundles on $C$ which is a Hecke eigensheaf with eigenvalue $\sigma$.
The key ingredient in this construction is the auxiliary moduli space $\mathrm{Bun}_{\mathcal{O}_C, l}$ of pairs $(E, L)$ where $E$ is an $\mathrm{SL}_2$-bundle and $L$ a line subbundle of sufficiently negative  $\deg(L) = l$.
The forgetful map which remembers only the line subbundle equips $\mathrm{Bun}_{\mathcal{O}_C, l}$ with a projective bundle structure over $\mathrm{Pic}^l(C)$. 
Its dual projective bundle $\mathrm{Bun}^\vee_{\mathcal{O}_C, l}$ has fibers $\mathbb{P}H^0(C, KL^{-2})$ over $L \in \mathrm{Pic}^l(C)$ and is hence a $2^{2g}:1$ étale covering of $\mathrm{Hilb}^d(C)$, where $d = 2g - 2 - 2l$.
Following Drinfeld, we consider the induced flat bundle $\sigma^{[d]}$ on $\mathrm{Hilb}^d(C)$, its pull-back to $\mathrm{Bun}^\vee_{\mathcal{O}_C, l}$ and the corresponding Radon transform on $\mathrm{Bun}_{\mathcal{O}_C, l}$. 
The technical part of Drinfeld's construction was to show that the Radon transform descends to exactly the Hecke eigensheaf $\mathcal{F}_\sigma$ on $\mathrm{Bun}_{\mathcal{O}_C}$ 
(or, more precisely, as we are working with moduli spaces, the flat bundle induced by $\mathcal{F}_\sigma$ on the locus of \textit{very stable bundles} in $\mathrm{Bun}_{\mathcal{O}_C}$).

\begin{equation}\label{diagram-Drinfeld}
\begin{tikzcd}
&\mathbb{L}_H(d) \cong p^\ast T^\ast \mathrm{Bun}_{\mathcal{O}_C} \arrow{d} \arrow{r} 
&T^\ast \mathrm{Bun}_{\mathcal{O}_C, l} \arrow[dashed]{rrr}{\mathrm{SoV}} \arrow{d} 
& & &\mathrm{Hilb}^{d}(T^\ast C) \arrow{d} \\
&T^\ast \mathrm{Bun}_{\mathcal{O}_C} \arrow{d}
&\mathrm{Bun}_{\mathcal{O}_C, l} \arrow[dashed]{dl}{p} \arrow{dr}[swap]{q}
& &\mathrm{Bun}^\vee_{\mathcal{O}_C, l} \arrow{r} \arrow{dl}{q^\vee} &\mathrm{Hilb}^{d}(C)\\
& \mathrm{Bun}_{\mathcal{O}_C} & &\mathrm{Pic}^{l}(C)
\end{tikzcd}
\end{equation}

Now, in the range $2g-2 < d \leq 3g-3 = \frac{1}{2}\dim \mathcal{M}_H(2, \mathcal{O}_C)$, the second author with Teschner constructed \cite{DT23} a rational local symplectomorphism 
\begin{equation}\label{intro-SOV-map}
    \mathrm{SoV}: T^\ast \mathrm{Bun}_{\mathcal{O}_C, l} \dashrightarrow \mathrm{Hilb}^d(T^\ast C)
\end{equation}
which is generically finite of degree $2^{2g}:1$.
The map $\mathrm{SoV}$ is constructed using the same divisors $\widetilde{D}_i(\phi)$ (cf. \eqref{intro-Dtilde}) that are essential to the construction of the Lagrangian correspondence $\mathbb{L}_H(d)$ in \Cref{intro-main-result-lag-cor-Higgs}.
In fact, $\mathbb{L}_H(d)$ is canonically birational to the pull-back of $T^\ast \mathrm{Bun}_{\mathcal{O}_C}$ along the forgetful map 
$\mathrm{Bun}_{\mathcal{O}_C, l} \overset{p}{\dashrightarrow} \mathrm{Bun}_{\mathcal{O}_C}$, 
and the composition 
\[ \mathbb{L}_H(d) \overset{\simeq}{\dashrightarrow} p^\ast T^\ast \mathrm{Bun}_{\mathcal{O}_C} \dashrightarrow \mathrm{Hilb}^d(T^\ast C), \]
coincides with the restriction of the forgetful map $\mathrm{pr}_1$ in \Cref{intro-main-result-lag-cor-Higgs}.
Certain results in this direction were also obtained in an unpublished work of Enriquez-Roubtsov \cite{volodya-unpublished}.

One can regard the rational map $\mathrm{SoV}$ as the Dolbeault limit of the $\mathcal{D}$-module Radon transform between the dual fibrations $\mathrm{Bun}_{\mathcal{O}_C, l}$ and $\mathrm{Bun}^\vee_{\mathcal{O}_C, l}$ relative over $\mathrm{Pic}^l(C)$ (composed with the descent to $\mathrm{Hilb}^d(C)$).
In addition, it was suggested by Arinkin \cite{Arinkin2012} and proved in some cases \cite{AD} that the composition 
\( p^\ast T^\ast \mathrm{Bun}_{\mathcal{O}_C} \dashrightarrow \mathrm{Hilb}^d(T^\ast C) \)
is compatible with the Fourier-Mukai transform that generically realizes the Dolbeault GLC. 
Namely, given a generic adjoint Higgs bundle $(E, \phi) \in \mathcal{M}_H(\mathrm{PSL}_2)$, the pull-back of the induced Higgs sheaf $\mathcal{L}_{(E,\phi)}^{[d]}$ on $\mathrm{Hilb}^d(T^\ast C)$ along $p^\ast T^\ast \mathrm{Bun}_{\mathcal{O}_C} \dashrightarrow \mathrm{Hilb}^d(T^\ast C)$ shall coincide with the pull-back of the restriction to $T^\ast \mathrm{Bun}_{\mathcal{O}_C}$ of the Fourier-Mukai transform $\Phi_\Theta((E, \phi))$ along $p^\ast T^\ast \mathrm{Bun}_{\mathcal{O}_C} \dashrightarrow T^\ast \mathrm{Bun}_{\mathcal{O}_C} \subset \mathcal{M}_H(n, \Lambda)$.
The fact that this symmetry intertwines the actions of Hecke and Wilson operators, a necessary symmetry of the Dolbeault GLC \cite{Kapustin-Witten06, DonagiPantev2012, HauselICM}, shall follow by construction: the Lagrangians $\mathbb{L}_H(D)$ constructed in \Cref{intro-main-thm-1} are nothing but Hecke transforms of the Hitchin sections, and Hecke symmetries commute.

Note that Drinfeld's machinery has a natural Dolbeault version which, instead of a $\mathrm{PSL}_2$-flat bundle, takes as an input a Higgs bundle $(E, \phi) \in \mathcal{M}_H(\mathrm{PSL}_2)$ and outputs a singular Higgs sheaf on $\mathrm{Bun}_{\mathcal{O}_C, l}$ as a Radon transform.
One can expect that this singular Higgs sheaf is isomorphic to two other constructions: 
\begin{itemize}
    \item the direct image to $\mathrm{Bun}_{\mathcal{O}_C, l}$ of the Higgs sheaf $\mathrm{SoV}^\ast \left( \mathcal{L}_{(E,\phi)}^{[d]} \right)$ on $T^\ast\mathrm{Bun}_{\mathcal{O}_C, l}$;
    \item the pull-back along $\mathrm{Bun}_{\mathcal{O}_C, l} \dashrightarrow \mathrm{Bun}_{\mathcal{O}_C}$ of the singular Higgs bundle $(\mathcal{E}, \Phi)$ defined as the direct image 
to $\mathrm{Bun}_{\mathcal{O}_C}$ of the Fourier-Mukai transform $\Phi_\Theta((E, \phi))$. 
\end{itemize}
Conjecturally, $(\mathcal{E}, \Phi)$ develops singularities along the \textit{wobbly locus} which parametrizes stable bundles admitting nonzero nilpotent Higgs fields.
These objects feature in Donagi-Pantev's approach to the GLC which uses Simpson's and Mochizuki's versions of non-abelian Hodge correspondence to interpolate between Higgs bundles and flat bundles on $C$ and $\mathrm{Bun}_G$ respectively \cite{Simpson1991, Simpson1992, Simpson1997, Mochizuki2006, Mochizuki2009, DP09, DonagiPantev2012}. 
Namely, for $\sigma$ the non-abelian Hodge correspondence of $(E, \phi)$, Donagi-Pantev conjectured that, up to some birational transformations
\footnote{Mochizuki's non-abelian Hodge theorem applies to parabolic Higgs bundles $(\mathcal{E}, \Phi)$ with $c_1((\mathcal{E}, \Phi)) = 0 = c_2((\mathcal{E}, \Phi))$ and singular locus which has at worst codimension-$1$ normal crossing singularities. To apply these results, one might need to identify and resolve codimension-$1$ singularities in the wobbly locus that are more complicated than normal crossing.}, 
$(\mathcal{E}, \Phi)$ is the non-abelian Hodge correspondence of the flat bundle which is the restriction to the complement of the wobbly locus of the Hecke eigensheaf $\mathcal{F}_\sigma$. 
This realization of GLC for $G = \mathrm{SL}_2$ and its compatibility with Drinfeld's construction has been checked in the case $g = 2$ \cite{DPS24}, and provides a correspondence between the (classical) Dolbeault GLC and de Rham GLC.

These considerations suggests the following:
\vspace{-24pt}
\begin{quote}
\begin{conjecture}
    The Lagrangian correspondence $\mathbb{L}_H(d)$ in \Cref{intro-main-result-lag-cor-Higgs} generically realizes the Dolbeault GLC in the sense of \cite{Drinfeld, DP09, DonagiPantev2012, Arinkin2012}. 
\end{conjecture}
\end{quote}

We note that Laumon proposed a framework to generalize Drinfeld's construction from rank-2 to higher ranks, essentially by considering chains of auxiliary moduli spaces of bundles of rank $k$ with subbundles of rank $k-1$, thereby extending the diagram \eqref{diagram-Drinfeld} to the left until one reaches the moduli of rank-$n$ bundles \cite{Laumon95}.  It would be interesting to understand how our Lagrangian correspondence fits into Laumon's framework.

Let us remark that Theorems \ref{intro-main-result-lag-cor-Higgs} and \ref{intro-main-result-lag-cor-dR} 
extend and strengthen various Lagrangian correspondence results
which readily follow from \cite{iwasaki92, pinchbeck, DT23, D24-lambda} 
not only in allowing arbitrary ranks
but also in the following important aspect.
These previous works were done following Drinfeld's first construction of the GLC which uses the auxiliary moduli space $\mathrm{Bun}_{\mathcal{O}_C, l}$ with a caveat: 
the fibers of the forgetful map $\mathrm{Bun}_{\mathcal{O}_C, l} \dashrightarrow \mathrm{Bun}_{\mathcal{O}_C}$ have positive dimension in Drinfeld's work but are generically finite in the range $2g-2 < d \leq 3g-3$ that applies to these works.
On the other hand, Theorems \ref{intro-main-result-lag-cor-Higgs} and \ref{intro-main-result-lag-cor-dR} apply to any value of $d$, hence extending these results to Drinfeld's original setting where the fibers of the forgetful map have positive dimension.

\subsubsection{Quantization of Lagrangian correspondences}
Under suitable conditions, a Lagrangian correspondence, or more precisely its corresponding Fourier transform, can be canonically quantized \cite{Arinkin-Block-Pantev}. 
In the case of the Lagrangian correspondence $\mathbb{L}_H(d)$, the canonical quantization of the Fourier transform 
\begin{equation}\label{intro-Fourier-lag}
\mathbf{L_H(d)}: \mathrm{QCoh}\left( \mathrm{Hilb}^d(T^\ast C) \right)
\longrightarrow \mathrm{QCoh}\left(\mathcal{M}_H(n, \Lambda)\right)    
\end{equation}
given by $\mathrm{pr}_{2, \ast} \circ \mathrm{pr}_1^\ast (\mathcal{F})$
shall give rise to a functor  
\begin{equation}\label{intro-quantum-Fourier}
\mathbf{q-L_H(d)}: \mathrm{\mathcal{D}-mod}\left( \mathrm{Hilb}^d(C) \right)
\longrightarrow \mathrm{\mathcal{D}-mod}\left(\mathrm{Bun}_{n, \Lambda}\right),
\end{equation}
where $\mathrm{Bun}_{n, \Lambda}$ is the moduli space of rank-$n$ semistable bundles with fixed determinant $\Lambda$.
One might immediately recognize how such a functor could realize the de Rham GLC in the spirit of Drinfeld's construction. Namely, given a $\mathrm{PSL}_n$-flat bundle $\sigma$ on $C$, consider $\sigma^{[d]}$ and regard it as a $\mathcal{D}$-module on $\mathrm{Hilb}^d(C)$. 
We then expect that the $\mathcal{D}$-module $\mathbf{q-L_H(d)}(\sigma^{[d]})$ is a flat bundle on the complement of the wobbly locus in $\mathrm{Bun}_{n, \Lambda}$ and is isomorphic to the restriction of the Hecke eigensheaf $\mathcal{F}_\sigma$.

\vspace{-20pt}
\begin{quote}
\begin{conjecture} \label{intro-conj-quantized-Fourier}
The Fourier transform $\mathbf{L_H(d)}$ can be quantized to a functor $\mathbf{q-L_H(d)}$ as in \eqref{intro-quantum-Fourier}. 
For sufficiently large $d$, $\mathbf{q-L_H(d)}$ realizes the de Rham GLC in the sense of \cite{Drinfeld}.
\end{conjecture}
\end{quote}

Similarly, one can consider the Fourier transform 
\begin{equation} 
\mathbf{L_{dR}(d)}: \mathrm{QCoh}\left( \mathrm{Hilb}^d(\mathcal{S}) \right) \longrightarrow \mathrm{QCoh}\left( \mathrm{Hilb}^d(\mathcal{M}_{dR}(n, \mathcal{O}_C) \right) \end{equation}
induced by the Lagrangian correspondence $\mathbb{L}_{dR}(d)$ between the de Rham moduli space and Hilbert scheme of the total space $\mathcal{S}$ of the twisted cotangent bundle of $C$, and look to quantize this Fourier transform. 
The corresponding quantization shall give rise to a functor 
\begin{equation}
    \mathbf{q-L_{dR}(d)}: \mathrm{\mathcal{D}-mod}_{k'} \left( \mathrm{Hilb}^d(C) \right) \longrightarrow 
    \mathrm{\mathcal{D}-mod}_k\left(\mathrm{Bun}_{n, \Lambda}\right).
\end{equation}
between the categories of twisted $\mathcal{D}$-modules on $\mathrm{Hilb}^d(C)$ and twisted $\mathcal{D}$-modules on $\mathrm{Bun}_{n, \Lambda}$. As $\mathbf{L_{dR}(d)}$ can be considered as the deformation of $\mathbf{L_{H}(d)}$ by turning on the twistor parameter $\lambda \in \mathbb{C}$, which parametrizes the base of the Hodge moduli space of $\lambda$-connections on $C$, it is reasonable to expect that $\mathbf{q-L_{dR}(d)}$ realizes particular cases of the quantum GLC. 
It is also possible that, upon appropriate normalization, the procedures of deformation and quantization, which in the following diagram are respectively represented by dashed arrows and ``wiggling'' arrows, are commutative.

\vspace{-8pt}
\[
\begin{tikzcd}[row sep=2em, column sep=-0.9em]
    & \mathrm{QCoh}\left( \mathrm{Hilb}^d(\mathcal{S}) \right) \arrow[rr, "\mathbf{L_{dR}(d)}"] \arrow[dd, squiggly] & & 
    \mathrm{QCoh}\left( \mathrm{Hilb}^d(\mathcal{M}_{dR}(n, \mathcal{O}_C)) \right) \arrow[dd, squiggly] \\
    \mathrm{QCoh}\left(\mathrm{Hilb}^d(T^\ast C)\right) \arrow[ur, dashed] \arrow[rr, crossing over, "\mathbf{L_H(d)}" near end] \arrow[dd, squiggly] & & 
    \mathrm{QCoh}\left(\mathcal{M}_H(n, \Lambda)\right) \arrow[ur, dashed] \\
    & \mathrm{\mathcal{D}-mod}_{k'} \left( \mathrm{Hilb}^d(C) \right) \arrow[rr, "\mathbf{q-L_{dR}(d)}" near end] & & \mathrm{\mathcal{D}-mod}_k\left(\mathrm{Bun}_{n, \Lambda}\right) \\
    \mathrm{\mathcal{D}-mod}\left( \mathrm{Hilb}^d(C) \right) \arrow[ur, dashed] \arrow[rr, "\mathbf{q-L_H(d)}"] & & \mathrm{\mathcal{D}-mod}\left(\mathrm{Bun}_{n, \Lambda}\right) \arrow[ur, dashed] \arrow[from=uu, squiggly, crossing over]
\end{tikzcd}
\]

\subsubsection{Relation to branes and Whittaker sheaves}
The seminal work of Kapustin-Witten formulates mirror symmetry and the GLC as a reduction of electro-magnetic duality in four dimensional supersymmetric gauge theory \cite{Kapustin-Witten06}.
Of particular importance in their approach are branes in the hyperk\"{a}hler Hitchin systems: with respect to each of the complex structures $(I, J, K)$, a subvariety is called of type $A$ or $B$ if it is a Lagrangian or a holomorphic subvariety respectively 
\footnote{The $A$-branes and $B$-branes in addition are equipped with flat bundles and holomorphic sheaves respectively}. 
In this language, the Lagrangians $\mathbb{L}_H(D)$ and $\mathbb{L}_{dR}(D)$ are branes of type $(BAA)$ and $(ABA)$ respectively.

The mirrors of $(BAA)$ and $(ABA)$ branes, according to Kapustin-Witten, should be respectively $(BBB)$ and $(ABA)$ branes in the moduli spaces associated to the dual group $\check{G}$. Conjecturally, these mirrors are generically given by the Fourier-Mukai transforms \eqref{Dolbeault-GLC}. For example, consider the Lagrangians $\mathbb{L}_H(L, D) \subset \mathcal{M}_H(n, \ell)$, i.e. the self-dual case $G = \mathrm{GL}_n$.
As $\mathbb{L}_H(L, D)$ intersects a generic Hitchin fiber at $d^n$ distinct points, it follows from the self-duality of Jacobians of spectral curves that, over an open dense subset of $\mathcal{M}_H(n, \ell)$, the mirror $\Phi_\Theta(\mathbb{L}_H(L, D))$ given by Fourier-Mukai transform is a vector bundle of rank $d^n$. 
When $d$ is sufficiently small and $D$ is reduced such that $\mathbb{L}_H(L, D)$ coincides with the upward flow $W^0_{(E_0, \phi_0)}$ associated to a \textit{very stable Higgs bundle} $(E_0, \phi_0) \in \mathcal{M}_H(n, \ell)^{\mathbb{C}^\ast}$, 
it can be shown that the mirror extends to a vector bundle over $\mathcal{M}_H(n, \ell)$ \cite{HH22}. 
In the case where $(E_0, \phi_0)$ is a \textit{wobbly Higgs bundle} or where $d$ is sufficiently large, the structure of the mirror is less obvious: one obstruction to such an investigation is the fact that $\mathbb{L}_H(L, D)$ is not closed and the intersection of $\overline{\mathbb{L}_H(L, D)}$ with the nilpotent cone in this case is of positive dimension.

Finally, we note the relationship between our constructions and the Whittaker sheaf $\mathrm{Whit}_{G} \in \mathcal{D}-\mathrm{mod}(\mathrm{Bun}_{G})$. In the de Rham GLC, the Whittaker sheaf is dual to the structure sheaf $\mathcal{O}_{\mathcal{M}_{dR}(\check{G})} \in \mathrm{QCoh}(\mathcal{M}_{dR}(\check{G}))$ and admits the Hitchin section $\mathbb{L}(\varnothing) \subset \mathcal{M}_H(G)$ as its microlocal support \cite{BenZvi26}.

Our constructions of $\mathbb{L}_H(D)$, which are obtained via Hecke transforms of the Hitchin section $\mathbb{L}_H(\varnothing)$, can be regarded as the Dolbeault counterparts to applying Hecke functors to $\mathrm{Whit}_{G}$. 
We expect that the Lagrangians $\mathbb{L}_H(D)$ are the microlocal support of appropriate restriction of the images of $\mathrm{Whit}_{G}$ along Hecke functors.
In particular, the fact that the forgetful map $\mathbb{L}_H(d) \rightarrow \mathcal{M}_H(G)$ is dominant for $d \geq \frac{1}{2} \dim \mathcal{M}_H(G)$ can be regarded as the geometric counterpart of the categorical statement that $\mathrm{Whit}_{G}$, together with its Hecke transforms, generate the category $\mathcal{D}-\mathrm{mod}(\mathrm{Bun}_G)$.

The relation between the Whittaker model and Drinfeld's construction of the GLC was discussed by Frenkel in \cite{Frenkel1995}. 
Assuming that \Cref{intro-conj-quantized-Fourier} is true, i.e. for generic $\sigma \in \mathcal{M}_{dR}(\check{G})$, the quantized Fourier transform $\mathbf{q-L_H(d)}$ sends $\sigma^{[d]} \mapsto \mathcal{F}_\sigma$, we observe that the construction goes through using different, distinguished bases on the two sides of the GLC:
\begin{itemize}
    \item On the spectral side: skyscraper sheaves $\mathcal{O}_\sigma \in \mathrm{QCoh}(\mathcal{M}_{dR}(\check{G}))$ -- the Langlands dual of these skyscraper sheaves are Hecke eigensheaves $\mathcal{F}_\sigma$ whose singular supports are in the nilpotent cone and microlocal supports are generically line bundles on Hitchin fibers; 
    \item On the automorphic side: the Whittaker sheaf $\mathrm{Whit}_G$, which is dual to $\mathcal{O}_{\mathcal{M}_{dR}(\check{G})}$ and has the Hitchin section $\mathbb{L}_H(\varnothing)$ as microlocal support, and its Hecke transforms, which ought to have $\mathbb{L}_H(D)$ as microlocal supports upon appropriate restriction.
\end{itemize}
Such an intertwining of bases is the signature that makes  \textit{Separation of Variables} techniques powerful in the theories of systems of PDE in general and of integrable systems in particular.
Our Lagrangian correspondences in Theorems \ref{intro-main-result-lag-cor-Higgs} and \ref{intro-main-result-lag-cor-dR}, which we conjecture to realize the (classical) Dolbeault GLC, appear to be the \textit{classical changes of variables} compatible with the spectral decomposition $\mathcal{F}_\sigma \mapsto \sigma^{[d]} \mapsto \sigma$ in the de Rham GLC.

\subsubsection{Relation to conformal field theories}
In this subsection, we discuss the relations between our work and conformal field theories on Riemann surfaces $\Sigma$ with marked points.

We first note that, following Beilinson-Drinfeld's seminal work \cite{BD92}, Hecke eigensheaves $\mathcal{F}_\sigma$ with eigenvalues $\sigma$ being $\check{G}$-opers (\textit{without} apparent singularities) can be constructed by taking the critical limit of conformal blocks of Wess-Zumino-Witten conformal field theories with Kac-Moody $\hat{\mathfrak{g}}$ symmetries. A concrete example of this limit is the case when $\Sigma$ is the Riemann sphere with marked points. In this case, the quantum Hitchin Hamiltonian eigenvalue equations that define $\mathcal{F}_\sigma$ can be recovered as the critical limit of the Knizhnik-Zamolodchikov (KZ) equations \cite{ReshetikhinVarchenko95, Teschner2011}. 
As discussed in \Cref{sect-intro-lag-property}, in our setting, the space of $\mathrm{SL}_n$-opers is precisely the Lagrangian $\mathbb{L}_{dR}(\varnothing) \subset \mathcal{M}_{dR}(n, \mathcal{O}_C)$, while the Lagrangians $\mathbb{L}_{dR}(D)$ in \Cref{intro-main-thm-1} can be considered as ``excitations'' of the space of opers at effective divisors $D$ on $C$.
It would be a natural and interesting investigation to look for an analogue of Beilinson-Drinfeld's conformal field theories machinery that construct Hecke eigensheaves $\mathcal{F}_\sigma$ with $\sigma \in \mathbb{L}(D)$. 
Ideas for this direction have been discussed in \cite{Frenkel2007, Ar16, GLC-proof}: one might account for these ``excitations'' by inserting certain finite dimensional $\mathfrak{g}$-representations at the divisors $D$ of apparent singularities.  

The second point we would like to emphasize here is that opers with apparent singularities naturally feature at the classical limits of certain Belavin-Polyakov-Zamolodchikov (BPZ) equations with degenerate fields.
This relation was discussed in \cite{Reshetikhin92, Teschner2011, Teschner17, Litvinov2014} in the case where $\Sigma$ is the Riemann sphere $\{z \in \mathbb{C} \}\cup \{\infty\}$ with marked points defining a reduced divisor $\mathbf{z} = z_1 + \dots + z_N$.
We now summarize this relation following Teschner (Section 3 of \cite{Teschner17}).

Let us consider highest weight representations $V_\alpha$ of the Virasoro algebra with central charge $c$, with the conformal weights $\Delta_\alpha$ of the highest weight vectors and the central charge $c$ often parametrized as
\[ \Delta_\alpha = \alpha(Q - \alpha), \qquad \qquad c = 1 + 6Q^2, \qquad \qquad \text{where } Q = b + b^{-1}.  \]
We assign representations $V_{\alpha_r + 1/2b}$ to the marked points $z_1, \dots, z_d$, $V_\alpha$ to marked points $z_{d+1}, \dots, z_N$, and degenerate representations $V_{-1/2b}$ to points in the reduced divisor $D = p_1 + \dots + p_d$. We also assign the degenerate representation $V_{-b/2}$ to $z = y$, regarding such field as a ``free variable''. 
The corresponding chiral partition function $Z(D, \mathbf{z}; y)$ then satisfies BPZ equations of the form 
\begin{subequations}\label{intro-BPZ-Liouville}
\begin{align}\label{intro-BPZ-Liouville-1}
\left[ \frac{1}{b^2} \partial_y^2 + T(y) \right] Z(D, \mathbf{z}; y) = 0,& \\
T(y) \coloneqq \sum_{r=1}^n \left( \frac{\Delta_r}{(y - z_r)^2} + \frac{1}{y - z_r} \partial_{z_r} \right) - \sum_{k=1}^d \left( \frac{3b^{-2} + 2}{4(y - p_k)^2} - \frac{1}{y - p_k} \partial_{p_k} \right)& \nonumber
\end{align}
and
\begin{align}\label{intro-BPZ-Liouville-2}
\left( b^2 \partial_{p_k}^2 + \check{\mathsf{T}}_k(p_k) - \frac{3b^2 + 2}{4(p_k - y)^2} + \frac{1}{p_k - y} \partial_y \right) Z(D, \mathbf{z}; y) = 0,& \\
\check{\mathsf{T}}_k(p_k) \coloneqq \lim_{y \to p_k} \left( \mathsf{T}(y) + \frac{3b^{-2} + 2}{4(y - p_k)^2} - \frac{1}{y - p_k} \partial_{p_k} \right)&, \nonumber
\end{align}
\end{subequations}
together with BPZ equations for the primary fields inserted at the marked points $z_1, \dots, z_N$.

One then can recover differential equations that define opers with apparent singularities at $p_1, \dots, p_d$ by carefully taking the $b \rightarrow 0$ limit of \eqref{intro-BPZ-Liouville}.
Indeed, observe that at the limit $b \rightarrow 0$ with $\delta_r = b^2 \Delta_r$ kept fixed, the ansatz 
\footnote{To be precise, in using this ansatz we have chosen to break the $\mathfrak{sl}_2$ symmetry on the sphere; namely, we have fixed $z_{N-2} = 1$, $z_{N-1} = 0$ and $z_N = \infty$.}
\[ Z(D, \mathbf{z}; y) =  e^{-\frac{1}{b^2}S(D, \mathbf{z})} \chi(y \mid D, \mathbf{z})  \left( 1 + \mathrm{O}(b^2) \right) \]
solves \eqref{intro-BPZ-Liouville-1} provided that $\chi$ solves the oper equation
\[ [\partial_y^2 + 
t(y)] \chi(y) = 0, \qquad t(y) = \sum_{r=1}^N \left( \frac{\delta_r}{(y-z_r)^2} - \frac{H_r}{y-z_r}\right) - 
\sum_{k=1}^d \left( \frac{3}{4(y-p_k)^2} - \frac{\nu_k}{y - p_k}\right), \]
with residue parameters $\nu_k = - \partial_{p_k} S(D, \mathbf{z})$ and $H_r = \partial_{z_r}S(D, \mathbf{z})$. In particular, it follows from equations \eqref{intro-BPZ-Liouville-2} that the residue parameters $\nu_k$ defined as such satisfy the constraints that ensure trivial $\mathrm{PSL}_2$-monodromy 
\[ \nu_k^2 + \check{t}_k(p_k) = 0, \qquad \check{t}_k(p_k) := \lim_{y \to p_k} \left( t(y) + \frac{3}{4(y - p_k)^2} - \frac{\nu_k}{y - p_k} \right), \] 
hence defining a reduced divisor $D = p_1 + \dots p_d$ of apparent singularities. The residues $H_r$ correspond to the Hamiltonians generating isomonodromic flows as one varies the positions of the marked points $z_1, \dots, z_N$.

We therefore have recovered $\mathrm{PSL}_2$-opers with apparent singularities at the classical limit of Virasoro conformal blocks and corresponding chiral partition functions. These objects have a natural place in Liouville conformal field theories, which are special cases of $\mathfrak{sl}_n$ Toda conformal field theories with $n = 2$. For general $n \geq 3$, one expects an analogous relation to higher order differential BPZ equations induced by the enlarged \(W_n\)-symmetry. For example, in the case of \(\mathfrak{sl}_3\) Toda field theories, the relevant 
\footnote{Note that the parameter $b$ in Toda theory differs from the one in Liouville theory because of the convention that simple roots are not orthonormal but rather satisfy $\langle e_i,e_i\rangle = 2$.}
degenerate field is \( V_{-2b^{-1}\omega_1}(z)\),
where $\omega_1$ is the positive simple root for $\mathfrak{sl}_3$.
In this case, the corresponding null-vector equations 
induce the third-order differential equation analogue of \eqref{intro-BPZ-Liouville-2} 
(cf. equations (5.13) -- (5.15) in \cite{CercleHuang2021}).
A similar formal semiclassical analysis suggests that at the limit $b \rightarrow 0$, these BPZ equations degenerate to the constraints on residue parameters which ensure trivial $\mathrm{PSL}_3$-monodromy around $p_1, \dots, p_d$, and hence defining these points as apparent singularities of a $\mathrm{PSL}_3$-oper differential equation that appears at the limit. 

We expect similar relations in the case of Riemann surfaces $\Sigma$ with marked points of higher genera. Namely, we expect to recover differential equations defining $\mathrm{PSL}_n$ opers with apparent singularities on $\Sigma$ and, in particular, the constraints on residue parameters of apparent singularities (cf. Section 4.3) at the $b \rightarrow 0$ limit of BPZ equations for appropriate degenerate fields in $\mathfrak{sl}_n$ Toda theories. From this perspective, degenerate fields in conformal field theories provide a quantum origin for opers with apparent singularities. 

Finally, we note that one can define integral transforms that intertwine correlation functions of Liouville theories and $H_3^+$ WZW theories. Such integral transforms can be rigorously defined in the probabilistic approach to CFT \cite{GKR25}. In the context of the analytic Langlands correspondence (ALC) \cite{Teschner2018, EFK2021, EFK2022, EFK2023, AT25}, at the critical limits, these correlation functions degenerate to solutions of differential equations defining \textit{real $\mathrm{PSL}_2$ opers} (without apparent singularities) and sections of the density bundle $|K^{1/2}_{\mathrm{Bun}_{\mathrm{SL}_2}}|$ that solve both the holomorphic and anti-holomorphic quantum Hitchin Hamiltonian eigenvalue equations.
For this reason, Gaiotto-Teschner \cite{GT24} 
argued that such integral transforms should be viewed as a realization of the \textit{quantum analytic Langlands correspondence}. 
Variants of such integral transforms at the critical level limits naturally feature in the GLC \cite{Frenkel1995} and ALC; 
in particular, this technique can be used to bypass certain analytic difficulties in the ALC regarding square-integrability near the wobbly locus \cite{AT25, AADT}. In turn, the Lagrangian correspondence functor \eqref{intro-Fourier-lag} and the local symplectomorphism \eqref{intro-SOV-map} that realize the Dolbeault GLC appear at the classical limits of these critical level integral transforms. 

The emerging picture is that these integral transforms and their limits are variants of the \textit{classical and quantum Separation of Variables} technique applied to the integrable and hyperk\"{a}hler Hitchin systems, and they naturally feature in different levels of quantisation, deformation and imposing quantisation conditions on the GLC.

\paragraph{Acknowledgements} The authors are grateful to Tony Pantev for many valuable discussions and suggestions that help shape and improve this paper. We also thank Dima Arinkin, David Ben-zvi, Indranil Biswas, Avik Chakravarty, Daebeom Choi, Ron Donagi, Volodya Roubtsov, J\"org Teschner and Richard Wentworth for valuable discussions and correspondences. 
Đ.Q.D is supported by the German Research Foundation (DFG) via the Walter Benjamin fellowship. 
S.X is partially supported by the BSF/NSF grant DMS-2200914.

\section{Preliminaries}

\subsection{Moduli of Higgs bundles}
From this point forward we fix $C$ to be a smooth projective curve over $\mathbb{C}$ of genus $g\ge 2$. For $G=\mathrm{GL}_n$, let $\mathcal H^{ss}(n,\ell)$ be the space of semistable Higgs bundles of rank $n$ and degree $\ell$ on $C$, parametrizing pairs $(E,\phi)$ where $E$ is a holomorphic vector bundle of rank $n$ and degree $d$ on $C$ whose holomorphic structure we denote by $\bar\pa_E$, and $\phi\in H^0\!\left(C,\operatorname{End}(E)\otimes K_C\right)$ is a Higgs field satisfying $\bar\pa_E \phi = 0$. The gauge group $\mathrm{GL}(E)$ acts on such pairs
\begin{equation*}
g\cdot (\bar\pa_E,\phi) := (g^{-1}\circ \bar\pa_E\circ g, g^{-1}\circ \phi\circ g). 
\end{equation*}
We define the moduli space of Higgs bundles as $\mathcal{M}_H(n,\ell):=\mathcal H^{ss}(n,\ell)/\mathrm{GL}(E)$. Each point of $\mathcal{M}_H(n,\ell)$ corresponds to the unique polystable representative in its $S$--equivalence class, and $\mathcal{M}_H(n,\ell)$ is a quasi-projective variety of complex dimension $\dim_{\mathbb{C}}\mathcal{M}_H(n,\ell)=2n^2(g-1)+2$. The structure of the $\mathrm{SL}_n$ moduli space is analogous, the only difference being that we impose the conditions $\det(E) = \mathcal O$ and $\Tr(\phi) = 0$.

\paragraph{Holomorphic symplectic form.}

The stable locus of $\mathcal{M}_H(n,\ell)$ can be equipped with a canonical holomorphic symplectic form $\Omega_H$. A tangent vector at $(E,\phi)\in \mathcal H^{ss}(n,\ell)$ is given by a pair
\[
(\alpha,\beta)\in \Omega^{0,1}\!\left(C,\operatorname{End}(E)\right)\oplus \Omega^{1,0}\!\left(C,\operatorname{End}(E)\right)\]
satisfying the linearized Higgs field equation 
\[
\bar{\partial}_E\beta+[\alpha,\phi]=0.
\]
For two such tangent vectors $(\alpha_1,\beta_1)$ and $(\alpha_2,\beta_2)$,
\[
\Omega_H\bigl((\alpha_1,\beta_1),(\alpha_2,\beta_2)\bigr)
:=\int_C \operatorname{tr}\!\left(\alpha_1\wedge \beta_2-\alpha_2\wedge \beta_1\right)
\]
defines a closed, nondegenerate $(2,0)$--form on $\mathcal H^{ss}(n,d)$ which descends to a symplectic form on the stable locus.

\paragraph{Hitchin fibration.}

For $G=\mathrm{GL}_n$, the \emph{Hitchin fibration} is the map
\begin{equation}\label{e: hitchin fibration}
h:\mathcal{M}_H(n,\ell)\longrightarrow B:=\bigoplus_{i=1}^n H^0\!\left(C,K_C^i\right),
\end{equation}
sending $(E,\phi)$ to the coefficients of the characteristic polynomial of $\phi$; we call $B$ the \emph{Hitchin base}. Let $\pi:\operatorname{Tot}(K_C)\to C$ be the projection and let $\lambda\in H^0(\operatorname{Tot}(K_C),\pi^\ast K_C)$ be the tautological section. Writing $h(E,\phi)=b=(b_1,\dots,b_n)$, the characteristic polynomial defines a section
\[
\det\!\left(\lambda\cdot \mathrm{Id}-\pi^\ast\phi\right)
=
\lambda^n+\pi^\ast b_1\,\lambda^{n-1}+\cdots+\pi^\ast b_n
\in
H^0\!\left(\operatorname{Tot}(K_C),\pi^\ast K_C^{\otimes n}\right),
\]
whose vanishing locus is the \emph{spectral curve} $C_b\subset \operatorname{Tot}(K_C)$. The map $h$ endows $\mathcal{M}_H(n,\ell)$ with the structure of a completely integrable system. For a generic $b\in B$, $C_b$ is smooth and the fiber $h^{-1}(b)$ is isomorphic to $\operatorname{Pic}^{d}(C_b)$ with $d=\ell+n(n-1)(g-1)$. Let $B_{\mathrm{reg}}\subset B$ be the locus of $b$ for which $C_b$ is smooth, and set $\mathcal{M}_{H,\mathrm{reg}}(n,\ell):=h^{-1}(B_{\mathrm{reg}})$.

\paragraph{Spectral correspondence.}
Over $B_{\mathrm{reg}}$, a point of $\mathcal{M}_{H,\mathrm{reg}}(n,\ell)$ corresponds to spectral data $(\widetilde{C},\mathcal{L})$, where $\widetilde{C}=C_b$ is a smooth spectral curve with covering map $\pi:\widetilde{C}\to C$, and $\mathcal{L}\in \operatorname{Pic}^{d}(\widetilde{C})$ is a line bundle such that
\[
(E,\phi)\ \cong\ \bigl(\pi_\ast \mathcal{L},\ \pi_\ast(\lambda:\mathcal{L}\to \mathcal{L}\otimes \pi^\ast K_C)\bigr).
\]
Conversely, given $(E,\phi)$ with smooth spectral curve $\widetilde{C}$, the spectral line bundle $\mathcal{L}=\mathcal{L}_{(E,\phi)}$ can be recovered as
\[
\mathcal{L}_{(E,\phi)}
\coloneqq
\operatorname{coker}\!\left(
\pi^\ast(\phi)-\lambda:
\pi^\ast\!\left(E\otimes K_C^{-1}\right)\longrightarrow \pi^\ast E
\right),
\]
where $\lambda$ acts by multiplication on $\pi^\ast E$ along $\widetilde{C}$.

\subsection{Moduli of holomorphic connections}
\label{s:moduli-hol-conn}

For $G=\mathrm{GL}_n$, we consider holomorphic connections $(E,\nabla)$ on $C$, where $E$ is a holomorphic vector bundle of rank $n$ and
\(\nabla:E\to E\otimes K_C\)
is a holomorphic connection, i.e.\ a $\mathbb{C}$--linear map satisfying $\nabla(fs)=f\nabla(s)+(\partial f)\otimes s$. A theorem of Atiyah--Weil asserts that $E$ admits a holomorphic connection if and only if every indecomposable summand of $E$ has degree $0$. This in particular forces $\deg(E) = 0$. 
Let $\mathcal H_{dR}^{ss}(n)$ denote the space of semistable pairs $(E,\nabla)$ which further satisfy the compatibility condition $[\bar\pa_E,\nabla]=0$. Again, the gauge group acts on this space and we define the de Rham moduli space $\mathcal{M}_{dR}(n):= \mathcal H_{dR}^{ss}(n)/\mathrm{GL}(E)$. It is a normal quasi-projective variety of complex dimension $\dim_{\mathbb{C}}\mathcal{M}_{dR}(n)=2n^2(g-1)+2$. Equivalently, it may be viewed as the moduli space of semisimple flat $\mathrm{GL}_n$--connections on $C$ by sending $(\bar\pa_E,\nabla) \to \bar\pa_E+\nabla$.

\paragraph{Irreducible locus and tangent space.}
We write $\mathcal{M}_{dR}^{\mathrm{irr}}(n)\subset \mathcal{M}_{dR}(n)$ for the open locus of \emph{irreducible} connections, i.e.\ those connections for which $E$ has no proper nonzero $\nabla$--invariant holomorphic subbundles (equivalently, the monodromy representation of $\pi_1(C)$ is irreducible). On $\mathcal{M}_{dR}^{\mathrm{irr}}(n)$ the moduli space is smooth, and the tangent space at $(E,\nabla)$ is canonically identified with
\[
T_{(E,\nabla)}\mathcal{M}_{dR}^{\mathrm{irr}}(n)\ \cong\
\mathbb{H}^1\!\left(C,\ \mathcal{E}nd(E)\xrightarrow{\ \operatorname{ad}(\nabla)\ }\mathcal{E}nd(E)\otimes K_C\right),
\]
where $\operatorname{ad}(\nabla)(u)=[\nabla,u]$.

\paragraph{Atiyah--Bott holomorphic symplectic form.}
On $\mathcal{M}_{dR}^{\mathrm{irr}}(n)$ there is a canonical holomorphic symplectic form $\Omega_{\mathrm{AB}}$. It is induced by the trace pairing $\operatorname{tr}:\mathcal{E}nd(E)\otimes\mathcal{E}nd(E)\to\mathcal{O}_C$ via cup product and Serre duality, giving a bilinear pairing
\[
\Omega_{\mathrm{AB}}:\ 
\mathbb{H}^1(\mathcal{C}^\bullet_{(E,\nabla)})\otimes \mathbb{H}^1(\mathcal{C}^\bullet_{(E,\nabla)})
\longrightarrow
\mathbb{H}^2(C,K_C)\cong \mathbb{C},
\]
where $\mathcal{C}^\bullet_{(E,\nabla)}$ denotes the above two-term deformation complex. Equivalently, given a semisimple complex flat connection $\nabla=d+A$ on a fixed smooth bundle and two tangent vectors $\delta_1 A$ and $\delta_2 A$, we define the symplectic form
\[
\Omega_{\mathrm{AB}}(\delta_1 A,\delta_2 A)=\int_C \operatorname{Tr}\!\left(\delta_1 A\wedge \delta_2 A\right).
\]
This form descends to a non-degenerate symplectic form on $\mathcal H_{dR}^{s}(n)/\mathrm{GL}(E)$, namely on the stable locus. 

\medskip
The $\mathrm{SL}_n$ case is treated similarly, after imposing $\det(E)\cong \mathcal{O}_C$ and $\Tr(\nabla)=0$ (equivalently, passing to $\mathrm{SL}_n$--connections with fixed determinant).

\subsection{The Hodge moduli space}

We now introduce the Hodge moduli space, which interpolates between the moduli space of Higgs bundles and the de Rham moduli space.

For the group \(\mathrm{GL}_n\), the Hodge moduli space \(\mathcal{M}_{\mathrm{Hod}}(\mathrm{GL}_n)\) parametrizes triples
\[
(\lambda,E,\nabla_\lambda), \qquad \lambda\in \mathbb C,
\]
up to gauge equivalence, where \(E\) is a rank-\(n\) holomorphic vector bundle on \(C\), and \(\nabla_\lambda\) is a \(\lambda\)-connection on \(E\), namely a \(\mathbb C\)-linear map
\[
\nabla_\lambda:E\to E\otimes K_C, \text{ satisfying } \nabla_\lambda(fs)=\lambda\,\partial f\otimes s+f\,\nabla_\lambda(s)
\]
satisfying the compatibility condition $[\bar\pa_E,\nabla_{\lambda}] = 0$. 
When \(\lambda=0\), the operator \(\nabla_\lambda\) is simply a Higgs field, while when \(\lambda=1\), it is an ordinary holomorphic connection. More generally, for every \(\lambda\neq 0\), rescaling by \(\lambda^{-1}\) identifies the fiber over \(\lambda\) with the de Rham moduli space. In this way, \(\mathcal{M}_{\mathrm{Hod}}(n)\) fits into a natural family
\(
\mathcal{M}_{\mathrm{Hod}}\longrightarrow \mathbb{C}, 
\)
whose fiber over \(0\) is \(\mathcal{M}_H\), and whose fibers over \(\lambda\neq 0\) are naturally identified with \(\mathcal{M}_{dR}\).

For the group \(\mathrm{SL}_n\), one defines \(\mathcal{M}_{\mathrm{Hod}}(\mathrm{SL}_n)\) similarly, by imposing the natural extra conditions.

There is a natural \(\mathbb C^\ast\)-action on \(\mathcal{M}_{\mathrm{Hod}}\), given by scaling the parameter and the \(\lambda\)-connection:
\[
t\cdot [(\lambda,E,\nabla_\lambda)]
=
[(t\lambda,E,t\nabla_\lambda)],
\qquad t\in \mathbb C^\ast.
\]
On the central fiber \(\lambda=0\), this reduces to the usual scaling action on the Higgs fields.

By the nonabelian Hodge correspondence, the smooth locus of the Higgs/de Rham moduli space carries a hyperk\"ahler structure. The corresponding twistor family is obtained by gluing the Hodge moduli spaces along the nonzero fibers, and the parameter \(\lambda\) may be viewed as the coordinate along the twistor line. Thus \(\mathcal{M}_{\mathrm{Hod}}\) should be regarded as the holomorphic family underlying the twistor space of the nonabelian Hodge moduli space.

\subsection{Hilbert schemes of points on a surface}
\label{s:hilb-surface}

Let $S$ be a quasi-projective complex surface. For an integer $d\ge 1$, the Hilbert scheme of $d$ points on $S$ is the moduli space
\(
\operatorname{Hilb}^d(S)
\)
parametrizing $0$--dimensional subschemes $Z\subset S$ of length $d$, equivalently ideal sheaves $I_Z\subset \mathcal{O}_S$ with
\(\dim_{\mathbb{C}} H^0(S,\mathcal{O}_S/I_Z)=d\).
If $S$ is smooth, then $\operatorname{Hilb}^d(S)$ is smooth of complex dimension $2d$. 

\paragraph{Hilbert--Chow morphism.}
There is a natural morphism (the Hilbert--Chow map)
\[
\rho:\operatorname{Hilb}^d(S)\longrightarrow \operatorname{Sym}^d(S),
\]
sending a subscheme $Z$ to the associated effective $0$--cycle $\sum m_i p_i$. It is an isomorphism over the open locus of reduced subschemes $\operatorname{Sym}^d(S)\setminus \Delta$ (i.e.\ $d$ distinct points) and provides a resolution of singularities of the symmetric product.

\paragraph{Holomorphic symplectic structure.}
Assume that $S$ carries a holomorphic symplectic form, i.e.\ a non-degenerate holomorphic $2$--form
\(\omega_S\in H^0(S,\Omega_S^2)\).
Then $\operatorname{Hilb}^d(S)$ admits a canonical holomorphic symplectic form $\omega_S^{[d]}$, uniquely characterized by the property that on the open locus of reduced subschemes
\[
\operatorname{Hilb}^d(S)^{\mathrm{red}}\cong (S^d\setminus \Delta)/\mathfrak{S}_d,
\]
the form $\omega_S^{[d]}$ agrees with the $\mathfrak{S}_d$--invariant $2$--form induced from $\omega_S$ on each factor of $S^d$. In particular, $\omega_S^{[d]}$ is closed and nondegenerate, hence holomorphic symplectic.

\paragraph{$S=T^\ast C$ and twisted cotangent bundles.}
In this paper the symplectic surfaces we deal with are either the cotangent bundle of the curve $C$ or a twisted version of it. 

\smallskip
The cotangent bundle $T^\ast C$ carries its tautological $1$--form $\theta$ and the canonical holomorphic symplectic form
\[
\omega_{T^\ast C}= \mathrm{d}\theta \in H^0(T^\ast C,\Omega_{T^\ast C}^2),
\]
so $\operatorname{Hilb}^d(T^\ast C)$ inherits a holomorphic symplectic form $\omega_{T^\ast C}^{[d]}$.

\smallskip
More generally, let $\mathcal{S}\to C$ be a \emph{twisted cotangent bundle}. By this we mean a holomorphic torsor under $T^\ast C$, namely an affine bundle locally isomorphic to $T^\ast C$, with transition functions given by translations by holomorphic $1$--forms. Since translations preserve $\omega_{T^\ast C}$, the local identifications glue to a global holomorphic symplectic form
\[
\omega_{\mathcal{S}}\in H^0(\mathcal{S},\Omega_{\mathcal{S}}^2),
\]
and hence $\operatorname{Hilb}^d(\mathcal{S})$ inherits a holomorphic symplectic form $\omega_{\mathcal{S}}^{[d]}$.

\subsection{Opers}
Opers and their generalizations will be the main objects of study throughout section \ref{LDR}. In this subsection, we recall the classical notion of $\mathrm{SL}_n$ (or $\mathrm{PSL}_n$) opers, following \cite{We15, BD05}. The analogous results apply for $\mathrm{GL}_n$ opers, opers over a formal disc, or a punctured disc. 

\begin{definition}
An $\mathrm{SL}_n$-oper on a smooth curve $C$ is a triple $(E,\nabla,\{E_i\})$, where $E$ is a rank-$n$ holomorphic vector bundle with $\det E \cong \mathcal O_C$, $\nabla$ is a holomorphic connection inducing the trivial connection on $\det E$, and
\[
0=E_0\subset E_1\subset \cdots \subset E_n=E
\]
is a filtration by holomorphic subbundles such that
\[
\nabla(E_i)\subset E_{i+1}\otimes K_C,
\qquad
\nabla: E_i/E_{i-1}\xrightarrow{\simeq}(E_{i+1}/E_i)\otimes K_C,
\quad 1\le i\le n-1.
\]
These conditions are referred to as the \emph{oper conditions}.

A $\mathrm{PSL}_n$-oper is the projectivization of an $\mathrm{SL}_n$-oper, which is denoted by $\mathbb P(E,\nabla,\{E_i\})$. 
\end{definition}

\begin{remark}
The oper conditions admit a concrete local form. 
Choose a coordinate $z$ and a local frame adapted to the filtration.
Then the isomorphism conditions on the successive quotients allow one to normalize the connection matrix into the shape
\[
\nabla=\partial_z+
\begin{pmatrix}
* & * & \cdots & * \\
1 & * & \cdots & * \\
0 & 1 & \ddots & \vdots \\
\vdots & \ddots & \ddots & * \\
0 & \cdots & 0 & 1 & *
\end{pmatrix}.
\]
Equivalently, if $B\subset \mathrm{GL}_n$ is the Borel subgroup of upper triangular matrices with Lie algebra $\mathfrak b$, then after gauge transformation by its unipotent radical $N$ one may write
\begin{equation}\label{eq:oper-local-DS}
\nabla=\partial_z+p_{-1}+v(z),
\qquad
p_{-1}:=\sum_{i=1}^{n-1}E_{i+1,i},
\qquad
v(z)\in\mathfrak b.
\end{equation}
The fixed subdiagonal term $p_{-1}$ is precisely the local expression of the oper condition: it records that the induced maps
\(
E_i/E_{i-1}\xrightarrow{\sim}(E_{i+1}/E_i)\otimes K_C
\)
are everywhere nonzero, hence isomorphisms.
\end{remark}

\begin{remark}
More generally, let $G$ be a complex simple Lie group with Lie algebra $\mathfrak g$, and fix a Borel subgroup $B\subset G$ with unipotent radical $N$. 
Choose Chevalley generators $\{f_i\}$ for the negative simple root spaces and set
\(
p_{-1}:=\sum_i f_i,
\)
the principal nilpotent element. 
Then a $G$-oper is defined as a triple $(\mathcal F,\nabla,\mathcal F_B)$, consisting of a principal $G$-bundle, a connection, and a $B$-reduction, such that locally in a coordinate $z$ one has
\[
\nabla=\partial_z+p_{-1}+v(z),
\qquad v(z)\in\mathfrak b,
\]
up to gauge transformation by $N$. 
In this paper, however, we only need the cases $G=\mathrm{GL}_n,\mathrm{SL}_n,\mathrm{PSL}_n$.
\end{remark}

For simplicity, one specializes to $G=\mathrm{SL}_n$ or $\mathrm{PSL}_n$, so that
$\mathfrak g=\mathfrak{sl}_n$. 
Let
\[
V_{\mathrm{can}}:=\ker(\operatorname{ad} p_1)\cap \mathfrak n
\]
be the space of $\operatorname{ad} p_1$-invariants in $\mathfrak n$, where
$(p_{-1},2\check\rho,p_1)$ is the principal $\mathfrak{sl}_2$-triple.

\begin{lemma}[\cite{BD05}]\label{lem:DS-gauge}
Each $N$-gauge equivalence class of local operators of the form~\eqref{eq:oper-local-DS}
contains a unique representative of the form
\[
\nabla=\partial_z+p_{-1}+v(z),
\qquad v(z)\in V_{\mathrm{can}}.
\]
This representative is called the \emph{Drinfeld--Sokolov gauge} of the oper.
\end{lemma}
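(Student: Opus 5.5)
We prove the Drinfeld--Sokolov normal form lemma for $\mathfrak{g}=\mathfrak{sl}_n$ by induction along the grading of $\mathfrak{b}$ by the principal element $\check\rho$. Write $\mathfrak{b}=\bigoplus_{k\ge 0}\mathfrak{g}_k$, where $\mathfrak{g}_k$ is the $k$-eigenspace of $\operatorname{ad}\check\rho$; then $\mathfrak{n}=\bigoplus_{k\ge1}\mathfrak{g}_k$ and $p_{-1}\in\mathfrak{g}_{-1}$. The key algebraic input is $\mathfrak{sl}_2$-representation theory for the principal triple $(p_{-1},2\check\rho,p_1)$. It gives, for each $k\ge 0$, a direct sum decomposition
\[
\mathfrak{g}_k=[p_{-1},\mathfrak{g}_{k+1}]\oplus V_{\mathrm{can},k},\qquad V_{\mathrm{can},k}:=\ker(\operatorname{ad}p_1)\cap\mathfrak{g}_k .
\]
Indeed, $\ker \operatorname{ad} p_1$ consists of highest weight vectors and complements the image of the lowering operator $\operatorname{ad}p_{-1}$. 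Moreover, $\operatorname{ad}p_{-1}:\mathfrak{g}_{k+1}\to\mathfrak{g}_k$ is injective for $k\ge0$, since no highest weight vector lies in positive degree $\ge1$ with $p_{-1}$ killing it (lowest weight vectors have nonpositive weight). Because $p_1$ is principal, $\mathfrak{g}_0$ contains no highest weight vectors, so $V_{\mathrm{can}}\subset\mathfrak{n}$ as stated.

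\emph{Existence.} Take $\nabla=\partial_z+p_{-1}+v$ with $v=\sum_k v_k$. We gauge by $\exp(u)$ with $u\in\mathfrak{g}_{k+1}$, working degree by degree starting at $k=0$. The new connection is $\partial_z+p_{-1}+v'$, where
\[
v'_j=v_j \ (j<k),\qquad v'_k=v_k+[u,p_{-1}] .
\]
All other changes (the terms $[u,v]$, the derivative $-\partial_z u$, and higher commutators) land in degrees $\ge k+1$, because $\partial_z u$ has degree $k+1$. By the decomposition we can choose $u$, depending holomorphically on $z$, so that $v'_k\in V_{\mathrm{can},k}$. Since the grading is finite, after finitely many steps $v\in V_{\mathrm{can}}$. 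The composite gauge lies in $N$.

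\emph{Uniqueness.} Suppose $g\in N$ carries one canonical form to another, and write $g=\exp(u)$ with $u=\sum_{k\ge1}u_k$. Let $m$ be the lowest degree with $u_m\ne0$. In degree $m-1$ the transformed connection picks up $[u_m,p_{-1}]$. Both $v$ and $v'$ lie in $V_{\mathrm{can}}$, which meets $\operatorname{im}\operatorname{ad}p_{-1}$ trivially, so $[u_m,p_{-1}]$ must equal $v'_{m-1}-v_{m-1}\in V_{\mathrm{can}}$ and hence vanish. Injectivity of $\operatorname{ad}p_{-1}$ on $\mathfrak{g}_m$ with $m\ge1$ then gives $u_m=0$, a contradiction. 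So $g=1$ and $v=v'$.

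The main obstacle is the bookkeeping of the degree filtration in the gauge formula $g^{-1}\partial_z g+g^{-1}(p_{-1}+v)g$. One must check that $\partial_z u$ and the higher commutators never disturb degrees already normalized; this holds because $p_{-1}$ is the only term of negative degree. The representation-theoretic splitting is the other essential point, and it is standard.
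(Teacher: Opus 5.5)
Your proof is correct. It is the standard argument behind the paper's citation of \cite{BD05}; the paper gives no proof of its own. Existence is the induction along the principal grading using $\mathfrak g_k=[p_{-1},\mathfrak g_{k+1}]\oplus V_{\mathrm{can},k}$, and uniqueness (together with freeness of the $N$-action) comes from $V_{\mathrm{can}}\cap\operatorname{im}\operatorname{ad}p_{-1}=0$ and the injectivity of $\operatorname{ad}p_{-1}$ on $\mathfrak n$.

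Two slips are cosmetic. With the convention $g^{-1}\partial_z g+g^{-1}(p_{-1}+v)g$ that you quote at the end, the degree-$k$ change is $-[u,p_{-1}]$ rather than $+[u,p_{-1}]$; this is harmless because $u$ is arbitrary. The injectivity sentence should say that no nonzero vector of positive degree is killed by $\operatorname{ad}p_{-1}$, since $\ker\operatorname{ad}p_{-1}$ consists of lowest weight vectors, which have nonpositive degree.
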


We denote by $\mathrm{Op}_{\mathrm{GL}_n}(C)$, $\mathrm{Op}_{\mathrm{SL}_n}(C)$, and $\mathrm{Op}_{\mathrm{PSL}_n}(C)$ the corresponding moduli spaces of opers on $C$, that is, the space of gauge-equivalence classes of triples $(E,\nabla,\{E_i\})$. 
For a complex semisimple Lie group $G$, the moduli space of $G$-opers is denoted by $\mathrm{Op}_G(C)$. 
A standard fact that will be relevant to us is that $\mathrm{Op}_G(C)$ embeds as a holomorphic Lagrangian subvariety of the de Rham moduli space $\mathcal M_{dR}(G)$.

The next proposition recalls the description of $\mathrm{SL}_n$- and $\mathrm{PSL}_n$-opers in terms of scalar differential operators.
\begin{proposition}[\cite{We15} Section 4.3.2, \cite{BD05} c.2.1]\label{p:oper equ d.o.}
The local system underlying an $\mathrm{SL}_n$-oper is irreducible. 
Moreover, an \( \mathrm{SL}_n \)-oper \( (E, \nabla, \{ E_i \}) \) is equivalent to an \( n \)-th order differential operator with nowhere vanishing principle symbol, 
\begin{equation}\label{e:diff oper for sln oper}
\mathbf{D}: \mathcal{L} \;\longrightarrow\; \mathcal{L} \otimes K_C^{n},
\end{equation}
where \( \mathcal{L} = E_n / E_{n-1} \) is a line bundle satisfying \( \mathcal{L}^n \cong K_C^{-\binom{n}{2}} \). Similarly, a $\mathrm{PSL}_n$-oper is described by an equivalence classes of \( n \)-th order differential operators,
\[
\mathbf{D} : K_C^{-\frac{n-1}{2}} \;\longrightarrow\; K_C^{\frac{n+1}{2}},
\]  
up to the choice of a square root $K_C^{1/2}$.
\end{proposition}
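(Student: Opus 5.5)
The plan has three parts: irreducibility via a degree argument on graded pieces, the passage from opers to scalar operators via jet bundles, and the $\mathrm{PSL}_n$ statement by descending modulo $n$-torsion.

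\textbf{Irreducibility.} Let $(E,\nabla,\{E_i\})$ be an $\mathrm{SL}_n$-oper and set $Q_i=E_i/E_{i-1}$. The oper condition gives $Q_{i+1}\cong Q_i\otimes K_C^{-1}$. Hence $\deg Q_i=\deg Q_1-(i-1)(2g-2)$, and $\sum_i \deg Q_i=\deg E=0$ forces $\deg Q_1=(n-1)(g-1)$.

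Suppose $F\subset E$ is a nonzero proper $\nabla$-invariant subbundle of rank $k$. Flatness gives $\deg F=0$. Consider the induced filtration $F_i=F\cap E_i$ with saturated graded pieces $\mathrm{gr}_iF$; each is either zero or a rank-one subsheaf of $Q_i$.

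Invariance of $F$ together with the oper isomorphism $Q_i\xrightarrow{\sim}Q_{i+1}\otimes K_C$ shows that if $\mathrm{gr}_iF\neq 0$ with $i<n$, then $\nabla$ maps it nontrivially into $\mathrm{gr}_{i+1}F\otimes K_C$. So the jumps of the filtration are consecutive and end at $i=n$. Since $F$ has rank $k$, the jumps occur exactly at $i=1,\dots,k$, because $E_1\subset F$ is forced otherwise.

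Rewriting carefully: if the first jump is at $j$, the jumps run from $j$ to $n$, so $F$ contains a rank-one piece generically equal to $Q_j$. Tracking more carefully with the strict-degree comparison gives $\deg F\le\sum_{i=n-k+1}^{n}\deg Q_i$ up to the correct sign convention. Because the $\deg Q_i$ form a strictly monotone arithmetic progression summing to $0$ (using $g\ge2$), the sum over any proper tail has a definite nonzero sign. This contradicts $\deg F=0$. I will fix the orientation of the filtration, namely which end is the subbundle, so that the inequality points the right way; this is routine bookkeeping.

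\textbf{Oper to differential operator.} Put $\mathcal L=E/E_{n-1}$. Let $V$ be the local system of flat sections, so $E=V\otimes\mathcal O_C$. Define the $\mathcal O_C$-linear map
\[
\Psi:E\longrightarrow J^{n-1}(\mathcal L),\qquad v\otimes f\mapsto f\cdot j^{n-1}(\bar v),
\]
where $\bar v$ is the image of $v$ in $\mathcal L$.

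The oper condition says that $E_i$ is exactly where the derivatives of order less than $n-i$ of the projection vanish. Hence $\Psi$ respects the filtrations and induces the oper isomorphisms on graded pieces, so $\Psi$ is an isomorphism. Taking determinants gives $\mathcal O_C\cong\det J^{n-1}(\mathcal L)\cong\mathcal L^nK_C^{\binom n2}$.

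The image of $V$ in $J^n(\mathcal L)$ is a rank-$n$ subbundle complementary to the kernel $\mathcal L\otimes K_C^n$ of $J^n(\mathcal L)\to J^{n-1}(\mathcal L)$. It is therefore the kernel of a unique splitting $J^n(\mathcal L)\to\mathcal L\otimes K_C^n$, which is an $n$th order operator $\mathbf D$ with symbol $1$. Conversely, given $\mathbf D$, set $E=J^{n-1}(\mathcal L)$. The connection is the one whose flat sections are the jets of local solutions of $\mathbf D$, and the filtration is by kernels of the truncations $J^{n-1}\to J^{n-1-i}$. The oper condition is then precisely the invertibility of the principal symbol.

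\textbf{Main obstacle and the $\mathrm{PSL}_n$ case.} The step I expect to be hardest is checking that these two constructions are mutually inverse and compatible with gauge equivalence. I would do this locally in Drinfeld--Sokolov gauge via \Cref{lem:DS-gauge}, where $\mathbf D=\partial_z^n+\dots$ is read off from the companion-matrix form.

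For $\mathrm{PSL}_n$, the bundle $\mathcal L$ is determined only up to twisting by $n$-torsion line bundles, and $\mathbf D$ changes by the corresponding twist. Choosing $K_C^{1/2}$ fixes $\mathcal L=K_C^{-(n-1)/2}$, which gives the stated description.
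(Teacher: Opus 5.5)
Your argument is correct in substance. It reaches the statement by a more global route than the paper.

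\textbf{Comparison with the paper.} The paper only sketches the forward direction. It puts the oper in Drinfeld--Sokolov gauge (\Cref{lem:DS-gauge}) and eliminates variables in $\nabla s=0$ to get a scalar $n$-th order equation for the last component of $s$, i.e.\ for the image of a flat section in $\mathcal{L}=E/E_{n-1}$. The converse is deferred to the literature, and nothing is said about irreducibility or the $\mathrm{PSL}_n$ case.

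Your map $\Psi\colon E\to J^{n-1}(\mathcal{L})$ is the coordinate-free form of that same elimination, so you obtain the same $\mathbf{D}$. The jet formulation buys three things:
\begin{itemize}
\item gluing is automatic;
\item $\mathcal{L}^n\cong K_C^{-\binom{n}{2}}$ comes directly from $\det\Psi$;
\item the converse and the mutual-inverse check become essentially tautological.
\end{itemize}
For the last point: an operator $\mathcal{L}\to\mathcal{L}\otimes K_C^n$ with symbol $1$ is determined by its sheaf of solutions. The difference of two such operators is a map $J^{n-1}(\mathcal{L})\to\mathcal{L}\otimes K_C^n$ that kills the $(n-1)$-jets of a basis of solutions, and these span $J^{n-1}(\mathcal{L})$. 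So the step you flag as the main obstacle does not need Drinfeld--Sokolov gauge at all. What the DS route buys instead is the explicit normalized local form \eqref{e:local form for D}, which the paper uses afterwards. Your degree argument for irreducibility is something the paper does not supply.

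\textbf{A small genuine gap in the converse.} Starting from an arbitrary $\mathbf{D}$ with symbol $1$, the jet construction gives a $\mathrm{GL}_n$-oper with $\det E\cong\mathcal{O}_C$. However, the induced connection on $\det E$ is $d+\alpha$ for some $\alpha\in H^0(C,K_C)$. To see this, write $\mathbf{D}=\partial^n+a_1\partial^{n-1}+\cdots$ in local trivializations compatible with $\mathcal{L}^nK_C^{\binom{n}{2}}\cong\mathcal{O}_C$; the Wronskian of a basis of solutions then satisfies $W'=-a_1W$.

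So $\mathrm{SL}_n$-opers correspond only to operators whose Wronskian is constant. For $\mathcal{L}=K_C^{-(n-1)/2}$ this means vanishing subprincipal term, which is exactly the normalization without a $\partial^{n-1}$ term in \eqref{e:local form for D}. Without this restriction the correspondence is off by $h^0(K_C)=g$ parameters. In the forward direction the condition holds automatically, since $\det\Psi$ sends a flat volume form of $E$ to the Wronskian of the projected flat sections. The statement as printed glosses over this as well, but your converse should impose it explicitly.

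\textbf{Irreducibility clean-up.} Delete the sentence claiming the jumps occur at $i=1,\dots,k$. Your propagation step actually shows the jump set is closed upwards, so it equals $\{n-k+1,\dots,n\}$. No orientation needs to be chosen: the oper condition forces $\deg Q_i=(g-1)(n+1-2i)$. Since $\mathrm{gr}_iF\hookrightarrow Q_i$, you get $\deg F\le\sum_{i=n-k+1}^{n}\deg Q_i=-(g-1)k(n-k)<0$, contradicting $\deg F=0$.

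\textbf{$\mathrm{PSL}_n$ clean-up.} Say why lifts exist. A $\mathrm{GL}_n$-lift of the flagged bundle has $\deg\det E=n\deg Q_1-n(n-1)(g-1)$, which is divisible by $n$, so it can be twisted to trivial determinant; the connection then lifts uniquely as in \Cref{identify oper}. Also say that $\mathrm{Jac}(C)[n]$ acts simply transitively on lifts by tensoring with $(N,\nabla_N)$, which changes $\mathcal{L}$ to $\mathcal{L}\otimes N$. Then $\mathcal{L}=K_C^{-(n-1)/2}$ singles out one lift.
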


\begin{proof}[Proof sketch]
We only outline the construction of the differential operator $\mathbf{D}$ from a given oper. Note that one can write the oper in Drinfeld--Sokolov gauge as
\[
\nabla=\partial_z+p_{-1}+v(z),
\qquad v(z)\in V_{\mathrm{can}},
\]
then the flatness equation $\nabla s=0$ is equivalent to a single $n$-th order scalar equation for the last component of $s$, and this scalar equation is precisely $\mathbf{D}y=0$. The converse construction is standard and can be found in \cite[\S 4.3.2]{We15}.
\end{proof}

The next proposition explains how the coefficients of the scalar operator encode a projective connection together with higher holomorphic differentials.
\begin{proposition}[\cite{We15} Theorem 4.3, \cite{DIZ91}]\label{prop: k differential}
Let \( \mathbf{D}: \mathcal{L} \to \mathcal{L} \otimes K_C^{n} \) be the differential operator in \eqref{e:diff oper for sln oper}, with \( \mathcal{L} = K_C^{-(n-1)/2} \) after fixing a theta characteristic. With respect to a local coordinate $z$ and a local trivialization of $\mathcal{L}$, one may write
\begin{equation}\label{e:local form for D}
\mathbf{D}
=
\left(
\partial_z^n + Q_2(z)\partial_z^{n-2}+\cdots+Q_n(z)
\right)(dz)^n.
\end{equation}
Then the following hold:
\begin{enumerate}
    \item The coefficient \(12 Q_2/n(n^2-1)\)
    defines a projective connection on $C$.

    \item For each $k\ge 3$, there exists a universal differential polynomial
    \(
    w_k=f_k(Q_2,\dots,Q_k)
    \)
    in the coefficients $Q_2,\dots,Q_k$ and their derivatives, such that
    \(
    w_k\in H^0(C,K_C^k).
    \)

    \item Conversely, a projective connection together with differentials
    \[
    w_k\in H^0(C,K_C^k),\qquad 3\le k\le n,
    \]
    determines uniquely such an operator $\mathbf{D}$.
\end{enumerate}
\end{proposition}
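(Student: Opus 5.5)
The statement to be proved is \Cref{prop: k differential}: the structure of the coefficients of the scalar operator $\mathbf{D}$ attached to an $\mathrm{SL}_n$-oper. The strategy is to analyse how the local form \eqref{e:local form for D} transforms under holomorphic changes of coordinate $z=z(w)$, using that $\mathbf{D}$ maps $K_C^{-(n-1)/2}\to K_C^{(n+1)/2}$.

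The first step is to record the transformation rule. A section of $K_C^{-(n-1)/2}$ transforms as $y(z)=y(w)\,(z')^{(n-1)/2}$ with $z'=dz/dw$. Expand $\partial_w^n$ acting on $y\,(z')^{-(n-1)/2}$. The weight $-(n-1)/2$ is precisely the one for which the $\partial^{n-1}$ coefficient stays zero, so the normal form $Q_1=0$ is preserved; this is the usual reason for the choice $\mathcal{L}=K_C^{-(n-1)/2}$. Tracking the next term should give
\[
\tilde Q_2(w)=Q_2(z(w))\,(z')^2+\tfrac{n(n^2-1)}{12}\,\{z,w\},
\]
where $\{z,w\}$ is the Schwarzian derivative. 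The normalisation by $12/n(n^2-1)$ is chosen to make the inhomogeneous term exactly the Schwarzian. This is a routine but index-heavy Leibniz computation, and it proves (1).

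For (2), the plan is the classical Drinfeld--Sokolov / Wilczynski covariantisation. Use the projective connection $P=12Q_2/n(n^2-1)$ to define invariant higher differentials. The cleanest route is to use \Cref{lem:DS-gauge}, which identifies the coefficients $v(z)\in V_{\mathrm{can}}$ with quantities that transform as sections $w_k$ of $K_C^{k}$, $k\ge3$, plus $Q_2$ transforming as a projective connection. One then expresses the scalar-operator coefficients $Q_k$ as differential polynomials in $v$, and inverts this triangularly: $Q_k$ equals $w_k$ plus a differential polynomial in $w_2,\dots,w_{k-1}$. The inversion gives universal $f_k$. Alternatively, one can proceed recursively: assume $f_3,\dots,f_{k-1}$ are constructed, and subtract from $Q_k$ the unique combination of derivatives of $Q_{k-1},\dots,Q_2$ (with Bol-type coefficients) that cancels the inhomogeneous terms in its transformation law. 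Since the inhomogeneous terms are themselves differential polynomials in lower $Q_j$ and the Schwarzian, they can be absorbed using $P$.

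For (3), given a projective connection $P$ and $w_k$, locally choose a projective coordinate in which $P=0$, and set $\mathbf{D}=\partial^n+\sum_k w_k\partial^{n-k}$ with appropriate lower corrections. The triangular inversion from (2) shows that the local operators agree on overlaps, so they glue. Uniqueness follows from the same triangularity.

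The main obstacle is establishing that the inhomogeneous parts can always be cancelled universally. This amounts to showing the triangular structure, namely that the leading term of the transformation of $Q_k$ is $(z')^kQ_k$ with lower-order corrections only. I would handle this via the Drinfeld--Sokolov gauge, where covariance is manifest from the $\mathfrak{sl}_2$-grading.
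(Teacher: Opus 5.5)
The paper gives no proof of this proposition; it quotes \cite{We15, DIZ91}, where the $w_k$ are built by covariantizing the scalar operator $\mathbf D$ itself with respect to the projective connection $12Q_2/n(n^2-1)$. The explicit expressions for $w_3,w_4$ listed after the statement come from that construction, and the affine description of $\mathrm{Op}_{\mathrm{PSL}_n}(C)$ from \cite{BD05} is then presented as a consequence.

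Your main route runs the logic the other way. You establish covariance on the Lie-algebra side, in the canonical Drinfeld--Sokolov slice $V_{\mathrm{can}}$. You then transport it to the $Q_k$ through the weight-homogeneous triangular relation $Q_k=c_k v_{k-1}+(\text{differential polynomial in } v_1,\dots,v_{k-2})$, where $v_j$ is the component along $p_1^{j}$. This is correct, and it is essentially the argument behind the Beilinson--Drinfeld description.

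What your route buys is a conceptual existence proof in which (3) is formal. In any chart, define $Q_2$ from $P$ and the $Q_k$ by inverting the $f_k$. Agreement on overlaps then follows from (1), (2) and injectivity of $(Q_2,\dots,Q_n)\mapsto(Q_2,f_3,\dots,f_n)$, so projective coordinates are unnecessary.

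What it does not give is explicit formulas. Your $w_k$ are the Drinfeld--Sokolov components. They agree with the paper's normalized ones only up to scaling and, from weight $6$ on, possibly up to covariant polynomials in lower $w_j$ such as $w_3^2$. This is harmless for the statement, which asserts only existence, whereas the route of \cite{DIZ91} produces explicit $f_k$ directly.

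Two steps must be made explicit.

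\emph{Covariance of the slice.} \Cref{lem:DS-gauge} only gives existence and uniqueness of the canonical representative; it says nothing about coordinate changes. Since the paper derives the \cite{BD05} theorem \emph{from} this proposition, you must prove the covariance directly rather than quote it. After $z=\mu(w)$, gauge by $\check\rho(\mu')$ and then by $\exp(c\,\tfrac{\mu''}{\mu'}\,p_1)$ for a universal constant $c$.
\begin{itemize}
\item The $p_1$-component picks up a multiple of $\{\mu,w\}$.
\item The component along $p_1^{j}$, $j\ge 2$, is only rescaled by $(\mu')^{j+1}$. This holds precisely because $V_{\mathrm{can}}\subset\ker(\operatorname{ad}p_1)$; the companion-matrix slice in which $\mathbf D$ is read off has no such property.
\item The change of jet frames of $K_C^{-(n-1)/2}$ has the same diagonal part $\check\rho(\mu')$, so it differs from this gauge by an element of $N$. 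Uniqueness in \Cref{lem:DS-gauge} then identifies the two canonical representatives.
\item You also need $c_k\neq 0$. Up to sign, $c_k$ is the sum of the entries of $p_1^{k-1}$, which are nonzero and of one sign.
\end{itemize}

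\emph{The alternative recursive argument.} As written it is not a proof. That the inhomogeneous terms ``can be absorbed using $P$'' is exactly the content of (2), and the correcting combination is not unique in general. Rely on the Drinfeld--Sokolov route instead.
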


For convenience, recall that a projective connection on $C$ is given by a collection of local holomorphic functions $\{S(z)\}$ such that under a change of coordinate $w=w(z)$ one has
\[
S(w)\,(w')^2=S(z)-\{w,z\},
\]
where $\{w,z\}$
is the Schwarzian derivative. The first few expressions for the differentials $w_k$ are
\begin{align}\label{e:W alg relations}
\begin{split}
w_2 &= Q_2,\\
w_3 &= Q_3-\frac{n-2}{2}\,Q_2',\\
w_4 &= Q_4-\frac{n-3}{2}\,Q_3'
+\frac{(n-2)(n-3)}{10}\,Q_2''
-\frac{(n-2)(n-3)(5n+7)}{10n(n^2-1)}\,Q_2^2.
\end{split}
\end{align}

As an immediate consequence, one obtains the moduli space of opers is affine.
\begin{theorem}[\cite{BD05} §C.3.4x] 
There is an isomorphism
\[
\mathrm{Op}_{\mathrm{PSL}_n}(C) \cong  \mathrm{Op}_{\mathrm{PSL}_2}(C) \times \bigoplus_{j=3}^n H^0(C, K_C^j),
\]
where $\mathrm{Op}_{\mathrm{PSL}_2}(C)$ may be identified with the space $\mathrm{Proj}(C)$ of projective connections on $C$, which is an $H^0(C,K_C^2)$-torsor. 
In particular, $\mathrm{Op}_{\mathrm{PSL}_n}(C)$ is an affine space modeled on the Hitchin base $B$.
\end{theorem}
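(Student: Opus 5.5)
The plan is to reduce everything to \Cref{prop: k differential} together with the classical description of $\mathrm{Op}_{\mathrm{PSL}_2}(C)$. First I fix a theta characteristic $K_C^{1/2}$. Via \Cref{p:oper equ d.o.}, a $\mathrm{PSL}_n$-oper becomes an $n$-th order operator $\mathbf{D}: K_C^{-\frac{n-1}{2}} \to K_C^{\frac{n+1}{2}}$ with principal symbol $1$. Since $\det E$ carries the trivial connection, the subprincipal coefficient vanishes, and I expect this to be checkable directly in Drinfeld--Sokolov gauge (\Cref{lem:DS-gauge}). So locally $\mathbf{D}$ has the form \eqref{e:local form for D}.

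Next I define the map
\[
\mathbf{D} \longmapsto \Bigl( \tfrac{12}{n(n^2-1)} Q_2,\ w_3, \dots, w_n \Bigr).
\]
By items 1 and 2 of \Cref{prop: k differential}, the first entry is a projective connection and each $w_k$ is a global section of $K_C^k$. The inverse comes from item 3: a projective connection together with $(w_3,\dots,w_n)$ determines a unique operator $\mathbf{D}$, and I then reassemble the oper using the converse half of \Cref{p:oper equ d.o.}. Both directions are given by universal differential polynomials, so they are algebraic. I would check that they are mutually inverse by reading off the triangular structure of \eqref{e:W alg relations}: $w_k = Q_k + (\text{polynomial in } Q_2,\dots,Q_{k-1} \text{ and their derivatives})$, so the $Q_k$ can be recovered recursively. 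This gives the isomorphism with $\mathrm{Proj}(C) \times \bigoplus_{j\ge 3} H^0(C,K_C^j)$.

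It remains to identify $\mathrm{Proj}(C)$ with $\mathrm{Op}_{\mathrm{PSL}_2}(C)$. This is the $n=2$ case of the same argument, where $\mathbf{D} = \partial_z^2 + Q_2$. It is a torsor for $H^0(C,K_C^2)$ because two projective connections differ by a quadratic differential: the Schwarzian terms cancel in the transformation law. Non-emptiness follows from uniformization, or from the existence of a projective structure on $C$. Consequently the total space is an affine space modeled on $\bigoplus_{j=2}^n H^0(C,K_C^j)$, which is the Hitchin base $B$ for $\mathrm{SL}_n$ (where $b_1=0$).

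The main obstacle is independence of choices. The theta characteristic affects the $\mathrm{SL}_n$ lift but not the projectivization, so I will argue that changing $K_C^{1/2}$ by a $2$-torsion line bundle twists $\mathcal{L}$ while leaving the operator's coefficients unchanged. I also need the $w_k$ to transform tensorially under coordinate change, which is exactly the content of item 2 and which I take as given from \cite{We15, DIZ91}.
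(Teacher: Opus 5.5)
Your proposal is correct and follows the same route as the paper. The paper gives no separate proof: it cites Beilinson--Drinfeld and presents the theorem as an immediate consequence of \Cref{p:oper equ d.o.} and \Cref{prop: k differential}, and your remarks on the vanishing subprincipal term, the triangular recovery of the $Q_k$ from the $w_k$, and independence of the theta characteristic just fill in details the paper leaves implicit.
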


For simplicity, we fix a reduced effective divisor
\(
D=\sum_{i=1}^d p_i
\)
on \(C\). 
We next introduce meromorphic opers with regular singularities along \(D\). 
The same constructions can be extended to the case of a non-reduced divisor.
\begin{definition}[\cite{Fr05}, \cite{BD05} §C.4.2]\label{def:global-meromorphic-oper}
One defines
\(
\mathrm{Op}_{\mathrm{PSL}_n,D}^{\mathrm{RS}}(C)
\)
to be the space of meromorphic \(\mathrm{PSL}_n\)-opers on \(C\setminus D\) with regular singularities along \(D\). 
Equivalently, these are \(\mathrm{PSL}_n\)-opers on \(C\setminus D\) such that for each \(p_i\in D\), after choosing a local coordinate \(z\) centered at \(p_i\), the oper is locally gauge equivalent to a connection of the form
\begin{equation}\label{eq:global-RS-oper}
\nabla
=
\partial_z+\frac{1}{z}\bigl(p_{-1}+v(z)\bigr),
\qquad
v(z)\in V_{\mathrm{can}}(\mathcal{O}_{U_i}),
\end{equation}
for some sufficiently small neighborhood \(U_i\) of \(p_i\).

For
\(
\boldsymbol{\check\lambda}=(\check\lambda_1,\dots,\check\lambda_d)\in \mathfrak h^d,
\)
with each \(\check\lambda_i\) being a dominant integral coweight,
we denote by
\[
\mathrm{Op}_{\mathrm{PSL}_n,D}^{\mathrm{RS}}(C)_{\boldsymbol{\check\lambda}}
\subset
\mathrm{Op}_{\mathrm{PSL}_n,D}^{\mathrm{RS}}(C)
\]
the subspace of meromorphic opers, such that, for every \(i=1,\dots,d\),
\[
\mathrm{Res}_{p_i}(\nabla)=-\check\lambda_i-\check\rho \in \mathfrak h/W.
\]
\end{definition}

\begin{definition}[\cite{Fr05}]\label{def:global-apparent-oper}
For
\(
\boldsymbol{\check\lambda}=(\check\lambda_1,\dots,\check\lambda_d)\in \mathfrak h^d,
\)
one defines
\(
\mathrm{Op}_{\mathrm{PSL}_n,D}(C)_{\boldsymbol{\check\lambda}}
\)
to be the space of \(\mathrm{PSL}_n\)-opers on \(C\setminus D\) such that for each \(p_i\in D\), after choosing a local coordinate \(z\) centered at \(p_i\), the oper is locally gauge equivalent to
\begin{equation}\label{eq:global-apparent-oper}
\nabla
=
\partial_z+\sum_{j=1}^{n-1} z^{\langle \alpha_j,\check\lambda_i\rangle}f_j + u(z),
\qquad
u(z)\in \mathfrak b(\mathcal{O}_{U_i}),
\end{equation}
where \(\alpha_j\in \mathfrak h^*\) are the simple positive roots of \(\mathfrak{sl}_n\). Later on, we will refer to 
\(
\mathrm{Op}_{\mathrm{PSL}_n,D}(C)_{\boldsymbol{\check\lambda}}
\)
as \emph{opers with apparent singularities of type \(\boldsymbol{\check\lambda}\)}.
\end{definition}

The relation between the two moduli spaces is given by the following lemma.
\begin{lemma}[\cite{Fr05} Lemma 1.2]\label{lem:global-apparent-mono}
There are natural inclusions
\[
\mathrm{Op}_{\mathrm{PSL}_n,D}(C)_{\boldsymbol{\check\lambda}}
\subset
\mathrm{Op}_{\mathrm{PSL}_n,D}^{\mathrm{RS}}(C)_{\boldsymbol{\check\lambda}}
\subset
\mathrm{Op}_{\mathrm{PSL}_n}(C\setminus D).
\]
Moreover,
\(
\mathrm{Op}_{\mathrm{PSL}_n,D}(C)_{\boldsymbol{\check\lambda}}
\)
is precisely the subspace of
\(
\mathrm{Op}_{\mathrm{PSL}_n,D}^{\mathrm{RS}}(C)_{\boldsymbol{\check\lambda}}
\)
consisting of opers with trivial local monodromy around each point of \(D\).
\end{lemma}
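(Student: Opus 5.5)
The plan is to prove both inclusions by an explicit local gauge transformation, and then to prove the characterization by passing to the scalar differential operator description of \Cref{p:oper equ d.o.}. Everything is local near each $p_i\in D$, so I fix a coordinate $z$ centered at $p_i$ and write $\check\lambda=\check\lambda_i$.

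\emph{First inclusion.} Start from the form \eqref{eq:global-apparent-oper}. I apply the gauge transformation by $\check\lambda(z)=z^{\check\lambda}$. It is a well-defined element of the loop group of $\mathrm{PSL}_n$ on the punctured disc, because $\check\lambda$ is an integral coweight and the adjoint group has the full coweight lattice. The relevant effects are:
\begin{itemize}
\item $\mathrm{Ad}(z^{\check\lambda})f_j=z^{-\langle\alpha_j,\check\lambda\rangle}f_j$, which cancels the factors $z^{\langle\alpha_j,\check\lambda\rangle}$;
\item $\mathrm{Ad}(z^{\check\lambda})$ multiplies each positive root vector by the nonnegative power $z^{\langle\alpha,\check\lambda\rangle}$, so $u(z)$ remains holomorphic and $\mathfrak b$-valued;
\item the derivative of the gauge transformation contributes $-\check\lambda/z$.
\end{itemize}
The result is $\partial_z+p_{-1}-\check\lambda/z+\tilde u(z)$ with $\tilde u$ holomorphic and $\mathfrak b$-valued. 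A second gauge transformation by $z^{\check\rho}$ turns $p_{-1}$ into $z^{-1}p_{-1}$ and adds $-\check\rho/z$. This gives the regular singular form $\partial_z+\frac1z(p_{-1}+v(z))$ with residue conjugate to $-\check\lambda-\check\rho$, after an $N$-gauge to Drinfeld--Sokolov form via \Cref{lem:DS-gauge}. The second inclusion is simply restriction to $C\setminus D$.

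\emph{Trivial monodromy for apparent opers.} The same computation shows that an apparent oper is gauge equivalent, by the single-valued meromorphic gauge transformation $z^{\check\lambda}$, to the holomorphic connection $\partial_z+p_{-1}+\tilde u$. Hence its local monodromy around $p_i$ is trivial.

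\emph{Converse.} Take $\nabla\in\mathrm{Op}^{\mathrm{RS}}_{\mathrm{PSL}_n,D}(C)_{\check\lambda}$ with trivial local monodromy. By \Cref{p:oper equ d.o.} it corresponds to a Fuchsian scalar operator $\mathbf D$ near $z=0$. The residue $-\check\lambda-\check\rho$ determines its local exponents: they are distinct integers $m_1<\dots<m_n$, with gaps $m_{j+1}-m_j=\langle\alpha_j,\check\lambda+\check\rho\rangle=\langle\alpha_j,\check\lambda\rangle+1$. In general, integer exponent differences allow logarithmic solutions, and trivial monodromy is exactly what excludes them. So we get a basis of single-valued solutions $y_k=z^{m_k}(1+O(z))$. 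Read projectively, this basis gives a holomorphic curve $z\mapsto[y_1:\dots:y_n]$ in $\mathbb P^{n-1}$, and its osculating flag extends holomorphically across $z=0$. The oper flag is the dual of this osculating flag. The vanishing orders at $0$ of the generalized Wronskians, i.e.\ the ramification indices of the curve, are $m_{j+1}-m_j-1=\langle\alpha_j,\check\lambda\rangle$. Writing $\nabla$ in a frame adapted to the extended flag then yields exactly the form \eqref{eq:global-apparent-oper}, with $f_j$ multiplied by $z^{\langle\alpha_j,\check\lambda\rangle}$.

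The step I expect to be the main obstacle is this converse. It needs two things done carefully: the equivalence between no logarithms and trivial monodromy, and the bookkeeping that matches the exponent gaps to $\langle\alpha_j,\check\lambda\rangle$ rather than to a Weyl-conjugate coweight. For the second point I would use that the residue is only defined in $\mathfrak h/W$, and choose the dominant representative, which orders the exponents increasingly.
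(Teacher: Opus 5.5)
The paper does not prove this lemma; it is quoted from \cite{Fr05}. Your strategy is the standard one. For the forward inclusion you use the gauge transformation $z^{\check\lambda}$ followed by $z^{\check\rho}$. For the converse you use trivial monodromy to extend the oper flag across the puncture and read the type off the vanishing orders of the subdiagonal. The forward direction is correct. Trivial monodromy is even more immediate there, since \eqref{eq:global-apparent-oper} is already a holomorphic connection on the whole disc. To land in $V_{\mathrm{can}}$ you need the regular-singular analogue of \Cref{lem:DS-gauge}, which follows from the same degree-by-degree argument.

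\textbf{The gap.} The converse rests on a false claim: the local exponents are in general not integers. For $\mathbf D_m=\partial^n+Q_2\partial^{n-2}+\cdots+Q_n$, the absence of a $\partial^{n-1}$-term forces $\sum_k m_k=\binom{n}{2}$. With ordered gaps $g_j=m_{j+1}-m_j$ this gives $n\,m_1=-\sum_{j=1}^{n-1}(n-j)(g_j-1)$. With $g_j-1=\langle\alpha_j,\check\lambda\rangle$ (or the reversed order), $m_1\in\mathbb Z$ exactly when $\check\lambda$ lies in the coroot lattice.

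The statement concerns $\mathrm{PSL}_n$ and arbitrary integral coweights, a point you use yourself in the forward direction. So the claim fails precisely in the cases the paper needs. For the operators $\mathbf D_m$ of \Cref{p:trans of diff opers}, the exponents are $-\frac1n+\{0,1,\dots,n-2,n\}$, as the factor $z^{-1/n}$ in \eqref{e:transform rule of mero oper0} shows. Hence there is no basis of single-valued solutions: the local monodromy of the scalar equation is $e^{2\pi i m_1}\,\mathrm{Id}\neq\mathrm{Id}$.

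If ``trivial monodromy'' is read for the scalar equation, the characterization is simply false. No oper of these types would qualify, even though your forward direction shows they have trivial $\mathrm{PSL}_n$-monodromy; the point is that $z^{\check\lambda}$ does not lift to a single-valued $\mathrm{SL}_n$ gauge transformation.

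\textbf{The repair.} Read the hypothesis projectively: the scalar equation has \emph{scalar} local monodromy. The exponents are distinct and pairwise congruent modulo $\mathbb Z$, so the semisimple part of the monodromy is automatically $e^{2\pi i m_1}\mathrm{Id}$. Projective triviality is therefore equivalent to the absence of logarithmic terms, i.e.\ to the existence of Frobenius solutions $y_k=z^{m_k}(1+O(z))$ for every $k$. Then $\tilde y_k:=z^{-m_1}y_k$ are single-valued and holomorphic, and the curve $[y_1:\cdots:y_n]=[\tilde y_1:\cdots:\tilde y_n]$ is unchanged. Your osculating-flag argument then goes through with $\tilde y$ in place of $y$.

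\textbf{Two smaller corrections.} First, the exponent $e_j$ of $z$ in front of $f_j$ is not the vanishing order of a Wronskian but a second difference of such orders. We have $\operatorname{ord}_0(\tilde y\wedge\tilde y'\wedge\cdots\wedge\tilde y^{(j-1)})=\sum_{k\le j}(m_k-m_1-k+1)$, whose second difference is $m_{j+1}-m_j-1$. Second, the units $\psi_j$ in $z^{e_j}\psi_j(z)f_j$ must still be removed by an $H(\mathcal O)$-gauge transformation. This is possible for the adjoint group because the simple roots form a basis of its character lattice.

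\textbf{Avoiding the ordering bookkeeping.} You can bypass the ordering issue you were worried about. Once the flag extends, the oper has the form \eqref{eq:global-apparent-oper} for the dominant coweight $\check\mu$ with $\langle\alpha_j,\check\mu\rangle=e_j$. Your forward computation shows its residue is the class of $-\check\mu-\check\rho$ in $\mathfrak h/W$. The residue is an invariant of the oper on the punctured disc, since it is read off from the indicial roots of $\mathbf D_m$. Since $\check\mu+\check\rho$ and $\check\lambda+\check\rho$ are regular dominant and $W$-conjugate, $\check\mu=\check\lambda$.
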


Finally, the space of meromorphic opers admits the same type of affine description as in the regular case.
\begin{theorem}[\cite{BD05} §C.4.1–C.4.2]\label{thm: singular oper space}
There exists an isomorphism 
\begin{align}\label{e:affine of oper}
\mathrm{Op}_{\mathrm{PSL}_n,D}^{\mathrm{RS}}(C)
  \;\cong\;
  \mathrm{Op}_{\mathrm{PSL}_2,D}^{\mathrm{RS}}(C)\;\times\;
  \bigoplus_{j=3}^n H^0\!\left(C, K_C^j(jD)\right),
\end{align}
where $\mathrm{Op}_{\mathrm{PSL}_2,D}^{\mathrm{RS}}(C)$ is the space of projective connections on $C$
with regular singularities along $D$, hence an $H^0(C,K_C^2(2D))-$torsor. In particular, $\mathrm{Op}_{\mathrm{PSL}_n,D}^{\mathrm{RS}}(C)$ is an affine space modeled on $\bigoplus_{j=2}^n H^0\!\left(C, K_C^j(jD)\right)$.
\end{theorem}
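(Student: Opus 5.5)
The plan is to reduce the statement to the scalar-operator description of \Cref{p:oper equ d.o.} and \Cref{prop: k differential}, applied on $C\setminus D$, and to track pole orders at the points of $D$. On $C\setminus D$, a $\mathrm{PSL}_n$-oper is an equivalence class of operators $\mathbf{D}:K_C^{-\frac{n-1}{2}}\to K_C^{\frac{n+1}{2}}$ of the form \eqref{e:local form for D}, now with $Q_j$ meromorphic along $D$. The first step is to show that the regular-singularity condition \eqref{eq:global-RS-oper} at $p_i$ is equivalent to the Fuchsian condition on $\mathbf{D}$, namely that each $Q_j$ has a pole of order at most $j$ at $p_i$, i.e. $z^n\mathbf{D}$ lies in the algebra generated by $\mathcal{O}$ and $z\partial_z$.

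To prove this equivalence, I would write $\partial_z+\frac1z(p_{-1}+v(z))$ with $v\in V_{\mathrm{can}}(\mathcal{O})$. Then I would gauge by $z^{\check\rho}$ (harmless up to the usual $\mathrm{PSL}_n$ ambiguity) to rescale the entries of the companion-type matrix. Eliminating variables from $\nabla s=0$ as in the proof of \Cref{p:oper equ d.o.} gives a scalar equation $\bigl((z\partial_z)^n+\sum_j a_j(z)(z\partial_z)^{n-j}\bigr)y=0$ with $a_j$ holomorphic. This is exactly the Fuchsian condition on $z^n\mathbf{D}$. Conversely, a Fuchsian scalar operator has a companion system in the variables $(y,z\partial_z y,\dots,(z\partial_z)^{n-1}y)$ of the form $\frac1z(p_{-1}+b(z))$ with $b\in\mathfrak b(\mathcal{O})$. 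The Drinfeld--Sokolov reduction of \Cref{lem:DS-gauge}, carried out for $z\partial_z$ in place of $\partial_z$, brings $b$ into $V_{\mathrm{can}}(\mathcal{O})$. Since this reduction is by holomorphic $N$-valued gauge transformations, the local form \eqref{eq:global-RS-oper} is recovered.

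Next I would transport the Fuchsian condition through the universal differential polynomials $w_k=f_k(Q_2,\dots,Q_k)$. These are weighted-homogeneous, with $Q_j$ and $\partial_z$ of weights $j$ and $1$, as one sees in \eqref{e:W alg relations}. Each $\partial_z$ raises pole order by at most one and each $Q_j$ contributes pole order at most $j$, so every monomial of weight $k$ has pole order at most $k$. Hence $w_k\in H^0(C,K_C^k(kD))$, and likewise $Q_2$ defines a projective connection with double poles along $D$. The relations are unitriangular, $w_k=Q_k+(\text{terms in }Q_2,\dots,Q_{k-1})$, so they can be inverted recursively by polynomials of the same weighted form. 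Therefore, given a projective connection with at most double poles and differentials $w_k\in H^0(K_C^k(kD))$, the reconstructed $Q_k$ again have pole order at most $k$, and by the first step the resulting oper lies in $\mathrm{Op}^{\mathrm{RS}}_{\mathrm{PSL}_n,D}(C)$. This gives the bijection \eqref{e:affine of oper}.

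Finally, the $\mathrm{PSL}_2$ factor is a torsor. Two projective connections with at most double poles differ by a quadratic differential in $H^0(K_C^2(2D))$, because the Schwarzian term cancels. The set is nonempty, since it contains the holomorphic projective connections. The main obstacle is the first step: checking carefully that Drinfeld--Sokolov reduction with respect to $z\partial_z$ preserves holomorphy and that the $z^{\check\rho}$ gauge introduces no spurious poles. This is a weight/grading argument, using that $\mathrm{ad}\,\check\rho$ grades $\mathfrak b$ nonnegatively.
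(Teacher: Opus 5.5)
Your argument is correct. It does not follow the paper, because the paper gives no argument here: the theorem is imported from Beilinson--Drinfeld. There, as in the standard treatment of regular-singular opers, the argument is Lie-theoretic and uniform in $G$. One gauges $\partial_z+\frac1z(p_{-1}+v)$ by $z^{\check\rho}$ into Drinfeld--Sokolov canonical form. Regular singularity then becomes the condition that the $V_{\mathrm{can}}$-component of $\check\rho$-degree $d_j$ has a pole of order at most $d_j+1$. For $\mathfrak{sl}_n$ the values $d_j+1=2,\dots,n$ give exactly $K_C^j(jD)$.

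You instead reduce everything to scalar operators.
\begin{itemize}
\item Locally, you prove an oper version of Fuchs' criterion: $z^n\mathbf D$ lies in the ring $\mathcal O[z\partial_z]$ of operators $\sum_j b_j(z)(z\partial_z)^j$ with $b_j$ holomorphic, i.e.\ $Q_j$ has a pole of order at most $j$. One direction is elimination in the $z\partial_z$-system; the other is Drinfeld--Sokolov reduction over the differential algebra $(\mathcal O,z\partial_z)$.
\item Globally, you transport these pole bounds through the $W_n$-polynomials of \Cref{prop: k differential}, using weighted homogeneity and unitriangularity.
\end{itemize}

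The paper uses the same dictionary in the opposite direction. Right after the theorem it reads off from it that $Q_2$ has at most double poles and $w_k\in H^0(C,K_C^k(kD))$. You take the Fuchs-type criterion as the primitive input and obtain the product decomposition from it.

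Your route is more elementary, and it yields directly the Fuchsian description that \Cref{p:trans of diff opers} and \Cref{p:AP-to-algebraic-v} rely on. The Lie-theoretic route works for every simple $G$ and never needs the explicit structure of the $w_k$.

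Two details deserve a line in a write-up.
\begin{itemize}
\item Weighted homogeneity of every $f_k$ follows from universality together with covariance under $z\mapsto\lambda z$. Inspecting \eqref{e:W alg relations} only covers $k\le 4$.
\item In the local step, the component you eliminate differs from the section on which the canonical $\mathbf D$ acts by a factor $z^{c}u(z)$, with $u$ a unit. This factor comes from $z^{\check\rho}$ and the scalar ambiguity of $\mathrm{PSL}_n$. It is harmless: conjugation by it sends $z\partial_z$ to $z\partial_z-c-zu'/u$, so it preserves $\mathcal O[z\partial_z]$.
\end{itemize}
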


As a meromorphic analogue of \Cref{p:oper equ d.o.} and \Cref{prop: k differential}, each element of \( \mathrm{Op}_{\mathrm{PSL}_n,D}^{\mathrm{RS}}(C) \) corresponds to a differential operator \( \mathbf{D} \). Locally, \(\mathbf{D}\) takes the form \eqref{e:local form for D}, where
where \(12 Q_2/n(n^2-1)\) defines a meromorphic projective connection on \( C \) with at most a second-order pole at \( D \), while for each \(k\ge 3\), the corresponding differential \(w_k\), obtained as in \eqref{e:W alg relations}, lies in \(H^0(C,K_C^k(kD))\).

\section{Lagrangian subvarieties and Lagrangian correspondences for Hitchin moduli spaces}

In this section, we formulate and prove our main results concerning the moduli spaces of stable Higgs bundles. As introduced in \Cref{sect-triples}, the main objects of our consideration are triples of the form $(L \overset{i}{\hookrightarrow} E, \phi)$, where $(E, \phi)$ is a Higgs bundle and $L$ is a line subbundle of $E$. 
After introducing the sections $s_i(\phi) \in \Gamma(C,K_C^{n \choose 2} L^{-n} \det(E))$ induced by such triples, we discuss how generic triples of such form are Hecke transforms of Higgs bundles in the Hitchin section. We then prove the Lagrangian correspondence results in \Cref{intro-main-result-lag-cor-Higgs} and as a corollary construct Lagrangians $\mathbb{L}_H(D)$ in the moduli spaces of Higgs bundles by fixing divisors $D$ on $C$.

\subsection{Higgs bundles with line subbundles}
Given a triple \((i:L\hookrightarrow E,\phi)\) with \((E,\phi)\in \mathcal M_H(n,\ell)\), for each \(j\ge 0\) consider the morphism
\[
\phi^j\circ i:L\xrightarrow{i}E\xrightarrow{\phi}E\otimes K_C\xrightarrow{\phi\otimes \mathbbm 1}\cdots \longrightarrow E\otimes K_C^j.
\]
Define an \(\mathcal O_C\)-linear morphism
\begin{subequations}\label{eqn-def-s-i-phi}
\begin{equation}
s_i(\phi)=i \wedge (\phi \circ i) \wedge \cdots \wedge (\phi^{n-1} \circ i):L^{n}\longrightarrow \det(E)K_C^{\binom{n}{2}},
\end{equation}
by
\begin{equation}
\sigma_1\otimes\cdots\otimes \sigma_n\longmapsto \det\bigl(i(\sigma_1),\phi\circ i(\sigma_2),\phi^2\circ i(\sigma_3),\dots,\phi^{n-1}\circ i(\sigma_n)\bigr),
\end{equation}
\end{subequations}
for local sections \(\sigma_k\in \Gamma(U,L)\). If \(s_i(\phi)\not\equiv 0\), we define
\[
D_i(\phi):=\operatorname{div}\bigl(s_i(\phi)\bigr).
\]
By the spectral correspondence, \((E,\phi)\) determines spectral data \((\widetilde C,\mathcal L)\). In the rest of this section we always assume that \(\widetilde C\) is smooth, and write \(\pi:\widetilde C\to C\). Consider the composition
\begin{equation}\label{eqn-composition-tilde-D}
f:\pi^*L\xrightarrow{\pi^*i}\pi^*E\longrightarrow \mathcal L.
\end{equation}
If \(f\not\equiv 0\), we define an effective divisor \(\widetilde D_i(\phi)\) on \(\widetilde C\) (possibly empty) by the short exact sequence
\begin{equation}\label{e: recons spectral line bundle}
0\to \pi^*L\xrightarrow{\,f\,}\mathcal L\to \mathcal L|_{\widetilde D_i(\phi)}\to 0,
\qquad\text{equivalently}\qquad
\mathcal L\cong \pi^*L\bigl(\widetilde D_i(\phi)\bigr).
\end{equation}

\begin{lemma}\label{l:section not zero}
Given a triple \((i:L\hookrightarrow E,\phi)\) with \((E,\phi)\in \mathcal M_H(n,\ell)\), assume that the corresponding spectral curve \(\widetilde C\) is smooth. Then the composition \(f:\pi^*L\to \mathcal L\) is never identically zero. Equivalently, \(s_i(\phi)\not\equiv 0\).
\end{lemma}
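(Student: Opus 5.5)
The plan is to prove both assertions at once: first show that $f\not\equiv 0$, and then show that $f\not\equiv 0$ is equivalent to $s_i(\phi)\not\equiv 0$. Both steps work at the generic point of $C$, where the smoothness (hence integrality) of $\widetilde C$ lets us diagonalize $\phi$.

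\textbf{Step 1: $f\not\equiv 0$.} Since $\widetilde C$ is smooth and connected, it is integral. Hence $\mathcal L$ is torsion-free on $\widetilde C$, and $\pi_*\mathcal L = E$ is torsion-free on $C$. By the spectral correspondence the map $\pi^*E\to\mathcal L$ is adjoint to the identity $E\to\pi_*\mathcal L$. So $f=0$ would mean that the adjoint composite $L\to E\xrightarrow{\ \mathrm{id}\ }\pi_*\mathcal L$ vanishes, i.e.\ $i=0$. This is impossible, since $i$ is an injection of a line bundle. Therefore $f\not\equiv 0$.

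\textbf{Step 2: equivalence with $s_i(\phi)\not\equiv 0$.} Choose a point $x\in C$ outside the branch locus of $\pi$. Over a small neighbourhood $U$ of $x$, the cover $\pi^{-1}(U)$ splits into $n$ sheets $U_1,\dots,U_n$, on which the eigenvalues $\lambda_1,\dots,\lambda_n$ are distinct. Correspondingly, $E|_U=\bigoplus_k \pi_*(\mathcal L|_{U_k})$ decomposes into eigenlines $E_k$ with $\phi|_{E_k}=\lambda_k$. The restriction of $f$ to the sheet $U_k$ is the projection $L\to E_k$. Write $i(\sigma)=\sum_k a_k e_k$ in a local eigenframe. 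Then
\[
s_i(\phi)(\sigma^{\otimes n})=\det\bigl(\lambda_k^{\,j}a_k\bigr)_{k,j}=\Bigl(\prod_k a_k\Bigr)\prod_{k<l}(\lambda_l-\lambda_k).
\]
The Vandermonde factor is nonzero on $U$, so $s_i(\phi)\not\equiv 0$ if and only if every $a_k\not\equiv 0$, i.e.\ if and only if $f$ is not identically zero on any sheet. Because $\widetilde C$ is connected, it is irreducible, so a section $f$ of $\mathcal L\otimes\pi^*L^{-1}$ that vanishes on one sheet vanishes on an open set and hence everywhere. Consequently, $f\not\equiv 0$ forces all the $a_k$ to be nonzero, and Step 1 then gives $s_i(\phi)\not\equiv 0$.

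\textbf{Main obstacle.} The point requiring care is the connectedness of the smooth spectral curve, which is what upgrades ``$f$ is nonzero somewhere'' to ``$f$ is nonzero on every sheet''. I would justify it as follows. $\widetilde C$ is an ample divisor in the class $n[C_0]$ on the compactification $\mathbb P(\mathcal O\oplus K_C)$, and by the standard Lefschetz/Hitchin argument ample effective divisors are connected; a smooth connected curve is then irreducible. The same conclusion also follows from the fibre $h^{-1}(b)\cong\operatorname{Pic}(\widetilde C)$ being connected, which the spectral correspondence recalled earlier already presupposes.
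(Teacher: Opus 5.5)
Your proof is correct, but it takes a different route from the paper's. The paper argues pointwise. At an unbranched $p$, if $f$ vanished at all $n$ preimages, then $L|_p$ would lie in every $\operatorname{Im}(\phi-\lambda_{p_i})|_p=\bigoplus_{j\neq i}\mathbb C e_j$, and the common intersection of these spaces is $0$.

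You instead obtain $f\not\equiv 0$ globally by adjunction: under $E\cong\pi_*\mathcal L$, the map $f$ is adjoint to $i$. This is the same fact the paper uses later to recover $i$ from $\widetilde D_i(\phi)$. You then prove the ``equivalently'' clause explicitly via the local factorization $s_i(\phi)=\bigl(\prod_k a_k\bigr)\prod_{k<l}(\lambda_l-\lambda_k)$.

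Each route has something to offer. The paper's is more elementary and needs no global input. It even gives the stronger local statement that $f$ is nonzero somewhere on every unramified fibre. However, it only yields that \emph{some} $a_k(p)\neq 0$, whereas $s_i(\phi)\not\equiv 0$ requires \emph{every} $a_k\not\equiv 0$. Your argument makes explicit that this passage needs irreducibility of $\widetilde C$. The paper leaves that point implicit, although it is also what makes $f$ injective in the exact sequence used in the next proposition.

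One correction concerns your justification of connectedness. $\widetilde C\sim nC_0$ is disjoint from the section at infinity of $\mathbb P(\mathcal O\oplus K_C)$, so it is nef and big but not ample. Appealing to connectedness of the Hitchin fibre is also somewhat circular. The clean argument is
\[
h^0(\widetilde C,\mathcal O_{\widetilde C})=h^0(C,\pi_*\mathcal O_{\widetilde C})=\sum_{j=0}^{n-1}h^0(C,K_C^{-j})=1,
\]
which holds since $g\ge 2$. It uses the decomposition $\pi_*\mathcal O_{\widetilde C}\cong\bigoplus_{j=0}^{n-1}K_C^{-j}$ that the paper itself invokes.
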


\begin{proof}
It suffices to show that for any \(p\in C\) outside the branch locus of \(\pi\), the restriction \(f|_{\pi^{-1}(p)}\) is nonzero. Write \(\pi^{-1}(p)=\{p_1,\dots,p_n\}\), and let \(\lambda_{p_i}\) be the eigenvalue of \(\phi|_p\) corresponding to \(p_i\). Since \(p\) is not a branch point, we have a decomposition
\(
E|_p=\bigoplus_{i=1}^n \mathbb C e_i,
\)
where \(e_i\) is an eigenvector with eigenvalue \(\lambda_{p_i}\). Suppose by contradiction that \(f|_{\pi^{-1}(p)}=0\). Then for each \(i\), the image of \(L|_p\) in \(\mathcal L|_{p_i}\) vanishes, equivalently
\[
L|_p\subset \operatorname{Im}\bigl((\phi-\lambda_{p_i}\mathrm{id})|_p\bigr)=\bigoplus_{j\ne i}\mathbb C e_j \subset E|_p.
\]
Since this holds for every \(i\), we get
\[
L|_p\subset \bigcap_{i=1}^n \bigoplus_{j\ne i}\mathbb C e_j=0,
\]
a contradiction because \(L|_p\) is one-dimensional.
\end{proof}

\begin{proposition}\label{prop-Nm(D)}
Given a triple \((i:L\hookrightarrow E,\phi)\) with \((E,\phi)\in \mathcal M_H(n,\ell)\), assume that the corresponding spectral curve \(\widetilde C\) is smooth. Then
\[
\pi_\ast \bigl(\widetilde D_i(\phi)\bigr)=D_i(\phi).
\]
Conversely, the embedding \(i:L\hookrightarrow E\) can be recovered from \(\widetilde D_i(\phi)\).
\end{proposition}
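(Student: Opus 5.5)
The plan is to compare two line bundle maps, the norm of $f$ and the section $s_i(\phi)$, locally over $C$. Then the converse direction follows from adjunction.

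\emph{Norm identity.} Since $\widetilde D_i(\phi)=\operatorname{div}(f)$ with $f\in H^0(\widetilde C,\mathcal L\otimes\pi^*L^{-1})$, we have $\pi_*\widetilde D_i(\phi)=\operatorname{div}(\operatorname{Nm}f)$. Here $\operatorname{Nm}f$ is a section of $\operatorname{Nm}(\mathcal L)\otimes L^{-n}$. Recall the standard isomorphism $\det(\pi_*\mathcal L)\cong\operatorname{Nm}(\mathcal L)\otimes\det(\pi_*\mathcal O_{\widetilde C})$, together with $\pi_*\mathcal O_{\widetilde C}\cong\bigoplus_{j=0}^{n-1}K_C^{-j}$. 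These give $\operatorname{Nm}(\mathcal L)\cong\det(E)K_C^{\binom n2}$, so $\operatorname{Nm}f$ and $s_i(\phi)$ are sections of the same line bundle. It then suffices to show they agree up to a nowhere vanishing function. I would check this locally.

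\emph{Local comparison.} Work on a small disc $U\subset C$ with coordinate $z$. Trivialize $K_C$ by $dz$, trivialize $L$ by a section $\sigma$, and set $v=i(\sigma)$. Over $U$ we have $\pi_*\mathcal O_{\widetilde C}=\mathcal O_U[\lambda]/(\chi(\lambda))$ with basis $1,\lambda,\dots,\lambda^{n-1}$. Also $E=\pi_*\mathcal L$ as a module over this algebra, with $\lambda$ acting by $\phi$. The section $f$ is the image of $v$, viewed as a section of $\mathcal L$ on $\pi^{-1}(U)$. Multiplication by $f$ defines an $\mathcal O_U[\lambda]/(\chi)$-linear map
\[
\pi_*\mathcal O_{\widetilde C}\to\pi_*\mathcal L=E,\qquad \lambda^j\mapsto\phi^j v .
\]
For the finite flat map $\pi$, the norm of $f$ equals the determinant of this map, up to the unit coming from the chosen trivializations. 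In the bases above, that determinant is exactly $\det(v,\phi v,\dots,\phi^{n-1}v)$, which is $s_i(\phi)$. Hence the two divisors agree. Alternatively, one can argue pointwise as in \Cref{l:section not zero}. Away from branch points, write $v=\sum c_k e_k$ in the eigenbasis. The Vandermonde identity gives $\det(v,\phi v,\dots)=\prod_k c_k\cdot\prod_{k<l}(\lambda_l-\lambda_k)$, and $c_k$ is the value of $f$ at $p_k$. The Vandermonde factor is a local generator of the chosen trivialization, so the divisors agree off the branch locus. The key step is to make this work at branch points, and I expect it to be the main obstacle. The determinant formulation of the norm handles it uniformly, but one must verify carefully that the identification $\operatorname{Nm}\mathcal L\cong\det E\otimes K_C^{\binom n2}$ carries $\operatorname{Nm}f$ to $s_i$ without introducing zeros. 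This holds because $\{\lambda^j\}$ is a genuine $\mathcal O_U$-basis of $\pi_*\mathcal O_{\widetilde C}$.

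\emph{Recovering $i$.} Given $\widetilde D=\widetilde D_i(\phi)$, we have $\mathcal L\cong\pi^*L(\widetilde D)$, so $L$ is determined up to isomorphism. This requires $\pi^*$ to be injective on Picard groups, which I would argue from $\operatorname{Nm}\circ\pi^*=n$ combined with the fact that $\mathcal L(-\widetilde D)$ is specified as a pullback. More directly, the canonical section $1_{\widetilde D}$ gives $\pi^*L\hookrightarrow\mathcal L$. By adjunction this corresponds to a map $L\to\pi_*\mathcal L=E$, and this map is $i$. Since $f$ and $i$ correspond under the adjunction $\pi^*\dashv\pi_*$, the embedding $i$ is determined by $\widetilde D$ up to scalar, that is, up to automorphisms of $L$.
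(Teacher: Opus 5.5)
Your proposal is correct and follows essentially the paper's route: the paper pushes forward $0\to\pi^*L\xrightarrow{f}\mathcal L\to\mathcal L|_{\widetilde D_i(\phi)}\to 0$, identifies $\pi_*f$ with $\bigoplus_j\phi^j\circ i$ so that $\det(\pi_*f)=s_i(\phi)$, and reads off $\pi_*\widetilde D_i(\phi)$ as the divisor of the torsion cokernel. This is the same determinant computation you phrase via $\operatorname{Nm}f$. The paper then recovers $i$ by restricting $\pi_*f$ to the summand $L\subset\pi_*\pi^*L$, which is exactly your adjunction argument.

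One correction to a side remark. The identity $\operatorname{Nm}\circ\pi^*=$ (multiplication by $n$) only shows $\ker\pi^*\subset\operatorname{Jac}(C)[n]$, so it does not give injectivity. Injectivity of $\pi^*$ on $\operatorname{Pic}$ instead follows from $h^0(\widetilde C,\pi^*N)=\sum_{j=0}^{n-1}h^0(C,NK_C^{-j})$. You do not actually need it, since $L$ is part of the given triple.
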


\begin{proof}
By \Cref{l:section not zero}, \eqref{e: recons spectral line bundle} always holds. Pushing forward along \(\pi:\widetilde C\to C\), we obtain
\[
0\to \pi_*\pi^*L\xrightarrow{\,\pi_*f\,}\pi_*\mathcal L\to \pi_*\bigl(\mathcal L|_{\widetilde D_i(\phi)}\bigr)\to 0.
\]
By the projection formula and the spectral correspondence,
\[
\pi_*\pi^*L\cong L\otimes \pi_*\mathcal O_{\widetilde C}\cong L\otimes \bigoplus_{j=0}^{n-1}K_C^{-j},
\qquad
\pi_*\mathcal L\cong E.
\]
Under these identifications, \(\pi_*f\) is precisely \(\bigoplus_{j=0}^{n-1}\phi^j\circ i\). Hence \(\det(\pi_*f)=s_i(\phi)\), so its vanishing divisor is \(D_i(\phi)\). On the other hand, the cokernel is the torsion sheaf \(\pi_*(\mathcal L|_{\widetilde D_i(\phi)})\), whose associated divisor on \(C\) is \(\pi_\ast(\widetilde D_i(\phi))\). Therefore
\(
\pi_\ast \bigl(\widetilde D_i(\phi)\bigr) = D_i(\phi).
\)
Finally, \(\widetilde D_i(\phi)\) determines the inclusion
\(
\pi^*L\cong \mathcal L(-\widetilde D_i(\phi))\hookrightarrow \mathcal L.
\)
Pushing forward gives \(\pi_*f:\pi_*\pi^*L\to \pi_*\mathcal L=E\), and restricting \(\pi_*f\) to the first direct summand \(L\subset \pi_*\pi^*L\) recovers the original embedding \(i:L\hookrightarrow E\).
\end{proof}

\paragraph{Filtrations induced by triples}
Given a triple \((i:L\hookrightarrow E,\phi)\), together with the divisor \(D_i(\phi)\), recall that for each \(j\ge 0\) the morphism \(\phi^j\circ i\) can be viewed as
\(
\phi^j\circ i:L K_C^{-j}\longrightarrow E.
\)
For \(j\ge 1\), define
\[
L_j:=\operatorname{Im}(\phi^{j-1}\circ i)\subset E,
\]
which is a rank-one subsheaf of \(E\), with \(L_1=L\). Then consider the subsheaves
\[
\mathcal F_j:=\operatorname{Span}_{\mathcal O_C}\{L_1,\dots,L_j\}\subset E,\qquad j=1,\dots,n.
\]
By construction, each \(\mathcal F_j\) is a locally free \(\mathcal O_C\)-submodule of \(E\) of rank \(j\), since away from the divisor \(D_i(\phi)\) the subsheaves \(L_1,\dots,L_j\) are linearly independent. It follows that \(\mathcal F_j\) splits as
\begin{equation}\label{split-subbundle}
\mathcal F_j=\bigoplus_{s=1}^j L_s\cong \bigoplus_{s=1}^j LK_C^{-s+1}.
\end{equation}
Here the second isomorphism follows from the fact that \(\phi\) induces an \(\mathcal O_C\)-isomorphism
\begin{equation}\label{eqn-induced-graded-morphism}
\phi:L_s\xrightarrow{\ \cong\ }L_{s+1}K_C.
\end{equation}
Indeed, surjectivity is immediate from the definition of \(L_{s+1}\), while injectivity follows because \(\ker(\phi|_{L_s})\) is supported on \(D_i(\phi)\); hence \(\phi|_{L_s}\) is injective in the sheaf-theoretic sense.

Therefore, starting from a triple \((i:L\hookrightarrow E,\phi)\), we obtain an increasing filtration of subsheaves compatible with \(\phi\),
\begin{equation}\label{eqn-sheaves-filtration Higgs}
0=\mathcal F_0\subset \mathcal F_1\subset \cdots \subset \mathcal F_n\subset E.
\end{equation}

\medskip
\begin{proposition}\label{prop-filtration-Higgs}
Given a triple $(i:L \hookrightarrow E, \phi)$ with $D=D_i(\phi)$, over $C-D$ we have a full filtration of subbundles,
$$
0 \subset \mathcal{F}_1\mid_{C-D} \subset \mathcal{F}_2\mid_{C-D} \subset \dots \subset \mathcal{F}_n\mid_{C-D}=E\mid_{C-D}. 
$$
When $D$ is a reduced divisor, each \(\mathcal{F}_i\) with \(i<n\) is a rank-\(i\) subbundle \(E_i \subset E\), 
and one has a full filtration of subbundles for $E$, 
\begin{align}\label{filtration-Higgs}
0 \subset i(L) = E_1 \subset E_2 \subset \dots \subset E_{n-1} \subset E. 
\end{align}
For $1 \leq j \leq n-1$, $E_j$
are split bundles according to \eqref{split-subbundle}.
The Higgs field $\phi$ is compatible with the filtration \eqref{filtration-Higgs}, i.e. $\phi(E_j) \subset E_{j+1}\otimes K_C$.
There exists local frames adapted to \eqref{filtration-Higgs} in which $\phi$ is gauge equivalent to the form
\begin{equation} \label{eqn-Higgs-gauged}
    \phi = \begin{pmatrix}
        &0  &\ast &\dots &\ast &\ast &\ast  \\
        &1 &0 &\dots &\ast &\ast &\ast \\
        &0 &1 &\dots &\ast &\ast &\ast \\
        &\vdots  &\vdots &\dots &\vdots  &\vdots &\vdots \\
        &0  &0 &\dots &0 &\ast &\ast \\
        &0  &0 &\dots &1 &-a_n &\ast \\
        &0  &0 &\dots &0 &s_i(\phi) &a_n 
    \end{pmatrix}.
\end{equation}
In addition, for $p < D$, we have $a_n(p)$ as the eigenvalue 
of $\phi\mid_p$ that determines $\widetilde{D}_i(\phi)$ in \eqref{e: recons spectral line bundle}.
\end{proposition}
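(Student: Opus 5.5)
Away from $D$, the plan is to work pointwise and then patch. At $p\notin D$ the section $s_i(\phi)$ does not vanish. This means the vectors $i(\sigma),\phi i(\sigma),\dots,\phi^{n-1}i(\sigma)$ are linearly independent in $E|_p$ for a local generator $\sigma$ of $L$. Hence $\mathcal F_j|_p$ has dimension $j$, and the inclusion $\mathcal F_j\to E$ is fibrewise injective on $C-D$. So each $\mathcal F_j|_{C-D}$ is a subbundle, and $\mathcal F_n|_{C-D}=E|_{C-D}$. Compatibility $\phi(\mathcal F_j)\subset \mathcal F_{j+1}\otimes K_C$ holds everywhere by construction, since $\phi(L_s)=L_{s+1}K_C$ by \eqref{eqn-induced-graded-morphism}.

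\textbf{Reduced $D$.} Here I would work locally near $p<D$, with coordinate $z$ and a generator $\sigma$ of $L$. Write $v_k=\phi^{k-1}i(\sigma)$ for $k=1,\dots,n$. Then $v_1\wedge\dots\wedge v_n=z\cdot u$, where $u$ is a unit times a frame of $\det E$, because $D$ is reduced.

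The key claim is that $v_1,\dots,v_{n-1}$ are linearly independent at $p$. I would argue by contradiction. Suppose a relation $\sum_{k<n}c_kv_k(p)=0$ holds with not all $c_k=0$, and let $m\le n-1$ be the largest index with $c_m\neq0$. Then $v_m(p)$ lies in the span of $v_1(p),\dots,v_{m-1}(p)$. Applying $\phi$ and iterating shows that $\mathrm{span}\{v_1(p),\dots,v_{m-1}(p)\}$ is $\phi|_p$-invariant and contains all $v_k(p)$. The step I expect to need most care is showing that this forces the wedge to vanish to order at least $2$ at $p$. The plan is to Taylor-expand, differentiating the relation in $z$. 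The order-one term of $v_1\wedge\dots\wedge v_n$ is a sum of wedges, each with one $v_k$ replaced by $\partial_z v_k(p)$. Each such wedge still contains at least two vectors from an $(m-1)$-dimensional invariant space together with the other $n-m$ images. The dimension count fails by at least two, so every term vanishes. This contradicts reducedness.

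Therefore $\mathcal F_j$ is a subbundle $E_j$ for $j\le n-1$, and the splitting \eqref{split-subbundle} then holds as bundles. This yields the filtration \eqref{filtration-Higgs}.

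\textbf{Normal form.} Near $p<D$, take the frame $e_k=v_k$ for $k\le n-1$, and complete it by any $e_n$ with $E=E_{n-1}\oplus\langle e_n\rangle$ locally. In this frame $\phi e_k=e_{k+1}$ for $k\le n-2$. This gives the subdiagonal $1$'s, and the zero diagonal for those columns.

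Next expand $\phi e_{n-1}=v_n=\sum_{k<n}b_ke_k+c\,e_n$. Comparing $v_1\wedge\cdots\wedge v_n$ with $e_1\wedge\cdots\wedge e_n$ gives $c=s_i(\phi)$ up to the normalization of $e_n$ by a unit. Modifying $e_{n-1}$ and $e_n$ by an upper-triangular gauge that preserves the filtration, I can arrange the $(n-1,n-1)$ entry to be $-a_n$, where $a_n$ is the $(n,n)$ entry. This uses tracelessness, or in the $\mathrm{GL}$ case a shift absorbed into the $\ast$'s. This yields \eqref{eqn-Higgs-gauged}.

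\textbf{Eigenvalue at $p$.} At $z=0$ the last row of $\phi$ is $(0,\dots,0,a_n(p))$. So $e_n^\ast$ is a left eigenvector with eigenvalue $a_n(p)$, and $E_{n-1}|_p$ is $\phi|_p$-invariant. Hence $L|_p\subset E_{n-1}|_p\subset\mathrm{Im}(\phi-a_n(p))|_p$. By the argument of \Cref{l:section not zero}, this means $f$ vanishes at the point of $\pi^{-1}(p)$ with eigenvalue $a_n(p)$. Combined with \Cref{prop-Nm(D)}, which says that degree one at $p$ accounts for a single point, $\widetilde D_i(\phi)$ above $p$ is exactly the point $(p,a_n(p))$.
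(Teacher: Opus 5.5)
Your proposal is correct and follows essentially the same route as the paper. A dependence among $v_1(p),\dots,v_{n-1}(p)$ propagates under $\phi|_p$ and forces $s_i(\phi)$ to vanish to order at least $2$ at $p$; your rank-$\le n-2$ argument is a self-contained version of the paper's order-$\ge n-m+1$ claim. The normal form comes from the frame $e_k=\phi^{k-1}i(\sigma)$ built from the summands $L_s$, and the eigenvalue statement rests on $\operatorname{Im}(\phi|_p-a_n(p))=E_{n-1}|_p$.

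One small repair. The left eigenvector $e_n^\ast$ only gives $\operatorname{Im}(\phi|_p-a_n(p))\subseteq E_{n-1}|_p$, so the reverse inclusion you use needs this image to have dimension $n-1$. That holds because $\operatorname{coker}(\phi|_p-a_n(p))$ is the fibre of the line bundle $\mathcal L$ at $(p,a_n(p))\in\widetilde C$, which is the paper's ``rank comparison''. Also, for traceless $\phi$ the $(n-1,n-1)$ entry is already $-a_n$ by the trace, so no extra gauge is needed.
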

\begin{proof}
The first statement is true by construction. For the second statement, let \(I(p)\) be the smallest index \(i\) such that 
\(\dim \mathcal{F}_i\mid_p < i\). We claim that \(I(p) = n\), for all $p<D$. This implies that the filtration \eqref{filtration-Higgs} 
is indeed a filtration by subbundles.

By way of contradiction, assume \(I(p)=m < n\) for some $p$. 
Equivalently, $i \wedge \phi\circ i \wedge \dots \wedge \phi^{m-1}\circ i$ vanishes at $p<D$. 
This implies that $s_i(\phi)$ acquires a zero of order at least $n-m+1$ at $p$, which contradicts the fact that $D$ is reduced.

By construction, in the local frames defined by the line bundle summands of the split bundles $\mathcal{F}_j$, $1 \leq j \leq n-1$, the Higgs field $\phi$ takes the form as in \eqref{split-subbundle}. 

Finally, for \(p < D\), let
\(\widetilde p \in \pi^{-1}(p)\cap\widetilde{D}_i(\phi)\) be the corresponding eigenvalue
\(a_n(p)\) of \(\phi|_p\). Moreover,
\[
\pi^*(L)|_{\tilde p}
\subset
\operatorname{Im}\bigl(\pi^*(\phi)-a_n(p)\bigr)\big|_{\tilde p}.
\]
From the local form \eqref{eqn-Higgs-gauged} and the rank comparison, 
\(\operatorname{Im}\bigl(\pi^*(\phi)-a_n(p)\bigr)\big|_{\tilde p} = \pi^*E_{n-1}|_{\tilde p}\). Therefore, it follows that the two nontrivial diagonal elements in $\phi$ should be $a_n(p)$.

\end{proof}

\begin{remark}
The conclusions of \eqref{filtration-Higgs} and \eqref{eqn-Higgs-gauged} admit a natural generalization to the case where the divisor \(D_i(\phi)\) associated to a triple \((i:L\hookrightarrow E,\phi)\) is non-reduced.
\end{remark}

\subsection{Hecke transform for Higgs bundles}
\paragraph{Hecke transformations}
We now recall the Hecke transformation for vector bundles and Higgs bundles. Let \(c\in C\), and let \(E\) be a holomorphic vector bundle on \(C\). For a subspace
\(
V\subset E|_c,
\)
one defines the Hecke transformation of $E$ at $V$ as follows:

\begin{definition}
The \emph{Hecke transform} $E_V$ of \(E\) at \(V\subset E|_c\) is characterized by the short exact sequence
\[
0 \longrightarrow E_V \longrightarrow E \longrightarrow W \longrightarrow 0.
\]
where \(W:=(E|_c/V)\otimes_{\mathbb C}\mathcal O_c\) is the skyscraper sheaf supported at \(c\) with fiber \(E|_c/V\). 
\end{definition}
It is clear that \(E_V\) is again a vector bundle of the same rank as $E$ , and
\[
\deg(E_V)=\deg(E)-\bigl(\operatorname{rk}(E)-\dim V\bigr).
\]
Moreover, one always has a natural inclusion
\(
E(-c)\subset E_V\subset E.
\)

\begin{example}
If \(E=F\oplus G\) and \(V=F|_c\subset E|_c\), then
\(
E_V \cong F\oplus G(-c).
\)
\end{example}

Let \((E,\phi)\) be a Higgs bundle on \(C\), with \(V\subset E|_c\)
be a \(\phi_c\)-invariant subspace. Then \(\phi_c\) induces an endomorphism on the quotient
\[
\overline{\phi}_c:W\longrightarrow W\otimes K_C|_c.
\]

\begin{definition}
The \emph{Hecke transform} of the Higgs bundle \((E,\phi)\) at an \(\phi_c\)-invariant subspace
\(V\subset E|_c\) is the unique Higgs bundle \((E_V,\phi_V)\) making the diagram
\[
\begin{tikzcd}
0 \arrow[r] & E_V \arrow[r] \arrow[d,"\phi_V"'] & E \arrow[r,] \arrow[d,"\phi"] & W_c \arrow[r] \arrow[d,"\overline{\phi}_c"] & 0 \\
0 \arrow[r] & E_V\otimes K_C \arrow[r] & E\otimes K_C \arrow[r] & W_c\otimes K_C \arrow[r] & 0
\end{tikzcd}
\]
commutative. 
\end{definition}

By construction, the underlying bundle of the Hecke transform at $V$ of $(E, \phi)$ is the Hecke transform \(E_V\). An important feature is that the Hecke transform of Higgs bundles preserves the characteristic polynomial. Since \(\phi_V\) and \(\phi\) agree away from the point \(c\), their characteristic polynomials coincide globally.

\paragraph{Basic properties of Hecke transforms.}
Let \((E,\phi)\) be a rank-\(n\) Higgs bundle equipped with a full filtration
\begin{equation}\label{filtration}
0=E_0\subset E_1\subset \cdots \subset E_n=E
\end{equation}
which is compatible with the Higgs field, in the sense that
\begin{equation}\label{compatability}
\phi(E_i)\subset E_{i+1}\otimes K_C,
\qquad 0\le i\le n-1.
\end{equation}
For each \(1\le i\le n-1\), the Higgs field induces a morphism between the successive quotients
\begin{equation}\label{morphism-c-i}
c_i:{E_i}/{E_{i-1}}
\longrightarrow
{E_{i+1}}/{E_i}\otimes K_C.
\end{equation}
Fix a point \(p\in C\) such that all fiber maps \((c_i)_p\) are nonzero.

The following result shows that the Hecke transform of a filtered Higgs bundle again carries a natural compatible filtration.

\begin{proposition}[{\cite[Proposition~4.5]{HH22}}]\label{HH-Hecke-prop}
Let \((E,\phi)\) be a Higgs bundle with a full compatible filtration
\eqref{filtration}--\eqref{compatability}, and let \(p\in C\) be a point at which all morphisms
\eqref{morphism-c-i} are nonvanishing.
Let \(V\subset E|_p\) be a \(k\)-dimensional eigenspace of \(\phi|_p\).
Then the Hecke transform
\(
(E',\phi') \coloneqq (E_V, \phi_V)
\)
carries an induced full filtration
\begin{equation}\label{induced-filtration}
0=E_0'\subset E_1'\subset \cdots \subset E_n'=E',
\end{equation}
where, for \(1\le i\le n-1\), \(E_i'\) is the Hecke transform of \(E_i\) at
\(
V_i:=E_i|_p\cap V\subset E_i|_p.
\)
Equivalently, for each \(i\) there is a commutative diagram with exact rows
\[
\begin{tikzcd}
0 \arrow{r} & E_i' \arrow{r} \arrow[hook]{d} & E_i \arrow{r} \arrow[hook]{d}
& E_i|_p/V_i \arrow{r} \arrow[hook]{d} & 0 \\
0 \arrow{r} & E_{i+1}' \arrow{r} & E_{i+1} \arrow{r}
& E_{i+1}|_p/V_{i+1} \arrow{r} & 0 .
\end{tikzcd}
\]
The induced filtration \eqref{induced-filtration} is compatible with \(\phi'\).
Moreover, if \(c_i'\) denotes the morphism induced by \(\phi'\) on the successive quotients of
\eqref{induced-filtration}, then \( c_i'=c_i, \text{ for } i\neq n-k,\)
while \( c_{n-k}'=s_p\,c_{n-k},\)
where \(s_p\) is the canonical section of \(\mathcal O_C(p)\) vanishing simply at \(p\).
Finally, the Hecke transform of \((E',\phi')\) at the subspace \(E'_{n-k}|_p\subset E'|_p\) is
canonically isomorphic to
\(
(E,\phi)\otimes \mathcal O_C(-p).
\)
\end{proposition}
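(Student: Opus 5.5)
We work locally near $p$, where the statement is purely local: outside $p$ the Hecke transform changes nothing, so $c_i'=c_i$ there. We choose a local coordinate $z$ centred at $p$ and a local frame $e_1,\dots,e_n$ of $E$ adapted to the filtration, with $E_i=\langle e_1,\dots,e_i\rangle$. In this frame $\phi$ is upper Hessenberg: it has upper triangular entries, and the subdiagonal entries $(c_i)$ are nonvanishing at $p$. We may rescale the frame so that these subdiagonal entries equal $1$, as in \eqref{eqn-oper-like}.

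\textbf{Locating the eigenspace.} Let $\mu$ be the eigenvalue of $\phi|_p$ whose eigenspace is $V$. Since $\phi|_p-\mu$ is upper Hessenberg with nonzero subdiagonal, its rank is at least $n-1$. Hence every eigenspace of $\phi|_p$ is one-dimensional, so in fact $k=1$. The statement is phrased for general $k$; if geometric multiplicity greater than one were intended, the nonvanishing hypothesis would exclude it, and I would note this. Write $V=\mathbb C v$. I expect the key fact to be $V\cap E_{n-1}|_p=0$. Indeed, if $v\in E_{n-1}|_p$, then the Hessenberg shape gives that $(\phi-\mu)v$ has $e_n$-component equal to $c_{n-1}$ times the $e_{n-1}$-coefficient of $v$. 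That coefficient must therefore vanish, and descending inductively forces $v=0$. So $V_i=0$ for $i\le n-1$ and $V_n=V$. Consequently $E_i'=E_i(-p)$ for $i<n$ (the Hecke transform at the zero subspace), while $E_n'=E_V\supset E_{n-1}(-p)$. This is consistent with the claimed formula, since $n-k=n-1$.

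\textbf{Compatibility and the induced maps.} Since $\phi'$ is the restriction of $\phi$, compatibility $\phi'(E_i')\subset E_{i+1}'K_C$ is immediate for $i\le n-2$, because $E_i(-p)$ maps into $E_{i+1}(-p)K_C$. For $i=n-1$ the target is all of $E'K_C$, so there is nothing to check. The quotients for $i\le n-2$ are $(E_i/E_{i-1})(-p)$, and the induced map there is $c_i$ tensored with the identity of $\mathcal O(-p)$, so $c_i'=c_i$ under the canonical identification. For the last step, $E_n'/E_{n-1}'=E_V/E_{n-1}(-p)$. The composite $E_n'/E_{n-1}'\to E_n/E_{n-1}$ has cokernel $E|_p/(V+E_{n-1}|_p)=0$ on the fibre, but it is a map of line bundles of degrees differing by one. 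Concretely, $E_V$ is spanned locally by $z e_1,\dots,z e_{n-1}$ and a lift $\tilde v$ of $v$ whose $e_n$-coefficient is a unit. So $E_n'/E_{n-1}'\cong E_n/E_{n-1}$, whereas $E_{n-1}'/E_{n-2}'=(E_{n-1}/E_{n-2})(-p)$. The map $c_{n-1}'$ is $c_{n-1}$ viewed as a map from $(E_{n-1}/E_{n-2})(-p)$ to $(E_n/E_{n-1})K_C$, which is $s_p\,c_{n-1}$. The main obstacle is keeping these canonical identifications straight so that the factor $s_p$ lands exactly on $c_{n-k}$; I would handle it by writing everything in the frame $\{ze_1,\dots,ze_{n-1},\tilde v\}$.

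\textbf{Inverse Hecke transform.} Finally, consider $E'$ and the subspace $E'_{n-1}|_p$, which is $\phi'$-invariant because $\phi'(E_{n-1}')\subset E'K_C$ and $\phi'|_p$ preserves it: by the shape in the new frame, the subdiagonal entry $s_p c_{n-1}$ vanishes at $p$. The Hecke transform $E''$ consists of sections of $E_V$ whose value at $p$ lies in $E_{n-1}(-p)|_p$. In the frame this is spanned by $ze_1,\dots,ze_{n-1},z\tilde v$, which equals $zE=E(-p)$ because $\tilde v\equiv v$ is independent of $E_{n-1}|_p$. The Higgs field is the restriction of $\phi$ throughout, giving the canonical isomorphism $(E'',\phi'')\cong(E,\phi)\otimes\mathcal O_C(-p)$.
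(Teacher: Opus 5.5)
The paper does not prove this proposition; it quotes it from \cite{HH22}, so your argument has to stand on its own. For the statement read literally, it does. With all $c_i(p)\neq 0$ the endomorphism $\phi|_p$ is regular, so every eigenspace is a line. For $k=1$ your frame $\{ze_1,\dots,ze_{n-1},\tilde v\}$ correctly gives $E_i'=E_i(-p)$ for $i<n$, $E_n'/E_{n-1}'\cong E_n/E_{n-1}$, $c_{n-1}'=s_p\,c_{n-1}$, and $E''=E(-p)$. Delete the clause ``a map of line bundles of degrees differing by one'': that map is an isomorphism, as you show in the next sentence. The paper's own use in \Cref{l:Hecke-triple-Hitchin} involves only eigenlines at distinct points, so your case is what the paper actually needs.

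However, this literal reading makes the proposition degenerate, and you have not proved the version it is built for. The parameter $k$ and the index $n-k$ only carry content if $V$ is an arbitrary $k$-dimensional $\phi|_p$-invariant subspace. Examples are a sum of eigenlines at an unramified point, which is how the paper describes invariant subspaces in the proof of \Cref{l:Hecke-triple-Hitchin}, or a generalized eigenspace at a branch point. In that setting your key step $V\cap E_{n-1}|_p=0$ is false for $k\ge 2$, since $\dim(V\cap E_{n-1}|_p)\ge k-1$.

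What replaces it is $V\cap E_{n-k}|_p=0$, by a Krylov argument. Suppose $0\neq v\in V\cap E_j|_p$ with $v\notin E_{j-1}|_p$. Because the $c_i(p)$ are nonzero, $v,\phi_p v,\dots,\phi_p^{\,n-j}v$ have nonzero images in $(E_j/E_{j-1})|_p,\dots,(E_n/E_{n-1})|_p$ respectively. They are therefore $n-j+1$ independent vectors of $V$, which forces $j\ge n-k+1$.

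A dimension count then finishes the intersection computation. The map $V_i\to (E_i/E_{n-k})|_p$ is injective, and $\dim V_i\ge k+i-n$, so $\dim V_i=\max(0,\,i-n+k)$. Consequently $E_i'=E_i(-p)$ for $i\le n-k$. For $i>n-k$ the map $E_i'/E_{i-1}'\to E_i/E_{i-1}$ is injective and surjective on the fibre at $p$ (because $\dim V_i/V_{i-1}=1$), hence an isomorphism. Compatibility follows from $\phi_p(V_i)\subset V\cap E_{i+1}|_p=V_{i+1}$.

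Now take the local frame $\{ze_1,\dots,ze_{n-k},\tilde v_1,\dots,\tilde v_k\}$ of $E_V$, where the $\tilde v_j$ lift a basis of $V$; this is a frame because $E|_p=E_{n-k}|_p\oplus V$. With it, your computations go through verbatim: $c_{n-k}'=s_p\,c_{n-k}$, $c_i'=c_i$ otherwise, and the Hecke transform at $E'_{n-k}|_p$ is $E(-p)$.
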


We now apply this general result to the filtration arising from a triple.
By Proposition~\ref{prop-filtration-Higgs}, a triple
\(
(i:L\hookrightarrow E,\phi)
\)
with divisor \(D_i(\phi)\) determines a full compatible filtration of \(E\).
This filtration can be obtained from the standard split
filtration on an element in the Hitchin section by successive Hecke transforms at the eigenlines
prescribed by the divisor \(\widetilde D_i(\phi)\) as we explain now.

\begin{definition}
Fix a line bundle \(L\). Define the map
\[
\mathfrak s_{L}:\bigoplus_{j=2}^n H^0(C,K_C^j)\longrightarrow \mathcal M_H(n,\ell), \quad \text{with }  h \circ \mathfrak s_{L}  =\operatorname{id}
\]
by sending \((a_2,\dots,a_n)\) to the Higgs bundle \((E_L,\phi_L)\), where
\[
E_L=\bigoplus_{j=0}^{n-1} L \otimes K_C^{-j},
\qquad
\phi_L=
\begin{pmatrix}
0 & 0 & \cdots & 0 & a_n\\
1 & 0 & \cdots & 0 & a_{n-1}\\
0 & 1 & \ddots & \vdots & \vdots\\
\vdots & \vdots & \ddots & 0 & a_2\\
0 & 0 & \cdots & 1 & 0
\end{pmatrix}.
\]
We denote by
\(
\mathrm{Hit}_L:=\operatorname{Im}(\mathfrak s_L)\subset \mathcal M_H(n,\ell)
\)
the Hitchin section determined by \(L\).
\end{definition}

\begin{proposition}\label{l:Hecke-triple-Hitchin}
Let \((i:L\hookrightarrow E,\phi)\) be a triple with associated divisor 
\(
\widetilde D_i(\phi)=\sum_{k=1}^d \widetilde p_k
\)
on the smooth spectral curve \(\pi:\widetilde C\to C\) associated to \((E,\phi)\), and assume that \(\widetilde D_i(\phi)\) is disjoint from the ramification divisor of \(\pi\). 
Let 
\(
D=\pi_\ast \bigl(\widetilde D_i(\phi)\bigr)=\sum_{k=1}^d p_k.
\)
Then \((E,\phi)\) is obtained from \(\mathfrak s_{L(D)}(\widetilde C)\), which is the Higgs bundle in the Hitchin section with spectral curve \(\widetilde C\), by successive lower Hecke transforms at the one-dimensional eigenspaces  
\[
V_k\subset E_{L(D)}|_{p_k},\qquad k=1,\dots,d,
\]
determined by the points \(\widetilde p_k\in \pi^{-1}(p_k)\). Moreover, the resulting filtration coincides with \eqref{filtration-Higgs}. Equivalently, \((E,\phi)\) is obtained from \(\mathfrak s_L(\widetilde C)\) by successive upper Hecke transforms along the corresponding hyperplanes.
\end{proposition}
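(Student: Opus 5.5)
The cleanest route is through the spectral correspondence, since all objects involved have the same smooth spectral curve $\widetilde C$. First I will identify the spectral line bundle of a Hitchin-section Higgs bundle. The composition $\pi^*L\to\pi^*E_L\to\mathcal L_{\mathfrak s_L(\widetilde C)}$ comes from the first summand $L\subset E_L=\bigoplus_j LK_C^{-j}$. The companion-matrix form of $\phi_L$ shows that $i_0\wedge\phi_L i_0\wedge\cdots\wedge\phi_L^{n-1}i_0$ is the identity on $L^n\cong\det(E_L)K_C^{\binom n2}$. So $s_{i_0}(\phi_L)$ is nowhere vanishing, and \Cref{prop-Nm(D)} gives $\widetilde D_{i_0}=\varnothing$, that is, $\mathcal L_{\mathfrak s_L(\widetilde C)}\cong\pi^*L$.

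Similarly, $\mathcal L_{\mathfrak s_{L(D)}(\widetilde C)}\cong\pi^*L(\pi^*D)=\pi^*L\otimes\mathcal O_{\widetilde C}(\sum_k\pi^{-1}(p_k))$. By \eqref{e: recons spectral line bundle}, the given triple has $\mathcal L_{(E,\phi)}\cong\pi^*L(\widetilde D_i(\phi))$. We therefore need the effect of Hecke transforms on spectral data.

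The key local fact concerns a point $p$ that is not a branch point, with $V\subset E|_p$ the eigenline of $\phi_p$ for the eigenvalue corresponding to $\widetilde p\in\pi^{-1}(p)$. Then $\mathcal O_{\widetilde C}$-module theory gives the following. A $\phi_p$-invariant quotient $E|_p\to W$ is the same as a quotient of $\mathcal L|_{\pi^{-1}(p)}=\bigoplus_{q\in\pi^{-1}(p)}\mathcal L|_q$ supported on a set of sheets. The lower Hecke transform at $V$ kills all sheets except $\widetilde p$, so its spectral sheaf is $\mathcal L(-\pi^{-1}(p)+\widetilde p)$. Dually, the upper Hecke transform along the invariant hyperplane $\bigoplus_{q\neq\widetilde p}(\text{eigenlines})$ has spectral sheaf $\mathcal L(\widetilde p)$.

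The proof then runs as follows.

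\textbf{Lower transforms.} Apply the lower transforms at $V_k$, $k=1,\dots,d$, to $\mathfrak s_{L(D)}(\widetilde C)$. This produces spectral line bundle $\pi^*L(\pi^*D)\otimes\mathcal O(-\pi^*D+\widetilde D)=\pi^*L(\widetilde D)$, which is $\mathcal L_{(E,\phi)}$. If several $\widetilde p_k$ lie over the same $p$, I treat each point separately and do the transforms successively; this is allowed because the eigenlines at distinct non-branch points stay eigenlines after transforming. By the spectral correspondence the result is isomorphic to $(E,\phi)$.

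\textbf{Upper transforms.} The statement via upper transforms from $\mathfrak s_L(\widetilde C)$ is the same computation with $\pi^*L\mapsto\pi^*L(\widetilde D)$. Alternatively, it follows from the last clause of \Cref{HH-Hecke-prop}, which relates lower transforms and the twist by $\mathcal O(-p)$.

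\textbf{Filtrations.} I expect the main obstacle to be matching the filtrations, not just the isomorphism classes. The plan is to track the distinguished subbundle. The inclusion $\pi^*L\to\mathcal L$ is unchanged away from $\widetilde D$, so the image of the first summand $L$ of $\mathfrak s_L(\widetilde C)$, pushed through the upper transforms, is a line subbundle $L\hookrightarrow E$. Its spectral map $\pi^*L\to\pi^*L(\widetilde D)$ is the canonical one, hence it vanishes exactly along $\widetilde D$. By the recovery part of \Cref{prop-Nm(D)}, this subbundle coincides with $i$.

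The filtration \eqref{filtration-Higgs} is generated by $i$ and $\phi$ through the subsheaves $\mathcal F_j$. The filtration induced by \Cref{HH-Hecke-prop} is compatible with $\phi$ and has first step $i(L)$. Both are saturations, and over $C-D$ the Hecke transforms are isomorphisms; there the transported standard filtration is $\mathrm{Span}\{L,\phi L,\dots\}$. So the two filtrations agree on $C-D$, and since both consist of saturated subsheaves, they agree everywhere. The hypotheses of \Cref{HH-Hecke-prop}, nonvanishing of the $c_i$ at $p_k$, hold at each step because each transform multiplies only one $c_{n-k}$ by $s_p$ at the current point, and the points are handled one at a time.
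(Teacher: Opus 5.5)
Your argument is correct and is essentially the paper's proof. Both rest on the spectral correspondence, the fact that a Hecke transform at a $\phi_c$-invariant subspace over a non-branch point twists the spectral line bundle by a divisor in the fibre, and the identification $\mathcal L\cong\pi^*L(\widetilde D_i(\phi))$ from \eqref{e: recons spectral line bundle}. Your version is more explicit than the paper's: your formula $\mathcal L(-\pi^{-1}(p)+\widetilde p)$ for the lower transform at the eigenline is the precise form of the paper's ``$\mathcal L(-D)$'' (with $D$ read as the points corresponding to $E|_c/V$), and your saturation argument supplies the filtration comparison the paper leaves implicit---though it, like your claim that the hypotheses of \Cref{HH-Hecke-prop} persist at each step, needs the $p_k$ to be distinct, which is the setting of \Cref{prop-filtration-Higgs} where \eqref{filtration-Higgs} is defined.
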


\begin{proof}
    The proposition follows from the following general fact. Given \((E,\phi)\) with smooth spectral curve
\(
\pi:\widetilde C\to C,
\)
let \(\mathcal L\) be the spectral line bundle corresponding to \((E,\phi)\). At a point \(c\in C\) away from the branch locus, choosing a \(\phi_c\)-invariant subspace
\(
V\subset E|_c
\)
amounts to choosing a sum of eigenspaces of \(\phi_c\), equivalently an effective divisor
\(
D\subset \pi^{-1}(c)
\)
on the spectral curve. Under the spectral correspondence, the Hecke transform of \((E,\phi)\) at \(V\) corresponds to changing the spectral line bundle to
\(
\mathcal L(-D)
\)
on \(\widetilde C\). Therefore, together with \eqref{e: recons spectral line bundle}, the result follows.
\end{proof}

\subsection{Lagrangian subvarieties in Hitchin moduli spaces}

Consider the moduli spaces
$\mathcal{M}_{H}(n, \ell)$ and $\mathcal{M}_H(n, \Lambda)$ 
of $\mathrm{GL}_n$-Higgs bundles $(E, \phi)$ with $\deg(E) = \ell$ 
and $\mathrm{SL}_n$-Higgs bundles $(E, \phi)$ with $\det(E) = \Lambda$ respectively.
We define subvarieties of these moduli spaces 
by fixing the divisors $D_i(\phi)$ constructed in \eqref{eqn-def-s-i-phi} and studying triples with divisor $D_i(\phi)$.

\begin{definition}\label{def: Lag in Higgs}
Given an integer $\ell$, an effective divisor $D$ and a line bundle $L$ with
\begin{equation}\label{eqn-GLn-constraint-d}
\deg(D) = \ell + n(n-1)(g-1) - n \deg(L),
\end{equation}
let \(\mathbb{L}_{{H}}(L, D)\) be the subvariety of \(\mathcal{M}_{H}(n, \ell) \) defined as 
\begin{align}\label{eqn-L_D higgs}
    \mathbb{L}_{H}(L, D) = 
\left\{ (E, \phi) \in \mathcal{M}_{H}(n, \ell) 
\mid \exists i: L \hookrightarrow E, \text{ } D_i(\phi) = D\right\}.
\end{align}

Given a fixed line bundle $\Lambda$, 
an effective divisor $D$ and a line bundle $L$ with
\begin{equation}\label{eqn-SLn-constraint-D}
L^n \cong  \Lambda K_C^{n \choose 2}(-D),    
\end{equation}
let \(\mathbb{L}_{H}(L, D)\) be the subvariety
of \(\mathcal{M}_{H}(n, \Lambda) \) defined in the same way as \eqref{eqn-L_D higgs}.
Let $\mathbb{L}_{H}(D)$ be the union of all $n^{2g}$ such subvarieties, i.e.
\begin{align}
\begin{split}
    \mathbb{L}_{H}(D) &= \bigcup_{L^n \cong  \Lambda K_C^{n \choose 2}(-D)} \mathbb{L}_{H}(L, D) \\
    &= 
\left\{ (E, \phi) \in \mathcal{M}_{H}(n, \Lambda) 
\mid \exists i: L \hookrightarrow E, D_i(\phi) = D\right\}.
\end{split}
\end{align}
\end{definition}

It follows by the preceding definition and \Cref{l:Hecke-triple-Hitchin}, that when \(D=\varnothing\) it holds that 
\(
\mathbb L_H(L,\varnothing)=\mathrm{Hit}_L .
\)
The next proposition shows that \(\mathbb L_H(L,D)\) is related to \(\mathbb L_H(L,\varnothing)\) by upper Hecke transforms along \(D\).

\begin{proposition}\label{p: relation to Hecke}
Assume that \(L\) and the effective divisor \(D\) satisfy the compatibility condition defining \(\mathbb L_H(L,D)\). Then \(\mathbb L_H(L,D)\) is obtained from \(\mathrm{Hit}_L\) by upper Hecke transforms along \(D\). Moreover, there is a natural morphism
\[
\mathfrak r_D:=\mathfrak s_L\circ h|_{\mathbb L_H(L,D)}:\mathbb L_H(L,D)\longrightarrow \mathbb L_H(L,\varnothing)=\mathrm{Hit}_L,
\qquad
(\widetilde C,\mathcal L)\longmapsto \mathfrak s_L(\widetilde C),
\]
and \(\mathfrak r_D\) is finite.
\end{proposition}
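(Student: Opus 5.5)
The plan is to work on the locus of smooth spectral curves and reduce everything to the spectral correspondence, using \Cref{prop-Nm(D)} and \Cref{l:Hecke-triple-Hitchin}.

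\textbf{Step 1 (Hecke description).} Take $(E,\phi)\in\mathbb L_H(L,D)$ with smooth spectral curve $\widetilde C$ and spectral line bundle $\mathcal L$. By \eqref{e: recons spectral line bundle} we have $\mathcal L\cong\pi^\ast L(\widetilde D)$ with $\widetilde D=\widetilde D_i(\phi)$ effective. By \Cref{prop-Nm(D)}, $\pi_\ast\widetilde D=D$. The Hitchin section point $\mathfrak s_L(\widetilde C)$ has spectral line bundle $\pi^\ast L$: its underlying bundle is $\bigoplus_j LK_C^{-j}\cong\pi_\ast\pi^\ast L$, and the inclusion of the first summand gives $D_i=\varnothing$. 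Hence $\mathcal L$ arises from $\pi^\ast L$ by adding $\widetilde D$, i.e.\ by upper Hecke transforms at the eigenlines indexed by the points of $\widetilde D$ lying over $D$. The Hecke step in the proof of \Cref{l:Hecke-triple-Hitchin} does exactly this on spectral line bundles, and it is valid even when $\widetilde D$ meets the ramification locus, since at the spectral level it is just twisting by $\mathcal O_{\widetilde C}(\widetilde D)$.

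\textbf{Step 2 (converse).} Conversely, let $\widetilde D$ be any effective divisor on $\widetilde C$ with $\pi_\ast\widetilde D=D$. The inclusion $\pi^\ast L\hookrightarrow\pi^\ast L(\widetilde D)$ pushes forward to a map $\pi_\ast\pi^\ast L\to E$. Restricting it to the summand $L$ gives $i\colon L\to E$ with $D_i(\phi)=\pi_\ast\widetilde D=D$, by the determinant computation in \Cref{prop-Nm(D)}. One also needs $i$ to be a subbundle; this is the point I would check most carefully. It holds because $\mathcal O_{\widetilde C}(-\widetilde D)$ does not contain $\pi^\ast\mathcal O_C(-p)$ for any $p$, so the map is nonvanishing on fibers. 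Degrees match by \eqref{eqn-GLn-constraint-d}. Together, Steps 1 and 2 show that $\mathbb L_H(L,D)$ is exactly the set of Hecke transforms of $\mathrm{Hit}_L$ along lifts of $D$.

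\textbf{Step 3 (the map $\mathfrak r_D$ and finiteness).} The Hitchin map is preserved by Hecke transforms, and $h\circ\mathfrak s_L=\mathrm{id}$. So $\mathfrak r_D=\mathfrak s_L\circ h$ is a morphism, being a composition of morphisms, and it sends $(\widetilde C,\mathcal L)$ to $\mathfrak s_L(\widetilde C)$. For finiteness I will show quasi-finiteness plus properness.
\begin{itemize}
\item \emph{Fibers.} The fiber over $\mathfrak s_L(\widetilde C)$ is in bijection with the possible $\mathcal L=\pi^\ast L(\widetilde D)$, and these are indexed by effective $\widetilde D\le\pi^\ast D$ with $\pi_\ast\widetilde D=D$. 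There are finitely many such divisors, at most $n^{\deg D}$.
\item \emph{Properness.} I expect this to be the main obstacle. The plan is to realize $\mathbb L_H(L,D)$ as the image of the closed, finite-over-base incidence scheme
\[
\{(b,\widetilde D): \widetilde D\subset C_b,\ \widetilde D\le\pi^\ast D,\ \deg\widetilde D=\deg D\}.
\]
This is a relative Hilbert scheme of the finite flat scheme $\pi^{-1}(D)\subset\mathrm{Tot}(K_C)$ over $B$, so it is finite over $B$. Its map to $\mathcal M_H(n,\ell)$ is given by the relative Hecke construction, and the composite with $h$ is finite.
\end{itemize}
Then $\mathfrak r_D$ is finite, since $h|_{\mathbb L_H(L,D)}$ is finite onto its image and $\mathfrak s_L$ is a closed embedding. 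This also requires checking stability of the Hecke transforms; I would restrict to integral spectral curves, where stability is automatic.
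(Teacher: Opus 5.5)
Your Step 1 and the fiber count in Step 3 are exactly the paper's proof. The Hecke description comes from $\mathcal L\cong\pi^\ast L(\widetilde D)$ and \Cref{l:Hecke-triple-Hitchin}, and the fiber of $\mathfrak r_D$ over $\mathfrak s_L(\widetilde C)$ injects into the finite set of $\mathcal O_{\widetilde C}(\widetilde D)$ with $\widetilde D\ge 0$ and $\pi_\ast\widetilde D=D$. That finiteness of fibers, on the smooth-spectral-curve locus, is all the paper proves.

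The gap is in the properness you add, and it starts in Step 2. The map $i$ vanishes at $p$ exactly when $f$ vanishes on the scheme-theoretic fiber, i.e.\ when $\widetilde D\ge\pi^\ast p$; your containment is written backwards. The condition $\pi_\ast\widetilde D=D$ excludes $\widetilde D\ge\pi^\ast p$ only when every point of $D$ has multiplicity $<n$. For example, take $n=2$, $D\ge 2p$, and a smooth spectral curve $C_{b_0}$ branched over $p$, so $\pi^{-1}(p)=\{q\}$. Any triple over $b_0$ with $D_i(\phi)=D$ then has $\widetilde D_i(\phi)\ge 2q=\pi^\ast p$, so $i$ vanishes at $p$. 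Hence $\mathbb L_H(L,D)\cap h^{-1}(b_0)=\varnothing$, and your claim that $\mathbb L_H(L,D)$ is exactly the set of Hecke transforms along lifts of $D$ is false.

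This breaks properness itself, not just your proof of it. Take $b_t\to b_0$ with $p$ unbranched over $b_t$. The points $(C_{b_t},\pi^\ast L(2q_1(t)+\widetilde D_0(t)))$ lie in $\mathbb L_H(L,D)$. They converge in $\mathcal M_H$ to $(C_{b_0},\pi^\ast L(2q+\widetilde D_0(0)))$, which is not in $\mathbb L_H(L,D)$. So $\mathbb L_H(L,D)\cap h^{-1}(B_{\mathrm{reg}})$ is a non-closed subset of the image of your incidence scheme, and $\mathfrak r_D$ is quasi-finite but not proper. For such $D$, finite fibers is all that holds.

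The properness bullet has two further defects.
\begin{itemize}
\item The incidence condition must be $\pi_\ast\widetilde D=D$. The conditions $\widetilde D\le\pi^\ast D$ and $\deg\widetilde D=\deg D$ also admit lifts of other divisors, e.g.\ of $2p$ when $D=p+p'$.
\item The relative Hilbert scheme of $\pi^{-1}(D)$ is finite over $B_{\mathrm{reg}}$ but not over $B$ when $D$ is non-reduced. For $n=2$, take a node of an integral $C_b$ lying over a point $p$ with $\mathrm{mult}_pD=2$. The length-$2$ subschemes there form a $\mathbb P^1$, and they give a positive-dimensional fiber of $\mathfrak r_D$. So restricting to integral spectral curves is not enough.
\end{itemize}

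What your route does prove, beyond the paper, is the case where all multiplicities of $D$ are $<n$ (e.g.\ $D$ reduced). There Step 2 holds, and $\mathbb L_H(L,D)\cap h^{-1}(B_{\mathrm{reg}})$ is the closed image of a scheme finite over $B_{\mathrm{reg}}$. Hence $\mathfrak r_D$ is genuinely finite over $\mathfrak s_L(B_{\mathrm{reg}})$.
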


\begin{proof}
The Hecke transform statement follows directly from \Cref{l:Hecke-triple-Hitchin}. On the other hand, by \eqref{e: recons spectral line bundle} and \Cref{prop-Nm(D)}, the fiber of \(\mathfrak r_D\) over the Hitchin section \(\mathfrak s_L(\widetilde C)\) is naturally identified with
\[
\left\{
\mathcal O_{\widetilde C}(\widetilde D)\in \operatorname{Pic}(\widetilde C)\;:\;
\widetilde D \text{ is effective},\ \pi_\ast(\widetilde D)=D
\right\},
\]
which is a finite set. In particular, when \(D\) is reduced,  \( \mathfrak r_D \)
is generically \(n^{\deg (D)}\!:\!1\).
\end{proof}

\begin{corollary}\label{c: non-empty and half dim Higgs}
Let \(G=\mathrm{GL}_n\) or \(\mathrm{SL}_n\), and let \(D\) be an effective divisor on \(C\) such that \(L\) and \(D\) satisfy the compatibility condition defining \(\mathbb L_H(L,D)\). Then \(\mathbb L_H(L,D)\subset \mathcal M_H(G)\) is non-empty and has dimension
\[
\dim \mathbb L_H(L,D)=\frac{1}{2}\dim \mathcal M_H(G).
\]
\end{corollary}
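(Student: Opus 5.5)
The plan is to realize $\mathbb L_H(L,D)$, over the open locus of smooth spectral curves, as a finite cover of an open subset of the Hitchin base. This will give non-emptiness and the dimension at the same time.

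\emph{Non-emptiness.} Let $B_{\mathrm{reg}}$ denote the base of the relevant Hitchin fibration (for $\mathrm{SL}_n$, the traceless part $\bigoplus_{j=2}^n H^0(C,K_C^j)$). Pick any $b\in B_{\mathrm{reg}}$ with spectral curve $\pi:\widetilde C\to C$. Choose an effective divisor $\widetilde D$ on $\widetilde C$ with $\pi_\ast\widetilde D = D$; for each point $p$ of $D$ with multiplicity $m$, take $m$ points (with multiplicity) of $\pi^{-1}(p)$. Then set
\[
\mathcal L := \pi^\ast L(\widetilde D).
\]
Its degree is $n\deg L + n(n-1)(g-1)\cdot 0 + \deg D$. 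More precisely, $\deg\pi^\ast L = n\deg L$, so $\deg\mathcal L = n\deg L + \deg D$. By \eqref{eqn-GLn-constraint-d} this equals $\ell + n(n-1)(g-1)$, which is exactly the degree of spectral line bundles in $\mathcal M_H(n,\ell)$. In the $\mathrm{SL}_n$ case, $\det\pi_\ast\mathcal L \cong \mathrm{Nm}(\mathcal L)\otimes K_C^{-\binom n2} \cong L^n(D)K_C^{-\binom n2}$, which equals $\Lambda$ by \eqref{eqn-SLn-constraint-D}.

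Let $(E,\phi)=\pi_\ast(\widetilde C,\mathcal L)$. It is stable because $\widetilde C$ is integral. The inclusion $\pi^\ast L\hookrightarrow\mathcal L$ pushes forward to $L\otimes\pi_\ast\mathcal O_{\widetilde C}\to E$. Restricting to the summand $L$ gives $i:L\to E$. By the computation in \Cref{prop-Nm(D)}, $s_i(\phi)=\det(\pi_\ast f)$ has divisor $\pi_\ast\widetilde D=D$. Since $s_i(\phi)\neq 0$, $i$ is injective. To see that $i$ is a subbundle, I would argue as follows: if $i$ vanished at a point $p$, then every $\phi^j\circ i$ would vanish there, so $s_i(\phi)$ would vanish to order $\ge n$ at $p$. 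I would handle this by choosing $\widetilde D$ not to contain a full fiber $\pi^{-1}(p)$. This is always possible when $D$ is reduced. For general $D$, I would check that containing a full fiber is precisely the saturation issue. If it does occur, I would replace $L$ by its saturation, which changes the data; so I would restrict to choices of $\widetilde D$ containing no full fiber. Such choices exist as long as each multiplicity is less than $n$. For larger multiplicities I would instead argue that $f:\pi^\ast L\to\mathcal L$ factoring through $\pi^\ast L(\pi^{-1}p)$ forces $i$ to factor through $L(p)$. So for non-emptiness it suffices to exhibit one good choice, and I would prove that the locus where saturation fails is avoidable. This subbundle/saturation check is the step I expect to be the main obstacle.

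\emph{Dimension.} By \Cref{p: relation to Hecke}, $\mathfrak r_D$ maps $\mathbb L_H(L,D)\cap h^{-1}(B_{\mathrm{reg}})$ to $\mathrm{Hit}_L\cong B$ with finite fibers. By the previous paragraph, its image contains the open dense set $B_{\mathrm{reg}}$. Hence this part has dimension $\dim B=\tfrac12\dim\mathcal M_H(G)$. For the points of $\mathbb L_H(L,D)$ over singular spectral curves, I would show that they lie in the closure of the regular part, or at least that they have dimension at most $\dim B$. To do this, I would use the Hecke description of \Cref{l:Hecke-triple-Hitchin}, in its non-reduced/ramified generalization: the triple is determined by the characteristic polynomial together with finitely many Hecke choices along $D$, parametrized by a proper, finite-dimensional (generically finite) datum over each $b$. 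Thus the whole of $\mathbb L_H(L,D)$ has dimension $\tfrac12\dim\mathcal M_H(G)$.
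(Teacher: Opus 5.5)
Your route is the paper's. Its proof is one line: it deduces the statement from \Cref{p: relation to Hecke} (finiteness of $\mathfrak r_D=\mathfrak s_L\circ h$, with $\mathbb L_H(L,D)$ obtained from $\mathrm{Hit}_L$ by upper Hecke transforms along $D$) and the half-dimensionality of $\mathrm{Hit}_L$. Your explicit construction $\mathcal L=\pi^\ast L(\widetilde D)$ over $b\in B_{\mathrm{reg}}$ is exactly what lies behind non-emptiness and the lower bound, including the degree and determinant checks and stability from integrality of $\widetilde C$. Two things in your write-up need fixing.

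\emph{Saturation is not an obstacle.} Your restriction to multiplicities $<n$ is unnecessary. As you note, $i$ fails to be a subbundle at $p$ exactly when $\widetilde D\ge\pi^\ast p$. Restrict to the open dense set of $b\in B_{\mathrm{reg}}$ for which $\widetilde C_b$ is unramified over $\operatorname{supp}D$. Then put the whole multiplicity $m_p$ on a single point of $\pi^{-1}(p)$. Since $n\ge 2$, such a $\widetilde D$ never contains a full fibre. This gives a saturated $i$ over an open dense subset of $B$, which is all the lower bound needs.

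\emph{The upper bound over singular spectral curves is a genuine gap.} The paper has the same gap: its proof of \Cref{p: relation to Hecke} identifies the fibres of $\mathfrak r_D$ only over smooth spectral curves. Here is what does hold for every $b$. By Cayley--Hamilton, $\bigoplus_j\phi^j\circ i$ identifies $(\mathcal F_n,\phi|_{\mathcal F_n})$ with the companion-matrix Higgs bundle $\mathfrak s_L(b)$. Moreover $\mathcal F_n\subset E\subset\mathcal F_n(D)$, with $E/\mathcal F_n$ of length $m_p$ at each $p$.

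For $D$ reduced, $E$ is then determined by a $\phi$-stable line in $\mathcal F_n(p)|_p$ for each $p\in D$. Since $\phi|_p$ is a companion matrix, it is regular, so there are at most $n$ such lines. Hence the fibres are finite over every $b$, and $\dim\mathbb L_H(L,D)\le\dim B$. This completes your argument in the reduced case.

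For non-reduced $D$, your claim of ``finitely many Hecke choices'' is false. Take $n=2$, $b=0$, $D=2p$, with local frames $e_1$ of $L$ and $e_2=\phi(e_1)$ of $LK_C^{-1}$. Consider the lattices $E_c=\mathcal F_2+\mathcal O\cdot(z^{-2}e_2+cz^{-1}e_1)$ for $c\in\mathbb C$. Each is $\phi$-stable, contains $L$ saturated, and has $D_i(\phi)=2p$. For $c\neq 0$ they sweep out the $\mathbb C^\ast$-orbit of a stable nilpotent Higgs bundle that is not a fixed point. So $\mathfrak r_D$ can have positive-dimensional fibres, and ``generically finite'' says nothing about components lying over the discriminant.

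To finish for non-reduced $D$ you need an extra estimate. For example, it would suffice that the locus of $b$ over which these fibres have dimension $k$ has codimension at least $k$ in $B$. In the example above, the jump requires the quadratic differential $\det\phi$ to vanish to order $2$ at $p$. Neither your proposal nor the paper supplies this estimate.
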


\begin{proof}
    The statement follows from \Cref{p: relation to Hecke}, namely the fact that $\mathfrak r_D$ is finite and the known fact that $\mathrm{Hit}_L$ is a half dimensional subvariety of $\mathcal{M}_{H}(G)$.
\end{proof}

As we show in the next subsection it turns out that 

\begin{theorem}\label{thm: Lag Higgs}
The subvariety $\mathbb{L}_{H}(L,D)\subset \mathcal{M}_{H}(n, \ell)$ is holomorphic Lagrangian with respect to the holomorphic symplectic form $\Omega_H$. The same statement holds for $\mathbb L_H(D)\subset\mathcal M_H(n,\Lambda)$.
\end{theorem}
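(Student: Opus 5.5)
Since \Cref{c: non-empty and half dim Higgs} already gives $\dim \mathbb L_H(L,D)=\tfrac12\dim\mathcal M_H$, it suffices to show that $\Omega_H$ vanishes on the tangent spaces of an open dense subset. By the closure of the isotropic condition, it is then enough to work on the locus $\mathcal M_{H,\mathrm{reg}}$ where the spectral curve $\widetilde C$ is smooth and $\widetilde D_i(\phi)$ avoids the ramification divisor. I plan to use the spectral (action--angle) description of $\Omega_H$ there.

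Over $B_{\mathrm{reg}}$, the map $(E,\phi)\mapsto(\widetilde C,\mathcal L)$ identifies $\mathcal M_{H,\mathrm{reg}}$ with the relative Picard variety. The standard fact (Hitchin; Beauville--Donagi--Markman) is that there $\Omega_H$ is the relative symplectic form. Concretely, for a tangent vector decomposed as $(\dot b,\dot{\mathcal L})$, with $\dot b\in B=\bigoplus H^0(K_C^j)$ and $\dot{\mathcal L}\in H^1(\widetilde C,\mathcal O)$, the pairing is given by the Serre duality pairing between the deformation of $\widetilde C$ (a section of $N_{\widetilde C}\cong K_{\widetilde C}$) and $\dot{\mathcal L}$, antisymmetrized. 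I will use the equivalent symplectic-geometric formulation, namely the abelianization map
\[
\mathrm{Hilb}^N(T^\ast C)\supset\{\text{$N$ points on $\widetilde C$}\}\dashrightarrow \mathcal M_{H},\qquad \widetilde D\mapsto (\widetilde C,\mathcal O_{\widetilde C}(\widetilde D)\otimes\pi^\ast L_0).
\]
Separation of variables shows this is a local symplectomorphism up to a normalization in the regime of \cite{GNR01,DT23}.

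In fact I expect a cleaner direct argument. Write a tangent vector to $\mathbb L_H(L,D)$ as a pair $(\dot b,\dot{\mathcal L})$. Since $L$ is fixed and $\pi_\ast\widetilde D=D$ is fixed, the relation $\mathcal L\cong\pi^\ast L(\widetilde D)$ means that varying $\widetilde C$ moves the points $\widetilde p_k$ only vertically, inside the fibers $T^\ast_{p_k}C$. The variation $\dot{\mathcal L}$ is then $\sum_k \dot{\widetilde p}_k$, viewed as a class in $H^1(\widetilde C,\mathcal O)$ via the coboundary of $\mathcal O(\widetilde D)|_{\widetilde D}$, and these points move vertically because the base points $p_k$ are fixed.

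Plugging into the spectral formula, $\Omega_H(v_1,v_2)$ reduces to a sum over the $\widetilde p_k$ of $\omega_{T^\ast C}(\dot{\widetilde p}_k^{(1)},\dot{\widetilde p}_k^{(2)})$. Each term is $d\lambda\wedge dz$ evaluated on two vertical vectors, which vanishes. Equivalently, the points of $\mathbb L_H(L,D)$ correspond to $\widetilde D$ lying in $\prod_k T^\ast_{p_k}C$, a Lagrangian in $\mathrm{Hilb}^d(T^\ast C)$, and the Hitchin section $\mathrm{Hit}_L$ itself is Lagrangian. The key identity to establish is therefore the formula
\[
\Omega_H\big|_{\mathbb L_H(L,D)} \;=\; \sum_k \big(\text{pullback of }\omega_{T^\ast C}\text{ via }\widetilde p_k\big) \;+\; \mathfrak r_D^\ast\,\Omega_H\big|_{\mathrm{Hit}_L}.
\]

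The main obstacle is proving this identity, namely the compatibility of $\Omega_H$ with Hecke modification at the points $\widetilde p_k$. I would first try to do this via the explicit Dolbeault representatives. A tangent vector to a family $\mathcal L_t=\pi^\ast L(\widetilde D_t)$ over varying $\widetilde C_t$ is represented, on a \v{C}ech cover with small discs around the $\widetilde p_k$, by a cocycle $\dot\lambda/(\lambda-\lambda(\widetilde p_k))$. The Hitchin pairing then becomes a residue computation $\sum_k\mathrm{Res}_{\widetilde p_k}$ of $\dot\lambda^{(1)}\dot\lambda^{(2)}/(\lambda-\lambda_k)\,dz$ minus the same expression with $1\leftrightarrow2$, which is symmetric and hence cancels. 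Here the residues use that $\dot z=0$.

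Finally, for $\mathbb L_H(D)\subset\mathcal M_H(n,\Lambda)$, I note that it is a finite union of the $\mathbb L_H(L,D)$. Each of these lies in the fixed-determinant, traceless locus, and the symplectic form there is the restriction of $\Omega_H$, so isotropy passes to the restriction. Half-dimensionality is again given by \Cref{c: non-empty and half dim Higgs}.
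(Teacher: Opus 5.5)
Your proposal is correct and takes essentially the paper's approach. The paper uses Hurtubise's residue formula \eqref{e:def sym form spectral} to show that $\Omega_H$ on the correspondence $\mathbb{L}_H(L,d)$ equals $-\omega^{[d]}$ evaluated on the variations of $\widetilde D$; it then gets isotropy of $\mathbb{L}_H(L,D)$ because $\pi_\ast^{-1}(D)\subset\mathrm{Hilb}^d(T^\ast C)$ is isotropic (the $\widetilde p_k$ move only vertically), and combines this with \Cref{c: non-empty and half dim Higgs}, so your direct residue computation with $\dot z=0$ is the same argument specialized to fixed $D$.
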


\begin{remark} 
The proof of \Cref{thm: Lag Higgs} actually implies a slight generalization of a result by Hausel-Hitchin which states that Hecke transformations send Lagrangian subvarieties to Lagrangian subvarieties by lifting some generic conditions imposed in \cite[Theorem~4.21]{HH22}. 
Indeed, by \Cref{p: relation to Hecke}, the subvariety \(\mathbb{L}_{H}(L,D)\) is obtained from the Hitchin section \(\mathrm{Hit}_L\) via Hecke transformations, and since \(\mathrm{Hit}_L\) is a holomorphic Lagrangian subvariety so must be \(\mathbb{L}_{H}(L,D)\).
\end{remark}

\subsection{Lagrangian correspondence for Hitchin moduli spaces}
In this subsection we define subvarieties
\(\mathbb{L}_{H}(L,d)\) and \(\mathbb{L}_{H}(d)\) in
\(
\mathrm{Hilb}^d(T^*C) \times \mathcal{M}_{H}(n, \ell)
\)
and
\(
\mathrm{Hilb}^d(T^*C) \times \mathcal{M}_H(n,\Lambda),
\)  
respectively. These subvarieties provide Lagrangian correspondences between $\mathrm{Hilb}^d(T^*C)$ and $\mathcal{M}_{H}(n, \ell)$ and respectively between $\mathrm{Hilb}^d(T^*C)$ and $\mathcal{M}_H(n,\Lambda)$. More precisely,

\begin{definition}\label{def:lag on extended space}
Let $\ell,d \in \mathbb{Z}$ and let $L$ be a line bundle on $C$ satisfying
$$
d = \ell + n(n-1)(g-1) - n\,\deg(L).
$$
We define $\mathbb{L}_H(L,d)$ to be the subvariety of
\(
\operatorname{Hilb}^d(T^\ast C)\times \mathcal{M}_{H}(n, \ell)
\)
consisting of triples $(\widetilde{D},E,\phi)\cong (\widetilde{D},\widetilde{C},\mathcal{L})$ such that
\begin{align}
\mathbb{L}_H(L,d)
:=
\left\{
    (\widetilde{D}, E, \phi)\ \middle|\ 
    \begin{array}{l}
        \widetilde{D}\in \operatorname{Sym}^d(\widetilde{C}) \subset \operatorname{Hilb}^d(T^\ast C),  \\[2pt]
        \exists\, i: L \hookrightarrow E \text{ such that }\mathcal{L} \cong \pi^\ast(L)(\widetilde{D})
    \end{array}
\right\}.
\end{align}
Here we identify $\operatorname{Sym}^d(\widetilde{C})$ with the set of effective divisors of degree $d$ on $\widetilde{C}$, since $\widetilde{C}$ is smooth.

Fix a line bundle $\Lambda$ on $C$, with $\deg(\Lambda)=\ell$. For $d$ satisfying $n \mid d-\ell$, we define $\mathbb{L}_H(d)$ to be the subvariety of
\(
\operatorname{Hilb}^d(T^\ast C)\times \mathcal{M}_H(n,\Lambda)
\)
consisting of triples $(\widetilde{D},E,\phi) \cong (\widetilde{D},\widetilde{C},\mathcal{L}) $ such that
\begin{align}
\mathbb{L}_H(d)
:=
\left\{
    (\widetilde{D}, E, \phi)\ \middle|\ 
    \begin{array}{l}
        \widetilde{D}\in \operatorname{Sym}^d(\widetilde{C}), \exists\, i:L \hookrightarrow E \text{ such that } \\[2pt]
        L^{n} \cong \Lambda K_C^{\binom{n}{2}}\bigl(-\pi_\ast(\widetilde{D})\bigr),  
        \mathcal{L} \cong \pi^\ast(L)(\widetilde{D})
    \end{array}
\right\}.
\end{align}
The map $\pi_\ast:\operatorname{Sym}^d(\widetilde C)\to \operatorname{Sym}^d(C)$, which coincides with the restriction of the norm map to effective divisors, is induced by $\pi:T^\ast C\to C$.
We have also identified $\operatorname{Sym}^d(C)$ with the set of effective divisors of degree $d$ on $C$.
\medskip
Note that if $(\widetilde{D},E,\phi)\in \mathbb{L}_H(L,d)$ (resp.\ $\mathbb{L}_H(d)$), then $(E,\phi)\in \mathbb{L}_H\!\bigl(L,\pi_\ast(\widetilde{D})\bigr)$ (resp.\ $\mathbb{L}_H\!\bigl(\pi_\ast(\widetilde{D})\bigr)$).
\end{definition}

\begin{theorem}\label{thm: Lag corres Higgs}
The subvariety $\mathbb{L}_H(L,d)$ defines a
\emph{Lagrangian correspondence} between
\(
\mathcal{M}_{H}(n, \ell)
\)
and
\(
\mathrm{Hilb}^d(T^*C)
\).
Specifically, $\mathbb{L}_H(L,d)$ is a Lagrangian subvariety of
\(
\mathrm{Hilb}^d(T^*C)\times \mathcal{M}_{H}(n, \ell),
\)
when the ambient space is equipped with the holomorphic symplectic form
\(
\omega_{T^\ast C}^{[d]} \oplus \Omega_H.
\)
The same statement holds with $\mathcal M_H(n,\Lambda)$ in place of $\mathcal M_H(n,\ell)$.
\end{theorem}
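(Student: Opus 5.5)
The plan is to parametrize $\mathbb{L}_H(L,d)$ by spectral data and show that, on it, $\mathrm{pr}_2^\ast\Omega_H$ and $\mathrm{pr}_1^\ast\omega^{[d]}_{T^\ast C}$ cancel (with the sign convention implicit in the product form). Lagrangianity then follows from isotropy plus a dimension count.

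\emph{Step 1: dimension.} By \Cref{def:lag on extended space} and \Cref{prop-Nm(D)}, a point of $\mathbb{L}_H(L,d)$ is determined by a point $b\in B_{\mathrm{reg}}$ together with an effective divisor $\widetilde D\in\operatorname{Sym}^d(C_b)$. Indeed, $\mathcal L=\pi^\ast L(\widetilde D)$ recovers $(E,\phi)$, and $\widetilde D$ recovers $i$. Conversely, any such pair gives a triple, via the inclusion $\pi^\ast L\hookrightarrow \pi^\ast L(\widetilde D)$ pushed forward and restricted to the summand $L$. So $\mathbb{L}_H(L,d)$ is the total space of the relative symmetric product $\operatorname{Sym}^d(\mathcal C/B_{\mathrm{reg}})$. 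Its dimension is
\[
d+\dim B=d+\tfrac12\dim\mathcal M_H(n,\ell)=\tfrac12\dim\bigl(\operatorname{Hilb}^d(T^\ast C)\times\mathcal M_H(n,\ell)\bigr).
\]
It is therefore enough to prove isotropy on the open dense locus where $\widetilde D=\sum_k\widetilde p_k$ is reduced, lies away from the ramification divisor, and has distinct images $p_k=\pi(\widetilde p_k)$. Isotropy then extends to the closure by continuity.

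\emph{Step 2: isotropy via separated variables.} On this locus I take local coordinates $(z_k,\lambda_k)$ for the points $\widetilde p_k\in T^\ast C$, together with the coordinates $b$ on the base. In these coordinates $\mathrm{pr}_1^\ast\omega^{[d]}=\sum_k d\lambda_k\wedge dz_k$, and the constraint is $\lambda_k$ equal to the eigenvalue $a_n(p_k)$ of \Cref{prop-filtration-Higgs}. The map $\mathrm{pr}_2$ factors through \Cref{p: relation to Hecke}. First send $b$ to the Hitchin section $\mathfrak s_L(C_b)\in \mathrm{Hit}_L$. Then apply upper Hecke transforms at the $\widetilde p_k$, which on the Hitchin fiber translates $\mathcal O(\widetilde D_0)$ by the Abel–Jacobi image of $\widetilde D$.

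I would compute $\Omega_H$ on the tangent space using the standard description of $\Omega_H$ in spectral data. For a variation $(\delta b,\delta\mathcal L)$, the pairing combines $\delta b$, viewed as a section of $N_{C_b}\cong K_{C_b}$, with the Čech class $\delta\mathcal L\in H^1(C_b,\mathcal O)$ via Serre duality, antisymmetrized. For $\mathcal L=\pi^\ast L(\widetilde D)$ with $L$ fixed, $\delta\mathcal L$ is the sum of two pieces:
\begin{itemize}
\item the variation of $\pi^\ast L$ along the family of curves, which is the Hitchin section contribution;
\item the Abel–Jacobi term $\sum_k[\delta\widetilde p_k]$.
\end{itemize}

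These pieces contribute as follows. The Hitchin section part gives zero, since $\mathrm{Hit}_L$ is Lagrangian. The Serre duality pairing of $\delta b$ with the Abel–Jacobi term reduces, by residues, to $\sum_k \delta b(\widetilde p_k)\,\delta z_k$. Here $\delta b(\widetilde p_k)$, read as a normal displacement, equals $\delta\lambda_k-\lambda_k'(z_k)\,\delta z_k$ along the curve. The terms $\lambda'\,\delta z\wedge\delta z$ vanish after antisymmetrization. The result is
\[
\mathrm{pr}_2^\ast\Omega_H=\pm\sum_k d\lambda_k\wedge dz_k=\pm\mathrm{pr}_1^\ast\omega^{[d]} ,
\]
which is exactly the isotropy statement for the correspondence, with the sign fixed by the convention for $\omega^{[d]}\oplus\Omega_H$. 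This is the classical Krichever/Hurtubise separation-of-variables identity, here transported through Hecke transforms.

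\emph{Step 3: the $\mathcal M_H(n,\Lambda)$ case.} $\mathbb L_H(d)$ is locally the union over the finitely many $n$-th roots $L$ in \eqref{eqn-SLn-constraint-D}, with $L$ now varying with $\pi_\ast\widetilde D$. The extra variation of $\pi^\ast L$ is pulled back from the variation of $\pi_\ast\widetilde D$ along $\operatorname{Pic}(C)$, and it pairs to zero against the tracefree directions. Both dimensions drop by the same amount ($g$ each), so the same argument goes through.

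\emph{Main obstacle.} The hard step is Step 2, namely the precise identification of $\Omega_H$ with the residue/Serre duality pairing in spectral coordinates, including the cross terms between $\delta b$ and the twist by $\pi^\ast L$. I would first do it by a direct local computation in the gauge \eqref{eqn-Higgs-gauged}. There, a Higgs field deformation supported near $p_k$ realizes the Hecke modification, so $\Omega_H=\int\operatorname{tr}(\alpha\wedge\beta)$ localizes to a residue at $p_k$ producing $\delta a_n(p_k)\wedge\delta z_k$. If that becomes messy, I would instead cite the known symplectic identification of the Hitchin system with $\operatorname{Hilb}^N(T^\ast C)$ (Beauville–Mukai type) for $N=\deg\mathcal L$, taking a divisor $\widetilde D_0$ of $\pi^\ast L$ that is pulled back from $C$.
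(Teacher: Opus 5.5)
Your proposal is correct and follows essentially the same route as the paper. The paper also argues half-dimensionality first; your identification of $\mathbb{L}_H(L,d)$ with an open dense part of $\operatorname{Sym}^d(\mathcal C/B_{\mathrm{reg}})$ makes its ``straightforward dimension count'' explicit. It then proves isotropy via Hurtubise's spectral description of $\Omega_H$, evaluated by residues at the points of $\widetilde D$, where the slope terms cancel under antisymmetrization and $\mathrm{pr}_2^\ast\Omega_H$ becomes $-\mathrm{pr}_1^\ast\omega^{[d]}_{T^\ast C}$.

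Your trace argument for the $\mathcal M_H(n,\Lambda)$ case is a useful addition: the varying $\pi^\ast L$ pairs with $\delta b$ only through $\mathrm{Tr}_\pi$ of the associated differential, which is $-\delta b_1=0$ for traceless Higgs fields. The paper does not give this argument; it simply omits the $\pi^\ast L$ contribution.
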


Before presenting the proof, we provide a description of $\Omega_H$ for $\mathcal{M}_{H}(n, \ell)$ in terms of spectral data based on \cite{Hur96}. 
Recall that for $b \in B$, 
$
h^{-1}(b) \cong \operatorname{Pic}^{\ell + n(n-1)(g-1)}(C_b),
$
where $C_b$ is the associated spectral curve. At a point $(\widetilde{C}, \mathcal{L}) \in \mathcal{M}_{H}(n, \ell)$, we have the short exact sequence:
\begin{equation} \label{e:ses tangent}
  0 \longrightarrow H^1(\widetilde{C}, \mathcal{O}_{\widetilde{C}}) \longrightarrow {T_{ (\widetilde{C}, \mathcal{L})}\mathcal{M}_{H}} \longrightarrow H^0(\widetilde{C}, K_{\widetilde{C}}) \longrightarrow 0.
\end{equation}
Here, the tangent space to the Hitchin base corresponds to first-order deformations of the spectral curve, which are captured by $H^0(\widetilde{C}, N_{\widetilde{C}/\mathrm{Tot}(K_C)})$. Since $\mathrm{Tot}(K_C)$ has trivial canonical bundle, by the adjunction formula we have:
$
N_{\widetilde{C}/\mathrm{Tot}(K_C)} \cong K_{\widetilde{C}}.
$
Moreover, we have:
$$
H^0(\widetilde{C}, K_{\widetilde{C}}) \cong H^0(C, \pi_* K_{\widetilde{C}}) \cong \bigoplus_{i=1}^n H^0(C, K_C^i).
$$
On the other hand, the tangent space to the Jacobian $\operatorname{Pic}(\widetilde{C})$ is given by $H^1(\widetilde{C}, \mathcal{O}_{\widetilde{C}})$.

The two ends of the exact sequence \eqref{e:ses tangent} are naturally dual via Serre duality. Therefore, any splitting of \eqref{e:ses tangent} defines a natural symplectic form as the canonical skew-symmetric form on a vector space and its dual. For simplicity, we specify the  splitting considered in \cite{Hur96}. 
One extends the line bundle $\mathcal{L}$ from $\widetilde{C}$ to a neighborhood of $\widetilde{C}$ inside $\mathrm{Tot}(K_C)$, thereby producing a background bundle that can be restricted to a family of varying curves. This extension induces a splitting of the exact sequence \eqref{e:ses tangent} and defines a symplectic form $\Omega_S$ on $\mathcal{M}_{H}(n, \ell)$. We detail this construction shortly.

To construct the symplectic form $\Omega_S$, we proceed as follows. Recall the projection 
$\pi: \mathrm{Tot}(K_C) \to C.$ Given a spectral data \((\widetilde{C},\mathcal{L}) \in \mathcal{M}_{H}(n, \ell)\), we choose a set of points \(\{p_1, \dots, p_d\} \subset C\) such that over the open subset \(U = C \setminus \{p_1, \dots, p_d\}\), the canonical bundle \(K_C\) is trivial, and the line bundle \(\mathcal{L}\) can be trivialized on the restriction \(\pi^{-1}(U) \cap \widetilde{C}\).

 Since both \(K_C\) and \(\mathcal{L}\) are trivial over this region, we identify \(\pi^* K_C \cong \mathcal{L}\) on \(\pi^{-1}(U) \cap \widetilde{C}\). We then extend \(\mathcal{L}\) to a line bundle on a neighborhood of \(\widetilde{C}\) in \(\pi^{-1}(U) \subset \mathrm{Tot}(K_C)\) by letting it to be the trivial bundle \(\pi^* K_C\) over this region.

Next, for each point \(p_i\), we assume that it isn't in ramification divisor. Choose a small neighborhood \(V_i \subset C\) centered at \(p_i\), and lift it to disjoint open sets \(V_i^j\) in \(\widetilde{C}\), one for each preimage \(p_i^j\) (\(j = 1, \dots, n\)). Let \(\lambda\) be a coordinate on \(V_i\), and let \(z_i\) denote the fiber coordinate on \(\pi^{-1}(V_i)\). We can identify $V_i$ with $V_i^j$, by sending $ \lambda \to (\lambda,z_i^j(\lambda))$.
Let \( f_i^j(\lambda) \) denote the transition function of \( \mathcal{L} \) from \( \pi^{-1}(U) \) to \( V_i^j \).
 Let \(N_i^j\) be small open neighborhoods of \(V_i^j\) in \(\mathrm{Tot}(K_C)\). Using same transition functions \(f_i^j(\lambda)\), we extend \(\mathcal{L}\) to a line bundle on a neighborhood of \(\widetilde{C}\) in \(\mathrm{Tot}(K_C)\) by gluing with these transition functions from $\pi^{-1}(U)$ to $N_i^j$. This gives an extension of \(\mathcal{L}\).

Now consider a two-parameter deformation \((\widetilde{C}(t_1, t_2), \mathcal{L}(t_1, t_2))\) of the spectral data, with \((\widetilde{C}, \mathcal{L}) = (\widetilde{C}(0,0), \mathcal{L}(0,0))\). Denote \((\widetilde{C}, \mathcal{L})_{t_i} :=\partial_{t_i}(\widetilde{C}(t_1,t_2),\mathcal{L}(t_1,t_2))\Big|_{t_1=t_2=0} \in {T_{ (\widetilde{C}, \mathcal{L})}\mathcal{M}_{H}}.\) Then the symplectic form \(\Omega_S\) is given by:

\begin{equation}\label{e:def sym form spectral}
    \Omega_S \left( (\widetilde{C},\mathcal{L})_{t_1}, (\widetilde{C},\mathcal{L})_{t_2}\right) = \sum_i \mathrm{Res}_{p_i} \sum_{j=1}^{n} \left( \partial_{t_1} z_i^j \frac{\partial_{t_2} f_i^j}{f_i^j} - \partial_{t_2} z_i^j \frac{\partial_{t_1} f_i^j}{f_i^j}  \right) \mathrm{d} \lambda.
\end{equation}
The same formula \eqref{e:def sym form spectral} applies to $\mathcal{M}_H(n,\Lambda)$.

\begin{proposition}[\cite{Hur96}, Proposition 4.15]
The symplectic form $\Omega_S$ on $\mathcal{M}_{H}(n, \ell)$ is independent of the choice of extension of $\mathcal{L}$, since any two such splittings of the tangent sequence differ by a self-adjoint map
$
H^0(\widetilde{C}, K_{\widetilde{C}}) \longrightarrow H^1(\widetilde{C}, \mathcal{O}_{\widetilde{C}}),
$
which preserves the skew-symmetry of $\Omega_S$. Moreover, $\Omega_S = \Omega_H$ on $\mathcal{M}_{H}(n, \ell)$.
\end{proposition}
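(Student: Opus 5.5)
The statement has two parts: independence of the extension, and the identity $\Omega_S=\Omega_H$. The plan is to prove independence by a local computation with the residue formula \eqref{e:def sym form spectral}, and then to identify $\Omega_S$ with $\Omega_H$ in three steps: a pairing computation on the tangent sequence \eqref{e:ses tangent}, a Lagrangian-fibration argument, and a homogeneity argument using the $\mathbb C^\ast$-action.

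\emph{Independence of choices.} Suppose the extension of $\mathcal L$ is changed near $\widetilde C\cap N_i^j$. Then each transition function $f_i^j(\lambda)$, now viewed as a function of $(\lambda,z)$, is multiplied by a holomorphic unit $g_i^j(\lambda,z)$ with $g_i^j(\lambda,z_i^j(\lambda))=1$.
\begin{itemize}
\item Differentiating along the deformation gives $\partial_t \log f \mapsto \partial_t\log f+\partial_z g\,\partial_t z_i^j$, using $g=1$ on the curve.
\item The extra contribution to the integrand is therefore $\partial_{t_1}z\,\partial_z g\,\partial_{t_2}z-\partial_{t_2}z\,\partial_z g\,\partial_{t_1}z=0$.
\item Abstractly, the change of splitting is a map $H^0(K_{\widetilde C})\to H^1(\mathcal O_{\widetilde C})$ that is self-adjoint for Serre duality, so the skew part of the correction cancels.
\end{itemize}
Changing the points $p_i$ or the trivializations over $U$ alters $\partial_t\log f$ by terms holomorphic on $\pi^{-1}(U)$. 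Their total residue vanishes by the residue theorem on $\widetilde C$, so $\Omega_S$ is well defined.

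\emph{Comparison with $\Omega_H$.} I would proceed in three steps.
\begin{enumerate}
\item \emph{Fibers are Lagrangian for both forms.} For $\Omega_S$ this is immediate from \eqref{e:def sym form spectral}, since $\partial_t z_i^j=0$ when the curve is fixed. For $\Omega_H$ it is the standard fact that the Hitchin fibration is Lagrangian.
\item \emph{The vertical--horizontal pairings agree.} Take a vertical vector given by a class in $H^1(\widetilde C,\mathcal O)$, represented by the Čech cocycle $\partial_t\log f_i^j$, and a horizontal vector $\dot b$, corresponding to $\partial_t z_i^j\,d\lambda\in H^0(K_{\widetilde C})$.
  \begin{itemize}
  \item The residue formula is precisely the Čech expression of the Serre duality pairing between these two classes.
  \item Pushing forward by $\pi$, the deformation $(\alpha,\beta)$ of $(E,\phi)$ has $\alpha=\pi_\ast$ of the Dolbeault representative of the $H^1(\mathcal O)$-class and $\beta=\pi_\ast(\dot z\,d\lambda)$.
  \item Then $\int_C\operatorname{tr}(\alpha\wedge\beta)=\int_{\widetilde C}\alpha\wedge\dot z\,d\lambda$, which is again the Serre pairing.
  \end{itemize}
\item \emph{The difference vanishes.} After steps 1 and 2, $\omega:=\Omega_S-\Omega_H$ satisfies $\iota_v\omega=0$ for every vertical $v$. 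Once $\omega$ is known to be closed, $\omega=h^\ast\beta$ for a holomorphic $2$-form $\beta$ on the base.
  \begin{itemize}
  \item Both $\Omega_S$ and $\Omega_H$ scale with weight $1$ under $t\cdot(E,\phi)=(E,t\phi)$: $\Omega_H$ is linear in $\beta$, and $\Omega_S$ is linear in $\partial z$.
  \item Hence $\beta$ has weight $1$ on $B=\bigoplus H^0(K_C^i)$, where the linear coordinates have weights $i\ge 1$.
  \item Any nonzero algebraic $2$-form on $B$ has weight at least $1+1=2$.
  \item Therefore $\beta=0$ on the open dense regular locus, and $\Omega_S=\Omega_H$ there.
  \end{itemize}
\end{enumerate}

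\emph{Main obstacle.} The delicate step is the closedness of $\Omega_S$, together with checking carefully that the Čech/residue and Dolbeault/trace pairings agree with the same sign and normalization. For closedness, I would first try to exhibit $\Omega_S=d\theta_S$ locally, with $\theta_S=\sum_i\operatorname{Res}_{p_i}\sum_j z_i^j\,\delta\log f_i^j\,d\lambda$, a Liouville-type form in the spirit of separated variables. If that fails, I would instead verify closedness directly in Darboux coordinates from Hitchin's action-angle description.
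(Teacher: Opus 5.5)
Your independence argument is fine. The pointwise cancellation $\partial_{t_1}z\,\partial_z g\,\partial_{t_2}z-\partial_{t_2}z\,\partial_z g\,\partial_{t_1}z=0$ is exactly the self-adjointness the statement invokes, and the paper offers nothing beyond that remark and the citation of \cite{Hur96}. Steps 1 and 2 of your comparison are also sound, modulo the $2\pi i$ and sign bookkeeping between the \v{C}ech--residue and Dolbeault forms of Serre duality that you already flag.

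The genuine gap is the homogeneity argument in Step 3. Since $\Omega_S$ is only defined for smooth spectral curves, $\omega=\Omega_S-\Omega_H$ and hence $\beta$ live only over $B_{\mathrm{reg}}=B\setminus\Delta$, where $\Delta$ is the discriminant. The principle ``a $2$-form of weight $1$ must vanish'' needs regularity on all of $B$: it comes from letting $t\to 0$, and $\mu_t(b)\to 0\in\Delta$ for every $b$. On $B\setminus\Delta$ the principle is false. Let $\delta$ be a weighted-homogeneous equation of $\Delta$, of weight $w_\delta>0$, and let $x,y$ be linear coordinates on $H^0(C,K_C)$. Then $d\bigl(x^{w_\delta}\delta^{-1}\,dy\bigr)$ is a nonzero, exact, holomorphic $2$-form on $B_{\mathrm{reg}}$ of weight exactly $1$. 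So $\mathbb{C}^\ast$-homogeneity alone cannot force $\beta=0$. To run your argument as written you would need $\Omega_S$ to extend holomorphically across generic points of $\Delta$ (nodal spectral curves), which is a statement of the same depth as the proposition.

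Two further remarks on Step 3. First, closedness of $\Omega_S$, which you single out as the main obstacle, is not needed. Once $\iota_v\omega=0$ for all vertical $v$, at each point $x$ the form $\omega_x$ is the pullback of a unique $\beta_x\in\Lambda^2T^\ast_{h(x)}B$. The map $x\mapsto\beta_x$ is holomorphic from the compact torus $h^{-1}(b)$ to a fixed vector space, hence constant, so $\beta$ exists on $B_{\mathrm{reg}}$ without knowing $d\omega=0$.

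Second, what is really missing is a normalization that pins $\beta$ down, and the cheapest one is a section of $h$ that is isotropic for both forms. For instance, when $n\mid\ell$, take a Hitchin section $\mathrm{Hit}_L$:
\begin{itemize}
\item The underlying bundle is fixed along $\mathrm{Hit}_L$, so tangent vectors have $\alpha=0$ and $\Omega_H$ vanishes on them.
\item $\mathcal{L}=\pi^\ast L$ extends to $\pi^\ast L$ on all of $\mathrm{Tot}(K_C)$, with transition functions depending only on $\lambda$. With this extension (admissible by your independence step), $\partial_t\log f_i^j\equiv 0$ along the section, so $\Omega_S$ vanishes there too.
\end{itemize}
At such points $T=V\oplus T\,\mathrm{Hit}_L$, so Steps 1--2 give $\omega=0$, hence $\beta=0$ on all of $B_{\mathrm{reg}}$. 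A replacement of this kind for the other degrees, or an extension argument across $\Delta$, is needed to finish.
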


\begin{proof}[Proof of \Cref{thm: Lag corres Higgs}]
By a straightforward dimension count, one verifies that
$\mathbb{L}_H(L,d)$ is a half-dimensional subspace of 
$\mathrm{Hilb}^d(T^*C)\times \mathcal{M}_{H}(n, \ell)$.
Hence, it suffices to show that $\mathbb{L}_H(L,d)$ is isotropic.

For $(\widetilde{D}, \widetilde{C}, \mathcal{L}) \in \mathbb{L}_H(L,d)$ such that $\widetilde{D} \in \operatorname{Sym}^d(\widetilde{C})\setminus \Delta \subset \operatorname{Hilb}^d(T^\ast C)$ and is away from the ramification divisor $R_\pi$ of spectral cover, one considers a two-parameter deformation
\[
(\widetilde{D}(t_1, t_2), \widetilde{C}(t_1, t_2), \mathcal{L}(t_1, t_2)) \in \mathbb{L}_H(L, d),
\]
with $(t_1, t_2)$ sufficiently small so that $\widetilde{D}(t_1, t_2)$ remains disjoint from $R_{\pi_{t_1,t_2}}$.

To compute \eqref{e:def sym form spectral}, we need to describe the local coordinates $z_i^j(\lambda; t_1, t_2)$ of the deformed spectral curve and the transition functions $f_i^j(\lambda; t_1, t_2)$ of $\mathcal{L}(t_1, t_2)$. Set
\[
D(t_1, t_2) := \pi\left(\widetilde{D}(t_1, t_2)\right) = \sum_i p_i(t_1, t_2),
\quad \lambda(p_i(t_1, t_2)) =u_i(t_1, t_2)
\]
with $u_i(0,0) = 0$, where $(V_i,\lambda)$ is an open chart over $p_i(0,0)$. Let $z_i$ denote the fiber coordinate on $\mathrm{Tot}(K_C)$ over $V_i$. The spectral curve $\widetilde{C}(t_1, t_2)$ in a neighborhood of $\pi^{-1}(V_i)$ is defined by the polynomial:
\(
    E(\lambda, z_i; t_1, t_2) = 0,
\)
with each branch over $V_i$ corresponds to a root
\(
z_i^j(\lambda; t_1, t_2), 
\)
for $j = 1, \dots, n$.
For simplicity, we assume that $\widetilde{D}(t_1,t_2)$ is reduced and lies on the branch corresponding to $z_i^1$. The general case follows from the same computation.

By \eqref{e: recons spectral line bundle}, 
\[
\mathcal{L}(t_1, t_2) \cong \pi_{t_1,t_2}^*(L)(\widetilde{D}(t_1,t_2)),
\]
where $\pi_{t_1,t_2}:  \widetilde{C}(t_1, t_2) \to C$. 
For simplicity, we omit the contribution from $\pi_{t_1,t_2}^*(L)$. Thus the transition functions of $\mathcal{L}(t_1, t_2)$ from $\pi^{-1}(U)$ to $N_i^j$ satisfy
\begin{align*}
f_i^1(\lambda; t_1, t_2) &= E(\lambda, z_i^1(u_i(t_1,t_2); t_1, t_2); t_1, t_2), \\
f_i^j(\lambda; t_1, t_2) &\equiv 1, \quad \text{for } j \neq 1.
\end{align*}

Differentiating the polynomial equation gives
\begin{align*}
    \left. \partial_{t_k} z_i^1(\lambda)  \right|_{(t_1,t_2)=(0,0)}
    = - \frac{\left( \partial_{t_k} E \right) (\lambda, z_i^1(\lambda))}{\left( \partial_z E \right) (\lambda, z_i^1(\lambda))} , \text{ for } k=1,2.
\end{align*}
and
\begin{align*}
    \left. \frac{\partial_{t_k} f_i^1}{f_i^1} \right|_{(t_1,t_2)=(0,0)} 
    & = \frac{
        \left( \partial_z E \right)(\lambda, z_i^1(0)) \cdot \left( \partial_{t_k} z_i^1(0) + \partial_\lambda z_i^1(0) \cdot \partial_{t_k} u_i(0,0) \right) + \left( \partial_{t_k} E \right)(\lambda, z_i^1(0))
    }{
        E(\lambda, z_i^1(0))
    }, \\
\left. \frac{\partial_{t_k} f_i^j}{f_i^j} \right|_{(t_1,t_2)=(0,0)} &= 0, \text{ for } j \neq 1, k=1,2.
\end{align*}
Substituting into \eqref{e:def sym form spectral}, we compute
\begin{align*}
&\Omega_H \left( (\widetilde{C},\mathcal{L})_{t_1}, ( \widetilde{C},\mathcal{L})_{t_2}\right)
= \sum_{i=1}^d \operatorname{Res}_{p_i} \left(
    \partial_{t_1} z_i^1 \cdot \frac{\partial_{t_2} f_i^1}{f_i^1}
    - \partial_{t_2} z_i^1 \cdot \frac{\partial_{t_1} f_i^1}{f_i^1}
\right) \, d\lambda \\
=& \sum_{i=1}^d \operatorname{Res}_{\lambda = 0} 
\left[
    \frac{\left( \partial_z E \right)(\lambda, z_i^1(0)) \cdot \partial_\lambda z_i^1(0)}{E(\lambda, z_i^1(0))}
    \left(
        \partial_{t_1} z_i^1(0) \cdot \partial_{t_2} u_i(0,0)
        - \partial_{t_2} z_i^1(0) \cdot \partial_{t_1} u_i(0,0)
    \right)
\right] d\lambda \\
=& \sum_{i=1}^d -\left(
    \partial_{t_1} z_i^1(0) \cdot \partial_{t_2} u_i(0,0)
    - \partial_{t_2} z_i^1(0) \cdot \partial_{t_1} u_i(0,0)
\right) \\
=& -\omega^{[d]} \left( \widetilde{D}_{t_1}, \widetilde{D}_{t_2} \right).
\end{align*}
Here, the third equality uses the asymptotic behavior
\[
- \frac{(\partial_z E)(\lambda, z_i^1(0)) \cdot \partial_\lambda z_i^1(0)}{E(\lambda, z_i^1(0))} = \frac{1}{\lambda}+\mathrm{O}(1),
\quad \text{as } \lambda \to 0,
\]
which follows from the local expression \( E(\lambda, z_i) = \prod_{j=1}^n (z_i - z_i^j(\lambda)) \); the last equality follows from the fact that a point in $\widetilde{D}(t_1, t_2)$ corresponds to coordinates $(u_i(t_1,t_2), z_i^1(u_i(t_1,t_2); t_1, t_2))$, and the canonical symplectic form on $T^*C$ over $\pi^{-1}(V_i)$ is $\omega_{T^\ast C} = dz_i \wedge d\lambda$.
\end{proof}

The Lagrangian property of \( \mathbb{L}_{{H}}(L, D)\) \text{and} \(\mathbb{L}_{{H}}(D)\) as stated in \Cref{thm: Lag Higgs} follows from \Cref{thm: Lag corres Higgs}.

\begin{proof}[Proof of \Cref{thm: Lag Higgs}]
    Let $\mathrm{pr}_1$ and $\mathrm{pr}_2$ denote the projection of $\operatorname{Hilb}^d(T^\ast C)\times \mathcal{M}_{H}(n, \ell)$ to $\operatorname{Hilb}^d(T^\ast C)$ and $\mathcal{M}_{H}(n, \ell)$ respectively.
    Recall that we have a natural map $\pi_\ast: \operatorname{Hilb}^d(T^\ast C) \to \operatorname{Hilb}^d(C)$  induced by the projection $\pi:T^\ast C\to C$. 
    It follows that 
    $$\mathbb{L}_H(L,D) = \mathrm{pr}_2 \left( \mathrm{pr}_1^{-1}\pi_\ast^{-1}(D) \cap \mathbb{L}_H(L,d) \right)$$ 
    is isotropic, since $\pi_\ast^{-1}(D)$ is isotropic in  $\operatorname{Hilb}^d(C)$. Combined with \Cref{c: non-empty and half dim Higgs}, this completes the proof.
\end{proof}

\section{Lagrangian subvarieties and Lagrangian correspondences for de Rham moduli spaces}\label{LDR}

In this section, we formulate and prove our main results for the de Rham moduli space. As introduced in \Cref{sect-triples}, the main objects of our consideration are triples of the form $(L \overset{i}{\hookrightarrow} E, \nabla)$, where $(E, \nabla)$ are pairs of holomorphic bundles with holomorphic connections and $L$ is a line subbundle of $E$. 
We start by introducing the sections $s_i(\nabla)$ of the line bundle $K_C^{n \choose 2} L^{-n} \det(E)$ induced by such triples and showing how they give rise to a filtration of $E$. We also make the critical observation that for generic divisor $D$ the forgetful assignment $(L \overset{i}{\hookrightarrow} E, \nabla) \mapsto (E, \nabla)$ with $D_i(\nabla)=D$ fixed is generically finite. 
We continue by identifying triples with a subset of opers with regular singularities. We use this identification to show that $\mathbb L_{dR}(D)$ are non-empty and of the right dimension. We finish the section with the proof of the Lagrangian correspondence for $M_{dR}$. 
\smallskip

\subsection{Holomorphic connections with line subbundles}
Given a triple \( (i: L \hookrightarrow E, \nabla) \), where \( (E,\nabla) \in  \mathcal{M}_{dR}(n)  \) or \( \mathcal{M}_{dR}(n, \mathcal{O}_C)  \), 
 we shall define (cf. \Cref{lemma-morphsim-well-defined}) a map 
\begin{subequations}\label{eqn-section-dR}
\begin{equation}
    s_i(\nabla): L^{n} \longrightarrow \det(E)K_C^{\binom{n}{2}},
\end{equation}
which serves as the counterpart to $s_i(\phi)$ defined in the Higgs side \eqref{eqn-def-s-i-phi}.

We begin by considering a flat frame of \( \nabla \), which yields a basis \( \{\mathbf{e_1}, \dots, \mathbf{e_n}\} \) of the local system \( E^\nabla := \ker \nabla \). Let \( U \subseteq C \) be a local chart with holomorphic coordinate \( z \), and let \( s \in \Gamma(U, L) \) be a nowhere vanishing local section. Via the inclusion \( i: L \hookrightarrow E \), we have \( i(s) \in \Gamma(U, E) \), which is expressed in terms of the local flat frame as
\(
    i(s) = \sum_{j=1}^n a_j(z) \mathbf{e_j},
\)
for \( a_j(z) \in \mathcal{O}_{U} \). For any \( f_i(z)\in \mathcal{O}_{U} \), we define the morphism \( s_i(\nabla): L^{n} \rightarrow \det(E)K_C^{ \binom{n}{2}}
\) locally by
\begin{align}\label{e:wronskian comp}
\begin{split}
s_i(\nabla)\left( f_1(z) s \otimes \cdots \otimes f_n(z) s \right)
:=\ 
&\det\left(
\begin{array}{cccc}
f_1 a_1 & \left(f_2 a_1\right)' & \cdots & \left(f_n a_1\right)^{(n-1)} \\
f_1 a_2 & \left(f_2 a_2\right)' & \cdots & \left(f_n a_2\right)^{(n-1)} \\
\vdots & \vdots & \ddots & \vdots \\
f_1 a_n & \left(f_2 a_n\right)' & \cdots & \left(f_n a_n\right)^{(n-1)}
\end{array}
\right) \otimes (\mathrm{d}z)^{\binom{n}{2}} \\
=\ 
&\left( \prod_{j=1}^n f_j(z) \right) \cdot \operatorname{Wr}(a_1(z), \dots, a_n(z)) \otimes (\mathrm{d}z)^{\binom{n}{2}},
\end{split}
\end{align}
\end{subequations}
where \( \operatorname{Wr}(a_1, \dots, a_n) \) denotes the Wronskian determinant.

\begin{lemma}\label{lemma-morphsim-well-defined}
\( s_i(\nabla) \) defines a global section of the line bundle \( L^{-n} \det(E)  K_C^{\binom{n}{2}} \).
\end{lemma}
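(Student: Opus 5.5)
We need to show that the local expression \eqref{e:wronskian comp} is independent of all choices and transforms correctly, so that it glues to a global section of \(\mathrm{Hom}(L^n,\det(E)K_C^{\binom n2}) = L^{-n}\det(E)K_C^{\binom n2}\). The choices are the flat frame \(\{\mathbf e_j\}\), the coordinate \(z\), and the generator \(s\) of \(L\).

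The cleanest route is to first give a frame-free description of the local formula. Write \(\nabla_{\partial_z}\) for the contraction of \(\nabla\) with \(\partial_z\). In the flat frame we have \(\nabla_{\partial_z}^k\bigl(i(f s)\bigr)=\sum_j (f a_j)^{(k)}\mathbf e_j\). Hence the determinant in \eqref{e:wronskian comp} equals
\[
\det\!\bigl(i(f_1s),\nabla_{\partial_z} i(f_2s),\dots,\nabla_{\partial_z}^{n-1} i(f_ns)\bigr)\in \det(E)
\]
computed in the frame \(\mathbf e_1\wedge\cdots\wedge\mathbf e_n\). Tensoring with \((dz)^{\binom n2}\) then gives an element of \(\det(E)K_C^{\binom n2}\).

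Independence of the flat frame is now immediate. Any other flat frame differs by a constant matrix \(A\in \mathrm{GL}_n(\mathbb C)\). The coefficient vectors transform by \(A^{-1}\), and \(\mathbf e_1\wedge\dots\wedge\mathbf e_n\) transforms by \(\det A\), so the element of \(\det(E)\) is unchanged. Alternatively, one can simply note that the frame-free formula involves no frame at all.

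Next we check that the expression is \(\mathcal O_C\)-multilinear, as claimed in the second line of \eqref{e:wronskian comp}. Expanding \((f_k a_j)^{(k-1)}\) by Leibniz, the terms in which derivatives fall on \(f_k\) produce columns that are linear combinations of earlier columns \(a, a',\dots\), so they die in the determinant. This leaves \(\prod f_k\cdot \operatorname{Wr}(a_1,\dots,a_n)\). It follows that changing the generator \(s\mapsto gs\) multiplies the value by \(g^n\), which is exactly the transformation rule for a map out of \(L^n\). In other words, the formula is well defined as a homomorphism \(L^n\to\det(E)K_C^{\binom n2}\) on each chart.

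The main step is coordinate independence. Let \(w\) be another coordinate, so that \(\partial_z = w'\partial_w\). Then \(\nabla^k_{\partial_z}\sigma = (w')^k\nabla^k_{\partial_w}\sigma + \sum_{m<k} c_{k,m}\nabla^m_{\partial_w}\sigma\), a triangular relation with holomorphic coefficients \(c_{k,m}\) built from derivatives of \(w'\). By column operations the lower terms drop out of the determinant. What remains is the product \(\prod_{k=0}^{n-1}(w')^k=(w')^{\binom n2}\), which cancels exactly against \((dz)^{\binom n2}=(w')^{-\binom n2}(dw)^{\binom n2}\).

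The only delicate point is verifying this triangularity. It follows by induction on \(k\) from \(\nabla_{\partial_z}(h\,\tau)=h'\tau+h\nabla_{\partial_z}\tau\), applied with \(\tau\) a combination of the \(\nabla^m_{\partial_w}\sigma\). This mirrors the classical fact that the Wronskian of \(n\) functions is a section of \(K^{\binom n2}\).

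Finally, all three checks are local and compatible on overlaps. The local expressions therefore glue to a global holomorphic section of \(L^{-n}\det(E)K_C^{\binom n2}\), which proves the lemma.
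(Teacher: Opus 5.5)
Your proposal is correct and follows the same route as the paper, which simply asserts that changing the flat frame, the coordinate $z$, and the local generator $s$ each produces a multiplicative factor matching the transition functions of $L^{-n}\det(E)K_C^{\binom{n}{2}}$. Your frame-free rewriting as $i(\sigma_1)\wedge\nabla_{\partial_z}i(\sigma_2)\wedge\cdots\wedge\nabla_{\partial_z}^{n-1}i(\sigma_n)\otimes(dz)^{\binom{n}{2}}$, together with the Leibniz/triangularity arguments for multilinearity and for $\partial_z = w'\partial_w$, supplies the details the paper leaves implicit.
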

\begin{proof}
The claim follows immediately from \eqref{e:wronskian comp}:
changes of the local flat frames $(\mathbf{e}_i)_{i=1}^n$,   
the local coordinate $z$ 
and the section $s \in \Gamma(U, E)$
induce a multiplicative factor in $s_i(\nabla)$,
compatible with the scaling of sections of 
\( L^{-n} \det(E)  K_C^{\binom{n}{2}} \).
\end{proof}

\begin{remark}
(a) For a $\lambda$-connection $(E, \nabla_\lambda)$ and a line subbundle $L \hookrightarrow E$, one can analogously define the section $s_i(\nabla_\lambda)$ of \( L^{-n} \det(E)  K_C^{\binom{n}{2}} \).
\\
(b) For $1 \leq k \leq n$, 
even though $\nabla^k\circ i$ does not define a bundle morphism 
as $\phi^k \circ i$ does,
their ``wedge product''
$$i \wedge (\nabla \circ i) \wedge \cdots \wedge (\nabla^{n-1} \circ i) $$
is well-defined and gives the morphism $s_i(\nabla)$.
\end{remark}

\begin{lemma}
If $(E,\nabla)$ is irreducible (i.e. admits no proper nonzero $\nabla$-invariant subbundle), then for any $i:L\hookrightarrow E$, the morphism $s_i(\nabla)\not\equiv 0$.
\end{lemma}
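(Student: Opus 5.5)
We argue by contraposition: if $s_i(\nabla)\equiv 0$, we will construct a proper nonzero $\nabla$-invariant subbundle of $E$. In a local chart $U$ with coordinate $z$, flat frame $\mathbf{e_1},\dots,\mathbf{e_n}$ and nowhere vanishing $s\in\Gamma(U,L)$, write $i(s)=\sum_j a_j\mathbf{e_j}$. By \eqref{e:wronskian comp}, the vanishing of $s_i(\nabla)$ on $U$ means $\operatorname{Wr}(a_1,\dots,a_n)\equiv 0$. The classical fact we invoke is that for holomorphic functions on a connected open set, identically vanishing Wronskian is equivalent to linear dependence over $\mathbb{C}$. Hence there is a nonzero constant vector $c\in\mathbb{C}^n$ with $\sum_j c_j a_j\equiv 0$.

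It is cleaner to phrase this invariantly. Consider the $\mathbb{C}$-span $W_U\subset E^\nabla(U)$ of the derivatives $\nabla_{\partial_z}^k i(s)$, $k\ge 0$, expressed in the flat frame; these are the vectors $(a_1^{(k)},\dots,a_n^{(k)})$. Let $r$ be the largest integer such that $i(s)\wedge\nabla i(s)\wedge\cdots\wedge\nabla^{r-1}i(s)\not\equiv 0$; we have $1\le r<n$ because $i$ is injective and $s_i(\nabla)\equiv 0$. On the dense open set where this wedge is nonzero, $F:=\mathrm{span}\{i(s),\nabla i(s),\dots,\nabla^{r-1}i(s)\}$ is a rank-$r$ subbundle with $\nabla^r i(s)\in F$ (with holomorphic coefficients, via Cramer's rule). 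Consequently $\nabla F\subset F\otimes K_C$. The subbundle $F$ does not depend on the choice of $s$ or of the coordinate, since rescaling $s$ by $f$ changes the generators by triangular combinations; so these local pieces glue.

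Next we extend $F$ across the finitely many points where the wedge vanishes: take the saturation of the subsheaf generated by $F$ in $E$. This gives a rank-$r$ subbundle $F\subset E$ on all of $C$. It remains $\nabla$-invariant, since the second fundamental form $F\to (E/F)\otimes K_C$ is an $\mathcal O_C$-linear map between vector bundles that vanishes on a dense open set and therefore vanishes identically. Because $0<r<n$, this contradicts irreducibility, and thus $s_i(\nabla)\not\equiv 0$.

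The main obstacle is the global, coordinate-free bookkeeping. Concretely, we must check that $r$ is constant on $C$, which follows from connectedness together with the analytic continuation of $\operatorname{Wr}\equiv 0$ statements for the truncated wedges. We must also check that the local invariant subbundles agree on overlaps. An alternative route that avoids this work is to use the local system directly. One notes that the $\mathbb{C}$-span of the germs $\nabla^k i(s)$ inside the space of multivalued flat sections is $\pi_1$-stable under analytic continuation, which produces an invariant subrepresentation. We would try the saturation argument first, since it stays within holomorphic bundles.
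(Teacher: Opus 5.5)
Your argument is correct, but it takes a different route from the paper.

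The paper dualizes. From $\operatorname{Wr}(a_1,\dots,a_n)\equiv 0$ it uses the Wronskian criterion to get $n-k\ge 1$ independent relations $\sum_j c_j a_j\equiv 0$. It reads each relation as a flat section of $E^\vee$ pairing to zero with $i(L)$, and lets these generate a $\nabla^\vee$-invariant subbundle of $E^\vee$, so $E^\vee$, and hence $E$, is reducible.

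You stay in $E$ and use $s_i(\nabla)\equiv 0$ only to get $r<n$. The sheaf you saturate is $\operatorname{Im}(\mathcal D_C^{\le r-1}\otimes L\to E)$, i.e.\ the paper's own $\mathcal F_r$ from \eqref{eq:def-Fi}. Invariance after saturation comes from the $\mathcal O_C$-linear second fundamental form.

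Both constructions give the same object: the smallest flat subbundle containing $i(L)$, of rank $r=k=\dim_{\mathbb C}\operatorname{span}\{a_1,\dots,a_n\}$.

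What each version buys:
\begin{itemize}
\item Yours makes the globalization transparent. The truncated wedges are global sections of $\Lambda^m E\otimes L^{-m}\otimes K_C^{\binom{m}{2}}$, so constancy of $r$ and gluing are automatic.
\item Yours also never needs the analytic Wronskian criterion, so the appeal to it in your first paragraph is superfluous.
\item The paper's is shorter and exhibits the invariant piece directly as a sub-local system.
\item However, the paper leaves implicit why the locally found $c_j$ give a subbundle on all of $C$. Analytic continuation is needed to see that the flat sections annihilating $i(L)$ form a local system of constant rank.
\end{itemize}

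One slip: $W_U$ and your ``alternative route'' describe a span of the $\nabla^k i(s)$ inside the flat sections, but these sections are not flat. What you want is the span of the constant vectors $\mathbf a^{(k)}(z_0)$, $k\ge 0$, or equivalently of the values $\mathbf a(z)$, $z\in U$. With that correction, the alternative route is exactly the undualized form of the paper's proof.
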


\begin{proof}
We argue by contrapositive. Suppose there exists $i:L\hookrightarrow E$ with $s_i(\nabla)\equiv 0$.
Consider the dual holomorphic connection $(E^\vee, \nabla^\vee)$.
It is clear that 
sections of the rank-$(n-1)$ subbundle $\ker(L) \subset E^\vee$ which are also flat w.r.t. $\nabla^\vee$ 
define a subbundle which is $\nabla^\vee$-invariant.
To this end, we will show that such $\nabla^\vee$-invariant subbundle exists. Note that locally $s_i(\nabla)\equiv 0$ implies $\operatorname{Wr}(a_1,\dots,a_n)\equiv 0$ 
(cf. \eqref{e:wronskian comp}).
This means that the functions 
$a_1(z), \dots, a_n(z)$ are $\mathbb{C}$-linearly dependent. If we denote $\mathrm{rank}_{\mathbb C} \{ a_1(z), \dots, a_n(z) \} = k$,
it follows that there are $\mathbb{C}$-linearly independent constant sections $c_j \in \Gamma(U, \ker(\nabla^\vee))$, 
$j = 1, \dots, c_{n-k}$ such that the pairing $\left( c_j, i(s) \right)$ vanishes identically on $U$.
These sections $c_j$ precisely generate the $\nabla^\vee$-invariant bundle we seek.
So $(E^\vee, \nabla^\vee)$ and hence $(E, \nabla)$ are reducible.
\end{proof}

In the rest of this paper, we will assume that $(E,\nabla)$ is irreducible,
i.e. we work in the smooth part of the de Rham moduli space.
For a triple 
\((i: L \hookrightarrow E, \nabla) \), denote by $D_i(\nabla)$ 
the divisor defined by the vanishing locus of $s_i(\nabla)$. 
Fixing $D$ implies that  
$
    L^{-1} = K_C^{-\frac{n-1}{2}} \otimes \mathcal{O}(D)^{1/n},
$
up to the choice of $n$--th root bundle.

\paragraph{Filtrations}
Given a triple $(i\colon L\hookrightarrow E,\nabla)$, the connection $\nabla$
endows $E$ with a left $\mathcal D_C$--module structure, where
$\mathcal D_C$ denotes the sheaf of differential operators on $C$.
Composing the action map with the inclusion $i$ yields a natural morphism
\(
\mathcal D_C\otimes_{\mathcal O_C} L \rightarrow E .
\)
For each $i\ge 1$, define the subsheaf
\begin{equation}\label{eq:def-Fi}
\mathcal F_i
\;:=\;
\operatorname{Im}\!\left(
\mathcal D_C^{\le i-1}\otimes_{\mathcal O_C} L \rightarrow E
\right),
\end{equation}
where $\mathcal D_C^{\le i-1}$ denotes differential operators of order
$\le i-1$ on $C$.
This yields an increasing filtration of subsheaves, which are compatible with $\nabla$,
\begin{equation}\label{eqn-sheaves-filtration}
0=\mathcal F_0\subset \mathcal F_1\subset \cdots \subset \mathcal F_n\subset E.
\end{equation}

We next relate the subsheaves $\mathcal F_i$ to jet bundles.
\begin{proposition}\label{p:oper bundle out D}
There is a canonical isomorphism 
\(
\mathcal F_i^\vee \;\cong\; J^{i-1}(L^{-1}).
\)
\end{proposition}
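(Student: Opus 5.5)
The plan is to show that the natural map
\[
\mu_i:\ \mathcal D_C^{\le i-1}\otimes_{\mathcal O_C} L\longrightarrow E
\]
is injective, and then to identify its source canonically with the dual of the jet bundle $J^{i-1}(L^{-1})$. Since $\mathcal F_i$ is defined as the image of $\mu_i$, injectivity gives $\mathcal F_i\cong \mathcal D_C^{\le i-1}\otimes_{\mathcal O_C} L$, and the statement follows by dualizing.

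\textbf{Step 1: injectivity.} The sheaf $\mathcal D_C^{\le i-1}\otimes_{\mathcal O_C} L$ is locally free of rank $i$; the tensor product uses the right $\mathcal O_C$-structure of $\mathcal D_C$, so it is again a left $\mathcal O_C$-module. In a chart with coordinate $z$ and a nowhere vanishing section $s$ of $L$, it has the local frame $\partial_z^j\otimes s$ for $0\le j\le i-1$. This frame maps to $i(s),\nabla_{\partial_z} i(s),\dots,\nabla_{\partial_z}^{i-1} i(s)$.

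In a flat frame $\mathbf e_1,\dots,\mathbf e_n$, the vector $\nabla_{\partial_z}^j i(s)$ has components $a_1^{(j)},\dots,a_n^{(j)}$. The wedge of all $n$ of these vectors is, up to a unit, $\operatorname{Wr}(a_1,\dots,a_n)$, which by \eqref{e:wronskian comp} is the local expression of $s_i(\nabla)$. Since $(E,\nabla)$ is irreducible, the lemma above gives $s_i(\nabla)\not\equiv 0$. Hence all $n$ vectors, and a fortiori the first $i$ of them, are linearly independent at the generic point.

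A morphism from a torsion-free sheaf of rank $i$ that has generic rank $i$ is injective. Therefore $\mu_i$ is injective, and $\mathcal F_i\cong \mathcal D_C^{\le i-1}\otimes_{\mathcal O_C} L$. In particular $\mathcal F_i$ is locally free of rank $i$, so its dual is a vector bundle.

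\textbf{Step 2: the canonical pairing.} Define
\[
\big(\mathcal D_C^{\le i-1}\otimes_{\mathcal O_C} L\big)\otimes J^{i-1}(L^{-1})\longrightarrow \mathcal O_C,\qquad
(P\otimes s,\ j^{i-1}\sigma)\longmapsto P\big(\langle \sigma,s\rangle\big).
\]
Four things need to be checked.
\begin{enumerate}
\item It is well defined on the tensor product over $\mathcal O_C$: $Pf\otimes s$ and $P\otimes fs$ give the same value, because $(Pf)(\langle\sigma,s\rangle)=P(f\langle\sigma,s\rangle)=P(\langle\sigma,fs\rangle)$.
\item It depends only on the $(i-1)$-jet of $\sigma$, because $P$ has order at most $i-1$.
\item It is $\mathcal O_C$-bilinear for the left structure on the first factor and the $\mathcal O_C$-module structure on jets.
\item It is perfect. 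In the local frames $\{\partial_z^j\otimes s\}$ and $\{j^{i-1}(z^k s^{-1})\}$, the matrix of values $\partial_z^j(z^k)$ at the centre is triangular with nonzero diagonal entries $k!$. This identifies $\mathcal D_C^{\le i-1}\otimes L$ with the classical $\mathcal{D}iff^{\le i-1}(L^{-1},\mathcal O_C)=J^{i-1}(L^{-1})^\vee$.
\end{enumerate}
Composing with Step 1 gives $\mathcal F_i\cong J^{i-1}(L^{-1})^\vee$, hence $\mathcal F_i^\vee\cong J^{i-1}(L^{-1})$. The isomorphism is canonical because neither $\mu_i$ nor the pairing involves any choices.

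\textbf{Main obstacle.} The delicate point is Step 1. One must use the correct (right) $\mathcal O_C$-structure in the tensor product, and must check that ``generic rank $i$'' really follows from $s_i(\nabla)\neq0$ for every $i\le n$, not only for $i=n$. The sheaf-level injectivity then follows formally, although $\mathcal F_i$ may fail to be a subbundle: it can be saturated only away from $D_i(\nabla)$. The jet-duality in Step 2 is standard and only needs the local frame check.
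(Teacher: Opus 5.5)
Your proof is correct and follows the same route as the paper. The paper first shows that $\mathcal D_C^{\le i-1}\otimes_{\mathcal O_C}L\to E$ is injective, then identifies the source with $\operatorname{Diff}_C^{\le i-1}(L^{-1},\mathcal O_C)\cong J^{i-1}(L^{-1})^\vee$ by the standard jet duality. Your version also proves the generic linear independence of $s,\nabla_{\partial_z}s,\dots,\nabla_{\partial_z}^{i-1}s$ via $s_i(\nabla)\not\equiv 0$, using irreducibility; the paper only asserts that $\mathcal F_i$ is locally free of rank $i$, so this is a welcome addition.
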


\begin{proof}
Locally on an open set $U$ with coordinate $z$ and a nowhere
vanishing section $s\in\Gamma(U,L)$, the $\mathcal D_C$--module structure
induced by $\nabla$ gives
\[
\mathcal F_i|_U
=
\operatorname{Span}_{\mathcal O_U}
\bigl\{
s,\ \nabla_{\partial_z}s,\ \ldots,\ (\nabla_{\partial_z})^{\,i-1}s
\bigr\}.
\]
In particular, $\mathcal F_i$ is locally free of rank $i$. This implies that the morphism $\mathcal \mathcal D_C^{\le i-1}\otimes_{\mathcal O_C} L \rightarrow E$ is sheaf theoretically injective.

On the other hand, for any $\mathcal O_C$--modules $F_1,F_2$, differential
operators of order $\le i-1$ satisfy the identification
\(
\operatorname{Diff}_C^{\le i-1}(F_1,F_2)
\;\cong\;
\operatorname{Hom}\!\bigl(J^{i-1}(F_1),F_2\bigr).
\)
Applying this with $F_1=L^{-1}$ and $F_2=\mathcal O_C$, taking the global section, we obtain canonical isomorphisms
\[
\mathcal F_i
= \mathcal D_C^{\le i-1}\otimes_{\mathcal O_C} L
=
\operatorname{Diff}_C^{\le i-1}(L^{-1},\mathcal O_C)
\;\cong\;
\operatorname{Hom}\bigl(J^{i-1}(L^{-1}),\mathcal O_C\bigr)
=
\bigl(J^{i-1}(L^{-1})\bigr)^\vee.
\]
This finishes the proof.
\end{proof}

By construction, the filtration \eqref{eqn-sheaves-filtration}
satisfies the Griffiths transversality condition
\[
\nabla \mathcal{F}_i \subset \mathcal{F}_{i+1} \otimes K_C.
\]
Furthermore, one has the induced $\mathcal{O}_{C}$--isomorphism,
\begin{equation}\label{eqn-induced-graded-morphism}
\overline \nabla: \mathcal{F}_i / \mathcal{F}_{i-1} \overset{\cong }\longrightarrow \mathcal{F}_{i+1} / \mathcal{F}_i \otimes K_C.   
\end{equation}

\medskip
In the rest of this paper, for simplicity, we will focus on the case where 
$D_i(\nabla)$ is reduced.
In this case, the following lemma states that  
except for $\mathcal{F}_n$ which is a lower Hecke transform of $E$,
the sheaves $\mathcal{F}_i$ with $i \leq n-1$ are subbundles of $E$.

\begin{proposition}\label{l:lower hecke}
Given a triple $(i: L \hookrightarrow E, \nabla)$,
with \(D = D_i(\nabla)\) a reduced divisor, each \(\mathcal{F}_i\) with \(i \leq n-1\) is a rank-\(i\) subbundle \(E_i \subset E\), 
and \eqref{eqn-sheaves-filtration} 
induces a filtration of subbundles for $E$, 
\begin{align}\label{e: filtration}
0 \subset i(L) = E_1 \subset E_2 \subset \dots \subset E_{n-1} \subset E. 
\end{align}
The connection $\nabla$ preserves the filtration \eqref{e: filtration} and thus is gauge equivalent to the form
\begin{equation}\label{eqn-nabla-filtration}
    \nabla = \mathrm{d} +
    \begin{pmatrix}
        &0  &\ast &\dots &\ast &\ast &\ast  \\
        &1 &0 &\dots &\ast &\ast &\ast \\
        &0 &1 &\dots &\ast &\ast &\ast \\
        &\vdots  &\vdots &\dots &\vdots  &\vdots &\vdots \\
        &0  &0 &\dots &0 &\ast &\ast \\
        &0  &0 &\dots &1 &\ast &\ast \\
        &0  &0 &\dots &0 &s_i(\nabla) &\ast 
    \end{pmatrix}.
\end{equation}
Furthermore, $\mathcal{F}_n$ is a lower Hecke transform of \(E\)  
at the subspaces $E_{n-1}\mid_p < E\mid_p$ for each \(p < D\), 
and fits into the s.e.s.
\begin{align}\label{eqn-Fn-E}
    0 \longrightarrow \mathcal{F}_n \longrightarrow E \longrightarrow L K_C^{-(n-1)}(D)\mid_D \longrightarrow  0,
\end{align}
where $L K_C^{-(n-1)}(D)\mid_D$ is the skyscraper sheaf.
\end{proposition}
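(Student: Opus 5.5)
We work locally, with a coordinate $z$ on $U\subset C$ and a nowhere vanishing section $s\in\Gamma(U,L)$. By \Cref{p:oper bundle out D}, $\mathcal F_j|_U$ is freely generated by $s,\nabla_{\partial_z}s,\dots,(\nabla_{\partial_z})^{j-1}s$. The plan mirrors \Cref{prop-filtration-Higgs}, with $\phi$ replaced by $\nabla_{\partial_z}$. The key local identity is that, in a flat frame $\mathbf e_1,\dots,\mathbf e_n$, the section $(\nabla_{\partial_z})^{k}s$ has coordinates $(a_1^{(k)},\dots,a_n^{(k)})$. Consequently
\[
s\wedge\nabla_{\partial_z}s\wedge\cdots\wedge(\nabla_{\partial_z})^{n-1}s=\operatorname{Wr}(a_1,\dots,a_n)\,\mathbf e_1\wedge\cdots\wedge\mathbf e_n ,
\]
so $s_i(\nabla)$ is the wedge of the local generators of $\mathcal F_n$, exactly as $s_i(\phi)$ was on the Higgs side.

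\textbf{Step 1: $\mathcal F_j$ is a subbundle for $j\le n-1$.} Define $I(p)$ as the smallest $j$ such that the images of $s,\dots,(\nabla_{\partial_z})^{j-1}s$ in $E|_p$ are dependent. Suppose $I(p)=m<n$ at some $p<D$. We want to show that the full Wronskian then vanishes to order $\ge n-m+1\ge 2$, contradicting reducedness of $D$.

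This is the main obstacle, since the Higgs argument used $\mathcal O$-linearity, which is not available here. My approach is to choose the flat frame so that $(a_1,\dots,a_n)$ is adapted to $p$. Let $w=s\wedge\cdots\wedge(\nabla_{\partial_z})^{m-1}s$, which vanishes at $p$. Writing $W_k$ for the wedge of the first $k$ derivatives, I would use the standard Wronskian recursion: $\partial_z$ of $W_k$ only changes its last factor. From this, a vanishing of $W_m$ at $p$ propagates, by induction on $k\ge m$, to a vanishing of $W_k$ of order $\ge k-m+1$.

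The cleaner version of this induction is as follows. Decompose $W_k=W_{m}\wedge R_k$, with $W_m$ divisible by $(z-p)$. Each further derivative factor can then be reduced modulo $\mathcal F_{k}$ using the Leibniz rule. This gives $\operatorname{ord}_p W_n\ge n-m+1$ whenever $W_m(p)=0$.

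If this induction is too delicate, I would fall back on the local normal form of a Fuchsian equation. Since the $a_j$ span an $n$-dimensional solution space, its local exponents at $p$ are distinct nonnegative integers $0\le\mu_1<\dots<\mu_n$. The order of the Wronskian is $\sum(\mu_j-(j-1))$. A drop of rank at stage $m<n$ forces $\mu_m\ge m$, hence $\mu_k\ge k$ for all $k\ge m$, and therefore order $\ge n-m+1\ge 2$.

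\textbf{Step 2: local normal form.} With $E_j=\mathcal F_j$ ($j\le n-1$) saturated, choose the local frame $s,\nabla_{\partial_z}s,\dots,(\nabla_{\partial_z})^{n-2}s$ of $E_{n-1}$ and complete it by a section $e_n$ of $E$ spanning $E/E_{n-1}$. By construction $\nabla_{\partial_z}$ sends each of the first $n-2$ basis vectors to the next one, which produces the $1$'s on the subdiagonal. The remaining entry is $(\nabla_{\partial_z})^{n-1}s \bmod E_{n-1}$, which equals $\det(s,\dots,(\nabla_{\partial_z})^{n-1}s)/\det(s,\dots,e_n)$, i.e.\ $s_i(\nabla)$ up to a unit. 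This gives \eqref{eqn-nabla-filtration}, and the transversality $\nabla E_j\subset E_{j+1}\otimes K_C$ is immediate.

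\textbf{Step 3: the sequence \eqref{eqn-Fn-E}.} In the frame of Step 2 we have $\mathcal F_n=E_{n-1}+\mathcal O\cdot s_i(\nabla)e_n$. Hence $E/\mathcal F_n\cong (E/E_{n-1})|_D$, since $D$ is reduced, and this is a lower Hecke transform at $E_{n-1}|_p$ for each $p<D$.

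It remains to identify $E/E_{n-1}$. Taking determinants gives $\det E\cong\det E_{n-1}\otimes E/E_{n-1}$. On the other hand, $\det\mathcal F_n=\det E(-D)$, and by \Cref{p:oper bundle out D} we have $\det\mathcal F_n\cong L^nK_C^{-\binom n2}$, while $E_{n-1}\cong\bigoplus_{s<n}LK_C^{-(s-1)}$ via \eqref{eqn-induced-graded-morphism}. Combining these, $E/E_{n-1}\cong LK_C^{-(n-1)}(D)$, which gives \eqref{eqn-Fn-E}.
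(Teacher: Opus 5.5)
Your proposal is correct. Steps 2 and 3 are essentially the paper's argument. Your determinant computation $E/E_{n-1}\cong \det E\otimes(\det E_{n-1})^{-1}\cong LK_C^{-(n-1)}(D)$ actually justifies the bottom row of the paper's commutative diagram, which the paper only asserts.

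The real difference is in Step 1. The paper uses explicit column reduction. If $\boldsymbol a^{(i-1)}(0)=\sum_{k\le i-2}c_k\boldsymbol a^{(k)}(0)$, subtracting these constant combinations turns columns $i,\dots,n$ of the Wronskian matrix into $z\mathbf v,(z\mathbf v)',\dots,(z\mathbf v)^{(n-i)}$. The identity $(z\mathbf v)\wedge(z\mathbf v)'\wedge\cdots\wedge(z\mathbf v)^{(m-1)}=z^{m}\,\mathbf v\wedge\cdots\wedge\mathbf v^{(m-1)}$ then extracts the factor $z^{n-i+1}$. This is exactly what your ``cleaner version'' induction needs to become a proof. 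As written, that sketch ($W_n=W_m\wedge R_n$ with $W_m(p)=0$) only gives order $1$, so it is your fallback that carries Step 1.

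The fallback replaces the computation by the standard formula $\operatorname{ord}_p\operatorname{Wr}=\sum_j(\mu_j-(j-1))$ for the vanishing sequence of the $n$-dimensional span of $a_1,\dots,a_n$. It also needs the observation that the rank of $\boldsymbol a(p),\dots,\boldsymbol a^{(m-1)}(p)$ equals $\#\{k:\mu_k\le m-1\}$; this is easy in an adapted basis, but you should write it out. Your route costs citing a standard fact, but it buys more: it shows that reducedness of $D$ forces the vanishing sequence at each $p<D$ to be exactly $0,1,\dots,n-2,n$, and it gives the sharp bound $n-m+1\ge 2$ directly.

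By contrast, the paper's displayed identity carries $z^{n-i}$ with only $n-1$ columns and concludes order at least $n-i>1$. As written, that would not exclude $i=n-1$; its own computation actually yields $n-i+1$, matching your bound.

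One small inaccuracy in your Step 3: on the de Rham side $E_{n-1}$ is not split as $\bigoplus_{s<n}LK_C^{-(s-1)}$, because $s\mapsto\nabla^k s$ is not $\mathcal O$-linear. It only carries a filtration with these graded pieces. That is all your determinant computation uses, so the conclusion stands.
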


\begin{proof}
For each \(p < D\), choose a local coordinate \(z\), such that \(p\) corresponds to \( z = 0\). 
For simplicity, consider the inclusion \(\mathcal{F}_i \hookrightarrow E\) as sheaves,
and let \(I(p)\) be the smallest index \(i\) such that 
\(\dim \mathcal{F}_i\mid_p < i\). 
We claim that \(I(p) = n\), for all $p < D$.

By \eqref{e:wronskian comp}, definition of $D_i(\nabla)$ 
and the fact that $D$ is reduced, the Wronskian 
\(\operatorname{Wr}(a_1, \dots, a_n)\) 
has a simple zero at \( z = 0\).
By way of contradiction, assume \(I(p)=i < n\). By \eqref{eq:def-Fi}, there exist constants \( c_k \) such that
\[
\boldsymbol{a}^{(i-1)}(0) = \sum_{k=0}^{i-2} c_k \boldsymbol{a}^{(k)}(0).
\]
Replacing the \(i\)-th column of the Wronskian matrix by
\[
\boldsymbol{a}^{(i-1)}(z) 
\;\longmapsto\; 
\boldsymbol{a}^{(i-1)}(z) - \sum_{k=0}^{i-2} c_k\, \boldsymbol{a}^{(k)}(z)
= z \, \mathbf{v}(z),
\]
for some column vector \(\mathbf{v}(z)\), we obtain
\[
\operatorname{Wr}(a_1, \dots, a_n)(z) 
= z^{\,n - i} \,
\det\!\left(
\boldsymbol{a}(z), 
\dots, 
\boldsymbol{a}^{(i-2)}(z), 
\boldsymbol{v}(z), 
\boldsymbol{v}'(z), 
\dots, 
\boldsymbol{v}^{(n - i - 1)}(z)
\right).
\]
This shows that 
\(\operatorname{Wr}(a_1, \dots, a_n)\) 
vanishes to order at least \(n - i > 1\) at \(z = 0\), 
contradicting the fact that the Wronskian has a simple zero there. 
Therefore, \(I(p) = n\), which implies that each $\mathcal{F}_i$ is a subbundle $E_i \subset E$, for $i<n$.

Since at $p < D$, the sheaf inclusion
$\mathcal{F}_n \hookrightarrow E$
evaluates to $E_{n-1}\mid_p$, 
we have $\mathcal{F}_n$ as the result of consecutive 
lower Hecke transforms of $E$ at $E_{n-1}\mid_p < E\mid_p$ 
for each $p < D$.
It follows from \eqref{eqn-induced-graded-morphism} that 
$$
\mathcal{F}_i / \mathcal{F}_{i-1} \cong L K_C^{-(i-1)}, 
\qquad \qquad i = 1, \dots, n.
$$
The following diagram
\[
\begin{tikzcd}[column sep=small, row sep=small]
0 \arrow[r] 
  & E_{n-1} \arrow[r] \arrow[d, equal] 
  & \mathcal{F}_{n} \arrow[r] \arrow[d] 
  & L K_C^{-(n-1)} \arrow[r] \arrow[d] 
  & 0 \\
0 \arrow[r] 
  & E_{n-1} \arrow[r] \arrow[d] 
  & E \arrow[r] \arrow[d] 
  & L K_C^{-(n-1)}(D) \arrow[r] \arrow[d] 
  & 0 \\ 
  & 0 \arrow[r] 
  & L K^{-(n-1)}(D)\!\big|_{D} \arrow[r, equal] 
  & L K^{-(n-1)}(D)\!\big|_{D}  \arrow[r] 
  & 0
\end{tikzcd}
\]
is commutative. 
This gives the s.e.s. \eqref{eqn-Fn-E}.
\end{proof}

\subsection{Opers with apparent singularities}

In this section, we introduce the subvarieties $\mathbb L_{dR}(L,D)$ (resp.\ $\mathbb L_{dR}(D)$) of the de Rham moduli spaces $\mathcal M_{dR}(n)$ (resp.\ $\mathcal{M}_{dR}(n, \mathcal{O}_C)$). Later, we will show (\Cref{thm: lag de rham}) that these are the de Rham counterparts of the Lagrangian subvarieties  $\mathbb L_H(L,D)$ (resp.\ $\mathbb L_H(D)$) on the Higgs side introduced in \Cref{def: Lag in Higgs}.

In the $\mathrm{SL}_n$ case, we identify the moduli of triple $\widetilde{\mathbb{L}_{dR}}(L,D)$ with the moduli of opers with prescribed coweight
\(
\mathrm{Op}_{\mathrm{PSL}_n,D}(C)_{\boldsymbol{\omega}_{n-1}^{\vee}},
\)
and show that such triples can be further identified with a differential operator with vanishing principle symbol at $D$. We do not treat the $\mathrm{GL}_n$ case since it is completely analogous.

\begin{definition}\label{def: Lag in dR}
\begin{itemize}
\item Given a triple $(L \overset{i}{\hookrightarrow} E, \nabla)$ such that $s_i(\nabla)$ does not identically vanish,
we say that the divisor $D_i(\nabla)$ is the divisor of \textit{apparent singularities} associated to the triple.
\item 
Let $G = \mathrm{GL}_n$.
Given a line bundle $L$ and an effective divisor $D$ of degree 
$\deg(D) = n(n-1)(g-1) - n \deg(L)$, 
let \(\mathbb{L}_{dR}(L, D)\) be the subvariety of \(\mathcal{M}_{dR}(n)\) given by 
\begin{align}\label{eqn-L_D}
    \mathbb{L}_{dR}(L, D) = 
\left\{ (E, \nabla) \in \mathcal{M}_{dR}(n) 
\mid (E, \nabla) \text{ irreducible}, \exists i: L \hookrightarrow E, \text{ } D_i(\nabla) = D\right\}.
\end{align}
\item Let $G = \mathrm{SL}_n$.
Given an effective divisor $D$ and a line bundle $L$ such that 
$L^n \simeq K_C^{n \choose 2}(-D)$,
let \(\mathbb{L}_{dR}(L, D)\) be the subvariety
of \(\mathcal{M}_{dR}(n, \mathcal{O}_C)\) given by \eqref{eqn-L_D}.
Let $\mathbb{L}_{dR}(D)$ be the union of all such subvarieties, i.e.
\begin{align}
\begin{split}
    \mathbb{L}_{dR}(D) &= \bigcup_{L^n \simeq K_C^{n \choose 2}(-D)} \mathbb{L}_{dR}(L, D) \\
    &= 
\left\{ (E, \nabla) \in \mathcal{M}_{dR}(n, \mathcal{O}_C) 
\mid (E, \nabla) \text{ irreducible}, \exists i: L \hookrightarrow E, D_i(\nabla) = D\right\}.
\end{split}
\end{align}

\end{itemize}
\end{definition}

In the following proposition, we identify the space
\(
\mathrm{Op}_{\mathrm{PSL}_n,D}(C)_{\boldsymbol{\omega}_{n-1}^{\vee}}
\)
with the moduli space of triples.
Here
\(
\boldsymbol{\omega}_{n-1}^{\vee}:=(\omega_{n-1}^\vee,\dots,\omega_{n-1}^\vee)
\)
where $\omega_i^{\vee}$ denotes the $i$-th fundamental coweight of $\mathfrak{sl}_n$.

\begin{proposition}\label{identify oper}
Fix a line bundle $L$ satisfying $L^{\otimes n}\simeq K_C^{\binom{n}{2}}(-D)$. Then there is a natural isomorphism
\begin{equation}\label{e: identify oper}
\widetilde{\mathbb{L}_{dR}}(L,D):=\bigl\{([i]\colon L\hookrightarrow E,\nabla)\mid D_i(\nabla)=D \bigr\} 
\;\cong\;
\mathrm{Op}_{\mathrm{PSL}_n,D}(C)_{\boldsymbol{\omega}_{n-1}^{\vee}}
.
\end{equation}
\end{proposition}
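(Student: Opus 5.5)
We construct maps in both directions and check that they are mutually inverse. Throughout we use the local description of $\mathrm{Op}_{\mathrm{PSL}_n,D}(C)_{\boldsymbol{\omega}_{n-1}^{\vee}}$ from \Cref{def:global-apparent-oper} and the characterization of \Cref{lem:global-apparent-mono}. That lemma says these are exactly the regular-singular opers of residue type $-\omega^\vee_{n-1}-\check\rho$ with trivial local monodromy at each $p\in D$.

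\emph{From triples to opers.} Let $([i]\colon L\hookrightarrow E,\nabla)$ satisfy $D_i(\nabla)=D$. By \Cref{l:lower hecke}, on $C\setminus D$ the sheaves $\mathcal F_1\subset\cdots\subset\mathcal F_n=E$ form a full flag. By \eqref{eqn-induced-graded-morphism}, $\nabla$ induces isomorphisms on the graded pieces. Hence $(E,\nabla,\{\mathcal F_j\})|_{C\setminus D}$ is an oper, and it projectivizes to a $\mathrm{PSL}_n$-oper on $C\setminus D$.

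To see how it extends over $p\in D$, use the frame adapted to \eqref{e: filtration}, in which $\nabla$ has the normal form \eqref{eqn-nabla-filtration}. All subdiagonal entries equal $1$ except the last, which is $s_i(\nabla)$ and vanishes simply at $p$ because $D$ is reduced. Since $\langle\alpha_j,\omega^\vee_{n-1}\rangle=\delta_{j,n-1}$, this is precisely the local form \eqref{eq:global-apparent-oper} with $\check\lambda_i=\omega^\vee_{n-1}$, up to normalizing $s_i(\nabla)=z\cdot(\text{unit})$ by a diagonal gauge. The resulting oper is well defined up to the $\mathbb C^\ast$-rescaling of $i$ (hence the class $[i]$) and up to isomorphism of triples. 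The connection $\nabla$ is holomorphic on all of $E$, so the monodromy around $p$ is trivial. This agrees with \Cref{lem:global-apparent-mono}, although the local form already suffices.

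\emph{From opers to triples.} Let a $\mathrm{PSL}_n$-oper on $C\setminus D$ be locally of the form \eqref{eq:global-apparent-oper} at each $p_k$. First lift it to $\mathrm{SL}_n$ on $C\setminus D$, with the first filtration piece isomorphic to $L|_{C\setminus D}$. Then glue the local models \eqref{eq:global-apparent-oper} near each $p_k$; their connection matrices are holomorphic, giving a holomorphic bundle $E$ with holomorphic connection $\nabla$. The first flag piece extends across $D$ as the line subbundle spanned by the first frame vector. In this frame the Wronskian $s_i(\nabla)$ is the product of the subdiagonal entries, which is $z$ to first order. So $D_i(\nabla)=D$ near each $p_k$ and $D_i(\nabla)$ is empty elsewhere. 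This forces $L^n\cong K_C^{\binom n2}(-D)\det E$, and $\det E\cong\mathcal O_C$ up to the $n$-th root ambiguity that was fixed by choosing $L$.

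The two constructions are mutually inverse. On $C\setminus D$ both directions are the classical equivalence between opers and flags generated by a line subbundle. Across $D$, a triple is determined by its restriction to $C\setminus D$ together with the local extension data, and this data is determined by the normal form.

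\emph{Main obstacle.} The hardest step is showing that the gauge class of the local model determines the extension $(E,L)$ canonically and uniquely. The point is that the residual gauge freedom in \eqref{eq:global-apparent-oper} is by $B(\mathcal O)$, which preserves both the holomorphic lattice and its first line. I would try to check this by a direct computation with the form $\partial_z+p_{-1}+(z-1)E_{n,n-1}+u(z)$. One must also verify the determinant and twist bookkeeping, namely that $\mathcal F_n\subset E$ as in \eqref{eqn-Fn-E} recovers the jet-bundle oper of \Cref{p:oper bundle out D} with the correct $L$.
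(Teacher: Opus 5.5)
Your forward direction (triples to opers) matches the paper's. The filtration and normal form of \Cref{l:lower hecke} have $s_i(\nabla)$ as the only non-unit subdiagonal entry, and this gives \eqref{eq:global-apparent-oper} with $\check\lambda_i=\omega^\vee_{n-1}$.

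\textbf{The gap is in the converse}, at the step ``lift to $\mathrm{SL}_n$ on $C\setminus D$, then glue the local models''.

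\begin{itemize}
\item Gluing in $\mathrm{SL}_n$ needs the comparison gauge transformations on the punctured discs to be single-valued and $\mathrm{SL}_n$-valued. For the wrong lift they are not.
\item Take a frame in which the oper on the punctured disc has the form $\partial_z+p_{-1}+v(z)$, as in \eqref{e: mero oper form dr lag}. Up to unipotent factors, the map to the local model is $z^{\pm\omega^\vee_{n-1}}$, where $z^{\omega^\vee_{n-1}}=z^{1/n}\,\mathrm{diag}(1,\dots,1,z^{-1})$. This is single-valued in $\mathrm{PSL}_n$ but has monodromy a primitive $n$-th root of unity in $\mathrm{SL}_n$. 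Compare the factorization $G_i=(z-u_i)^{\omega_1^\vee}N_i$ the paper uses later.
\item Suppose you glue instead with a single-valued $\mathrm{GL}_n$-lift $\tilde g$. Then $\det\tilde g=z^m\cdot(\text{unit})$ with $m\not\equiv 0 \pmod n$.
\item Comparing traces, the glued connection in the local frame is $A+\frac1n\,\mathrm d\log(\det\tilde g)\,\mathrm{Id}$. Its residue is $\frac mn\,\mathrm{Id}\neq 0$, so your claim that the glued connection is holomorphic fails.
\end{itemize}

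The gluing works only after twisting the lift on $C\setminus D$ by a $\mu_n$-local system with primitive monodromy around every $p_k$. Such a twist exists iff $n\mid d$. Among the resulting $n^{2g}$ choices, it is pinned down by requiring $E_1|_{C\setminus D}=L|_{C\setminus D}$ as $n$-th roots of $K_C^{\binom n2}$, via $L^n\cong K_C^{\binom n2}(-D)$ and the canonical section of $\mathcal O(D)$.

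This is exactly where the hypothesis on $L$ must enter, and your proposal never uses it this way. ``First filtration piece isomorphic to $L|_{C\setminus D}$'' does not determine the lift. It also does not determine the extension of $E_1$ across $D$, which is fixed only up to $\mathcal O(\sum m_kp_k)$. So $E_1\cong L$ on $C$ is never established, and ``$\det E\cong\mathcal O_C$ up to the $n$-th root ambiguity'' is not an argument.

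The paper avoids this by extending first and lifting second:
\begin{enumerate}
\item It treats the oper as a $\mathrm{PGL}_n$-bundle on all of $C$ with $B$-reduction and holomorphic connection. The extension across $D$ is determined by the restriction to $C\setminus D$ through the inclusion in \Cref{lem:global-apparent-mono}; this also settles the uniqueness question you flagged as the main obstacle.
\item It lifts this to a filtered $\mathrm{GL}_n$-bundle using $H^2(C,\mathbb{G}_m)=0$.
\item It twists by a line bundle so that $E_1\cong L$.
\item It reads off $\det E\cong E_1^{n}K_C^{-\binom n2}(D)\cong\mathcal O_C$ from the graded pieces.
\item It lifts the connection through the Atiyah sequence.
\end{enumerate}
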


\begin{proof}
By \Cref{l:lower hecke}, each triple $([i]\colon L\hookrightarrow E,\nabla)$ with
$D_i(\nabla)=D$ determines a filtration $E^\bullet$ such that
$\mathbb P(E,\nabla,E^\bullet)$ is a $\mathrm{PSL}_n$-oper of type $\boldsymbol{\omega}_{n-1}^{\vee}$.
This defines a map from the right-hand side of \eqref{e: identify oper} to the left.

Conversely, let $(\mathcal F,\nabla,\mathcal F_B)\in
\mathrm{Op}_{\mathrm{PSL}_n,D}(C)_{\boldsymbol{\omega}_{n-1}^{\vee}}$.
Identifying $\mathrm{PSL}_n\simeq\mathrm{PGL}_n$, consider the central extension
\[
1\longrightarrow \mathbb G_m \longrightarrow \mathrm{GL}_n \longrightarrow \mathrm{PGL}_n \longrightarrow 1 .
\]
The obstruction to lifting the $\mathrm{PGL}_n$-bundle $\mathcal F$ (together with its $B$-reduction)
to a $\mathrm{GL}_n$-bundle with Borel reduction lies in
\(
H^2(C,\mathbb G_m)=\mathrm{Br}(C),
\)
the Brauer group of $C$, which vanishes for a smooth complex curve. Hence $(\mathcal F,\mathcal F_B)$
lifts to a $\mathrm{GL}_n$-bundle with full flag $(\widetilde{\mathcal F},\widetilde{\mathcal F_B})$, equivalently to a rank-$n$ vector bundle $E$ with a filtration
$E^\bullet$. 

The generic oper conditions imply canonical isomorphisms
\[
E_i/E_{i-1}\cong E_1\otimes K_C^{i-1}\quad (1\le i\le n-1), \quad E_n/E_{n-1}\cong E_1\otimes K_C^{n-1}(D),
\]
and therefore
\(
\det(E)\cong E_1^{\otimes n}\otimes K_C^{-\binom{n}{2}}(D).
\)
Fixing $E_1\simeq L$ and using the assumption on $L$, we obtain $\det(E)\cong\mathcal O_C$, so $E$
admits a reduction of structure group to $\mathrm{SL}_n$. This reduction is unique under the constraint $E_1=L$. Accordingly, we regard $(\widetilde{\mathcal F},\widetilde{\mathcal F_B})$ as the $\mathrm{SL}_n$ object.

It remains to lift the connection.  
Recall that a connection on $\mathcal F$ is equivalently a splitting of the Atiyah exact sequence
\[
0\longrightarrow \mathrm{ad}(\mathcal F)
\longrightarrow \mathcal A_{\mathcal F}
\longrightarrow T_C
\longrightarrow 0,
\]
where $\mathrm{ad}(\mathcal F)=\mathcal F\times_{\mathrm{PSL}_n}\mathfrak{sl}_n$ is the adjoint bundle and
$\mathcal A_{\mathcal F}=(T\mathcal F)/\mathrm{PSL}_n$ is the Atiyah algebroid.
Since $\mathrm{SL}_n\to\mathrm{PSL}_n$ is a finite central quotient, we have canonical identifications
\[
\mathrm{ad}(\mathcal F)\cong \mathrm{ad}(\widetilde{\mathcal F}),\qquad
\mathcal A_{\mathcal F}\cong \mathcal A_{\widetilde{\mathcal F}},
\]
where $\widetilde{\mathcal F}$ denotes the $\mathrm{SL}_n$-lift.
Thus the splitting $\nabla:T_C\to\mathcal A_{\mathcal F}\cong \mathcal A_{\widetilde{\mathcal F}}$ uniquely lifts to a connection on $E$ compatible
with the filtration.

We therefore obtain a unique $\mathrm{SL}_n$-oper $(E,\nabla,E^\bullet)$ with $E_1=L$, and
$(E,\nabla)\in\mathbb L_{dR}(L,D)$. The two constructions are inverse to each other, proving
\eqref{e: identify oper}.
\end{proof}

\begin{proposition}\label{p: generic finite dR}
Fix a line bundle \(L\) such that
\(
L^{\otimes n}\simeq K_C^{\otimes \binom{n}{2}}(-D).
\)
Consider the natural forgetful morphism
\[
\pi(L,D)\colon
\widetilde{\mathbb{L}_{dR}}(L,D)
\longrightarrow
\mathbb{L}_{dR}(L,D),
\qquad
([i]\colon L\hookrightarrow E,\nabla)\longmapsto (E,\nabla),
\]
then for generic \(D\in \operatorname{Hilb}^d(C)\), the morphism \(\pi(L,D)\) is generically finite.
\end{proposition}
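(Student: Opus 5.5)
Given a fixed irreducible $(E,\nabla)\in\mathbb{L}_{dR}(L,D)$, the fiber of $\pi(L,D)$ consists of embeddings $[i]\colon L\hookrightarrow E$ up to scaling with $D_i(\nabla)=D$, i.e.\ of points of $\mathbb{P}H^0(C,\mathrm{Hom}(L,E))$ at which the Wronskian section $s_i(\nabla)$ vanishes exactly on $D$. The plan is to show this is a finite set over a dense open subset of the image by a dimension count. We compare $\dim\widetilde{\mathbb{L}_{dR}}(L,D)$, computed through \Cref{identify oper} and \Cref{thm: singular oper space}, with the dimension of the image. Generic finiteness is equivalent to the statement that the image has the same dimension as the source.

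\textbf{Step 1: dimension of the source.} By \Cref{identify oper}, $\widetilde{\mathbb{L}_{dR}}(L,D)\cong\mathrm{Op}_{\mathrm{PSL}_n,D}(C)_{\boldsymbol\omega^\vee_{n-1}}$. By \Cref{lem:global-apparent-mono}, this is the locus in the affine space $\mathrm{Op}^{\mathrm{RS}}_{\mathrm{PSL}_n,D}(C)_{\boldsymbol\omega^\vee_{n-1}}$ cut out by trivial local monodromy at each $p_k$. The fixed residue conditions and the trivial-monodromy conditions are polynomial in the Laurent coefficients of the $w_j$ at $p_k$. We count them locally in Drinfeld--Sokolov gauge: at each point, after fixing the exponents, the space of local coefficients is cut down by $n$ free parameters per point minus one free "residue parameter" $\nu_k$. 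We then show the conditions are transverse for generic $D$, by exhibiting one explicit point, for example a perturbation of a Hecke-type construction, where the Jacobian has full rank. The expected outcome is
\[
\dim\widetilde{\mathbb{L}_{dR}}(L,D)=(n^2-1)(g-1)=\tfrac12\dim\mathcal{M}_{dR}(n,\mathcal O_C),
\]
since $\sum_j\dim H^0(K^j(jD))$ grows like $\binom{n+1}{2}d-d$ and the local constraints remove exactly the $d$-dependent part.

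\textbf{Step 2: the image has full dimension.} We show the differential of $\pi(L,D)$ is injective at a generic point. A tangent vector in its kernel is a first-order deformation of $[i]$ inside $H^0(\mathrm{Hom}(L,E))$, with $(E,\nabla)$ fixed, that keeps $D_i(\nabla)=D$. For $\dot\imath\in H^0(\mathrm{Hom}(L,E))$, the variation of the Wronskian is $\dot s=\sum_k\det(\ldots,\nabla^{k}\dot\imath,\ldots)$, and this must vanish on $D$ to first order, i.e.\ be proportional to $s_i(\nabla)$. We would use the filtration of \Cref{l:lower hecke}: near $p\in D$, write $\dot\imath$ in the adapted frame of \eqref{eqn-nabla-filtration}. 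The condition "$\dot s$ is proportional to $s$" forces, at each $p_k$, one linear condition on the component of $\dot\imath$ along $E/E_{n-1}$. This gives $d$ linear conditions on $H^0(\mathrm{Hom}(L,E))/\mathbb{C}i$. For $D$ generic of large degree, these conditions should cut the space down to zero. Concretely, we would show that evaluation of the relevant component at $d$ generic points is injective on $H^0(\mathrm{Hom}(L,E))$, using that $\deg\mathrm{Hom}(L,E)$ is controlled by $\deg L$, which is determined by $d$, so that $h^0$ is at most of order $d/n$-ish. Genericity of $D$ enters so that the points avoid the base-point locus of this evaluation system.

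\textbf{Main obstacle.} The hard step is Step 2, specifically justifying that the $d$ pointwise conditions are independent on the finite-dimensional space $H^0(\mathrm{Hom}(L,E))$. This is delicate because $D$ is tied to $(E,\nabla)$: we cannot choose $D$ independently of the point. We would first try a semicontinuity argument. It suffices to find one triple with the given generic $D$ where the fiber is zero-dimensional, and finiteness then propagates to a dense open subset. The special triple would be produced by deforming from the Higgs side via the Hodge moduli space and $\lambda$-connections, where \Cref{p: relation to Hecke} already gives finiteness of $\mathfrak r_D$. Upper semicontinuity of fiber dimension along the family $\lambda\to0$ would then give generic finiteness at $\lambda=1$ for generic $D$.
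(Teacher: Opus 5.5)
The argument that actually closes your proof is the fallback in your last paragraph, and it is the paper's proof. You place the triples in the family $X$ of $\lambda$-connections with a line subbundle and a divisor over $\mathbb{C}_\lambda\times\operatorname{Hilb}^d(C)$, use finiteness of the forgetful map at $\lambda=0$, and spread it by upper semicontinuity of fiber dimension. Steps 1 and 2 are not needed for this, and Step 2 would not work as written:
\begin{itemize}
\item The bundle $\mathrm{Hom}(L,E)=L^{-1}E$ has degree $d-n(n-1)(g-1)$ and $\chi(L^{-1}E)=d-n^2(g-1)$, so $h^0$ grows like $d$, not like $d/n$.
\item The conditions $\dot s(p_k)=0$ factor through $\dot s\in H^0(C,K_C^{\binom n2}L^{-n})\cong H^0(C,\mathcal{O}_C(D))$. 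There the sections vanishing on $D$ are just $\mathbb{C}s$, so the conditions have rank $h^0(C,\mathcal{O}_C(D))-1$ (which is $d-g$ once $d>2g-2$), not $d$.
\item Each condition constrains the $(n-1)$-jet of $\dot\imath$ at $p_k$: the classes of several $\nabla^k\dot\imath(p_k)$ modulo $E_{n-1}|_{p_k}$ enter, not a single component of $\dot\imath(p_k)$.
\item $\dot\imath=i$ always satisfies them.
\end{itemize}
So injectivity can only hold modulo $\mathbb{C}i$, and that is exactly the nontrivial statement. A pointwise-evaluation argument cannot give it because, as you observe, $D$ is not generic relative to $(E,\nabla)$.

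Within the fallback, the main missing ingredient is the passage from $\lambda$ near $0$ to $\lambda=1$. Semicontinuity only yields an open subset of $X$ around the good $\lambda=0$ points, hence good points for $\lambda$ near $0$. Nothing so far prevents the bad locus from containing the whole slice $\lambda=1$. The paper bridges this with the $\mathbb{C}^\ast$-action $(\lambda,\nabla_\lambda)\mapsto(t\lambda,t\nabla_\lambda)$, which multiplies $s_i(\nabla_\lambda)$ by $t^{\binom n2}$. Hence it preserves $D$, commutes with the forgetful map, and identifies the forgetful map at any $\lambda\neq0$ with the one at $\lambda=1$. The good open set is therefore $\mathbb{C}^\ast$-stable and reaches $\lambda=1$.

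Two smaller points. First, the input at $\lambda=0$ is finiteness of the forgetful map $([i],E,\phi)\mapsto(E,\phi)$ itself, not of $\mathfrak r_D$, which is a map to $\mathrm{Hit}_L$. This finiteness follows from \Cref{prop-Nm(D)}: on a smooth spectral curve $[i]$ is recovered from $\widetilde D_i(\phi)$, and only finitely many effective $\widetilde D$ satisfy $\pi_\ast\widetilde D=D$. Second, for the open set to meet $\lambda\neq0$ at all, the good $\lambda=0$ triples must lie in the closure of $X\cap\{\lambda\neq0\}$ within the part $X_D$ of $X$ over a fixed $D$. For instance, it suffices that $X_D$ is flat over $\mathbb{C}_\lambda$ with every component meeting the smooth-spectral-curve locus at $\lambda=0$. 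The paper's sketch also leaves this implicit, but you should state it.
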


\begin{proof}
Recall the construction of Hodge moduli space
\(
\mathcal{M}_{\mathrm{Hod}}
=
\{(\lambda,E,\nabla_\lambda)\}/\!\sim,
\)
where \(\nabla_\lambda\) is a \(\lambda\)-connection. Consider the  moduli space of $\lambda$-connections together with line subbundles,
\[
\widetilde{\mathcal{M}}_{\mathrm{Hod}}
=
\bigl\{
(\lambda,[i]\colon L\hookrightarrow E,E,\nabla_\lambda)
\bigr\},
\]
where the embedding \([i]\colon L\hookrightarrow E\) is taken up to scaling.

Now define
\[
X =\Bigl\{
(\lambda,[i],E,\nabla_\lambda,D)
\;\Big|\;
\operatorname{div}\bigl(s_i(\nabla_\lambda)\bigr)=D
\Bigr\}\subset \widetilde{\mathcal{M}}_{\mathrm{Hod}}\times \operatorname{Hilb}^d(C).
\]
By construction, \(X\) is an algebraic family. Let
\(
F\colon X\to \mathcal{M}_{\mathrm{Hod}}\times \operatorname{Hilb}^d(C)
\)
be the natural forgetful map
and let
\(
Y:=F(X)\subset \mathcal{M}_{\mathrm{Hod}}\times \operatorname{Hilb}^d(C)
\)
be its image.

For each \((\lambda,D)\in \mathbb{C}\times \operatorname{Hilb}^d(C)\), one has
\[
F_{(\lambda,D)}\colon
X_{(\lambda,D)}= \widetilde{\mathbb{L}_{\lambda}}(L,D) = 
\bigl\{
([i]\colon L\hookrightarrow E,\nabla_\lambda)
\mid
D_i(\nabla_\lambda)=D
\bigr \}
\longrightarrow
Y_{(\lambda,D)}=\mathbb{L}_{\lambda}(D).
\]
In particular, by \Cref{identify oper}
\[
X_{(1,D)}
=
\operatorname{Op}_{\mathrm{PSL}_n,D}(C)_{\boldsymbol{\omega}_{n-1}^{\vee}},
\qquad
Y_{(1,D)}
=
\mathbb{L}_{dR}(L,D),
\qquad
F_{(1,D)}=\pi(L,D).
\]
On the Higgs side, namely at \(\lambda=0\), the forgetful morphism
\(
F_{(0,D)}\colon \widetilde{\mathbb{L}_{0}}(L,D) \to \mathbb{L}_0(L,D)
\)
is finite and generically one-to-one for generic \(D\).

Let
\(
Z
:=
\bigl\{
y\in Y \mid \dim F^{-1}(y)\ge 1
\bigr\}.
\)
Then the closure of \(F^{-1}(Z)\) is a closed subvariety of positive codimension in \(Y\). Therefore, for generic $D \in \operatorname{Hilb}^d(C)$ and $\lambda$ sufficiently small, the morphism
\(
F_{(\lambda,D)}
\)
is generically finite.

Finally, for \(\lambda\neq 0\), the fibers of the Hodge family are naturally identified by rescaling the
\(\lambda\)-connection. Therefore the generic-finiteness of \(F_{(\lambda,D)}\) for one nonzero \(\lambda\)
is equivalent to that of \(F_{(1,D)}=\pi(L,D)\). It follows that for generic \(D\in \operatorname{Hilb}^d(C)\), the morphism $\pi(L,D)$
is generically finite, which proves the statement.
\end{proof}

\paragraph{Associated differential operators}
Given a triple \( (i: L \hookrightarrow E, \nabla) \) 
with $D_i(\nabla)=D$ and $(E,\nabla)$ irreducible, we 
define an associated differential operator as follows.

Consider the dual connection $(E^\vee, \nabla^\vee)$ with a local system \( {({E}^{\nabla})}^{\vee} := \ker(\nabla^\vee)\)
and a nonzero injective morphism \( i^\vee: {({E}^{\nabla})}^{\vee} \rightarrow L^{-1} \). 
The injectivity follows from the irreducibility.
Following the same construction as in \Cref{p:oper equ d.o.} (which was also considered in \cite{ohtsuki82, iwasaki92, yoshi97, We15}), 
one can define a global differential operator of degree $n$,
\begin{subequations}\label{def: ass diff oper 0}
\begin{equation}\label{def: ass diff oper}
    \mathbf{D}: L^{-1} \longrightarrow L^{-(n+1)} \otimes K_C^{\binom{n+1}{2}} \cong L^{-1} \otimes K_C^{n}(D),
\end{equation}
which is given by the Wronskian determinant over any open set \( (U,z) \subset C \), 
\begin{equation}\label{e:def diff oper D}
\mathbf{D} y = \det \begin{pmatrix}
i^\vee(\mathbf{v}_1) & \cdots & i^\vee(\mathbf{v}_n) & y \\
i^\vee(\mathbf{v}_1)' & \cdots & i^\vee(\mathbf{v}_n)' & y' \\
\vdots & & \vdots & \vdots \\
i^\vee(\mathbf{v}_1)^{(n)} & \cdots & i^\vee(\mathbf{v}_n)^{(n)} & y^{(n)}
\end{pmatrix} \otimes (\mathrm{d}z)^{\binom{n+1}{2}}.
\end{equation}
Here \( \left( \mathbf{v}_i\right) \) is a local frame for \( {({E}^{\nabla})}^{\vee} \) and \( y = y(z) \in \Gamma(U, L^{-1}) \). The derivatives are taken with respect to \( z \).
\end{subequations}

We say that two triples $(L \overset{i_1}{\hookrightarrow} E_1, \nabla_1)$ and $(L \overset{i_2}{\hookrightarrow} E_2, \nabla_2)$
are equivalent if there is an isomorphism of holomorphic connections 
$(E_1, \nabla_1) \overset{\cong}{\rightarrow} (E_2, \nabla_2)$ 
that commutes with $i_1$ and $i_2$.

\begin{proposition}\label{SL diff oper}
There is a one-to-one correspondence between $(L \overset{i}{\hookrightarrow} E, \nabla)$ and the differential operator
\(
\mathbf{D}\in \operatorname{Diff}_C^{n} \bigl(L^{-1},\,L^{-1}K_C^{n}(D)\bigr)^\circ.
\)
Here $\operatorname{Diff}^{n}_{C} \bigl(L^{-1},\,L^{-1}K_C^{n}(D)\bigr)^{\circ}$ denotes the space of order-$n$ operators
$L^{-1}\to L^{-1}K_C^{n}(D)$
whose principal symbol $\sigma_n(\mathbf{D})$ has divisor equal to $D$.
\end{proposition}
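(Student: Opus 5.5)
We construct maps in both directions and check that they are mutually inverse.

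\emph{From triples to operators.} Start from a triple $(L\overset{i}{\hookrightarrow}E,\nabla)$ with $D_i(\nabla)=D$ and $(E,\nabla)$ irreducible, and take the operator $\mathbf D$ of \eqref{def: ass diff oper 0}. We first verify that the local Wronskian expression \eqref{e:def diff oper D} glues to a global operator $L^{-1}\to L^{-(n+1)}K_C^{\binom{n+1}{2}}$. This goes as in \Cref{lemma-morphsim-well-defined}: changing the flat frame $(\mathbf v_i)$ multiplies the determinant by a constant, and this is absorbed into the $\det$ factor. Changing the coordinate or the trivialisation of $L^{-1}$ multiplies it by the appropriate transition factor. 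The identification with $L^{-1}K_C^n(D)$ uses $L^{-n}\cong K_C^{-\binom n2}(D)$. The principal symbol is the coefficient of $y^{(n)}$, namely $\pm\operatorname{Wr}(i^\vee(\mathbf v_1),\dots,i^\vee(\mathbf v_n))$. We then show this equals $s_i(\nabla)$ up to the canonical identifications. Pair the dual flat frame against $i(s)=\sum a_j\mathbf e_j$; then $i^\vee(\mathbf v_j)=a_j$ in the dual basis. So the symbol has divisor exactly $D_i(\nabla)=D$, and $\mathbf D$ lies in $\operatorname{Diff}^n_C(L^{-1},L^{-1}K^n_C(D))^\circ$. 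Equivalent triples give the same operator, because an isomorphism commuting with the inclusions carries flat frames to flat frames.

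\emph{From operators to triples.} Given $\mathbf D$ whose symbol has divisor $D$, we take $E^\vee$ to be the $\mathcal D_C$-module generated by $L^{-1}$ modulo $\mathbf D$. Concretely, set $E^\vee:=(\mathcal D_C\otimes L^{-1}\cdot$ restricted to order $\le n-1)$ away from $D$. Equivalently, $E$ is the bundle whose local system is the sheaf of solutions of $\mathbf D y=0$, with $i^\vee:(E^\nabla)^\vee\to L^{-1}$ the evaluation map. On $C\setminus D$ the symbol is invertible, so the solution sheaf is a rank-$n$ local system. This gives a flat bundle together with the jet filtration of \Cref{p:oper bundle out D}, i.e.\ an oper there. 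The key step, which we expect to be the main obstacle, is extending this across each $p<D$ to a holomorphic connection on a vector bundle $E$ with $L\hookrightarrow E$ a subbundle. Two facts must be checked at $p$. First, the local monodromy around $p$ is trivial. Second, $L$ still injects into the holomorphic extension. Near a simple zero of the symbol, $\mathbf D=z\partial_z^n+\dots$ has a regular singular point at $z=0$, with indicial roots $0,1,\dots,n-2$ and $n$. This matches the coweight $\omega^\vee_{n-1}$ in \Cref{def:global-apparent-oper}.

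We would handle this step in two ways. We would first try to verify it directly, by showing that there are $n$ holomorphic solutions, i.e.\ no logarithmic terms. If that becomes technical, we would instead appeal to \Cref{lem:global-apparent-mono} and \Cref{identify oper}. These identify such operators with $\mathrm{Op}_{\mathrm{PSL}_n,D}(C)_{\boldsymbol\omega^\vee_{n-1}}$, and hence with triples, once we know that $\mathbf D$ comes from an oper with apparent singularity. Here we expect the absence of logarithms to follow from the Wronskian normalisation, because the leading coefficient of $\mathbf D$ is itself a Wronskian of the solutions. Once monodromy is trivial, the $n$ holomorphic solutions give a trivialisation of the local system near $p$, so $E$ extends holomorphically. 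Nonvanishing of the image of $L$ follows because the evaluation $i^\vee$ is surjective at $p$: some solution is nonzero at $p$, since the indicial root $0$ occurs.

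\emph{Inverse maps.} We check that the two constructions are inverse to each other. Starting from a triple, the solutions of $\mathbf D y=0$ are spanned by the $i^\vee(\mathbf v_j)$, since $\mathbf D$ vanishes on them by the repeated-column argument, and there are $n$ independent ones by irreducibility. So the local system, and hence $(E,\nabla,i)$, is recovered. Starting from $\mathbf D$, the reconstructed Wronskian operator has the same kernel, so it agrees with $\mathbf D$ up to normalisation. The normalisation of the leading term $\sigma_n(\mathbf{D})=s_i(\nabla)$ is fixed by the determinant trivialisation $\det E\cong\mathcal O_C$ together with the $\mathrm{SL}_n$ condition.
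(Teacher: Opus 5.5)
Your forward direction matches the paper's: expanding the Wronskian gives $\sigma_n(\mathbf D)=s_i(\nabla)$. You are also right that the whole content of the converse sits at the points of $D$. But the step you flag cannot be proved, because it is false when $\operatorname{Diff}^\circ$ is cut out only by $\operatorname{div}\sigma_n(\mathbf D)=D$.

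\textbf{Why your argument at $p<D$ fails.} Near $p<D$ write $\mathbf D=z(\partial^n+a_1\partial^{n-1}+\cdots+a_n)$ with $a_k\in\frac1z\mathcal O$.
\begin{itemize}
\item The indicial roots are $0,1,\dots,n-2$ and $n-1-\operatorname{Res}_p a_1$, not automatically $0,\dots,n-2,n$. The residue $\operatorname{Res}_p(a_1\,\mathrm dz)$ does not depend on the coordinate or on the trivialisation of $L^{-1}$, and the symbol does not fix it. You can change it by adding an operator of order $n-1$ whose symbol is a section of $K_C(D)$ not vanishing at $p$; such sections exist since $d\ge 2$. It equals $-1$ for operators coming from triples only because the $y^{(n-1)}$-coefficient of the determinant \eqref{e:def diff oper D} is $-W'$.
\item Even when $a_1=-1/z$, the exponents differ by integers and logarithms are generic. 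Their absence is exactly the $n-1$ further conditions \eqref{e:recursion formula} of \Cref{p:AS for fuchsian}. For $n=2$, take $z\partial_z^2-\partial_z+c$ with $c\neq 0$. Substituting $\sum a_mz^m$ gives $m(m-2)a_m+c\,a_{m-1}=0$. Then $a_1=c\,a_0$ and $c\,a_1=0$ force $a_0=0$, so there is only a one-dimensional space of holomorphic solutions and the local monodromy is nontrivial unipotent.
\item This happens globally too. Add $\epsilon s$ to the operator of a triple, with $s\in H^0(C,K_C^2(D))$ and $s(p)\neq 0$ (such $s$ exists by Riemann--Roch). This keeps $\sigma_2$ but violates \eqref{e:recursion formula} at $p$ for all but at most two values of $\epsilon$.
\item The example above satisfies the Wronskian normalisation you want to invoke. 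That argument is circular in any case, since it presupposes a full basis of solutions at $p$. Your fallback through \Cref{lem:global-apparent-mono} and \Cref{identify oper} also assumes what is to be shown, namely that $\mathbf D$ comes from an oper with apparent singularities.
\end{itemize}

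\textbf{What is missing is a hypothesis, not an argument.} The correspondence holds only after restricting $\operatorname{Diff}^\circ$ to operators with apparent singularities along $D$. Concretely, require two things:
\begin{itemize}
\item the $\partial^{n-1}$-coefficient is minus the derivative of the $\partial^n$-coefficient, as for \eqref{e:def diff oper D};
\item the solution sheaf is a rank-$n$ local system on all of $C$.
\end{itemize}
In the charts of \eqref{e:local form of D} this means $a_1=-1/z$ together with \eqref{e:recursion formula} at every $p<D$. This is how $\operatorname{Diff}^\circ$ is actually used later, in \Cref{p:trans of diff opers} and \Cref{p:AP-to-algebraic-v}. The paper's own proof imposes it tacitly when it declares $\ker(\mathbf D)$ to be a local system.

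The first condition is also what your last paragraph needs. Two order-$n$ operators with the same kernel agree only up to the function $\sigma_n(\mathbf D)/W$, and this is constant exactly when that normalisation holds. An $\mathrm{SL}_n$ structure cannot supply it, since $E$ is built from $\mathbf D$.

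With these hypotheses built in, the converse is the paper's argument:
\begin{itemize}
\item set $E^\vee:=\ker(\mathbf D)\otimes_{\mathbb C}\mathcal O_C$ with its tautological connection;
\item the evaluation map to $L^{-1}$ is surjective, because solutions all vanishing at $p$ would have a Wronskian vanishing to order at least $n\ge 2$ there, whereas the Wronskian is a constant multiple of the symbol;
\item dualise.
\end{itemize}
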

\begin{proof}
By \eqref{e:def diff oper D}, the principal symbol of $\mathbf{D}$ is locally given by
\begin{equation}\label{eqn-principal-symbol}
\sigma_n(\mathbf{D})
=\det \begin{pmatrix}
i^\vee(\mathbf{v}_1) & \cdots & i^\vee(\mathbf{v}_n)\\
i^\vee(\mathbf{v}_1)' & \cdots & i^\vee(\mathbf{v}_n)'\\
\vdots & & \vdots\\
i^\vee(\mathbf{v}_1)^{(n-1)} & \cdots & i^\vee(\mathbf{v}_n)^{(n-1)}
\end{pmatrix}\otimes (\mathrm{d}z)^{\binom{n}{2}}.
\end{equation}
Choosing local frames $(\mathbf{e}_i)$ of $E$ dual to $(\mathbf{v}_i)$ and using \eqref{e:wronskian comp} for $s_i(\nabla)$ gives $\sigma_n(\mathbf{D})=s_i(\nabla)$. 

To show the correspondence, it remains to recover the triple from $\mathbf{D}$. Let $\ker(\mathbf{D})$ be the sheaf of local solutions; it is a local system with a natural evaluation map
\[
\mathrm{ev}:\ \ker(\mathbf{D})\otimes_{\mathbb{C}}\mathcal{O}_C \longrightarrow L^{-1}.
\]
The condition $\operatorname{div}(\sigma_n(\mathbf{D}))=D$ implies $\mathrm{ev}$ is surjective: otherwise at some $p\in C$ the map would factor through $L^{-1}(-p)$, forcing an extra zero of $\sigma_n(\mathbf{D})$ at $p$, a contradiction. Set $E^\vee:=\ker(\mathbf{D})\otimes_{\mathbb{C}}\mathcal{O}_C$ with its tautological flat connection. Dualizing these objects recovers a triple $(L \overset{i}{\hookrightarrow} E, \nabla)$, and by construction it satisfies $D_i(\nabla)=D$.
\end{proof}

\begin{remark}\label{rem-D-PSLn}[$\mathrm{PSL}_n$ differential operator]
The scalar differential operator $\mathbf D$ associated with a triple $(L\xrightarrow{i}E,\nabla)$ is 
\(
\mathbf D: L^{-1}\longrightarrow L^{-1}\otimes K_C^{\otimes n}(D),
\)
and therefore depends on the choice of the embedding $i$ up to scaling (cf.\ \eqref{e:def diff oper D}). Equivalently, twisting $L$ by a flat line bundle $N$ with $N^{\otimes n}\simeq\mathcal O_C$ changes the domain of $\mathbf D$ but not its local expression.

From the $\mathrm{PSL}_n$ point of view, this dependence is inessential. Indeed, passing from $\mathrm{SL}_n$ to $\mathrm{PSL}_n$ amounts to forgetting the choice of the $n$-th root $N$ and retaining only the induced projective generic $\mathrm{PSL}_n$--oper structure. In this setting, $\mathbf D$ is naturally regarded as a collection of compatible local scalar differential operators on $C$, defined up to twisting of its domain by such flat line bundles.

Accordingly, $n^{2g}$ distinct $\mathrm{SL}_n$-triples $(L\xrightarrow{i}E,\nabla)$ give rise to the same $\mathrm{PSL}_n$-oper and the same projective differential operator $\mathbf D$, differing only by tensoring with flat line bundles $N$ satisfying $N^{\otimes n}\simeq\mathcal O_C$.
\end{remark}

To summarize, by \Cref{identify oper,SL diff oper}, after fixing a line bundle $L$, one has the isomorphism
\begin{equation}\label{e:ident spaces}
\widetilde{\mathbb{L}_{dR}}(L,D)
\cong
\mathrm{Op}_{\mathrm{PSL}_n,D}(C)_{\boldsymbol{\omega}_{n-1}^{\vee}}
\overset{\vee}{\cong}
\mathrm{Op}_{\mathrm{PSL}_n,D}(C)_{\boldsymbol{\omega}_{1}^{\vee}}
\cong
\mathbb C^{*} \backslash \operatorname{Diff}_C^{n}\!\bigl(L^{-1},\,L^{-1}\otimes K_C^{n}(D)\bigr)^\circ / \operatorname{Jac}(C)[n].
\end{equation}
Here, \(\mathbb C^{*}\) acts by scaling the differential operator, while \(\operatorname{Jac}(C)[n]\) acts by tensoring \(L\) with an \(n\)-torsion line bundle.

\subsection{Residue parameters}
In this section, we relate the divisor of apparent singularities $D_i(\nabla)$ associated to a triple $(L \overset{i}{\hookrightarrow} E, \nabla)$ with the notion of \emph{apparent singularities} for the differential operators in $\operatorname{Diff}_C^{n}\!\bigl(L^{-1},\,L^{-1}\otimes K_C^{n}(D)\bigr)^\circ$.
We then study the differential operators via the inclusion 
of \Cref{lem:global-apparent-mono}. 
Finally, we define the \emph{residue parameters} of the apparent singularities, which play a role analogous to spectral parameters on the Higgs side.

\paragraph{Meromorphic $\mathrm{PSL}_n$--oper}
The inclusion in \Cref{lem:global-apparent-mono},
\[
\mathrm{Op}_{\mathrm{PSL}_n,D}(C)_{\boldsymbol{\omega}_{1}^{\vee}}
\subset
\mathrm{Op}_{\mathrm{PSL}_n,D}^{\mathrm{RS}}(C)
\subset
\mathrm{Op}_{\mathrm{PSL}_n}(C\setminus D),
\]
induces the inclusion on the level of differential operators
\begin{equation}\label{e:inclusion in diff operator}
\mathbf D \in \operatorname{Diff}_C^{n} (L^{-1},\,L^{-1}\otimes K_C^{n}(D))^\circ
 \to
 \mathbf D_m \in \operatorname{Diff}^n_{C\setminus D} (K_{C\setminus D}^{-\frac{n-1}{2}}, K_{C\setminus D}^{\frac{n+1}{2}}).
\end{equation}
For later use, we rigidify the section $s_i(\nabla)=\sigma_n(\mathbf{D}) \in H^0(C,\mathcal{O}(D))$ in \eqref{eqn-nabla-filtration}, for each $p_i\in D$, fix a local chart $(U_i,z)$ with $z(p_i)=0$ such that, over $U_i$, the section $\sigma_n(\mathbf{D})=z$.
The following lemma describes the relation between $\mathbf D$ and $\mathbf D_m$.

\begin{lemma}\label{p:trans of diff opers}
View $\mathbf D_m$ (resp.\ $\mathbf D$) as a collection of compatible local scalar differential operators which patch to a global operator on $C\setminus D$ (resp.\ on $C$). Fix the above local charts. Then the following hold.
\begin{itemize}
    \item 
$\mathbf D_m$ and $\mathbf D$ agree on $C\setminus D$.
Over $U_i$, one has
\begin{equation}\label{e:transform rule of mero oper0}
\mathbf D(z)
=
z^{\frac{n+1}{n}}\circ \mathbf D_m(z) \circ z^{-\frac{1}{n}} .
\end{equation}

\item Locally, write
\begin{equation}\label{local Dm}
\mathbf D_m
:= \partial^n + Q_2(z)\partial^{n-2} + \cdots + Q_n(z).
\end{equation}
Then, globally, each coefficient $Q_i$ is in an affine space modeled on
\(
H^0\!\left(C,K_C^i(iD)\right).
\)
\end{itemize}
\end{lemma}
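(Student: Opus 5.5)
The plan is to compare the two operators as maps between line bundles that differ only over $D$, and track how the trivializations change near each $p_i$. On $C\setminus D$ the principal symbol $\sigma_n(\mathbf D)=s_i(\nabla)$ is nowhere vanishing, so over $C\setminus D$ we may divide $\mathbf D$ by its symbol. The result is a monic order-$n$ operator $L^{-1}\to L^{-1}K_C^n$. Since $L^{n}\cong K_C^{\binom n2}(-D)$, there is a canonical identification $L^{-1}|_{C\setminus D}\cong K_{C\setminus D}^{-\frac{n-1}{2}}$, up to the $n$-torsion twist discussed in Remark~\ref{rem-D-PSLn}. Under this identification the monic operator becomes a $\mathrm{PSL}_n$-oper operator $K^{-\frac{n-1}{2}}\to K^{\frac{n+1}{2}}$ on $C\setminus D$.

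Conjugating by a nonvanishing function, we then remove the subleading term $\partial^{n-1}$ to reach the normal form \eqref{local Dm}. This is the oper $\mathbf D_m$ singled out by the inclusion \eqref{e:inclusion in diff operator}. The statement that $\mathbf D_m$ and $\mathbf D$ agree on $C\setminus D$ means exactly this: they are the same operator up to these canonical bundle identifications. This settles the first bullet away from $D$.

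For the local formula \eqref{e:transform rule of mero oper0}, we work in the rigidified chart $(U_i,z)$ where $\sigma_n(\mathbf D)=z$. The identification of $L^{-1}$ with $K^{-\frac{n-1}{2}}$ involves $\mathcal O(D)^{1/n}$, whose local frame is $z^{-1/n}$. So a section $y$ of $K^{-\frac{n-1}{2}}$ corresponds to $z^{-1/n}y$ in $L^{-1}$ up to a holomorphic unit, which we absorb by choosing the frame of $L$ compatibly. On the target side, $L^{-1}K^n(D)$ versus $K^{\frac{n+1}{2}}$ contributes the factor $z^{-1/n}\cdot z^{-1}$ from $\mathcal O(D)$ and the symbol normalization. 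Inverting this factor gives $z^{\frac{n+1}{n}}$. Writing $\mathbf D = z\,\partial^n+\dots$ and composing $z^{\frac{n+1}{n}}\circ\mathbf D_m\circ z^{-\frac1n}$, we check that the leading coefficient is $z$ and that the identity holds.

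The delicate point is that $\mathbf D_m$ is in Drinfeld--Sokolov (traceless) form. We therefore verify that conjugation by $z^{-1/n}$, together with the normalization removing the $\partial^{n-1}$ term, is consistent. The $\partial^{n-1}$-coefficient of $\mathbf D/z$ is fixed by the Wronskian structure, since it is the log-derivative of the symbol: $\partial_z\log z = 1/z$. Conjugating by $z^{-1/n}$ shifts this coefficient by $-n\cdot\frac{1}{n}\cdot\frac1z$, which kills it. This is the computation I expect to be the main obstacle: matching the $\partial^{n-1}$ term exactly with the Wronskian identity $\mathrm{Wr}'=(\text{coefficient})\cdot \mathrm{Wr}$.

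For the second bullet, expanding \eqref{e:transform rule of mero oper0} shows that $Q_k = z^{-1}\cdot(\text{holomorphic}) + (\text{terms from the conjugation})$. These have poles of order at most $k$ at $p_i$, which is the regular-singular condition. Globally, Theorem~\ref{thm: singular oper space} says $\mathrm{Op}^{\mathrm{RS}}_{\mathrm{PSL}_n,D}(C)$ is an affine space modeled on $\bigoplus_j H^0(C,K_C^j(jD))$. The universal differential polynomials of Proposition~\ref{prop: k differential} express the $Q_i$ triangularly in terms of a projective connection and the $w_j$. Hence each $Q_i$, for fixed lower $Q_j$, ranges over an affine space modeled on $H^0(C,K_C^i(iD))$.
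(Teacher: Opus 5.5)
Your proof is correct, but it reaches \eqref{e:transform rule of mero oper0} by a different mechanism than the paper.

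\textbf{How the two routes differ.} The paper works at the level of connections. It uses \Cref{lem:global-apparent-mono} to place the restricted oper in $\mathrm{Op}^{\mathrm{RS}}_{\mathrm{PSL}_n,D}(C)$. It then notes that the local gauge transformation $z^{\omega_1^\vee}$ carries the apparent-singularity normal form \eqref{eq:global-apparent-oper} to the regular-singular form \eqref{eq:global-RS-oper}, and translates this into scalar operators. The second bullet is then read off from \Cref{thm: singular oper space}.

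You stay entirely with scalar operators. The Wronskian (Abel) identity fixes the $\partial^{n-1}$-coefficient of $\mathbf D/\sigma_n(\mathbf D)$ in terms of $\sigma_n(\mathbf D)$, and conjugation by $\sigma_n(\mathbf D)^{1/n}$ removes it. This gives $\mathbf D_m=\sigma_n(\mathbf D)^{-\frac{n+1}{n}}\circ\mathbf D\circ\sigma_n(\mathbf D)^{\frac1n}$ on all of $C\setminus D$, and the local formula is the case $\sigma_n(\mathbf D)=z$.

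\textbf{What each route buys.} Yours is more elementary and explains the exponents $\frac{n+1}{n}$, $-\frac1n$ and the value $a_1=-1/z$ directly. The price is a separate pole-order check for the second bullet, which you do supply. The paper gets the regular-singular property for free from the oper-theoretic inclusion. Its gauge factor $z^{\omega_1^\vee}$ is also the one reused in the factorization $G_i=(z-u_i)^{\omega_1^\vee}N_i$ in the proof of \Cref{thm: Lag corres dR; SLn}. Your triangular phrasing of the second bullet (``for fixed lower $Q_j$'') is in fact the accurate one. For $i\ge 3$ the $Q_i$ are not tensorial; only the $w_i$ of \Cref{prop: k differential} are.

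\textbf{Two small corrections.}
\begin{itemize}
\item \emph{Signs.} Expanding \eqref{e:def diff oper D} along its last column gives $\mathbf D=W\partial^n-W'\partial^{n-1}+\cdots$. So the coefficient is $-\sigma_n'/\sigma_n=-1/z$, not $+1/z$; this matches $a_1=-1/z$ in \eqref{e:transform rule of mero oper}. Moreover $z^{-1/n}\circ(\,\cdot\,)\circ z^{1/n}$ shifts that coefficient by $+1/z$, not $-1/z$. Your two sign slips cancel, so the conclusion stands.
\item \emph{Multivaluedness.} $\sigma_n(\mathbf D)^{1/n}$ has monodromy $\zeta=e^{2\pi i/n}$ around each $p_i$. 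Hence the identification $L^{-1}|_{C\setminus D}\cong K_{C\setminus D}^{-\frac{n-1}{2}}$ compatible with $\sigma_n(\mathbf D)$ is not canonical up to a global $n$-torsion twist as in \Cref{rem-D-PSLn}. It is a central $\mu_n$-ambiguity with local monodromy around $D$. This ambiguity cancels in the conjugate, since the two factors contribute $\zeta^{-(n+1)}\zeta=1$. That is why $\mathbf D_m$ is single-valued, and why the statement is really a $\mathrm{PSL}_n$ one.
\end{itemize}
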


\begin{proof}
The first statement follows directly from the \Cref{lem:global-apparent-mono}.
Locally on $U_i$, this inclusion is implemented by the gauge transformation
$z^{\omega_1^\vee}$ applied to the local normal form \eqref{eq:global-apparent-oper}, bringing the connection into the standard oper form \eqref{eq:global-RS-oper} on $U_i^\times$.
Translating this gauge transformation into the language of scalar differential operators yields \eqref{e:transform rule of mero oper0}. The second statement is a consequence of \Cref{thm: singular oper space}.
\end{proof}
In the following, on each chart $U_i$ we denote
\begin{equation}\label{e:local form of D}
\mathbf D|_{U_i}
:= z\Bigl(\partial^n + a_1(z)\partial^{n-1}
+ a_2(z)\partial^{n-2} + \cdots + a_n(z)\Bigr),
\qquad a_j(z)\in \frac{1}{z}\mathcal O_{U_i}.
\end{equation}
Combining \eqref{e:transform rule of mero oper0} with \eqref{local Dm} yields
\begin{equation}\label{e:transform rule of mero oper}
\partial^n + a_1(z)\partial^{n-1}
+ a_2(z)\partial^{n-2} + \cdots + a_n(z)
=
z^{\frac{1}{n}}\circ
\Bigl(\partial^n + Q_2(z)\partial^{n-2} + \cdots + Q_n(z)\Bigr)
\circ z^{-\frac{1}{n}}.
\end{equation}
In particular, one obtains
\(
a_1(z)=-\frac{1}{z}.
\)

\begin{definition}
With the notation above, we call the divisor $D=D_i(\nabla)$ the \textbf{apparent singularity}
of the differential operator $\mathbf D \in \operatorname{Diff}_C^{n} (L^{-1},\,L^{-1}\otimes K_C^{n}(D))^\circ$.
\end{definition}

\begin{remark}
In the ODE picture, an \textbf{apparent singularity} is a point where the coefficients of the differential operator have a pole, yet all local solutions are holomorphic (equivalently, the local monodromy is trivial).
This is exactly the situation here: by \eqref{e:local form of D}, the points in $D$ contribute poles to the coefficients of $\mathbf D$, but do not produce nontrivial local monodromy.
\end{remark}

\paragraph{Simple apparent singularity and residue parameters.}
In this part, for each $\mathbf D \in \operatorname{Diff}_C^{n} (L^{-1},\,L^{-1}\otimes K_C^{n}(D))^\circ$, with the fixed $\sigma_n(\mathbf{D})$. we introduce the residue parameter locally, over each point in $D$ with its local chart. We also check that the apparent singularities together with their residue parameters define points in $\mathcal{S}$ which is a $T^*C$--torsor. 

In \cite{DM07}, the authors characterize \emph{simple} apparent singularities for
scalar Fuchsian equations. We recall the corresponding local statement on a disk \(\mathbb D\) with coordinate \(z\).

\begin{proposition}[\cite{DM07}, Lemma~4.8]\label{p:AS for fuchsian}
Consider a scalar Fuchsian equation on \(\mathbb D\) of the form
\[
y^{(n)}+a_1(z)y^{(n-1)}+\cdots+a_n(z)y=0.
\]
Then \(z=0\) is a simple apparent singularity if and only if the coefficients
\(a_k(z)\) admit local expansions
\begin{align}\label{e: single part of a_i}
  a_1(z) &= -\frac{1}{z}+\delta_1+\mathrm O(z), \\
  a_k(z) &= \frac{c_k}{z}+\delta_k+\mathrm O(z), \qquad k=2,\dots,n,
\end{align}
for some constants \(c_k,\delta_k\in \mathbb C\), and these coefficients satisfy
\begin{align}\label{e:recursion formula}
\begin{split}
  v\, c_n+\delta_n &= 0, \\
  v\, c_{n-1}+\delta_{n-1}+c_n &= 0, \\
  &\;\;\vdots \\
  v\, c_3+\delta_3+c_4 &= 0, \\
  v\, c_2+\delta_2+c_3 &= 0,
\end{split}
\end{align}
where
\(
v:=c_2+\delta_1.
\)
\end{proposition}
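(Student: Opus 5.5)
The proof is a Frobenius (indicial-equation) analysis at $z=0$. First we use the regular-singularity part of Fuchsian theory to fix the shape of the poles. Then we translate "trivial local monodromy with the prescribed exponents" into a chain of linear conditions on the Taylor coefficients. We take \emph{simple} apparent singularity to mean, in the sense of \cite{DM07}, that $z=0$ is a regular singular point, every local solution is holomorphic, and the local exponents are $0,1,\dots,n-2,n$. This is exactly the exponent set produced by the gauge transformation $z^{\omega_1^\vee}$ in \Cref{p:trans of diff opers}, and it matches \eqref{e:transform rule of mero oper}.

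\emph{Step 1 (pole order and $a_1$).} By Fuchs' criterion, $z=0$ is a regular singular point iff each $a_k$ has a pole of order at most $k$. We write the indicial polynomial as
\[
I(\rho)=\rho(\rho-1)\cdots(\rho-n+1)+\sum_k \alpha_k\,\rho(\rho-1)\cdots(\rho-n+k+1),
\]
with $\alpha_k=\lim_{z\to0} z^k a_k$. Requiring $I(\rho)=\rho(\rho-1)\cdots(\rho-n+2)(\rho-n)$ forces $\alpha_1=-1$. The remaining conditions, together with the requirement that no logarithms appear, force $\alpha_k=0$ for $k\ge 2$ and in fact force each $a_k$ to have at most a simple pole. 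The cleanest way to see this is to rewrite the operator as $z^{-n}\bigl(\theta(\theta-1)\cdots\bigr)$ with $\theta=z\partial$ and compare coefficients. For the converse we simply assume the form \eqref{e: single part of a_i}. Either way this yields $a_1=-1/z+\delta_1+\mathrm O(z)$ and $a_k=c_k/z+\delta_k+\mathrm O(z)$.

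\emph{Step 2 (the recursion).} We write $y=\sum_m y_m z^m$, multiply the equation by $z$, and collect powers of $z$. The equation at order $z^{m-1}$ reads
\[
I(m)\,y_m=\text{(linear combination of }y_0,\dots,y_{m-1}).
\]
For $m\le n-2$ and $m=n$ we have $I(m)\ne 0$ after the shift, so the only possible obstruction is at the resonant exponent. The exponents $0,\dots,n-2$ come with free data $y_0,\dots,y_{n-2}$, and $y_n$ is also free. Apparentness (no logarithms) is therefore equivalent to the compatibility condition at the resonance being identically satisfied for all choices of $y_0,\dots,y_{n-2}$. Because of the simple-pole structure, only $c_k,\delta_k$ and $\delta_1$ enter the equations up to that order. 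Requiring the compatibility to hold for each basis choice $y_0,\dots,y_{n-2}$ separately gives $n-1$ linear conditions.

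\emph{Step 3 (matching to \eqref{e:recursion formula}).} This is where I expect the main obstacle. One must organise the $n-1$ conditions so that they telescope into the stated triangular system with the single auxiliary parameter $v=c_2+\delta_1$. I would do this by induction on $n$, by first solving for $y_{n-1}$. Alternatively, I would factor the operator as $(\partial-v-\tfrac1z+\dots)\circ(\text{order }n-1)$, so that $v$ appears naturally as the $z^0$ coefficient in the first-order factor annihilating the exceptional solution, and then read off the conditions degree by degree. Both directions of the equivalence follow, since each step above is reversible. The statement is quoted from \cite{DM07}, so we may also simply refer there for the routine bookkeeping.
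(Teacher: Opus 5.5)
The paper gives no argument of its own here; it only cites \cite{DM07}, Lemma~4.8. So the question is whether your outline stands alone. The Frobenius strategy is the right one, but the proposal stops exactly where the content is. Step~3 is left open, and the telescoping obstacle you anticipate does not arise, so the induction or factorization you suggest is unnecessary. Step~2 also states the indicial values backwards: $I(m)=0$ for $m\le n-2$ and for $m=n$. The fact you actually need is $I(n-1)=-(n-1)!\neq0$.

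Here is the missing computation. Put $za_1=-1+\delta_1z+\mathrm O(z^2)$, $za_k=c_k+\delta_kz+\mathrm O(z^2)$ and $y=\sum_m y_mz^m$ into $z\bigl(y^{(n)}+a_1y^{(n-1)}+\cdots+a_ny\bigr)=0$. Every coefficient of $z^{m-n+1}$ with $m\le n-2$ vanishes identically, since each falling factorial $j(j-1)\cdots(j-r+1)$ occurring there has $0\le j<r$. Hence $y_0,\dots,y_{n-2}$ are free. The coefficients of $z^0$ and $z^1$ read
\begin{align*}
(n-1)!\,y_{n-1}&=\sum_{k=2}^{n}(n-k)!\,c_k\,y_{n-k},\\
0&=(c_2+\delta_1)(n-1)!\,y_{n-1}+\sum_{k=3}^{n}(n-k+1)!\,c_k\,y_{n-k+1}+\sum_{k=2}^{n}(n-k)!\,\delta_k\,y_{n-k}.
\end{align*}
Substituting the first into the second gives
\[
\sum_{k=2}^{n}(n-k)!\,\bigl(v\,c_k+\delta_k+c_{k+1}\bigr)\,y_{n-k}=0,\qquad c_{n+1}:=0 .
\]
The coefficient of each free $y_{n-k}$ is exactly the equation $v c_k+\delta_k+c_{k+1}=0$ of \eqref{e:recursion formula}. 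The parameter $v=c_2+\delta_1$ appears simply as the coefficient of $(n-1)!\,y_{n-1}$ in the resonant equation.

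In the simple-pole form, the indicial polynomial is $\rho(\rho-1)\cdots(\rho-n+2)(\rho-n)$. Also $y_n$ is free, and $y_m$ for $m>n$ is determined because $I(m)\neq0$. So the space of formal solutions has dimension $n$ minus the rank of this linear form in $(y_0,\dots,y_{n-2})$. All $n$ local solutions are holomorphic if and only if the form vanishes identically, using that formal solutions converge at a regular singular point. This gives both directions at once.

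The other soft spot is the simple-pole claim in Step~1. Matching indicial polynomials only yields $\alpha_1=-1$ and $\alpha_k=0$ for $k\ge2$, that is, poles of order at most $k-1$. Getting down to simple poles needs the no-logarithm conditions at the resonances $m=1,\dots,n-2$, which you do not carry out. The quickest fix is the Wronskian. A holomorphic fundamental system with vanishing orders $0,\dots,n-2,n$ has Wronskian $W$ with a simple zero at $0$, because these orders sum to $\binom n2+1$. By Cramer's rule $a_k=\pm W_k/W$ with $W_k$ holomorphic, so every $a_k$ has at most a simple pole. With these two points supplied, your argument is a complete proof of the quoted lemma.
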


\Cref{p:AS for fuchsian} applies directly to our setting under the fixed local chart
\((U_i,z)\) as in \eqref{e:local form of D}.  Indeed, \(p_i\) is an apparent
singularity of the scalar equation associated with
\(
\frac{1}{z}\mathbf D|_{U_i}.
\)
Therefore, the singular parts of the coefficients \(a_k(z)\) in
\eqref{e:local form of D} satisfy the relations \eqref{e:recursion formula}.

This motivates the following definition.
\begin{definition}[Residue parameter]\label{def: res para}
Let
\(
\mathbf D \in \operatorname{Diff}_C^{n}\bigl(L^{-1},\,L^{-1}\otimes K_C^{n}(D)\bigr)^\circ,
\)
and fix a local chart \((U_i,z)\) around \(p_i < D\) such that
\eqref{e:local form of D} holds.  Suppose the coefficients of
\(\frac{1}{z}\mathbf D|_{U_i}\) have expansions
\[
a_1(z)=-\frac{1}{z},\qquad
a_k(z)=\frac{c_k^{(i)}}{z}+\delta_k^{(i)}+ \mathrm O(z).
\]
We define the \emph{residue parameter} of \(\mathbf D\) at \(p_i\) w.r.t.
the coordinate \(z\) to be
\(
v_i^{(z)}:=c_2^{(i)}.
\)
\end{definition}

The following proposition shows that each pair \((p_i, v_i)\), for \(p_i < D\), naturally defines a point in an
affine line bundle \(\mathcal{S}\) modeled on \(T^*C\).
\begin{proposition}\label{p: def twist cotanbun}
Under a change of local coordinate \(z \mapsto w = \phi(z)\) with \(\phi(0)=0\), 
the residue parameters \(v_i\) transform according to
\begin{equation}\label{e:transition for affine bundle}
   v_i^{w}
=
v_i^{z}\,\phi'(0)
-
\frac{n^2+2}{2n}\,
\frac{\phi''(0)}{\phi'(0)} .
\end{equation}
where \(v_i^{w}\) (resp.\ \(v_i^{z}\)) denotes the value of the residue parameter 
in the \(w\)– (resp.\ \(z\)–) trivialization.  
This transformation law defines an affine line bundle \(\mathcal{S}\) modeled on \(T^*C\), 
and each pair \((p_i, v_i)\) determines a point in \(\mathrm{Tot}(\mathcal{S})\).
\end{proposition}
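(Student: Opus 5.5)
The plan is to reduce the transformation law for $v_i$ to the transformation law of the coefficient $Q_2$ of the meromorphic $\mathrm{PSL}_n$-operator $\mathbf D_m$ from \Cref{p:trans of diff opers}. The point is that $\mathbf D_m$ is intrinsically defined on $C\setminus D$, as an operator $K^{-\frac{n-1}{2}}\to K^{\frac{n+1}{2}}$, and does not depend on the rigidification $\sigma_n(\mathbf D)=z$. The operator $\mathbf D$ and its coefficients $a_k$, by contrast, do depend on the chart through the conjugation \eqref{e:transform rule of mero oper}.

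I will therefore carry out three steps, in this order.

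\textbf{Step 1: express $v_i^{z}$ through $Q_2$ in a single chart.} In the chart $(U_i,z)$, write
\[
Q_2(z)=\frac{A}{z^2}+\frac{B^{z}}{z}+\mathrm O(1).
\]
Expanding
\[
z^{1/n}\circ\bigl(\partial^n+Q_2\partial^{n-2}+\cdots\bigr)\circ z^{-1/n}=\Bigl(\partial-\tfrac{1}{nz}\Bigr)^n+Q_2\Bigl(\partial-\tfrac{1}{nz}\Bigr)^{n-2}+\cdots,
\]
I read off $a_1=-1/z$ and the coefficient of $\partial^{n-2}$. The pure conjugation terms contribute to $a_2$ only at order $z^{-2}$, namely a universal constant $\kappa_n/z^2$ coming from the binomial and derivative terms of $(\partial-\frac{1}{nz})^n$. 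Since $a_2\in\frac1z\mathcal O$ by \eqref{e:local form of D}, the $z^{-2}$ part must cancel, which forces $A=-\kappa_n$. It then follows that $c_2^{(i)}=B^{z}$. Thus
\[
v_i^{z}=B^{z}=\operatorname{Res}_{z=0}Q_2\,dz,
\]
and the same formula holds in any other admissible chart $w$, with its own normalization $\sigma_n(\mathbf D)=w$. This is legitimate because the relation $c_2=B$ was derived chart-by-chart from the same universal formula \eqref{e:transform rule of mero oper}.

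\textbf{Step 2: use the transformation law of $Q_2$.} By \Cref{prop: k differential} and its meromorphic analogue, $\tfrac{12}{n(n^2-1)}Q_2$ is a projective connection. Hence
\[
Q_2^{w}(\phi(z))\,\phi'(z)^2=Q_2^{z}(z)-\tfrac{n(n^2-1)}{12}\{\phi,z\}.
\]
The Schwarzian term is holomorphic, so it contributes nothing to the $1/z$ coefficient. Set $r=\phi''(0)/\phi'(0)$. Using $\phi'(z)^2/\phi(z)^2=z^{-2}(1+rz+\mathrm O(z^2))$, the double pole $A/w^2$ contributes $A r/z$ to the residue. Similarly, $B^{w}/\phi(z)\cdot\phi'(z)^2$ contributes $B^{w}\phi'(0)/z$. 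Comparing the $1/z$ coefficients gives
\[
B^{z}=B^{w}\phi'(0)+A r .
\]
This is an affine law in which $v$ transforms like a cotangent vector (up to the direction convention for $\phi'(0)$), shifted by a multiple of $\phi''(0)/\phi'(0)$. Substituting $A=-\kappa_n$ and inverting (replacing $\phi$ by $\phi^{-1}$ as needed for the stated direction) should yield \eqref{e:transition for affine bundle}.

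\textbf{Step 3: conclude.} The shift $\phi''/\phi'$ satisfies the cocycle identity of the affine connection cocycle, since $\log\phi'$ is additive under composition. Hence the transition functions glue to an affine bundle $\mathcal S$ modeled on $T^*C$, and $(p_i,v_i)$ is a point of $\mathrm{Tot}(\mathcal S)$. This step is formal.

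I expect the main obstacle to be the bookkeeping in Step 1: computing $\kappa_n$ exactly, including the derivative terms from commuting $\tfrac1{z}$ past $\partial$, so that the constant comes out as $\tfrac{n^2+2}{2n}$ with the correct sign and orientation of $\phi$. A secondary point to check is that the conjugating factors $z^{1/n}$ and $w^{1/n}$ differ by a holomorphic unit. Such a unit would shift $c_2$ through cross terms with $a_1=-1/z$, so one cannot compare the $a_k$ across charts directly. This is exactly why I route the argument through the intrinsic $Q_2$, and I will verify that the normalization $\sigma_n(\mathbf D)=w$ in the new chart is compatible with this.
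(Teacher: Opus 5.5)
Your Steps 1 and 2 are each correct, but together they do not give \eqref{e:transition for affine bundle}, and more careful bookkeeping will not change that, because the constant in Step 1 is forced. The coefficient of $\partial^{n-2}$ in $\bigl(\partial-\frac{1}{nz}\bigr)^n$ is $\binom n2\bigl(\frac{1}{n^2}+\frac1n\bigr)z^{-2}=\frac{n^2-1}{2n}z^{-2}$. Hence $\kappa_n=\frac{n^2-1}{2n}$ and $A=-\frac{n^2-1}{2n}$, which agrees with the paper's own expansion $a_2=\frac{n^2-1}{2n}z^{-2}+Q_2$. Run Step 2 in the direction $z=\phi(w)$. This is the direction the paper's proof actually uses, and the one in which the linear term is $v^z\phi'(0)$. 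Your argument then proves
\[
v_i^{w}=v_i^{z}\,\phi'(0)-\frac{n^2-1}{2n}\,\frac{\phi''(0)}{\phi'(0)},
\]
which differs from the stated law by $\frac{3}{2n}\frac{\phi''(0)}{\phi'(0)}$.

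The missing ingredient is the paper's normalization of $v^w$, which is not $\operatorname{Res}_{w=0}\widetilde Q_2$. The paper does not rescale to $a_1=-1/w$ in the new chart. Instead it writes $\mathbf D|_{U_i}=h^{\frac{n+1}{n}}\circ\mathbf D_m\circ h^{-\frac1n}$ with $h(w)=\phi(w)\phi'(w)$. Then $\widetilde a_1=-h'/h=-\frac1w-\frac32\frac{\phi''(0)}{\phi'(0)}+\mathrm O(w)$ has a nonzero constant term $\delta_1^{w}$. The paper defines $v^w:=c_2^w+\delta_1^w$, which is the Dubrovin--Mazzocco combination $v=c_2+\delta_1$ from \Cref{p:AS for fuchsian}. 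It finds $c_2^w=v^z\phi'(0)-\frac{(n-1)(n-2)}{2n}\frac{\phi''(0)}{\phi'(0)}$, and adding $\delta_1^w$ produces $\frac{n^2+2}{2n}$.

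More generally, suppose the leading coefficient is $h=w\,u(w)$ with $u(0)\neq 0$. Then $c_2+\delta_1=\operatorname{Res}\widetilde Q_2-\frac1n\frac{u'(0)}{u(0)}$. This $u$ is exactly the holomorphic unit you flagged at the end. The paper's constant corresponds to $\frac{u'(0)}{u(0)}=\frac32\frac{\phi''(0)}{\phi'(0)}$, whereas you take $u\equiv 1$. So to prove the statement as written, you would have to adopt that normalization and justify it.

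Your choice is in fact the one \Cref{def: res para} literally presupposes, namely $a_1=-1/z$ exactly in each chart. So your computation shows that the coefficient depends on the convention. Step 3 and the qualitative conclusion hold under either convention: you still get an affine bundle modeled on $T^*C$ whose class is a nonzero multiple of $c_1(K_C)$. However, the specific constant in \eqref{e:transition for affine bundle}, and hence the class in \Cref{r:torsor class}, does not follow from your argument.
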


\begin{proof}
For \(p_i < D\), let \(U_i\) be the local neighborhood of $p_i$ with coordinate \(z\), such that \(z(p_i)=0\). 
By \eqref{e:transform rule of mero oper}, in the $z$--coordinate, one obtains
\[
a_1(z) = -\frac{1}{z}, \qquad
a_2(z) = \frac{n^2-1}{2n}\frac{1}{z^2}+Q_2(z)
       = \frac{c_2^{(i),z}}{z}+\delta_2^{(i),z}+ \mathrm O(z).
\]
In this coordinate, we have $\delta^{(i),z}_1=0$, and \(v_i^z=c_2^{(i),z}\).

Now change the coordinates by \(z=\phi(w)\), with \(\phi(0)=0\) and \(\phi'(0)\neq 0\). Set
\(
h(w):=\phi(w)\phi'(w).
\)
Then \eqref{e:transform rule of mero oper} implies that, in the \(w\)–coordinate,
\begin{align}\label{diff op in w}
  \mathbf{D}|_{U_i} 
  = h(w)^{\frac{n+1}{n}} \circ \mathbf{D}_m|_{U_i}
  \circ h(w)^{-\frac{1}{n}},
\end{align}
where the local expression of $\mathbf{D}|_{U_i}$ and $\mathbf{D}_m|_{U_i}$ are,
\[
  \mathbf{D}|_{U_i} = h(w) \left( \partial_w^n + \widetilde Q_2(w)\,\partial_w^{n-2} + \cdots + \widetilde Q_n(w) \right), 
  \qquad
  \mathbf{D}_m|_{U_i} = \partial_w^n + \widetilde a_1(w)\,\partial_w^{n-1}+\cdots+\widetilde a_n(w).
\]
From \Cref{prop: k differential} and \eqref{e:affine of oper}, \(Q_2\) transforms 
as a meromorphic projective connection:
\[
\widetilde Q_2(w) = Q_2(\phi(w))\,\phi'(w)^2 + \frac{n(n^2-1)}{12}\,\{\phi(w),w\},
\]
where \(\{\phi(w),w\}\) is the Schwarzian derivative.  
Then \eqref{diff op in w} yields
\[
\widetilde a_1(w) = -\frac{h'(w)}{h(w)} 
       = -\frac{1}{w} - \frac{3\phi''(0)}{2\phi'(0)} + O(w),
\]
\[
\widetilde a_2(w) = \frac{n^2-1}{2n}\frac{h'(w)^2}{h(w)^2}
        - \frac{n-1}{2}\frac{h''(w)}{h(w)}+ \widetilde Q_2(w)
        = \left(c_2^{(i),z}\phi'(0)-\frac{(n-1)(n-2)}{2n}\frac{\phi''(0)}{\phi'(0)}\right)\frac{1}{w}+O(1).
\]
Thus in the \(w\)-coordinate the residue parameter satisfies \eqref{e:transition for affine bundle}
$$
v_i^w = c_2^{(i),w}+\delta_1^{(i),w}
      = v_i^z\,\phi'(0)-\frac{n^2+2}{2n}\frac{\phi''(0)}{\phi'(0)}.
$$
This shows that \(\{v_i\}\) transforms affinely under coordinate change.  
Consequently one obtains an affine line bundle \(\mathcal{S}\) over \(C\), 
with transition functions on overlaps \(U_i \cap U_j\) given by
\begin{equation}\label{e:transition for aff bun}
a_j 
   = a_i\,\frac{dz_i}{dz_j}
     - \frac{n^2+2}{2n}\,
       \frac{d^2 z_i / dz_j^2}{dz_i / dz_j},
\end{equation}
where \(a_i \in \Gamma(U_i, \mathcal{S})\) and \(a_j \in \Gamma(U_j, \mathcal{S})\).  
These transition rules satisfy the Čech cocycle condition for a torsor under \(T^*C\), 
thereby defining the affine bundle \(\mathcal{S}\).
\end{proof}

\begin{remark}\label{r:torsor class}
Starting from \eqref{e:transition for aff bun}, define local holomorphic $1$--forms
\(
A_i:=a_i\,\mathrm d z_i \in \Omega_C^1(U_i).
\)
Then \eqref{e:transition for aff bun} is equivalent to
\(
A_j = A_i + \alpha_{ij},
\)
where
\begin{equation}\label{eq:alphaij}
\alpha_{ij}:= - \frac{n^2+2}{2n}\, \mathrm d\!\left(\log\frac{\mathrm d z_i}{\mathrm d z_j}\right) \in \Omega_C^1(U_i\cap U_j).
\end{equation}
One can show that the collection $\{\alpha_{ij}\}$ satisfies the \v{C}ech cocycle condition, hence determines a class
\(
[\alpha]\in H^1(C,\Omega_C^1).
\)
Moreover, since ${\mathrm d z_i}/{\mathrm d z_j}$ are the transition functions of $K_C$, one has
\(
[\alpha]=-\frac{n^2+2}{2n}\,c_1(K_C)\in H^1(C,\Omega_C^1).
\)
In particular, this class characterizes the $T^\ast C$--torsor $\mathcal{S}$.
\end{remark}

\subsection{Lagrangian subvarieties in de Rham moduli spaces}
In this section, we show that the subvarieties $\mathbb L_{dR}(L,D)$ and $\mathbb L_{dR}(D)$ defined in \Cref{def: Lag in dR} are holomorphic Lagrangian subvarieties in $\mathcal M_{dR}$.

For simplicity, we restrict to the  single connected component $\mathbb L_{dR}(L,D)$, that is fixing the line bundle $L$ satisfies
\(
L^{\otimes n}\simeq K_C^{\binom{n}{2}}(-D).
\)
The argument proceeds in two steps. First, we compute the dimension of $\mathbb L_{dR}(L,D)$ by comparing it with
$\mathrm{Op}_{\mathrm{PSL}_n,D}(C)_{\boldsymbol{\omega}_{n-1}^{\vee}}$ via \Cref{identify oper,p: generic finite dR}. Second, we verify that $\mathbb L_{dR}(L,D)$ is isotropic with respect to the Atiyah--Bott symplectic form on $\mathcal{M}_{dR}(n, \mathcal{O}_C)$.

We first fix some notation. Let \( D=\sum_{i=1}^d p_i \) be a reduced effective divisor on $C$ with $n\mid d$.
In the rest of this section, for each \(p_i\in D\) we fix a local coordinate chart \((U_i,z)\) so that \Cref{p:trans of diff opers} applies.

For each $1\le i\le d$, $1 \le j \le n$, and $1\le k\le j$, let
\(
\xi_i^{(j,k)}\in H^0 \bigl(C,K_C^{j}(k p_i)\bigr)
\)
be a meromorphic $j$--differential whose Laurent part at $p_i\in D$ under local chart $U_i$ is
\[
\xi_i^{(j,k)}=\Bigl(\frac{1}{z^{k}}+\mathrm O(1)\Bigr)(\mathrm d z)^{j}.
\]
Such differentials $\xi_i^{(j,k)}$ exist by Riemann--Roch.

For later use, given a meromorphic $j$--differential $\omega$ and a point $p_m < D$, we denote by
\(
\mathrm{c}^i_{m}(\omega), i \geq 0
\)
the coefficient of $z^{i}$ in the local Laurent expansion of $\omega$ at $p_m$. More precisely, in the chart $U_m$,
\[
\omega=\Bigl(\cdots+\mathrm{c}^0_{m}(\omega)+\cdots+\mathrm{c}^i_{m}(\omega)z^i+ \mathrm O(z^{i+1})\Bigr)(\mathrm d z)^j.
\]

\medskip
The following proposition characterizes the meromorphic differential operators
corresponding to $\operatorname{Diff}_C^{n} (L^{-1},\,L^{-1}\otimes K_C^{n}(D))^\circ$ in \Cref{p:trans of diff opers}.

\begin{proposition}\label{p:AP-to-algebraic-v}
Adopt the notation above. Fix a holomorphic $\mathrm{PSL}_n$--oper
\(
\mathbf D_{\mathrm{reg}} \in \operatorname{Diff}_C^{n}\bigl(K_C^{-\frac{n-1}{2}},\,K_C^{\frac{n+1}{2}}\bigr)
\)
and meromorphic higher differentials $\{ \xi_i^{(j,k)} \}_{i,j,k}$.
The following hold,
\begin{enumerate}
\item For any
\(
\mathbf D\in \operatorname{Diff}_C^{n}\!\bigl(L^{-1},\,L^{-1}\otimes K_C^{n}(D)\bigr)^{\circ},
\)
let $\mathbf D_m$ denote its image under \eqref{e:inclusion in diff operator}.
Then there exist $\boldsymbol v =(v_1,\dots,v_d)\in \mathbb C^d$, $b\in B\bigoplus_{j=2}^n H^0(C,K_C^j)$ such that
\(
\mathbf D_m=\mathbf D_m(\boldsymbol v, b),
\)
and over fixed local charts $\{U_i\}_i$, these data solve an algebraic equation in $\boldsymbol v$ of the form 
\begin{equation}\label{nonlinear equ for v}
\boldsymbol v^n+\sum_{i=1}^{n-1}\boldsymbol v^{\,n-i-1} f_i(\boldsymbol v;b)=0,
\end{equation}
where each $f_i(\boldsymbol v;b)$ is a polynomial of degree $i$ in $\boldsymbol v$ whose coefficients depend on $b\in B$.

\item
(\textbf{Inverse construction})
Conversely, given $\boldsymbol v\in \mathbb C^d$ and $b\in B$ that are solutions to \eqref{nonlinear equ for v}, these data uniquely determine a meromorphic operator
\(
\mathbf D_m(\boldsymbol v,b) \in \operatorname{Diff}_{C\setminus D}^n \bigl(K_{C\setminus D}^{-\frac{n-1}{2}},\,K_{C\setminus D}^{\frac{n+1}{2}}\bigr)
\)
coming from an element 
\(
\mathbf D \in \operatorname{Diff}_C^{n} \bigl(L^{-1},\,L^{-1}\otimes K_C^{n}(D)\bigr)^{\circ} 
\)
via \eqref{e:inclusion in diff operator}.
\end{enumerate}
\end{proposition}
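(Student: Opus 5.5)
The plan is to parametrize $\mathbf D_m$ through the affine description of meromorphic opers (\Cref{thm: singular oper space}) and then impose the trivial-monodromy conditions of \Cref{p:AS for fuchsian} at each $p_i$. By \Cref{p:trans of diff opers}, $\mathbf D_m$ lies in $\mathrm{Op}^{\mathrm{RS}}_{\mathrm{PSL}_n,D}(C)$. Hence its coefficients can be written as $\mathbf D_{\mathrm{reg}}$ plus a holomorphic part $b\in B=\bigoplus_{j=2}^n H^0(C,K_C^j)$ plus a singular part. The singular part is a linear combination of the fixed differentials $\xi_i^{(j,k)}$, $1\le k\le j$.

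\textbf{Step 1: fix the principal parts.} By \eqref{e:transform rule of mero oper}, the conjugation $z^{1/n}\circ(\cdot)\circ z^{-1/n}$ forces $a_1=-1/z$. It also fixes the most singular (order $z^{-j}$) part of $Q_j$ to universal constants, coming from the exponents $\{0,1,\dots,n-2,n\}$ of the coweight $\omega_{n-1}^\vee$; for instance $a_2=\frac{n^2-1}{2n}z^{-2}+Q_2$. Requiring that $a_k\in z^{-1}\mathcal O$ in \eqref{e:local form of D} then fixes the coefficients of $\xi_i^{(j,k)}$ for $k\ge 2$. These are universal up to lower-order corrections, which are polynomials in the lower coefficients and their Laurent data $\mathrm c^\bullet_i$.

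\textbf{Step 2: identify the free parameters.} After Step 1, the free parameters in $\mathbf D_m$ are $b$ and the simple-pole coefficients $c_j^{(i)}$ of $\xi_i^{(j,1)}$, which correspond to $c_k$ in \eqref{e: single part of a_i}. I set $v_i:=c_2^{(i)}$, matching \Cref{def: res para}. In the chosen chart $\delta_1=0$, so $v=c_2$.

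\textbf{Step 3: reduce to one equation in $v_i$.} I rewrite the recursion \eqref{e:recursion formula} as $c_{k+1}=-v c_k-\delta_k$ for $k=2,\dots,n-1$, together with the terminal condition $v c_n+\delta_n=0$. Each $\delta_k$ at $p_i$ is the constant term of $a_k$. By \eqref{e:transform rule of mero oper}, that constant term is an affine expression in $b$ (through $\mathrm c^0_i$), in the $c^{(m)}_j$ (through $\mathrm c^0_i(\xi_m^{(j,1)})$, including $m\neq i$), and in the derivatives of lower $Q$'s. Solving the recursion downward expresses each $c_k^{(i)}$ as a polynomial of degree $k-1$ in $v_i$, with coefficients depending on $b$ and on the other $v_m$. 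Substituting into the terminal condition gives a single equation per point of the form $v_i^n+\sum v_i^{n-i-1}f_i(\boldsymbol v;b)=0$. This equation is \eqref{nonlinear equ for v}.

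\textbf{Main obstacle: the degree count.} I expect the hardest part to be verifying that $f_i$ has degree exactly $i$ in $\boldsymbol v$. The difficulty is that $\delta_k$ involves the cross-terms $c^{(m)}_j$ at other points, and these have already been solved as polynomials in the $v_m$. I would handle this by grading: give weight $1$ to $v$ and weight $k-1$ to $c_k$, and check by induction along the recursion that $\delta_k$ is of weight $\le k-1$. That $\delta_k$ is linear in $c_j$, with derivative terms of $Q_{k-1}$ producing a drop of one in weight, should make the induction go through.

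\textbf{Step 4: converse.} Given $(\boldsymbol v,b)$ solving \eqref{nonlinear equ for v}, I define the $c_k^{(i)}$ by the recursion, form $\mathbf D_m(\boldsymbol v,b)$, and note that \eqref{e:recursion formula} then holds at every $p_i$. \Cref{p:AS for fuchsian} gives trivial local monodromy, so by \Cref{lem:global-apparent-mono} the operator lies in $\mathrm{Op}_{\mathrm{PSL}_n,D}(C)_{\boldsymbol\omega^\vee}$. Conjugating back via \eqref{e:transform rule of mero oper0} yields $\mathbf D$ with $\sigma_n(\mathbf D)=z$ locally, so $\mathbf D$ lies in the $^\circ$ space by \eqref{e:ident spaces}. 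Uniqueness holds because every coefficient was determined in Steps 1–3.
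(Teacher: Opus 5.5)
Your proposal is correct and follows essentially the paper's own argument. You write $\mathbf D_m$ as $\mathbf D_{\mathrm{reg}}$ plus principal parts built from the $\xi_i^{(j,k)}$ plus $b\in B$. You propagate $c_{k+1}^{(i)}=-v_ic_k^{(i)}-\delta_k^{(i)}$ upward from $c_2^{(i)}=v_i$, so that each $w_{k+1}$ is fixed by $\boldsymbol v$ and $b_2,\dots,b_{k+1}$. You then read off \eqref{nonlinear equ for v} from the terminal relation $v_ic_n^{(i)}+\delta_n^{(i)}=0$ and obtain the converse by running the construction backwards, exactly as the paper does. Your weight-grading induction (weight $1$ for $v$, weight $k-1$ for $c_k$, with $\delta_k$ affine in the $c_j$ for $j\le k$) usefully makes explicit the degree bound $\deg f_k\le k$, which the paper only asserts.
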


\begin{proof}
By \Cref{p:trans of diff opers}, any $\mathbf D\in \operatorname{Diff}_C^{n} \bigl(L^{-1},\,L^{-1}\otimes K_C^{n}(D)\bigr)^{\circ}$ injects to a meromorphic operator $\mathbf{D}_m$. 
In the local coordinate chart $(U_i,z)$ centered at $p_i\in D$, its local expression is given by \eqref{local Dm} and \eqref{e:local form of D}, namely
$$
\mathbf D_m|_{U_i}
        = \partial^n + Q_2(z)\partial^{n-2} + \cdots + Q_n(z).
$$
By \eqref{e:transform rule of mero oper} and \eqref{e: single part of a_i}, on $U_i$ the Laurent part of $a_k(z)$ is given by
\begin{align}\label{e: equ for a and Q}
a_k(z) \;=\; \sum_{j=0}^{k} c_{k,j}\,\frac{Q_j(z)}{z^{\,k-j}}= \frac{c_k^{(i)}}{z}+\delta_k^{(i)}+\mathrm O(z),
\quad \text{ with }Q_0(z)=1, Q_1(z)=\frac{1}{z}.
\end{align}
Equivalently, one can solve for $Q_k$ recursively as
\begin{align}\label{e: equ for a and Q;2}
    Q_k(z)=a_k(z)+ \sum_{j=0}^{k-1} d_{k,j} \frac{Q_j(z)}{z^{k-j}} ,
\quad k \ge 2.
\end{align}
Here the constants \(c_{i,j},d_{i,j}\) are universal binomial-type coefficients determined by the conjugation rule in \eqref{e:transform rule of mero oper0}.
For instance, on $U_i$ one has
\begin{align}
Q_2(z)
&=a_2(z)-\frac{n-1}{2n}\frac{1}{z^2}=-\frac{n^2-1}{2n}\frac{1}{z^2} + \frac{v_i}{z}+\delta_2^{(i)}+ \mathrm O(z), \label{e: eg expansion of Q2} \\
Q_3(z)
&=a_3(z)-\frac{n-2}{n}\frac{Q_2(z)}{z}
+\frac{(n-1)(n-2)}{6n^2}\frac{1}{z^3}. \label{e: eg expansion of Q3}
\end{align}
Moreover $v_i=c^{(i)}_2$, since $a_1(z)=-1/z$ by \eqref{e:transform rule of mero oper}.

Fix the reference oper $\mathbf D_{\mathrm{reg}}$, and write on the same chart $(U_i,z)$,
\[
  \mathbf{D}_{\mathrm{reg}}|_{U_i}
  := \partial^n + Q^{\mathrm{reg}}_2(z)\,\partial^{n-2} + \cdots + Q^{\mathrm{reg}}_n(z).
\]
By \Cref{p:trans of diff opers}, the difference $Q_i(z) - Q^{\mathrm{reg}}_i(z)$ glues into $w_i \in H^0 \left(C, K_C^i(iD)\right)$. In particular, specifying $\mathbf D_m$ is the same as specifying the global sections
$w_i$ for $i=2,\dots,n$ (as $\mathbf D_{\mathrm{reg}}$ fixed).

We give the following algorithm.

\textbf{Step 1.}  By \eqref{e: eg expansion of Q2}, the Laurent part of $w_2 \in H^0 \left(C, K_C^2(2D)\right)$ coincides with that of $Q_2(z)(\mathrm d z)^2$ in each chart $U_i$. Therefore, there exists a unique $b_2 \in H^0\left(C, K_C^2\right)$ satisfying
$$
w_2= \sum_{i=0}^d \left( -\frac{n^2-1}{2n}\xi_i^{(2,2)} + v_i \xi_i^{(2,1)} \right) + b_2.
$$
By \eqref{e: eg expansion of Q2}, constant terms in the expansion of $Q_2$ around $p_m\in D$ is given by
\begin{align}\label{e:identify del2}
\begin{split}
\delta^{(m)}_2 &= c^0_m(Q_2)=c^0_m(Q^{\mathrm{reg}}_2)+c^0_m(w_2)\\
&= c^0_m(Q^{\mathrm{reg}}_2)+\sum_{i=0}^d \left( -\frac{n^2-1}{2n}c^0_m \left(\xi_i^{(2,2)}\right) + v_i c^0_m \left( \xi_i^{(2,1)} \right) \right) + c^0_m(b_2)
\end{split}
\end{align}
Collecting these into vector
\(
\boldsymbol{\delta}_2=(\delta_2^1,\dots,\delta_2^d)^{T},
\)
\eqref{e:identify del2} becomes
\[
\boldsymbol{\delta}_2= f_1(\boldsymbol v;b_2) := \boldsymbol{a}_1+A_1 \boldsymbol{v},
\]
where \(\boldsymbol{a}_1\) and \(A_1\) depend only on \(c^0_m(Q^{\mathrm{reg}}_2),\{c^0_m(\xi_i^{(j,k)})\}_{j \leq 2}, c^0_m(b_2)\).

\medskip
\noindent
\textbf{Step 2.}  
The lowest relation in the apparent singularity condition \eqref{e:recursion formula} gives \(c_3^{(i)}=-v_i^2-\delta_2^{(i)}\). 
Together with \eqref{e: eg expansion of Q3}, this shows that the Laurent part of $w_3\in H^0\!\left(C,K_C^3(3D)\right)$ is determined by $\boldsymbol v$ and $b_2$ (since $Q_2(z)/z$ is already determined by $\boldsymbol v$ and $b_2$). Hence there exists a unique $b_3\in H^0(C,K_C^3)$ such that
$$
w_3= \sum_{i=0}^d \left( -\frac{(n-2)(n-1)(3n+4)}{6n^2}\xi_i^{(3,3)} - \frac{n-2}{n} v_i \xi_i^{(3,2)} - \frac{n-2}{n} \delta^{(i)}_2 \xi_i^{(3,1)} + c_3^{(i)}\xi_i^{(3,1)}\right) + b_3.
$$
Similarly to \eqref{e:identify del2}, constant terms of $a_3$ in \eqref{e: equ for a and Q} is given by
\[
\boldsymbol{\delta}_3 = f_2(\boldsymbol v;b_2,b_3) :=\boldsymbol{a}_2 + A_2\boldsymbol v + D_2 \left( B_2\boldsymbol v\circ C_2\,\boldsymbol v \right),
\]
where “\(\circ\)” denotes Hadamard (entrywise) product; the coefficients of $f_2(\boldsymbol v)$ are determined by \(c^0_m(Q^{\mathrm{reg}}_i)_{i \leq 3},\{c^l_m(\xi_i^{(j,k)})\}_{l+j \leq 3}, c^l_m(b_i)_{l+i\leq 3}\).

\medskip
\noindent
\textbf{Steps $3$ to $n-1$.}  
Proceeding inductively, suppose $b_2,\dots,b_k$ have been determined. Then the coefficients $c^{(i)}_k$ and $\delta^{(i)}_k$ can be expressed in terms of $\boldsymbol v$ and $b_2,\dots,b_k$.  The recursion \eqref{e:recursion formula} gives $c^{(i)}_{k+1} = -v_i c^{(i)}_{k}-\delta^{(i)}_{k}$ and together with \eqref{e: equ for a and Q;2} it follows that the Laurent part of $w_{k+1}$ is determined by $\boldsymbol v$ and $b_2,\dots,b_k$. Hence there exists a unique $b_{k+1} \in H^0\left(C, K_C^{k+1}\right)$, such that 
\begin{equation}\label{e:construct w_k}
w_{k+1}=\operatorname{Linear Comb}\{\xi_i^{(j,k+1)}\}_{j \leq k+1} + b_{k+1},
\end{equation}
where the linear combination is chosen to match the Laurent part of $w_{k+1}$.
Taking constant terms yields
\begin{equation}\label{e:equ for bolddelta_k}
\boldsymbol{\delta}_{k+1} = f_k(\boldsymbol v;b_i,i \leq k+1),
\end{equation}
where \(f_k(\boldsymbol v)\) is a (vector-valued) polynomial in \(\boldsymbol v\) of degree $k$, with coefficients determined by
\(c^0_m(Q^{\mathrm{reg}}_i)_{i \leq k+1},\{c^l_m(\xi_i^{(j,k)})\}_{l+j \leq k+1}, c^l_m(b_i)_{l+i\leq k+1}\).

\medskip
\noindent
\textbf{Step $n$.} 
From $\mathbf D$ we have extracted $\boldsymbol v\in \mathbb{C}^d$, $b\in B$, and $(w_2,\dots,w_n)$. These data satisfy one additional constraint, namely the top equation in \eqref{e:recursion formula}. Eliminating $c_k^{(i)}$ and using \eqref{e:equ for bolddelta_k}, we obtain an algebraic constraint for $\boldsymbol v$:
$$
(-1)^{n-1} \boldsymbol v^n + \left(\sum_{k=2}^{n} (-1)^{n-k}  \boldsymbol v^{n-k} \circ \boldsymbol{\delta}_{k}  \right) = (-1)^{n-1} \boldsymbol v^n + \left(\sum_{k=2}^{n} (-1)^{n-k}  \boldsymbol v^{n-k} \circ f_{k-1}(\boldsymbol v;b)  \right) = 0.
$$
which is exactly \eqref{nonlinear equ for v}. This proves the first statement.

\noindent\textbf{Conversely.}
Given $\boldsymbol v\in \mathbb{C}^d$ and $b\in B$, one constructs $w_k$ inductively via \eqref{e:construct w_k}. If moreover \eqref{nonlinear equ for v} holds, then the resulting meromorphic operator
\(
\mathbf D_m=\mathbf D_{\mathrm{reg}}+(w_2,\dots,w_n)
\)
satisfies the apparent singularity condition \eqref{e:recursion formula} on each chart $U_i$. Equivalently, $\mathbf D_m$ lies in the image of some
\(
\mathbf D\in \operatorname{Diff}_C^{n} \bigl(L^{-1},\,L^{-1}\otimes K_C^{n}(D)\bigr)^{\circ}
\)
under the inclusion \eqref{e:inclusion in diff operator}. This completes the proof.
\end{proof}

The following lemma concerns solutions to \eqref{nonlinear equ for v} for any $b\in B$.
\begin{lemma}\label{l:solve nonlinear equ}
    Adopt the notation above. Fix a holomorphic $\mathrm{PSL}_n$--oper
\(
\mathbf D_{\mathrm{reg}} \in \operatorname{Diff}_C^{n}\bigl(K_C^{-\frac{n-1}{2}},\,K_C^{\frac{n+1}{2}}\bigr)
\)
and meromorphic higher differentials $\{ \xi_i^{(j,k)} \}_{i,j,k}$. For any $b\in B$, the equation \eqref{nonlinear equ for v}
$$
\boldsymbol v^n+\sum_{i=1}^{n-1}\boldsymbol v^{\,n-i-1} f_i(\boldsymbol v;b)=0,
$$
admits a solution, and the number of solutions is finite.
\end{lemma}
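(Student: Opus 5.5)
Fix $b\in B$ and regard \eqref{nonlinear equ for v} as a system of $d$ polynomial equations in the $d$ unknowns $\boldsymbol v=(v_1,\dots,v_d)\in\mathbb C^d$, one equation for each apparent singularity $p_m<D$. The plan is a Bézout/homogenization argument. The key structural fact I will extract from the proof of \Cref{p:AP-to-algebraic-v} is that the $m$-th component of the system has the form
\[
P_m(\boldsymbol v):=(-1)^{n-1}v_m^n+\sum_{k=2}^{n}(-1)^{n-k}v_m^{\,n-k}\,\bigl(f_{k-1}(\boldsymbol v;b)\bigr)_m ,
\]
where each $f_{k-1}$ has degree at most $k-1$. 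Consequently every monomial in $P_m$ other than $v_m^n$ has total degree at most $n-1$. Thus $P_m$ is a polynomial of degree exactly $n$ whose top-degree homogeneous part is $(-1)^{n-1}v_m^n$.

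The first step is to verify this degree bound carefully. From Steps $1$ to $n-1$, $\boldsymbol\delta_{k+1}=f_k$ is built from products of Laurent coefficients that are at most linear in each earlier $c^{(i)}_j,\delta^{(i)}_j$, together with the recursion $c^{(i)}_{k+1}=-v_ic^{(i)}_k-\delta^{(i)}_k$. By induction on $k$, both $c_k^{(i)}$ and $\delta_k^{(i)}$ have total degree at most $k-1$ in $\boldsymbol v$. The weight-counting is the point to check: with $v$ of weight $1$, $c_k$ of weight $k-1$, and $\delta_k$ of weight $k-1$, the relations \eqref{e: equ for a and Q;2} and \eqref{e:construct w_k} are weight-homogeneous up to lower order terms, because the $\xi_i^{(j,l)}$ coefficients are constants. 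This check is where I expect the main work to lie. If the cross-terms from the Hadamard products, such as $B_2\boldsymbol v\circ C_2\boldsymbol v$, could produce a degree-$n$ term $v_{m'}^{a}v_m^{n-a}$ with $m'\neq m$, the leading form would no longer be diagonal. I would first confirm that the bound on $\deg f_k$ stated in \Cref{p:AP-to-algebraic-v} is strictly less than $k+1$, which is what the weight count gives. Once that holds, any product contributing to $P_m$ has degree at most $n-1$.

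The second step is to homogenize. Let $\bar P_m(v_0,\boldsymbol v)$ be the homogenization of $P_m$. Its solutions at infinity, meaning those with $v_0=0$, satisfy $v_m^n=0$ for all $m$, so $\boldsymbol v=0$. There is therefore no common zero on the hyperplane at infinity in $\mathbb P^d$. By the projective dimension theorem, $d$ hypersurfaces in $\mathbb P^d$ always intersect. Their intersection avoids the hyperplane $\{v_0=0\}$, so it is a closed subset of the affine space $\mathbb A^d$ and hence finite. Bézout's theorem then gives exactly $n^d$ solutions counted with multiplicity, so the set of solutions is nonempty and finite.

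An equivalent algebraic formulation avoids projective geometry. Since the leading forms $v_1^n,\dots,v_d^n$ form a regular sequence, $\mathbb C[\boldsymbol v]/(P_1,\dots,P_d)$ is a finite free module of rank $n^d$, spanned by the monomials $\boldsymbol v^{\alpha}$ with $0\le\alpha_m<n$. This yields both existence and finiteness uniformly in $b$. As a sanity check, I would compare with the Higgs side at $\lambda=0$. There the analogous equations read $v_m^n+\dots=0$, expressing that $v_m$ is an eigenvalue at $p_m$, and there are exactly $n^d$ lifts $\widetilde D$ with $\pi_\ast\widetilde D=D$, which is consistent with \Cref{p: relation to Hecke}.
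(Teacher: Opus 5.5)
Your argument is correct, but it takes a different route from the paper.

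\textbf{The paper's route.} The paper argues topologically. Assuming $P_1$ has no zero, the normalized map $P_1(r\tilde{\boldsymbol v})/|P_1(r\tilde{\boldsymbol v})|$ on $S^{2d-1}$ is null-homotopic, by shrinking $r\to 0$. For large $r$, the homotopy $P_t$ (which scales the lower-order terms by $t$) deforms the same map to $(\tilde v_1^n,\dots,\tilde v_d^n)/\|\cdot\|$, which has degree $n^d$; this is the contradiction. Finiteness then follows because the zero set is bounded and affine algebraic.

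\textbf{Your route.} You homogenize, observe that there are no common zeros on the hyperplane at infinity, and conclude by the projective dimension theorem and B\'ezout. Equivalently, $\{P_m\}$ is a Gr\"obner basis for a degree-compatible order because the leading monomials $v_m^n$ are pairwise coprime.

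\textbf{Common input.} Both arguments rest on the same fact: each $\boldsymbol v^{n-i-1}\circ f_i$ has degree at most $n-1$, so the leading form of the system is the diagonal $(v_1^n,\dots,v_d^n)$. The paper uses this in its estimate $\|\boldsymbol v^n\|>\sum_i\|\boldsymbol v^{n-i-1}f_i\|$ for large $r$. Like you, it takes $\deg f_i\le i$ from \Cref{p:AP-to-algebraic-v} without further proof. Your weight count is a reasonable justification: the Taylor coefficients at $p_m$ are constant-coefficient linear combinations of the Laurent coefficients at all the $p_i$, so $c_k^{(i)}$ and $\delta_k^{(i)}$ have degree at most $k-1$.

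\textbf{What each buys.} The paper's route is elementary and needs only a norm estimate. Yours is purely algebraic and gives more:
\begin{itemize}
\item the exact count $n^d$ with multiplicity;
\item a statement uniform in $b$. The leading coefficients are $\pm 1$ and the lower-order coefficients depend polynomially on $b$ in the construction, so the division algorithm shows that $\mathbb C[B][\boldsymbol v]/(P_1,\dots,P_d)$ is free of rank $n^d$ over $\mathbb C[B]$. The solution space is therefore finite flat of degree $n^d$ over $B$. This gives the dimension count of \Cref{dim count for dR} directly and matches the $n^{\deg D}\!:\!1$ count of \Cref{p: relation to Hecke} on the Higgs side.
\end{itemize}

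\textbf{One phrasing fix.} The intersection is finite because it is closed in $\mathbb P^d$ and misses a hyperplane, and a positive-dimensional projective variety meets every hyperplane. Being closed in $\mathbb A^d$ alone is not enough.
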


\begin{proof}
By way of contradiction, we assume that \eqref{nonlinear equ for v} has no solution. 
Fix $r>0$, consider 
\(
\boldsymbol{v} = r\boldsymbol{\tilde v} = (r\tilde v_1,...,r\tilde v_d),
\)
with $\|\tilde v \| =1$.
Consider a one-parameter family of algebraic equations
\[
P_t(\boldsymbol{ v}) := \boldsymbol{v}^n + t\sum_{i=1}^{n-1} \boldsymbol{v}^{n-i-1} f_{i}(\boldsymbol{v};b). 
\]
For any $r>0$,  define the maps 
\begin{equation}\label{homotopy_1}
F_1(\boldsymbol{\tilde v};r) := \frac{P_1(r\boldsymbol{ \tilde v})}{|P_1(r\boldsymbol{\tilde v})|} : S^{2d-1} \to S^{2d-1}, \quad |\tilde v|=1.
\end{equation}
Note that $F_1(\cdot;r)$ are well defined continuous maps depending continuously on the parameter $r \in \mathbb R^+$. Taking $r\to 0$, we see that $F_1(\cdot;r)$ are homotopic to the constant map $F_1(\cdot;0)$ sending $S^{2d-1}$ to the point $\frac{P(0)}{|P(0)|}$.

On the other hand, since $f_i(\boldsymbol{v})$ are polynomial in $\boldsymbol{v}$ of degree $i$, it holds that for $r>0$ suitably large, 
\(
\|\boldsymbol{v}^n\| > \sum_{i=1}^{n-1} \|\boldsymbol{v}^{n-i-1} f_{i}(\boldsymbol{v})\|.
\)
As a result, the polynomials $P_t(\boldsymbol{ v})$ have no root on the sphere of radius $r$ for any $t\in[0,1]$. Therefore, the maps 
\begin{equation}\label{homotopy_2}
F_{t}(\boldsymbol{\tilde v};r) := \frac{P_t(r\boldsymbol{ \tilde v})}{|P_t(r\boldsymbol{ \tilde v})|} : S^{2d-1} \to S^{2d-1}
\end{equation}
are well defined continuous maps depending continuously on $t\in[0,1]$. Keeping $r$ fixed and letting $t\to 0$, these maps are therefore homotopic to the map 
\begin{equation}\label{diagonal}
F_{0}(\boldsymbol{\tilde v};r)=\frac{(\tilde v_1^n,...,\tilde v_d^n)}{\|(\tilde v_1^n,...,\tilde v_d^n)\|} : S^{2d-1} \to S^{2d-1}.
\end{equation}
It is easy to calculate that the degree of this map is equal to $n^d$. We have reached a contradiction since we have shown that the constant map $F_1(\cdot;0)$ which has degree zero has to be homotopic to the map $F_{0}(\cdot;r)$ in \eqref{diagonal}. 

The same degree reason implies that $P_1(\boldsymbol v)\neq 0$ for $\|\boldsymbol v\|$ sufficiently large, hence the zero set $P_1^{-1}(0)$ is bounded.
Since $P_1^{-1}(0)$ is an affine complex algebraic set, boundedness forces it to be finite: otherwise it would contain a positive-dimensional algebraic component,
which is necessarily unbounded in $\mathbb{C}^d$.
\end{proof}

\begin{proposition}\label{dim count for dR}
In $G=\mathrm{SL}_n$, let \( D=\sum_{i=1}^d p_i \) be a reduced effective divisor on $C$ with $n\mid d$. Then
\[
\dim_{\mathbb{C}} \mathbb L_{dR}(L,D) = \dim_{\mathbb{C}} \mathbb L_{dR}(D)
= \frac{1}{2}\dim_{\mathbb{C}}\,\mathcal{M}_{dR}(n, \mathcal{O}_C)
= (g-1)\,(n^2-1).
\]
\end{proposition}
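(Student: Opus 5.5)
The plan is to compute the dimension of the space of triples $\widetilde{\mathbb{L}_{dR}}(L,D)$ and then transfer it to $\mathbb L_{dR}(L,D)$ through the forgetful map $\pi(L,D)$. By \Cref{identify oper} and the summary \eqref{e:ident spaces}, $\widetilde{\mathbb{L}_{dR}}(L,D)$ is identified with $\mathrm{Op}_{\mathrm{PSL}_n,D}(C)_{\boldsymbol{\omega}_{n-1}^{\vee}}$. That space is in turn identified, via \Cref{SL diff oper} and \Cref{p:trans of diff opers}, with the meromorphic operators $\mathbf D_m$ that satisfy the apparent singularity conditions at every $p_i<D$.

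\Cref{p:AP-to-algebraic-v} parametrizes these operators by pairs $(\boldsymbol v,b)\in\mathbb C^d\times B$ subject to \eqref{nonlinear equ for v}. Here $B=\bigoplus_{j=2}^n H^0(C,K_C^j)$ is the $\mathrm{SL}_n$ Hitchin base, and \eqref{nonlinear equ for v} is a system of $d$ equations, one per point $p_i$, with the $i$-th equation monic of degree $n$ in $v_i$. The correspondence is bijective once $\mathbf D_{\mathrm{reg}}$ and the differentials $\xi_i^{(j,k)}$ are fixed. Consider the projection
\[
\{(\boldsymbol v,b)\,:\,\eqref{nonlinear equ for v}\text{ holds}\}\longrightarrow B .
\]
By \Cref{l:solve nonlinear equ} it is surjective with finite fibers. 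The incidence variety therefore has dimension exactly
\[
\dim B=\sum_{j=2}^n (2j-1)(g-1)=(n^2-1)(g-1).
\]
More precisely, every component has dimension at least $\dim(\mathbb C^d\times B)-d=\dim B$ since it is cut out by $d$ equations, and at most $\dim B$ because the fibers over $B$ are finite. This gives $\dim\widetilde{\mathbb{L}_{dR}}(L,D)=(n^2-1)(g-1)$.

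Next I transfer this to $\mathbb L_{dR}(L,D)$. The map $\pi(L,D)$ is surjective by definition, so $\dim \mathbb L_{dR}(L,D)\le (n^2-1)(g-1)$. For the reverse inequality I use \Cref{p: generic finite dR}, which says $\pi(L,D)$ is generically finite for generic $D$, and hence equality holds for generic $D$.

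For an arbitrary reduced $D$ I plan an independent upper bound on the fiber dimension. A fiber of $\pi(L,D)$ over $(E,\nabla)$ consists of embeddings $L\hookrightarrow E$, up to scale, with $D_i(\nabla)=D$. I would argue these form a finite set because $\mathbb L_{dR}(L,D)$ is isotropic, a fact proved later via the correspondence. Failing that, I would use the semicontinuity argument in the proof of \Cref{p: generic finite dR}, applied now in the family over $\mathrm{Hilb}^d(C)$.

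I expect this last point, control of the fiber dimension of $\pi(L,D)$ for non-generic $D$, to be the main obstacle. If it cannot be settled, I will state the equality for generic $D$, which matches the genericity hypothesis in \Cref{intro-main-thm-1}(b). For non-generic $D$ I would record only the upper bound together with co-isotropy, which again matches what \Cref{intro-main-thm-1}(b) claims.

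Finally, $\mathbb L_{dR}(D)$ is the finite union over the $n^{2g}$ choices of $L$ with $L^n\simeq K_C^{\binom n2}(-D)$. Its dimension therefore equals that of any single $\mathbb L_{dR}(L,D)$. Using $\dim\mathcal M_{dR}(n,\mathcal O_C)=2(n^2-1)(g-1)$ then completes the computation.
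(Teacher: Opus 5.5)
Your route is the paper's. It too obtains $\dim\widetilde{\mathbb{L}_{dR}}(L,D)=\dim B=(n^2-1)(g-1)$ from \Cref{p:AP-to-algebraic-v} and \Cref{l:solve nonlinear equ}. Your Krull-type count, showing every component of the incidence variety has dimension exactly $\dim B$, is a slightly more careful version of its one-line step. The paper then transfers this to $\mathbb L_{dR}(L,D)$ through the generic finiteness of $\pi(L,D)$ in \Cref{p: generic finite dR}, exactly as you do.

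The obstacle you isolate is real, and the paper's proof does not get past it either. \Cref{p: generic finite dR} is only proved for generic $D$: its semicontinuity argument runs over the whole family in $(\lambda,D)$, and its input at $\lambda=0$ is also taken for generic $D$. So, as written, both arguments give the equality only for generic reduced $D$. For an arbitrary reduced $D$ they give only $\dim\mathbb L_{dR}(L,D)\le(n^2-1)(g-1)$. This is consistent with \Cref{intro-main-thm-1}(b), which claims the Lagrangian property only for generic $D$.

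Neither of your fallbacks closes the gap.
\begin{itemize}
\item Isotropy of $\mathbb L_{dR}(L,D)$ (the Atiyah--Bott computation in \Cref{thm: lag de rham}, which does not use the dimension count) is an \emph{upper} bound on dimension. It is compatible with $\pi(L,D)$ having positive-dimensional fibres and $\mathbb L_{dR}(L,D)$ being isotropic of dimension less than half, so it cannot force the fibres to be finite.
\item Redoing the semicontinuity argument over $\operatorname{Hilb}^d(C)$ just reproduces the generic statement.
\item ``Upper bound plus co-isotropy'' is not a weaker fallback. A coisotropic subvariety has dimension at least half, so those two together are precisely the equality you could not prove. For arbitrary $D$, what you actually have is isotropy plus the upper bound.
\end{itemize}

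If you want the statement for every reduced $D$, one route that looks workable is to hold $D$ fixed and degenerate only $\lambda$. Since $\nabla\mapsto\lambda\nabla$ does not change $D_i$, the rescaling $v\mapsto\lambda v$, $b_j\mapsto\lambda^j b_j$ should turn \eqref{nonlinear equ for v} into a family over $\mathbb C_\lambda\times B$. Its leading part $v_k^n$ is independent of $(\lambda,b)$, so the family should be finite. Its $\lambda=0$ fibre should be the condition that $(p_k,v_k)$ lies on the spectral curve of $b$. Each component at $\lambda=1$ would then specialize, over some $b$ with smooth spectral curve, to Higgs triples. There the forgetful map is finite by \Cref{prop-Nm(D)}, and upper semicontinuity of fibre dimension gives generic finiteness of $\pi(L,D)$ for that fixed $D$. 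This sketch still needs a $\lambda$-uniform version of the $(v,b)$-parametrization to be made rigorous.
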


\begin{proof}
By \Cref{p:AP-to-algebraic-v} and \Cref{l:solve nonlinear equ}, for each $b\in B$ the equation \eqref{nonlinear equ for v} admits at least one solution and only finitely many.
Thus the parameter $b$ labels a nonempty finite fiber in the construction of \(\operatorname{Diff}_C^{n}\!\bigl(L^{-1},\,L^{-1}\otimes K_C^{n}(D)\bigr)^{\circ}\), and we obtain
\[
\dim_{\mathbb{C}} \operatorname{Diff}_C^{n}\!\bigl(L^{-1},\,L^{-1}\otimes K_C^{n}(D)\bigr)^{\circ}
=\dim_{\mathbb{C}} B.
\]
On the other hand, $\dim_{\mathbb{C}} B=\frac12\dim_{\mathbb{C}}\mathcal{M}_{dR}(n, \mathcal{O}_C)=(g-1)(n^2-1)$.
Finally, the dimension statement for $\mathbb L_{dR}(L,D)$ and $\mathbb L_{dR}(D)$ follows from the generic finiteness of the forgetful map in \Cref{p: generic finite dR}. 
\end{proof}

In the following, we prove the Lagrangian property of $\mathbb{L}_{dR}(D)$.
\begin{theorem}\label{thm: lag de rham}
The subvariety $\mathbb{L}_{dR}(D)\subset \mathcal{M}_{dR}(n, \mathcal{O}_C)$ is holomorphic Lagrangian with respect to the Atiyah--Bott holomorphic symplectic form. The same statement holds for $\mathrm{GL}_n$.
\end{theorem}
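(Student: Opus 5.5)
The plan mirrors the Higgs case. By \Cref{dim count for dR}, for a reduced $D$ with $n\mid d$ (and generic, so that \Cref{p: generic finite dR} applies), an open dense subset of $\mathbb L_{dR}(L,D)$ already has dimension $\frac12\dim\mathcal M_{dR}(n,\mathcal O_C)$. It therefore suffices to show that $\mathbb L_{dR}(L,D)$ is isotropic for $\Omega_{\mathrm{AB}}$. Since $\mathbb L_{dR}(D)$ is a finite union of the components $\mathbb L_{dR}(L,D)$, indexed by $n$-th roots, isotropy of each component gives the statement for $\mathbb L_{dR}(D)$. The $\mathrm{GL}_n$ case is handled the same way, adding the trace part.

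For isotropy I would work on the open set where $D$ is fixed and the data $(\boldsymbol v,b)$ of \Cref{p:AP-to-algebraic-v} vary.

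\textbf{Step 1: tangent vectors.} A tangent vector to $\mathbb L_{dR}(L,D)$ is the variation $\delta\mathbf D_m=(\delta w_2,\dots,\delta w_n)$ of the meromorphic oper. Its poles lie only along $D$, and its polar parts are constrained to first order by the apparent-singularity relations \eqref{e:recursion formula}.

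\textbf{Step 2: local flat-section description of $\Omega_{\mathrm{AB}}$.} On $C\setminus D$ the connection is an oper in Drinfeld--Sokolov gauge. A deformation $\delta\mathbf D_m$ gives a variation $\delta A$ that is holomorphic on $C\setminus D$. Because the local monodromy is trivial, $\delta A$ is gauge-trivial on each punctured disc: $\delta A=\nabla\xi_i$ near $p_i$, with $\xi_i$ single-valued and meromorphic. Using the holomorphic gauge of \eqref{eqn-nabla-filtration} on $E$ itself, one can take the class in $\mathbb H^1$ as $\delta A$ corrected by $\nabla\xi_i$ near $p_i$. The pairing then reduces, by Stokes, to a sum of residues:
\[
\Omega_{\mathrm{AB}}(\delta_1A,\delta_2A)=\sum_i\mathrm{Res}_{p_i}\operatorname{Tr}(\xi_i^{(1)}\delta_2A)+(\text{global term}),
\]
where the global term pairs two oper deformations with holomorphic $\delta w_j$. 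That global term vanishes because the space of holomorphic opers is Lagrangian.

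\textbf{Step 3: residue computation.} In the normal form \eqref{e:local form of D}, each residue depends only on the polar data $(c_k^{(i)},\delta_k^{(i)})$. Via \eqref{e:recursion formula} these are polynomial in $v_i$, with constant terms determined by $b$. Since $p_i$ is fixed, the residue should collapse to an expression of the form $\delta_1 v_i\,\delta_2 p_i-\delta_2 v_i\,\delta_1 p_i$, which is zero. This is the de Rham analogue of the final equality in the proof of \Cref{thm: Lag corres Higgs}, with $v_i$ playing the role of $z_i^1$.

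\textbf{Step 4: deducing isotropy.} Equivalently, I would first prove the correspondence statement, namely that $\Omega_{\mathrm{AB}}|_{\mathbb L}=-\omega_{\mathcal S}^{[d]}$ pulled back via $(p_i,v_i)$. Isotropy for fixed $D$ then follows exactly as \Cref{thm: Lag Higgs} followed from \Cref{thm: Lag corres Higgs}, because fibers of $\mathrm{Hilb}^d(\mathcal S)\to\mathrm{Hilb}^d(C)$ are isotropic.

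\textbf{Main obstacle.} I expect the hardest part to be Step 2–3: writing the gauge transformation $\xi_i$ explicitly from $z^{\omega_1^\vee}$ and \eqref{e:transform rule of mero oper0}, and checking that the higher polar data ($c_k,\delta_k$ for $k\ge3$) cancel in the residue. The cancellation should come from the recursion \eqref{e:recursion formula}, which makes all of them functions of $v_i$ and lower data. I would first verify it for $n=2$, where it reduces to the known computation of \cite{iwasaki92, D24-lambda}. Then I would argue inductively on $k$ using the triangular structure of the recursion.
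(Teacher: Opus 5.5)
There is a genuine gap, and it is the step you yourself flag as the main obstacle. Step 3 \emph{is} the isotropy statement, and the proposal only asserts that the residue ``should collapse.'' Nothing in Steps 1--2 forces $\sum_i\mathrm{Res}_{p_i}\operatorname{tr}(\xi_i^{(1)}\delta_2A)$ to vanish for fixed $D$.

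Your claim that each residue sees only $(c_k^{(i)},\delta_k^{(i)})$ is unjustified. With $D$ fixed, $\delta Q_k$ still has poles of order up to $k-1$ at $p_i$ in Drinfeld--Sokolov gauge. The Laurent coefficients of $\xi_i$ come from solving $\nabla\xi_i=\delta_1A$ against the full Taylor expansions of the $Q_k$. So a priori the residue involves coefficients of the $a_k$ beyond $\delta_k$, which \eqref{e:recursion formula} does not control.

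Even for $n=2$, the vanishing comes from $\xi_i$ vanishing to the right order at $p_i$. That is guaranteed by the no-logarithm (resonance) condition in $\nabla\xi=\delta A$, i.e.\ by the linearized apparent-singularity constraint. For general $n$ you would have to control all such conditions, and all entries of $\xi_i$ pairing with $\delta Q_2,\dots,\delta Q_n$. ``Induction on $k$ using the triangular structure of the recursion'' does not do this.

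Step 4 does not rescue the argument. It replaces the fixed-$D$ computation by the identity $\Omega_{\mathrm{AB}}|_{\mathbb L}=-\omega_{\mathcal S}^{[d]}$, which is the same computation with $D$ moving as well. (Also, in the \v{C}ech model with cover $C\setminus D$ plus small discs, the pairing is exactly a sum of residues, so there is no separate ``global term.'' Moreover the $\delta w_j$ are not holomorphic: for example $\delta w_2$ has a simple pole at $p_i$ with coefficient $\delta v_i$. So the appeal to holomorphic opers being Lagrangian is beside the point.)

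The missing idea is that, for fixed $D$, no residue computation is needed. By \Cref{l:lower hecke}, for reduced $D$ the filtration $E_1\subset\cdots\subset E_{n-1}\subset E$ is by subbundles. Its graded pieces are $LK_C^{-(i-1)}$ for $i<n$ and $LK_C^{-(n-1)}(D)$ for $i=n$, all fixed once $L$ and $D$ are fixed. The connection $\nabla$ induces the maps $1,\dots,1,s$ between consecutive graded pieces, with $s=s_i(\nabla)$ fixed.

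Hence, in a smooth frame adapted to the filtration, a tangent vector to $\mathbb L_{dR}(L,D)$ is represented by a pair $(\mathfrak b,\mathfrak n)$. Here $\mathfrak b\in\Omega^{0,1}(C,\operatorname{End}E)$ is strictly upper triangular, since the graded holomorphic structures do not move. And $\mathfrak n\in\Omega^{1,0}(C,\operatorname{End}E)$ is weakly upper triangular, since the subdiagonal does not move. Then $\Omega_{\mathrm{AB}}(\delta_1,\delta_2)=\int_C\operatorname{tr}(\mathfrak b_1\wedge\mathfrak n_2)+\operatorname{tr}(\mathfrak n_1\wedge\mathfrak b_2)$. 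The integrand vanishes pointwise, because a strictly upper triangular matrix times a weakly upper triangular one is strictly upper triangular.

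This is the paper's proof. It gives isotropy for every reduced $D$, with genericity entering only through the dimension count of \Cref{dim count for dR}. It leaves residue computations of the kind you propose to the correspondence theorem, where $D$ genuinely varies and the answer is $-\omega_{\mathcal S}^{[d]}$ rather than $0$.
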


\begin{proof}
By \Cref{dim count for dR}, it suffices to show that $\mathbb{L}_{dR}(D)$ is isotropic for the Atiyah--Bott form.

Fix a point $(E,\nabla) \in \mathbb{L}_{dR}(D)$. Choose a gauge adapted to the filtration given by \Cref{l:lower hecke}. In this gauge, the holomorphic structure on $E$ is represented by the Dolbeault operator,
\[
\bar\partial_E=
\begin{pmatrix}
\bar\partial_{L_1} & & \\
& \ddots & \\
& & \bar\partial_{L_n}
\end{pmatrix}
+ B,
\quad
B \in \Omega^{0,1}\!\left(C,\operatorname{End}(E)\right)\ \text{strictly upper triangular},
\]
where $L_i=\mathcal F_i/\mathcal F_{i-1}$ for $i<n$ and $ L_n=E/\mathcal F_{n-1}$. In particular, the holomorphic structures on the graded pieces $L_i$ are fixed, and an infinitesimal deformation of $\bar\partial_E$ in $T_{(E,\nabla)}\mathbb{L}_{dR}(D)$ is represented by
\(
\mathfrak b\in \Omega^{0,1}\!\left(C,\operatorname{End}(E)\right),
\)
which is strictly upper triangular.

Similarly, \Cref{l:lower hecke} provides a normal form for the holomorphic connection:
\[
\nabla=
\begin{pmatrix}
\partial_{L_1} & & & \\
& \ddots & & \\
& & \ddots & \\
& & & \partial_{ L_n}
\end{pmatrix}
+
\begin{pmatrix}
0 & & & \\
1 & 0 & & \\
\vdots & \ddots & \ddots & \\
0 & \cdots & s & 0
\end{pmatrix}
+ N,
\quad
N\ \text{weakly upper triangular}.
\]
Here $s$ denotes the unique section in $H^0\!\left(K_C^{\binom{n}{2}}L^{-n}\right)$ vanishing precisely along $D$, which is always fixed. Therefore an infinitesimal deformation of $\nabla$ tangent to $\mathbb{L}_{dR}(D)$ is represented by
\(
\mathfrak n\in \Omega^{1,0}\!\left(C,\operatorname{End}(E)\right),
\)
which is weakly upper triangular.

Therefore, take two tangent vectors $\delta_1(E,\nabla), \delta_2(E,\nabla) \in T_{(E,\nabla)}\mathbb{L}_{dR}(D)$, represented in the chosen gauge by pairs
\(
(\mathfrak b_1,\mathfrak n_1)
\)
and
\(
(\mathfrak b_2,\mathfrak n_2)
\).
Then
\begin{align*}
\Omega_{\mathrm{AB}}(\delta_1,\delta_2)
=\int_C \operatorname{tr}\!\left((\mathfrak b_1+\mathfrak n_1)\wedge(\mathfrak b_2+\mathfrak n_2)\right)
=\int_C \operatorname{tr}(\mathfrak b_1\wedge \mathfrak n_2)+\operatorname{tr}(\mathfrak b_2\wedge \mathfrak n_1)
=0.
\end{align*}
Thus $\mathbb{L}_{dR}(D)$ is isotropic, and hence (being half-dimensional) it is a holomorphic Lagrangian subvariety.
\end{proof}

\subsection{Lagrangian correspondence for de Rham moduli spaces}
In this section, for \(\mathrm{GL}_n\) and \(\mathrm{SL}_n\) cases, we define the subvarieties
\(\mathbb{L}_{dR}(L,d)\) and \(\mathbb{L}_{dR}(d)\) in
\(
\mathrm{Hilb}^d(\mathcal{S}) \times \mathcal{M}_{dR}(n)
\)
and
\(
\mathrm{Hilb}^d(\mathcal{S}) \times \mathcal{M}_{dR}(n, \mathcal{O}_C),
\)
respectively. Here $\mathcal{S}$ is the $T^\ast C$--torsor introduced in \Cref{p: def twist cotanbun}.
These subvarieties provide a Lagrangian correspondence between the two factors.

Recall that given a triple $(i\colon L\hookrightarrow E,\nabla)$ with reduced apparent singular divisor
$D_i(\nabla)=\sum p_i$, one associates residue parameters $v_i$ at each $p_i$ via the operator
\(
\mathbf D \in \operatorname{Diff}_C^{n} (L^{-1},\,L^{-1}\otimes K_C^{n}(D))^\circ
\)
and \Cref{def: res para}. By \Cref{p: def twist cotanbun}, each pair $(p_i,v_i)$ defines a point of $\mathcal{S}$, hence determines a reduced length-$d$ subscheme
\[
\widetilde{D}_i(\nabla):=\{(p_i,v_i)\}_{i=1}^d \in \operatorname{Hilb}^d(\mathcal{S}),
\qquad d=\deg D.
\]
Moreover, its support projects to $D\in \operatorname{Sym}^d(C)\setminus \Delta$ under $\pi:\mathcal{S}\to C$, so
\(
\widetilde{D}(i\colon L\hookrightarrow E,\nabla)\in \pi^\ast\bigl(\operatorname{Sym}^d(C)\setminus \Delta\bigr)\subset \operatorname{Hilb}^d(\mathcal{S}).
\)
We now define the desired correspondences.

\begin{definition}\label{def:lag-on-extended-space-dR-alt}
Let $d \in \mathbb{Z}$ and let $L$ be a line bundle on $C$ satisfying
\[
d = n(n-1)(g-1) - n\,\deg(L).
\]
We define $\mathbb{L}_{dR}(L,d)$ to be the subvariety of
\[
\pi^\ast\bigl(\operatorname{Sym}^d(C)\setminus \Delta\bigr)\times \mathcal{M}_{dR}(n)
\ \subset\
\operatorname{Hilb}^d(\mathcal{S})\times \mathcal{M}_{dR}(n)
\]
consisting of triples $(\widetilde{D},E,\nabla)$ such that
\begin{align}
\mathbb{L}_{dR}(L,d)
:=
\left\{
    (\widetilde{D}, E, \nabla)\ \middle|\ 
    \begin{array}{l}
        \exists\, i: L \hookrightarrow E \text{ such that } \\[2pt]
        \widetilde{D}=\widetilde{D}_i(\nabla)
    \end{array}
\right\}.
\end{align}

Assume $n\mid d$. We define $\mathbb{L}_{dR}(d)$ to be the subvariety of
\[
\pi^\ast\bigl(\operatorname{Sym}^d(C)\setminus \Delta\bigr)\times \mathcal{M}_{dR}(n, \mathcal{O}_C)
\ \subset\
\operatorname{Hilb}^d(\mathcal{S})\times \mathcal{M}_{dR}(n, \mathcal{O}_C)
\]
consisting of triples $(\widetilde{D},E,\nabla)$ such that
\begin{align}
\mathbb{L}_{dR}(d)
:=
\left\{
    (\widetilde{D}, E, \nabla)\ \middle|\ 
    \begin{array}{l}
        \exists\, i:L \hookrightarrow E \text{ such that } \\[2pt]
        L^{\otimes n} \cong K_C^{\binom{n}{2}}\otimes \mathcal{O}_C\!\bigl(-\pi_\ast(\widetilde{D})\bigr), \\[2pt]
        \widetilde{D}=\widetilde{D}_i(\nabla)
    \end{array}
\right\}.
\end{align}
Here $\pi_\ast:\operatorname{Hilb}^d(\mathcal{S})\to \operatorname{Sym}^d(C)$ is induced by the projection $\pi:\mathcal{S}\to C$.
\medskip
By construction, if $(\widetilde{D},E,\nabla)\in \mathbb{L}_{dR}(L,d)$ (resp.\ $\mathbb{L}_{dR}(d)$), then
$(E,\nabla)\in \mathbb{L}_{dR}(L,\pi_\ast(\widetilde{D}))$ (resp.\ $\mathbb{L}_{dR}(\pi_\ast(\widetilde{D}))$).
\end{definition}

\begin{theorem}\label{thm: Lag corres dR; SLn}
The variety $\mathbb{L}_{dR}(d)$ defines a \emph{Lagrangian correspondence} between
\(
\mathcal{M}_{dR}(n, \mathcal{O}_C)
\)
and
\(
\operatorname{Hilb}^d(\mathcal{S}).
\)
In other words, $\mathbb{L}_{dR}(d)$ is a Lagrangian subvariety of
\(
\operatorname{Hilb}^d(\mathcal{S})\times \mathcal{M}_{dR}(n, \mathcal{O}_C),
\)
equipped with the holomorphic symplectic form
\(
\omega_{\mathcal{S}}^{[d]}\oplus \Omega_{\mathrm{AB}}.
\)
The same statement holds for $\mathrm{GL}_n$.
\end{theorem}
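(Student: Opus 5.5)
The plan mirrors the proof of \Cref{thm: Lag corres Higgs}: a dimension count plus a local residue computation.

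\textbf{Dimension.} For fixed reduced $D$ of degree $d$, \Cref{dim count for dR} together with the generic finiteness in \Cref{p: generic finite dR} gives a fiber of $\mathbb{L}_{dR}(d)$ over $D$ of dimension $(g-1)(n^2-1)$. The residue parameters add no further dimensions. Indeed, by \Cref{SL diff oper} a triple determines $\mathbf D$, hence $\boldsymbol v$ by \Cref{def: res para}. Conversely, \Cref{p:AP-to-algebraic-v} and \Cref{l:solve nonlinear equ} show that $(D,b)$ determines only finitely many $\boldsymbol v$. Letting $D$ vary in the $d$-dimensional space $\operatorname{Sym}^d(C)\setminus\Delta$, we get
\[
\dim \mathbb{L}_{dR}(d) = d+(g-1)(n^2-1) = \tfrac12\bigl(2d+\dim\mathcal M_{dR}(n,\mathcal O_C)\bigr).
\]
It therefore suffices to prove isotropy for $\omega_{\mathcal S}^{[d]}\oplus\Omega_{\mathrm{AB}}$, at generic points with $D$ reduced. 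The required identity is
\[
\Omega_{\mathrm{AB}}(\delta_1,\delta_2)=-\omega^{[d]}_{\mathcal S}(\widetilde D_{t_1},\widetilde D_{t_2}).
\]

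\textbf{Strategy for $\Omega_{\mathrm{AB}}$.} In the proof of \Cref{thm: lag de rham}, the pairing vanished because $D$ was fixed. Now $D$ moves, so the gauge adapted to the filtration of \Cref{l:lower hecke} must also move. I would work locally on the charts $(U_i,z)$ and choose, on $C\setminus\bigcup U_i$, a smooth gauge in which the deformations $(\mathfrak b,\mathfrak n)$ are strictly upper, respectively weakly upper, triangular, exactly as before. The only obstruction to triangularity is the entry $s$ of \eqref{eqn-nabla-filtration}, whose zero $p_i(t)$ moves. On each $U_i$, I would instead use the meromorphic oper gauge of \Cref{p:trans of diff opers}, where $\nabla$ is given by $\mathbf D_m$ and hence depends on $(p_i,v_i)$.

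\textbf{Reduction to a residue pairing.} Glue the two gauges by a $t$-dependent transformation on the annuli $\partial U_i$. By Stokes, $\Omega_{\mathrm{AB}}(\delta_1,\delta_2)$ reduces to a sum over $i$ of boundary terms. Equivalently, it becomes a sum of residues at $p_i$ of $\operatorname{tr}$-pairings between the variation of the connection form and the variation of the gauge. This is the de Rham analogue of the Hurtubise formula \eqref{e:def sym form spectral}. The relevant singular data of $\mathbf D_m$ at $p_i$ are the double pole $-\frac{n^2-1}{2n}z^{-2}$ and the simple pole $v_i z^{-1}$ of $Q_2$ recorded in \eqref{e: eg expansion of Q2}. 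Moving $p_i$ by $\partial_t u_i$ produces
\[
\partial_t Q_2 \sim -\frac{n^2-1}{n}\,\partial_t u_i\, z^{-3} + \partial_t v_i\, z^{-1} + \dots .
\]
Pairing this against the gauge variation, which is of order $z^{\pm 1/n}$ from $z^{\omega_1^\vee}$, should yield the residue contribution
\[
-\bigl(\partial_{t_1}v_i\,\partial_{t_2}u_i-\partial_{t_2}v_i\,\partial_{t_1}u_i\bigr).
\]
This equals minus $\omega_{\mathcal S}$ evaluated on the moving point $(u_i,v_i)$, in the local Darboux coordinates in which $\omega_{\mathcal S}=dv\wedge dz$; such coordinates exist by \Cref{p: def twist cotanbun}. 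The affine correction in \eqref{e:transition for affine bundle} is compatible with this, since translations preserve $\omega_{\mathcal S}$. The higher coefficients $Q_k$ for $k\ge3$ and the global parameter $b$ should contribute nothing, for two reasons: they enter through triangular pieces as in \Cref{thm: lag de rham}, and their singular parts are determined by $(\boldsymbol v,b)$ via \eqref{e:recursion formula}, so their residue terms either cancel or are exact.

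\textbf{Main obstacle.} The hardest step is making the residue computation honest: matching the normalizations so that the coefficient comes out exactly $1$ rather than some multiple depending on $n$, and verifying that the contributions from $Q_3,\dots,Q_n$ vanish. I would first check the case $n=2$ against the known formulas in \cite{iwasaki92, D24-lambda} to fix constants. An alternative, which may be cleaner, is a deformation argument in $\lambda$ through the Hodge family of \Cref{p: generic finite dR}. Isotropy is a closed condition, and at $\lambda=0$ it is \Cref{thm: Lag corres Higgs}. However, this requires checking that $\mathcal S$ degenerates to $T^\ast C$ and that the residue parameters $v_i$ specialize to the eigenvalue coordinates $a_n(p_i)$ of \Cref{prop-filtration-Higgs}. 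Since the torsor class $[\alpha]$ of \Cref{r:torsor class} scales with $\lambda$, this seems plausible, and I would pursue it if the direct residue computation becomes unwieldy.
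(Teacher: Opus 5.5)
Your dimension count and overall architecture match the paper: half-dimensionality first, then isotropy via a residue formula for $\Omega_{\mathrm{AB}}$ in terms of gluing data at the moving apparent singularities, yielding $\Omega_{\mathrm{AB}}=-\omega_{\mathcal S}^{[d]}$. The gap is the step that carries the proof, which you leave at ``should yield''; the mechanism you propose for it cannot work. In the oper gauge only the row $Q_2,\dots,Q_n$ varies, so $\partial_t A_{\mathrm{oper}}\in\mathfrak n$. The gluing transformation $G=(z-u_i)^{\omega_1^\vee}N$, with $N$ unipotent, is Borel-valued, so $\partial_tG\,G^{-1}\in\mathfrak b$. Since $\operatorname{tr}(\mathfrak n\cdot\mathfrak b)=0$, pairing $\partial_tQ_2$ (or any $\partial_tQ_k$) against the gauge variation gives identically zero, whatever the pole orders. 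Also, the diagonal part of $\partial_tG\,G^{-1}$ is the simple pole $-\partial_tu_i\,\omega_1^\vee/(z-u_i)$; the $z^{\pm1/n}$ branches cancel. Your reason for discarding $Q_3,\dots,Q_n$ therefore applies equally to $Q_2$. The residue has to come from the other half of the \v{C}ech cup product: the variation of the connection in a gauge that is \emph{regular} at $p_i$. For the same reason your placement of gauges must be reversed. The meromorphic oper gauge is not a frame of $E$ at $p_i$, so it can only be used on $C\setminus D$, while $U_i$ needs a regular gauge.

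The missing ingredient is \Cref{l: standard form near div dr}. In a filtration-adapted holomorphic gauge on $U_i$, $\nabla^\vee$ has diagonal $(v_i,-v_i,0,\dots,0)$ and $(2,1)$-entry $-(z-u_i)$, with all other lower-triangular entries constant. This gauge is glued to the oper gauge on $C\setminus D$ by $G_i=(z-u_i)^{\omega_1^\vee}N_i$. Now look at $\operatorname{tr}(\partial_{t_1}A_i\,\partial_{t_2}G_iG_i^{-1})$. The strictly upper part of $\partial_{t_2}G_iG_i^{-1}$ meets only the $(2,1)$-entry. With the form of $N_i$ forced by the gauge equation, it contributes only terms that are symmetric in $t_1,t_2$ or holomorphic. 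The diagonal gives $-\partial_{t_1}v_i\,\partial_{t_2}u_i/(z-u_i)$ with coefficient $(\omega_1^\vee)_{11}-(\omega_1^\vee)_{22}=\frac{n-1}{n}+\frac1n=1$. This settles your normalization worry and shows that none of the $Q_k$ enter. Finally, your fallback via the $\lambda$-family does not work as stated. Isotropy is a closed condition, so it passes from $\lambda\neq0$ to the limit $\lambda=0$, not from the Higgs case at $\lambda=0$ to $\lambda=1$.
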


Before proving the theorem, we need the following lemma.
\begin{lemma}\label{l: standard form near div dr}
Let $(\widetilde D,E,\nabla)\in\mathbb{L}_{dR}(d)$. By definition, there exists 
$i: L \hookrightarrow E$ such that 
\[
\widetilde D=\widetilde{D}_i(\nabla)=\{(p_i,v_i)\}_{i=1}^d,
\qquad d=\deg D.
\]
For each $p_i$, choose a local chart $(U_i,z)$ with $z(p_i)=0$.
Then, over $U_i$, with respect to a local frame adapted to the filtration in \eqref{eqn-nabla-filtration}, the dual connection $\nabla^{\vee}|_{U_i}$ is gauge equivalent to
\begin{equation}\label{e: gauge form for nabla Ui}
    \nabla^{\vee} \mid_{U_i} = \mathrm{d} +
    \begin{pmatrix}
        &v^{(z)}_i  &\ast &\ast &\dots &\ast &\ast &\ast  \\
        &-z &-v^{(z)}_i &\ast &\dots &\ast &\ast &\ast \\
        &0 &-1  &0 &\dots &\ast &\ast &\ast \\
        &\vdots &\vdots &\vdots &\dots &\vdots  &\vdots &\vdots \\
        &0  &0 &0 &\dots &-1 &0 &\ast \\
        &0  &0 &0 &\dots &0 &-1 & 0 
    \end{pmatrix} \mathrm{d}z,
\end{equation}
where $v^{(z)}_i$ is the residue parameter under the trivialization over $U_i$ in \Cref{p: def twist cotanbun}, and all unspecified entries are holomorphic in $z$.
\end{lemma}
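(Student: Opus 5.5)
The plan is to obtain the local normal form \eqref{e: gauge form for nabla Ui} by translating the scalar operator $\mathbf D$ back into a companion-type first-order system for the dual connection, and then using the residue-parameter normalization of \Cref{def: res para} together with the apparent-singularity relations \eqref{e:recursion formula} to simplify the top-left $2\times 2$ block.

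\textbf{Step 1: the companion system.} By \Cref{SL diff oper}, $(E^\vee,\nabla^\vee)$ is recovered from $\mathbf D$: the local system $\ker(\nabla^\vee)$ is identified with $\ker(\mathbf D)$ through the evaluation map $i^\vee$. Over $U_i$, trivialize $L^{-1}$ and use the chart for which $\sigma_n(\mathbf D)=z$, so that by \eqref{e:local form of D}
\[
\mathbf D|_{U_i}=z\bigl(\partial^n+a_1\partial^{n-1}+\cdots+a_n\bigr),\qquad a_1=-\tfrac1z,\qquad a_k=\tfrac{c_k}{z}+\delta_k+\mathrm O(z).
\]
Take the naive frame $y,y',\dots,y^{(n-1)}$ of the jet bundle $J^{n-1}(L^{-1})$, which by \Cref{p:oper bundle out D} is dual to $\mathcal F_n$. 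Writing $\nabla^\vee$ in this frame gives the companion matrix, which has a simple pole at $z=0$ coming from the last row $-a_n,\dots,-a_1$. Since $\mathcal F_n\subset E$ is the lower Hecke transform of \eqref{eqn-Fn-E}, the frame of $E^\vee$ is obtained by one elementary Hecke modification along the line dual to $E_{n-1}|_p$. Concretely, we replace the top jet $y^{(n-1)}$ by $z\,y^{(n-1)}$ (up to lower-order corrections). This turns the residue into a holomorphic term and produces the entry $-z$ in the subdiagonal slot, matching $s_i(\nabla)=z$ in \eqref{eqn-nabla-filtration}. It also yields $-1$ in the other subdiagonal slots, after reversing the frame order so that the matrix is upper triangular plus subdiagonal, as in \eqref{e: gauge form for nabla Ui}.

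\textbf{Step 2: normalizing the diagonal.} Next apply a holomorphic unipotent upper-triangular gauge transformation $g=\mathrm{Id}+zX+\cdots$. Because of the $-z$ in the subdiagonal, the induced change in the $(1,1)$ and $(2,2)$ entries is controlled only by the low-order coefficients. Each conjugation step shifts these entries by combinations of $c_2$, $\delta_1$ and the $\delta_k$. Using the relations \eqref{e:recursion formula}, and the fact that $\delta_1=0$ in the chosen chart (see the proof of \Cref{p: def twist cotanbun}), the diagonal entries can be brought to $(v,-v,0,\dots,0)$ with $v=c_2+\delta_1=v_i^{(z)}$. The remaining diagonal entries are killed by the tracelessness of $\nabla^\vee$ together with further triangular gauge changes. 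All entries above the diagonal stay holomorphic, since every gauge transformation used is holomorphic.

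\textbf{Expected obstacle.} The main difficulty is the bookkeeping showing that the diagonal values at $z=0$ are exactly $\pm v_i^{(z)}$ and not some other combination. The apparent-singularity conditions are what guarantee that, after the Hecke modification, no residual pole or extra diagonal term survives. I would check this first for $n=2$, where the computation is a single $2\times2$ conjugation. For general $n$ I would then argue by induction on the lower block, using that the indicial exponents at $p_i$ are $0,1,\dots,n-2,n$ and that the local monodromy is trivial by \Cref{lem:global-apparent-mono}.
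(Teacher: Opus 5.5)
Your route is different from the paper's, and it is viable. As written, though, the one nontrivial point is assigned to a step that cannot produce it. That point is the claim that the $(1,1)$ entry at $p_i$ equals $v_i^{(z)}$.

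The paper starts from a filtration-adapted form with diagonal $(-f_i,f_i,0,\dots,0)$. It eliminates $s_1,\dots,s_{n-1}$ to recover the scalar equation and reads off $f_i(0)=-v_i^{(z)}$ from the $\partial^{n-2}$ coefficient. You go the other way: from $\mathbf D$ to its companion system, then through the Hecke modification.

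Your Step 2 cannot do what you ask of it. A gauge $g=\mathrm{Id}+zX+\cdots$ with $X$ strictly upper triangular leaves every diagonal entry unchanged at $z=0$. More generally, any unipotent upper-triangular gauge changes the $(1,1)$ entry only by a multiple of $z$, because column one couples to the rest only through the $-z$ entry. So the value of the $(1,1)$ entry at $z=0$ is an invariant that must be computed, not normalized. The relations \eqref{e:recursion formula} do not determine the diagonal at all. Your ``expected obstacle'' and the proposed induction on indicial exponents are therefore aimed at the wrong place.

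The computation lives in Step 1, once the ``lower-order corrections'' are made explicit. Put $Y_k=y^{(k)}$ and write $a_k=c_k/z+\delta_k+z\,r_k(z)$ for $k\ge2$.

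Every local solution is holomorphic. Evaluating $\mathbf D y=0$ at $z=0$ (its coefficients $za_k$ are holomorphic) shows that the image of $E^\vee|_{p_i}$ in $J^{n-1}(L^{-1})|_{p_i}$ lies in the hyperplane $Y_{n-1}=\sum_{k\ge2}c_kY_{n-k}$. Since the Wronskian vanishes simply, the image equals this hyperplane.

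So the correct coordinate is $W=\bigl(Y_{n-1}-\sum_{k\ge2}c_kY_{n-k}\bigr)/z$. In terms of the naive jet frame $e_0,\dots,e_{n-1}$, the frame of $E^\vee$ is $ze_{n-1}$ together with $e_k+c_{n-k}e_{n-1}$ for $0\le k\le n-2$. The corrections sit on the lower frame vectors and are exactly the residues $c_k$.

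Using $a_1=-1/z$ exactly, one has $Y_k'=Y_{k+1}$ for $k\le n-3$, and a short computation gives
\[
Y_{n-2}'=zW+c_2Y_{n-2}+\sum_{k\ge3}c_kY_{n-k},\qquad
W'=-c_2W+\frac1z\sum_{k=2}^{n}\bigl(-\delta_k-c_2c_k-c_{k+1}\bigr)Y_{n-k}-\sum_{k=2}^{n}r_kY_{n-k},
\]
with $c_{n+1}:=0$. The bracket vanishes by \eqref{e:recursion formula} with $v=c_2$, since $\delta_1=0$. This is the only place the apparent-singularity relations enter, and what they give is holomorphy of the first row.

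Now order the frame as $(W,Y_{n-2},\dots,Y_0)$ and write $\nabla^\vee=\mathrm d+\mathcal A\,\mathrm dz$, with $\mathcal A$ minus the matrix of this system (flat sections satisfy $\mathbf Y'=-\mathcal A\mathbf Y$). This is \eqref{e: gauge form for nabla Ui} exactly, with diagonal $(v_i^{(z)},-v_i^{(z)},0,\dots,0)$ and no further gauge fixing. Repaired this way, your argument is more explicit than the paper's. The paper takes the preliminary form with diagonal $(-f_i,f_i,0,\dots,0)$ from \eqref{eqn-nabla-filtration} without detail.
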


\begin{proof}
By \eqref{eqn-nabla-filtration}, over $U_i$, the connection $\nabla^{\vee}\big|_{U_i}$ is gauge equivalent to
\[
\nabla^{\vee}\big|_{U_i}
=\mathrm{d}+
\begin{pmatrix}
  -f_i(z) & * & * &\cdots & * & * & * \\
  -z & f_i(z) & * &\cdots & * & * & * \\
  0 & -1 & 0 &\cdots & * & * & * \\
  \vdots & \vdots &\vdots & \ddots & \vdots & \vdots & \vdots \\
  0 & 0 & 0 &\cdots & -1 & 0 & * \\
  0 & 0 & 0 & \cdots & 0 & -1 & 0
\end{pmatrix}\mathrm{d}z,
\]
where all entries are holomorphic in $z$, since $\nabla^{\vee}$ is holomorphic.
Thus it suffices to show that
\[
f_i(z)= -v_i^{(z)}+\mathrm{O}(z),
\]
since, once this holds, a further gauge transformation by a matrix valued in the unipotent Lie subgroup $N$ eliminates the $\mathrm{O}(z)$ terms in the first two diagonal entries, yielding the form \eqref{e: gauge form for nabla Ui}.

Recall that solving for a flat section
\begin{equation}\label{e:conn flat equ}
\nabla^{\vee}\big|_{U_i}\,\boldsymbol{s}(z)=0,
\qquad 
\boldsymbol{s}(z)=
\begin{pmatrix}
s_1(z) & s_2(z) & \cdots & s_n(z)
\end{pmatrix}^{\mathsf T},
\end{equation}
is equivalent to solving the scalar $n$th-order equation
\(
\mathbf D\big|_{U_i}s_n(z)=0
\),
where via \eqref{e:local form of D}
\begin{equation}\label{e: nth order pde}
\mathbf D\big|_{U_i}
:=z\Bigl(
\partial^n+a_1(z)\partial^{n-1}
+a_2(z)\partial^{n-2}+\cdots+a_n(z)
\Bigr),
\qquad a_1(z)=-\frac{1}{z}.
\end{equation}
By the definition of the residue parameter \eqref{def: res para}, we have
\[
a_2(z)=\frac{v_i^{(z)}}{z}+\delta_2^{(i)}+\mathrm{O}(z).
\]

To derive the scalar equation from \eqref{e:conn flat equ}, we eliminate variables recursively:
\[
s_k(z)
=\Bigl(\partial^{n-k}
+\mathrm{LinComb}_{\mathcal O_{U_i}}\{\partial^{\le n-k-2}\}\Bigr)s_n(z),
\quad k\ge2,
\]
and
\[
s_1(z)
=\frac{1}{z}\Bigl(
\partial^{n-1}
+f_i(z)\partial^{n-2}
+\mathrm{LinComb}_{\mathcal O_{U_i}}\{\partial^{\le n-3}\}
\Bigr)s_n(z).
\]
Substituting these expressions into the first row of
\eqref{e:conn flat equ}, we obtain
\begin{align}\label{e:scalar-from-first-row}
\Bigl(
\partial^{n}
-\frac{1}{z}\partial^{n-1}
+\Bigl(
-\frac{f_i(z)}{z}
+\mathrm{O}(z)
\Bigr)\partial^{n-2}
+\frac{1}{z}\,
\mathrm{LinComb}_{\mathcal O_{U_i}}\{\partial^{\le n-3}\}
\Bigr)s_n(z)=0.
\end{align}
Comparing the coefficient of $\partial^{n-2}$ in
\eqref{e:scalar-from-first-row} with \eqref{e: nth order pde}, we conclude that
\[
a_2(z)
=-\frac{f_i(z)}{z}+\mathrm{O}(z)
=\frac{v_i^{(z)}}{z}+\delta_2^{(i)}+\mathrm{O}(z),
\]
which implies
\(
f_i(z)=-v_i^{(z)}+\mathrm{O}(z)
\),
as claimed.
\end{proof}

\begin{proof}[Proof of \Cref{thm: Lag corres dR; SLn}]
By \Cref{p: generic finite dR}, the space $\mathbb{L}_{dR}(d)$ is a half-dimensional subvariety of
\(
\operatorname{Hilb}^d(\mathcal A)\times \mathcal{M}_{dR}(n, \mathcal{O}_C).
\)
It therefore suffices to show that $\mathbb{L}_{dR}(d)$ is isotropic.

Let
\[
(\widetilde D(t_1,t_2),E(t_1,t_2),\nabla(t_1,t_2))
\in \mathbb{L}_{dR}(d)
\]
be a two-parameter deformation family, with $(t_1,t_2)$ sufficiently small.
By \Cref{p: generic finite dR}, this family is in one-to-one correspondence with a two-parameter deformation of triples
\begin{equation}\label{e: two para class dR}
(i(t_1,t_2)\colon L(t_1,t_2)\hookrightarrow E(t_1,t_2),\nabla(t_1,t_2)).
\end{equation}
We first study the deformation of the bundles and connections $(E(t_1,t_2),\nabla(t_1,t_2))$.
Since there is a finite cover from $(E(t_1,t_2),\nabla(t_1,t_2))$ to the associated
$\mathrm{PSL}_n$–flat connection
\(
(\mathbb P(E(t_1,t_2)),\nabla(t_1,t_2)),
\)
it is sufficient to work on the $\mathrm{PSL}_n$ level.

By \Cref{p:trans of diff opers}, once the theta characteristic is fixed, each triple determines a meromorphic operator
\(
\mathbf D_m(t_1,t_2)\in \operatorname{Diff}^n_{C\setminus D} (K_{C\setminus D}^{-\frac{n-1}{2}}, K_{C\setminus D}^{\frac{n+1}{2}}).
\)
Combining this with \Cref{p:oper bundle out D} and the fact that $L(t_1,t_2)^{-1} \cong K_C^{-\frac{n-1}{2}}\otimes \mathcal{O}_C(\frac{D}{n})$, the projective bundle
$\mathbb P(E(t_1,t_2))$ can be obtained as a modification of $\mathbb P(J^{n-1}(K_C^{-\frac{n-1}{2}})^\vee)$ at the divisor
$D(t_1,t_2)=\pi_*(\widetilde D(t_1,t_2))$.

Away from the divisor $D(t_1,t_2)$, we gauge
$\nabla(t_1,t_2)$ (and hence its dual)
into the standard oper form.
More precisely, if $(V,z) \subset C\setminus D(t_1,t_2)$ is a local chart, then
\[
\mathbf D_m(t_1,t_2)\big|_V
=\partial^n+Q_2(z;t_1,t_2)\partial^{n-2}+\cdots+Q_n(z;t_1,t_2),
\]
and the dual connection takes the form
\begin{equation}\label{e: mero oper form dr lag}
\nabla(t_1,t_2)^{\vee}\big|_V
=\mathrm{d}+
\begin{pmatrix}
 0 & Q_2 & \cdots & Q_{n-1} & Q_{n} \\
 -1 & 0 & \cdots & 0 & 0\\
 0 & -1 & \ddots & \vdots & \vdots\\
 \vdots & \vdots & \ddots & 0 & 0\\
 0 & 0 & \cdots & -1 & 0
\end{pmatrix}\mathrm{d}z.
\end{equation}

Now write
\(
D(t_1,t_2)=\sum_i p_i(t_1,t_2)
\).
For each $p_i(0,0)\in C$, fix a neighborhood $(U_i,z)$ with $z(p_i(t_1,t_2))=u_i(t_1,t_2)$, $u_i(0,0)=0$, $\mathbb P(E(t_1,t_2)^{\vee})$ is obtained from $\mathbb P(J^{n-1}(K_C^{-\frac{n-1}{2}}))$ by a transition
function
\[
G_i(z;t_1,t_2)\in
\Gamma(U_i\setminus p_i(t_1,t_2),\mathrm{PSL}_n\otimes\mathcal O),\qquad 1 \leq i \leq d,
\]
satisfying
\begin{equation}\label{e:AS condition ess}
G_i(z;t_1,t_2)\cdot
\nabla(t_1,t_2)^{\vee}\big|_{U_i}
=
\nabla(t_1,t_2)^{\vee}\big|_{U_i\setminus p_i(t_1,t_2)},
\end{equation}
where the RHS is the oper form \eqref{e: mero oper form dr lag}, while LHS is in the form given by \Cref{l: standard form near div dr},
\begin{equation}\label{e:form of nabla check Ui}
\nabla^{\vee}(t_1,t_2)\big|_{U_i}
= \mathrm{d} +
\scalebox{0.85}{$
\begin{pmatrix}
        &v^{(z)}_i(t_1,t_2)  &\ast &\ast &\dots &\ast &\ast &\ast  \\
        &-(z-u_i(t_1,t_2)) &-v^{(z)}_i(t_1,t_2) &\ast &\dots &\ast &\ast &\ast \\
        &0 &-1  &0 &\dots &\ast &\ast &\ast \\
        &\vdots &\vdots &\vdots &\dots &\vdots  &\vdots &\vdots \\
        &0  &0 &0 &\dots &-1 &0 &\ast \\
        &0  &0 &0 &\dots &0 &-1 & 0 
    \end{pmatrix}
$}\,\mathrm{d}z .
\end{equation}
For simplicity, we write $\nabla^{\vee}(t_1,t_2)\big|_{U_i} = \mathrm{d} + A_i(z;t_1,t_2)$ for this gauge form.

Moreover, note that $G_i$ admits a factorization
\[
G_i(z;t_1,t_2)
=(z-u_i(t_1,t_2))^{\omega_1^{\vee}}\cdot N_i(z;t_1,t_2),
\]
with $N_i$ valued in the unipotent lie subgroup $N$.
Here
\[
(z-u_i(t_1,t_2))^{\omega_1^{\vee}}
=
\mathrm{diag}\bigl(
(z-u_i)^{(n-1)/n},
(z-u_i)^{-1/n},\dots,(z-u_i)^{-1/n}
\bigr).
\]
From \eqref{e:AS condition ess},
one finds that $N_i(z;t_1,t_2)$ must be of the form
\begin{equation}\label{e:form of Ni}
N_i(z;t_1,t_2)=
\begin{pmatrix}
 1 & -\frac{n-1}{n(z-u_i(t_1,t_2))^2}+\frac{v^{(z)}_i(t_1,t_2)}{z-u_i(t_1,t_2)} & * & \cdots & * \\
 0 & 1 & -\frac{n-2}{n(z-u_i(t_1,t_2))} & \cdots & * \\
 0 & 0 & 1 & \cdots & * \\
 \vdots & \vdots & \vdots & \ddots & \vdots \\
 0 & 0 & 0 & \cdots & -\frac{1}{n(z-u_i(t_1,t_2))} \\
 0 & 0 & 0 & \cdots & 1
\end{pmatrix}.
\end{equation}

Therefore, evaluating the Atiyah--Bott form $\Omega_{AB}$ on the two
tangent vectors yields,
\begin{align*}
&\Omega_{AB} \left( (E,\nabla)_{t_1}, ( E,\nabla)_{t_2}\right)
= \Omega_{AB} \left( (\mathbb{P}(E),\nabla)_{t_1}, ( \mathbb{P}(E),\nabla)_{t_2}\right) \\
=& 
\sum_{i=1}^d \operatorname{Res}_{z=0} \operatorname{Tr}\left(
    \partial_{t_1} A_i \cdot \partial_{t_2} G_i
    \cdot G_i^{-1}
    - \partial_{t_2} A_i \cdot \partial_{t_1} G_i
    \cdot G_i^{-1}
\right) \Big|_{t_1=t_2=0} \mathrm{d}z \\
=& \sum_{i=1}^d \operatorname{Res}_{\lambda = 0} 
\left[
    -\frac{2(n-1)}{n z^2} \left( \partial_{t_1}u_i + \partial_{t_2}u_i \right)
    + 
    \frac{1}{z} \left( \partial_{t_2} v^{(z)}_i \partial_{t_1} u_i - \partial_{t_1} v^{(z)}_i \partial_{t_2} u_i \right)
    + 
    \mathrm{O}(1)
\right] d\lambda \\
=& \sum_{i=1}^d -\left(
    \partial_{t_1} v^{(z)}_i \cdot \partial_{t_2} u_i
    - \partial_{t_2} v^{(z)}_i \cdot \partial_{t_1} u_i
\right) \Big|_{t_1=t_2=0}\\
=& -\omega^{[d]} \left( \widetilde{D}_{t_1}, \widetilde{D}_{t_2} \right).
\end{align*}
Here $(E,\nabla)_{t_j}$ denotes the tangent vector
\(
(E,\nabla)_{t_j}
:=\partial_{t_j}(E(t_1,t_2),\nabla(t_1,t_2))\Big|_{t_1=t_2=0}.
\)
In the second equality we use \eqref{e:form of nabla check Ui} and
\eqref{e:form of Ni}, from which it follows that only the $2\times2$ minors
contribute to the trace.
In the last line, we write
\(
\widetilde D_{t_j}
:=\partial_{t_j}\widetilde D(t_1,t_2)\Big|_{t_1=t_2=0}.
\)

Therefore, this shows that $\mathbb L_{dR}(d)$ is isotropic, and hence gives a Lagrangian correspondence.
\end{proof}

\section{Examples and applications}
The aim of this last section is to provide explicit examples and applications of the theory developed and relate it to previous work. We hope that this section nicely complements the abstract and general nature of the rest of the paper. In the first subsection, by choosing appropriate degrees for the divisors, we construct Darboux coordinates on the Hitchin and de Rham moduli spaces. In the second subsection we relate our holomorphic Lagrangian subvarieties to the $\C^{\star}$-action on the Hodge moduli space and the conformal limit. In the final subsection, we give a detailed description of our Lagrangian subvarieties and Lagrangian correspondences in the rank $2$ cases. 

\subsection{Darboux coordinates on moduli spaces}
Recall that the dimensions of $\mathcal{M}_H(n, \Lambda)$ and $\mathcal{M}_{dR}(n, \mathcal{O}_C)$ are equal to $2 (n^2-1)(g-1)$. 

\begin{proposition}
(a) Let $\Lambda$ be a line bundle on $C$ and $d \equiv \ell = \deg(\Lambda) \pmod{n}$. 
Then $\mathbb{L}_H(d) \rightarrow \mathcal{M}_H(n, \Lambda)$ is dominant if and only if $d \geq (n^2-1)(g-1)$. 
\\
(b) Let $d \equiv 0 \pmod{n}$. 
Then $\mathbb{L}_{dR}(d) \rightarrow \mathcal{M}_{dR}(n, \mathcal{O}_C)$ is dominant if and only if $d \geq (n^2-1)(g-1)$. 
\end{proposition}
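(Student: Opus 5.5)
The plan is a dimension count for the projection $\mathrm{pr}_2$, combined with a tangent-space computation on the Higgs side and a degeneration argument through the Hodge moduli space on the de Rham side. Set $N:=(n^2-1)(g-1)$, so that $\dim\mathcal M_H(n,\Lambda)=\dim\mathcal M_{dR}(n,\mathcal O_C)=2N$ and $\dim\operatorname{Hilb}^d=2d$. By \Cref{thm: Lag corres Higgs} and \Cref{thm: Lag corres dR; SLn}, both $\mathbb L_H(d)$ and $\mathbb L_{dR}(d)$ are Lagrangian, hence of dimension $d+N$. The "only if" direction is then immediate in both cases: if $\mathrm{pr}_2$ is dominant, then $d+N\ge 2N$, that is, $d\ge N$.

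For sufficiency in (a), I would fix a generic Hitchin fibre, with smooth spectral curve $\pi\colon\widetilde C\to C$ of genus $\tilde g=n^2(g-1)+1$. By \Cref{prop-Nm(D)} and \eqref{e: recons spectral line bundle}, a point $(E,\phi)\cong(\widetilde C,\mathcal L)$ lies in the image of $\mathrm{pr}_2$ exactly when $\mathcal L\cong\pi^\ast L(\widetilde D)$ for some effective $\widetilde D$ of degree $d$ and some $L$ with $L^n\cong\Lambda K_C^{\binom n2}(-\pi_\ast\widetilde D)$. The norm condition determines $L$ from $\widetilde D$ up to the finite choice of $n$-th roots. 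So the question becomes whether the algebraic map
\[\Psi\colon\operatorname{Sym}^d(\widetilde C)\to h^{-1}(b),\qquad \widetilde D\mapsto\pi^\ast L(\widetilde D),\]
whose target is a torsor under the Prym-type subvariety of $\operatorname{Pic}(\widetilde C)$ of dimension $N$, is dominant. I would compute $d\Psi$ at a reduced $\widetilde D=\sum\tilde p_k$ avoiding the ramification divisor. Up to the correction from varying $L$, which only shifts by pull-backs from $C$, $d\Psi$ is the Abel–Jacobi differential $\bigoplus_k T_{\tilde p_k}\widetilde C\to H^1(\widetilde C,\mathcal O)$ followed by projection to the Prym part. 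Its dual is evaluation of the $N$-dimensional space $H^0(\widetilde C,K_{\widetilde C})^{\mathrm{Prym}}$ (the differentials orthogonal to $\pi^\ast H^0(K_C)$) at the points $\tilde p_k$. A nonzero finite-dimensional space of sections cannot vanish at $N$ generic points, so for $d\ge N$ this evaluation is injective, $d\Psi$ is surjective, and $\Psi$ is dominant. Letting $b$ vary over $B_{\mathrm{reg}}$ then shows that $\mathrm{pr}_2$ is dominant.

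For (b), I expect sufficiency to be the main obstacle, because there is no spectral curve to work with. My first approach is to run the Higgs argument in the Hodge family, in the same way as in \Cref{p: generic finite dR}. Form the family $\mathbb L_\lambda(d)\to\mathcal M_{\mathrm{Hod}}$ over $\lambda\in\mathbb C$, using the sections $s_i(\nabla_\lambda)$ and the $\lambda$-analogue of the residue parameters. The fibre at $\lambda=0$ is $\mathbb L_H(d)$ restricted to reduced $D$; this is still dominant for $d\ge N$ by the tangent computation above, since generic $\widetilde D$ are reduced and unramified. Surjectivity of the differential of $\mathrm{pr}_2$ at a point of the $\lambda=0$ fibre is an open condition in the total space, so it persists for small $\lambda\neq0$. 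The $\mathbb C^\ast$-rescaling then identifies all fibres with $\lambda\neq0$ with $\lambda=1$, giving dominance of $\mathbb L_{dR}(d)\to\mathcal M_{dR}(n,\mathcal O_C)$.

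The delicate point is that this family is flat, or at least that its total space has the expected dimension $d+N+1$ near the chosen Higgs point, so that openness actually applies. I would try to justify this using the explicit parametrisation of \Cref{p:AP-to-algebraic-v} and the finiteness in \Cref{l:solve nonlinear equ}, which vary algebraically in $\lambda$. If that fails, the fallback is a direct tangent computation at $\lambda=1$ via the local normal forms of \Cref{l: standard form near div dr}.
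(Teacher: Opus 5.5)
Your ``only if'' directions are fine. All you need is $\dim\mathbb L_H(d),\dim\mathbb L_{dR}(d)\le d+N$, which follows directly from fibring the triples over the space of divisors.

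Your sufficiency argument for (a) is correct, but it is not the paper's route. The paper's sketch works on the bundle side. A generic $E$ has a line subbundle $L$ with $\ell-n\deg L\ge(n-1)(g-1)$ (by a Riemann--Roch/dimension count, or by Yoshida). Since $s_i(\phi)\neq0$ whenever the spectral curve is smooth, every Higgs field on such an $E$ lands in the image, with $\deg D_i(\phi)=d$ automatically. You instead do Jacobi inversion for the Prym. Using $T^\ast\mathrm{Prym}\cong\ker\bigl(\mathrm{tr}\colon H^0(K_{\widetilde C})\to H^0(K_C)\bigr)$, the dual of $d\Psi$ is evaluation at the points of $\widetilde D$, which is injective for $N$ general points. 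This gives the stronger fact that $d\,\mathrm{pr}_2$ is surjective at general points of $\mathbb L_H(d)$. Your ``exactly when'' needs the adjoint map $L\to\pi_\ast\mathcal L$ to be saturated. That fails only if $\widetilde D$ contains a whole fibre of $\pi$, so it is harmless for general $\widetilde D$.

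The gap is the sufficiency in (b), and it is exactly the point you flag. Surjectivity of the relative differential at a point of the $\lambda=0$ fibre carries over to nearby $\lambda\neq0$ only if the space $X$ of $\lambda$-triples with reduced $D$ is smooth over $\mathbb C_\lambda$ there. Otherwise two things can go wrong:
\begin{itemize}
\item your Higgs point may lie on a component of $X_0$ that does not deform to $\lambda\neq0$;
\item $T_{x_0}X_0$ may be strictly larger than the limits of the tangent spaces of $X_\lambda$.
\end{itemize}
You also need the $\lambda$-residue parameters, and the reconstruction of $(E,\nabla_\lambda)$ from $(D,\boldsymbol v,b)$, to extend algebraically through $\lambda=0$. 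There the polar parts of the $w_j$ vanish, and \eqref{nonlinear equ for v} must degenerate to ``$v_k$ is (up to sign) an eigenvalue of $\phi|_{p_k}$''. Only then is $X_0$ really $\mathbb L_H(d)$.

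None of this is available in the paper. \Cref{p:AP-to-algebraic-v} and \Cref{l:solve nonlinear equ} are stated only at $\lambda=1$. \Cref{p: generic finite dR} uses the same semicontinuity heuristic without proof. So ``they vary algebraically in $\lambda$'' is the whole content of the step, not something you can cite. It is plausibly fixable: if the $\lambda$-version of the constraint keeps leading part $(v_1^n,\dots,v_d^n)$, the family is finite flat over $(\lambda,D,b)$ and \'etale at simple roots. But that has to be proved.

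The paper's route avoids any degeneration. A generic $(E,\nabla)$ has generic stable underlying bundle, which for $d\ge N$ has a line subbundle of degree $(n(n-1)(g-1)-d)/n$. Irreducibility gives $s_i(\nabla)\neq0$, so $\deg D_i(\nabla)=d$ automatically. What remains is only to check that $D_i(\nabla)$ is reduced for a suitable choice. That argument runs directly at $\lambda=1$ and treats (a) and (b) uniformly.
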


\begin{proof}[Proof idea]
This follows from \cite{yoshi97}. One can alternatively prove the statement via a direct Riemann-Roch calculation to show that a generic bundle $E \in \mathrm{Bun}_{n, \Lambda}$ admits a line subbundle $L$ such that $\deg(C, K_C^{n \choose 2} L^{-n} \Lambda) \geq (n^2-1)(g-1)$.
\end{proof}

\begin{corollary}
(a) For $\ell + g - 1 \equiv 0 \pmod{n}$, $\deg(\Lambda) = \ell$, the Lagrangian correspondence $\mathbb{L}_H((n^2-1)(g-1))$ provides Darboux coordinates on an open dense subset of $\mathcal{M}_H(n, \Lambda)$.
\\
(b) For $g \equiv 1 \pmod{n}$, apparent singularities and their residue parameters provide Darboux coordinates on an open dense subset of $\mathcal{M}_H(n, \mathcal{O}_C)$.
\end{corollary}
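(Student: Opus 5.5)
The plan is to obtain both statements from the Lagrangian correspondences of \Cref{thm: Lag corres Higgs} and \Cref{thm: Lag corres dR; SLn}, used in the degree where both sides have the same dimension. Set $d=(n^2-1)(g-1)$. Since $d\equiv -(g-1)\pmod n$, the hypothesis $\ell+g-1\equiv 0\pmod n$ in (a) is exactly the congruence $d\equiv \ell\pmod n$ needed to define $\mathbb{L}_H(d)$. Likewise $g\equiv 1\pmod n$ in (b) gives $n\mid d$, so $\mathbb{L}_{dR}(d)$ is defined. (In (b) the relevant moduli space is the de Rham space $\mathcal{M}_{dR}(n,\mathcal{O}_C)$, since that is where apparent singularities and residue parameters live.) In both cases
\[
\dim \mathrm{Hilb}^d(T^\ast C)=\dim\mathrm{Hilb}^d(\mathcal S)=2d=2(n^2-1)(g-1),
\]
which equals the dimension of the moduli space. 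So each correspondence $\mathbb{L}$ is a Lagrangian of dimension $2d$ inside a product of two symplectic manifolds, each of dimension $2d$.

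\textbf{Step 1: $\mathrm{pr}_2$ is generically étale.} By the preceding proposition, $\mathrm{pr}_2\colon \mathbb{L}\to\mathcal{M}$ is dominant when $d=(n^2-1)(g-1)$. Source and target have equal dimension, and we work in characteristic zero, so generic smoothness gives an open dense subset of $\mathbb{L}$ on which $d\,\mathrm{pr}_2$ is an isomorphism. We shrink this subset further so that:
\begin{itemize}
\item the divisor $\pi_\ast\widetilde D$ is reduced;
\item on the Higgs side, $\widetilde D$ avoids the ramification divisor of the spectral cover.
\end{itemize}
Both are open conditions. They are nonempty because the construction of \Cref{l:Hecke-triple-Hitchin}, respectively the parametrisation of \Cref{p:AP-to-algebraic-v}, produces points with a reduced generic $D$.

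\textbf{Step 2: $\mathbb{L}$ is locally the graph of an anti-symplectic map.} Near a point of this open set, $\mathbb{L}$ is the graph of a holomorphic map $\Psi=\mathrm{pr}_1\circ\mathrm{pr}_2^{-1}$ from an open subset of $\mathcal M$ to $\mathrm{Hilb}^d$. A graph $\{(\Psi(y),y)\}$ is isotropic for $\omega^{[d]}\oplus\Omega$ exactly when $\Psi^\ast\omega^{[d]}=-\Omega$. The computations in the proofs of the two theorems show precisely this, with the sign they produce. Since $\Omega$ is nondegenerate, $d\Psi$ is injective, and the dimensions are equal, so $\Psi$ is a local biholomorphism. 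Hence $\mathrm{pr}_1$ is also étale there.

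\textbf{Step 3: pull back Darboux coordinates.} On the reduced locus of $\mathrm{Hilb}^d$, fix local coordinates $\lambda$ (Higgs case) or $z$ (de Rham case) near each of the $d$ distinct base points, together with the fibre coordinate of $T^\ast C$ or $\mathcal S$. These give coordinates $(u_k,z_k)_{k=1}^d$ with $\omega^{[d]}=\sum_k dz_k\wedge du_k$, as in the proofs of the theorems. Pulling them back by $\Psi$ yields coordinates on $\mathcal M$ in which $\Omega=-\sum_k dz_k\wedge du_k$, i.e. Darboux coordinates.

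In case (a), the $u_k$ are the points of $D_i(\phi)$ and the $z_k$ are the eigenvalues $a_n(p_k)$ of \Cref{prop-Nm(D)} and \Cref{prop-filtration-Higgs}. In case (b), they are the apparent singularities $p_k$ and their residue parameters $v_k$ from \Cref{def: res para}. The $n^{2g}$-fold ambiguity in the choice of $L$ only means these coordinates are defined on an étale chart; locally on $\mathcal M$ this is harmless.

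\textbf{Main obstacle.} The delicate step is Step 1 in the de Rham case. There, the half-dimensionality of $\mathbb{L}_{dR}(d)$ and the generic finiteness of the forgetful map rest on \Cref{p: generic finite dR}, which is only asserted for generic $D$. We must make sure that the generic point of $\mathbb{L}_{dR}(d)$ in the dominant top-dimensional component has such a $D$. The first thing to try is to exploit dominance of $\mathrm{pr}_2$ directly: a generic point of $\mathcal{M}_{dR}$ lifts to $\mathbb{L}_{dR}(d)$. Combined with the dimension count, this forces the fibres of $\mathrm{pr}_2$ over a dense open set to be finite. That is all Step 2 needs, so the genericity clause of \Cref{p: generic finite dR} can be bypassed.
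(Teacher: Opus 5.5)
Your argument is correct and is the reasoning the paper leaves implicit for this corollary. At $d=(n^2-1)(g-1)$ the congruence hypotheses are exactly $d\equiv \ell \pmod n$, resp.\ $n\mid d$, both factors have dimension $2d$, and the preceding proposition makes $\mathrm{pr}_2$ dominant; the Lagrangian correspondence is then generically the graph of a local anti-symplectic biholomorphism, so the coordinates $(p_k,\text{fibre coordinate})$ on $\mathrm{Hilb}^d$ pull back to Darboux coordinates. You also correctly read part (b) as a statement about $\mathcal{M}_{dR}(n,\mathcal{O}_C)$ (the $\mathcal{M}_H$ there is a typo), and your bypass of the genericity clause in \Cref{p: generic finite dR} is sound, provided the bound $\dim\mathbb{L}_{dR}(d)\le 2d$ is taken from isotropy, or from the count $\dim B+d$ of triples, rather than from that proposition.
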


\subsection{Conformal limit and Bia\l ynicki-Birula stratification}
\label{sect-CL}

When the degree of the divisor is small enough the Lagrangians $\mathbb L_H(L,D)$ and $\mathbb L_{dR}(L,D)$ interact nicely with the conformal limit construction for Higgs bundles \cite{CW18}. Below, we briefly explain this interaction. 

Given a stable Higgs bundle $[(\bar\pa_E,\phi)]$ denote its $\C^{\star}$-limit by $[(\bar\pa_0,\phi_0)] := \lim\limits_{\xi\to 0}[(\bar\pa_E,\xi\phi)]$. Given such a fixed point we define 
\begin{equation}
W(\bar\pa_0,\phi_0):= \{[(\lambda,\bar\pa_E,\nabla_{\lambda})] \in \mathcal M_{Hod}~|~ \lim\limits_{\xi\to 0} [(\xi\lambda,\bar\pa_E,\xi\nabla_{\lambda})] = [(\bar\pa_0,\phi_0)]\}.
\end{equation}
Then it holds that $ W(\bar\pa_0,\phi_0)\cap \pi^{-1}(0) =: W^0(\bar\pa_0,\phi_0)$ and $ W(\bar\pa_0,\phi_0)\cap \pi^{-1}(1) =: W^1(\bar\pa_0,\phi_0)$ are holomorphic Lagrangian submanifolds of the Dolbeaut and De Rham moduli space respectively. Assume that the fixed point $[(\bar\pa_0,\phi_0)]$ is of type $(1,...,1)$ so that 
\begin{equation}\label{eqn-Hodge-bundle-form}
\begin{split}
E\cong& L_1\oplus\dots\oplus L_n\\
\phi_0 =& \begin{pmatrix} 0 & \dots & \dots & 0 \\ \Phi_1 & 0 & \dots & 0\\ \vdots & \ddots & \ddots & 0 \\ 0 & \dots & \Phi_{n-1} & 0 \end{pmatrix},
\end{split}
\end{equation}
with $\Phi_i \in H^0(C; L_i^{-1}L_{i+1} K)$. Denote $d_i = \deg L_i$ and $D_i$ the vanishing set of $\Phi_i$. The divisor $D$ we have studied throughout the paper is given by the weighted sum $\sum\limits_{i=1}^{n-1} (n-i)D_i$. We can also associate to the fixed point the divisor $\mathfrak D := \sum\limits_{i=1}^{n-1} D_i$ which is used in \cite{HH22}. Under the above assumption it can be inferred from \cite{CW18} that 
\begin{equation}
\mathbb L_H(L_1,D) = W^0(\bar\pa_0,\phi_0)~\text{ and }~\mathbb L_{dR}(L_1,D) = W^1(\bar\pa_0,\phi_0). 
\end{equation}
In the next proposition, given a $\C^{\star}$-fixed point $[(\bar\pa_0,\phi_0)]$ we give bounds on the possible degree of the divisors $D$ and $\mathfrak D$. 
\begin{proposition}\label{Dbounds}
(a) Consider a stable Hodge bundle $[(\bar\pa_0,\Phi_0)]$, and let $D$ and $\mathfrak D$ be divisors on $C$ defined by $[(\bar\pa_0,\Phi_0)]$ as above. If $\deg E \equiv \ell \pmod{n}$ then necessarily 
\begin{equation*}
\begin{split}
\deg(D) &\le n(n-1)(g-1)-(n-\ell),\\
\deg(\mathfrak D)&\le 2(n-1)(g-1) -1- \delta_{0,\ell}.
\end{split}
\end{equation*}
There exist divisors which realize equality in both inequalities. 
\\
(b) Let $D$ be a reduced effective divisor of degree $d \equiv \ell \pmod{n}$.
Then there exists a stable Hodge bundle $[(\bar\pa_0,\Phi_0)]$ of the form \eqref{eqn-Hodge-bundle-form} with  
\[\mathbb L_H(L_1,D) = W^0(\bar\pa_0,\phi_0) \]
if and only if 
\begin{equation*}
\deg(D) \le n(g-1)-(n-\ell). 
\end{equation*}
\\
(c) Let $D$ be a reduced effective divisor of degree $d \equiv 0 \pmod{n}$.
Then there exists a stable Hodge bundle $[(\bar\pa_0,\Phi_0)]$ of the form \eqref{eqn-Hodge-bundle-form} with 
\[\mathbb L_{dR}(L_1,D) = W^1(\bar\pa_0,\phi_0) \]
if and only if 
\begin{equation*}
\deg(D) \le  n(g-2).
\end{equation*}
\end{proposition}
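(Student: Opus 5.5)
The plan is to reduce everything to elementary degree bookkeeping for Hodge bundles of type $(1,\dots,1)$ as in \eqref{eqn-Hodge-bundle-form}, combined with the stability condition. First I record the structure of the relevant subbundles. Since $\phi_0$ maps $L_i$ into $L_{i+1}K_C$, the $\phi_0$-invariant subbundles are exactly $E_{\ge k}:=L_k\oplus\cdots\oplus L_n$, $2\le k\le n$. Hence stability is equivalent to
\[
\frac{\sum_{j\ge k} d_j}{n-k+1}<\frac{\ell}{n},\qquad 2\le k\le n .
\]
Next I express the divisors through the degrees $d_i$. From $\Phi_i\in H^0(L_i^{-1}L_{i+1}K_C)$ we get $\deg D_i=d_{i+1}-d_i+2g-2\ge 0$. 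Telescoping gives
\[
\deg D=\sum_{i=1}^{n-1}(n-i)\deg D_i=\ell-nd_1+n(n-1)(g-1),
\]
which is consistent with \eqref{eqn-GLn-constraint-d} with $L=L_1$, and
\[
\deg\mathfrak D=d_n-d_1+2(n-1)(g-1).
\]

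For part (a), the bound on $\deg D$ amounts to a lower bound on $d_1$. The condition $k=2$ says $\ell-d_1<(n-1)\ell/n$, i.e.\ $nd_1>\ell$. Normalizing the residue of $\ell$ to lie in $\{1,\dots,n\}$, as the statement implicitly does, this gives $nd_1\ge \ell+(n-\ell)$ and hence the first inequality. For $\mathfrak D$, the condition $k=n$ gives $d_n<\ell/n$, and together with $d_1>\ell/n$ this yields $d_n-d_1\le -1$. When $n\mid\ell$ both inequalities are strict at the integer $\ell/n$, so $d_n-d_1\le-2$; this explains the term $\delta_{0,\ell}$.

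For sharpness, I construct Hodge bundles explicitly. I choose $L_1$ with $d_1$ minimal, then pick effective divisors $D_i$ and set $L_{i+1}=L_iK_C^{-1}(D_i)$, so that each $\Phi_i$ is the canonical section. The degrees of the $D_i$ must be distributed so that every intermediate stability inequality holds. I would take the $D_i$ concentrated in the last step, or spread as evenly as needed, and check the partial averages directly.

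For parts (b) and (c), I use the identification $\mathbb L_H(L_1,D)=W^0$, and $\mathbb L_{dR}(L_1,D)=W^1$ via \cite{CW18}, which is valid for type $(1,\dots,1)$ fixed points. If $D=\sum(n-i)D_i$ is reduced, then every $D_i$ with $n-i\ge2$ must vanish. So $D=D_{n-1}$, and the degrees are forced: $d_{i+1}=d_i-(2g-2)$ for $i\le n-2$, and $d_n=d_{n-1}-(2g-2)+\deg D$.

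The key step is to show that, under these forced degrees, all stability inequalities reduce to the single one for $k=n$, namely $d_n<\ell/n$. Writing $d_n$ in terms of $\ell$ and $\deg D$ through $\ell=\sum d_j$, this becomes $\deg D\le n(g-1)-(n-\ell)$. This reduction is where I expect the real work. The partial averages of $d_k,\dots,d_n$ are decreasing in a staircase pattern with step $2g-2$, except for the last jump of size $\deg D-(2g-2)$. I would check that the average over $\{k,\dots,n\}$ is maximized at $k=n$ whenever $\deg D\le n(g-1)$, by a convexity comparison of consecutive tail averages.

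The converse in (b) follows by taking $L_1$ with $L_1^n\cong\Lambda K_C^{\binom n2}(-D)$ and building the chain as above. Finally, (c) is the case $\ell=0$, with the residue normalized to $n$ since $n\mid d$ and $\deg E=0$, which gives $n(g-1)-n=n(g-2)$.
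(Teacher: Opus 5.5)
Your proposal is correct and follows essentially the same route as the paper: the same degree bookkeeping for $\deg D$ and $\deg\mathfrak D$, stability tested on $L_k\oplus\cdots\oplus L_n$ at $k=2$ and $k=n$, and, for reduced $D$, the forced chain with $D=D_{n-1}$ in which the $k=n$ inequality is the binding one (your tail-average comparison gives exactly $\deg D<n(n-k+1)(g-1)$ for each $k$, which justifies what the paper only asserts). Two small repairs are needed: first, the convention must be $\ell\in\{0,\dots,n-1\}$, so that $n-\ell$ is the least positive integer congruent to $-\ell$; this is what $\delta_{0,\ell}$ and your own answer $n(g-2)$ in (c) require, whereas $\ell\in\{1,\dots,n\}$ taken literally only yields $\deg D\le n(n-1)(g-1)$ when $n\mid\ell$. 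Second, for the equality cases, concentrating the $D_i$ in the last step fails for $n\ge 3$ (that chain is stable only if $\deg D<n(g-1)$), so you need an even spread, such as $d_1=\cdots=d_\ell=d$, $d_{\ell+1}=\cdots=d_n=d-1$, or, when $n\mid\ell$, $d_1=d+1$, $d_n=d-1$ and $d_i=d$ otherwise.
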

\begin{proof}
The total degree of the divisor $D$ is given by 
\begin{equation}
\begin{split}
\deg(D) = \sum\limits_{i=1}^{n-1} (n-i)\deg(D_i) =& \sum\limits_{i=1}^{n-1}(n-i)(\deg K + \deg L_{i+1}-\deg L_i)\\ =& n(n-1)(g-1) + d_1+...+d_n-nd_1,
\end{split}
\end{equation}
and the total degree of $\mathfrak D$ is given by 
\begin{equation}
\begin{split}
\deg(\mathfrak D) = \sum\limits_{i=1}^{n-1}\deg(D_i) =& \sum\limits_{i=1}^{n-1}(\deg K + \deg L_{i+1}-\deg L_i)\\ =& 2(n-1)(g-1) + d_n-d_1. 
\end{split}
\end{equation}

The $\phi$-invariant subbundles of $E$ are precisely $E_k = \bigoplus\limits_{i=k}^n L_i$ and by stability we get the following inequalities: 
\begin{equation}\label{SS}
\frac{d_k+...+d_n}{n-k+1} < \frac{d_1+...+d_n}{n}.
\end{equation}
In particular, the inequality \eqref{SS} for $k =2$ gives the restriction $d_1+...+d_n < nd_1$. Since all the $d_i$ are integers and $d_1+...+d_n \equiv \ell \pmod{n}$, it holds that $d_1+...+d_n - nd_1\le -(n-\ell)$ and therefore $\deg(D) \le n(n-1)(g-1)-(n-\ell)$. Equality can be obtained by choosing for example $d_1=d_2=...=d_\ell = d$ and $d_{\ell+1}=... = d_n = d-1$. 

Regarding the divisor $\mathfrak D$, the inequality \eqref{SS} for $k=n$ gives $nd_n < d_1+...+d_n$. Combining with $d_1+...+d_n < nd_1$ we get that $d_n -d_1 \le -1$. If $d_1+...+d_n \neq 0 \mod n$ then equality can be obtained by choosing $d_1=d_2=...=d_k = d$ and $d_{k+1}=... = d_n = d-1$. If $d_1+...+d_n = 0 \mod n$ then $d_n -d_1 \le -2$ with equality obtained by choosing $d_1 = d+1$, $d_2=...=d_{n-1} = d$ and $d_n = d_1$.

Finally, suppose that $D$ is reduced. This forces all $\Phi_i$ to be constant sections of $\mathcal{O}_C$ for $i\neq n-1$ so that 
\begin{equation*}
L_{i+1} = L_iK^{-1}~\text{for}~i\le n-2~\text{and}~L_n = L_{n-1}K^{-1}(D).
\end{equation*}
This implies that $d_{i+1} = d_i -(2g-2)$ for $i\le n-2$ and $d_n = d_{n-1} -(2g-2) + d$ where $d = \deg(D)$. The only constraint on $D$ comes from the stability constraints \eqref{SS}. The most restrictive constraint is the one for $k=n$ which gives 
\begin{equation*}
d_n < d_n + (n-1)(g-1) - \frac{(n-1)d}{n}
\end{equation*}
which implies that $d \le n(g-1)-1$. Finally, since $d_1+...+d_n \equiv \ell \pmod{n}$ and $d_1+...+d_n = nd_n + n(n-1)(g-1) -(n-1)d = d \mod n$ we must have $d \le n(g-1) - (n-\ell)$. 
This proves both parts (b) and (c) of the proposition.
\end{proof}
\begin{remark}
When the divisor $\mathfrak D$ is reduced, one can recover $D$ from $\mathfrak D$ and vice versa. Neither inequality in \ref{Dbounds} is a sufficient condition in general. 
\end{remark}

A surprising fact is that there exists a natural map from the Dolbeaut to the De Rham space which identifies these Lagrangians biholomorphically. In order to define it, let $R>0$ and let $h_R$ denote the harmonic metric associated to $[(\bar\pa_E,R\Phi)]$. Then consider the family of complex flat connections 
\begin{equation*}
\nabla_{R,\hbar} := \bar\pa_E + \pa_E^{\dagger_{h_R}} + \hbar R^2 \Phi^{\dagger_{h_R}} + \hbar^{-1}\Phi.
\end{equation*}
The limit of this family as $R\to 0$ exists and is given by the explicit formula 
\begin{equation*}
\begin{split}
\mathcal {CL}_{\hbar}(\bar\pa_E,\Phi):&= \lim\limits_{R\to 0} \nabla_{R,\hbar}\\
&= \bar\pa_E + \pa_0^{\dagger_{h_0}} + \hbar \Phi_0^{\dagger_{h_0}} + \hbar^{-1}\Phi.
\end{split}
\end{equation*}
\begin{theorem}
The $\hbar$-conformal limit map $\mathcal {CL}_{\hbar}$ maps $W^0(\bar\pa_0,\Phi_0)$ biholomorphically to $W^1(\bar\pa_0,\Phi_0)$.
\end{theorem}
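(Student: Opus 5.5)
I plan to use the explicit formula for $\mathcal{CL}_\hbar$ together with the Collier--Wentworth description of the upward flows \cite{CW18}. That description says a point of $W^0(\bar\pa_0,\Phi_0)$ is represented uniquely, up to the residual gauge, in the form $(\bar\pa_0,\ \Phi_0+\beta)$. Here $\beta$ lies in the direct sum of the positive-weight eigenspaces of $\mathrm{ad}$ of the grading element: with respect to the splitting $E\cong L_1\oplus\cdots\oplus L_n$, these are the weakly upper triangular blocks of $\mathrm{End}(E)\otimes K_C$, and $\beta$ satisfies $\bar\pa_0\beta=0$. Similarly, a point of $W^1(\bar\pa_0,\Phi_0)$ is represented uniquely as $\bar\pa_0+\pa_0^{\dagger_{h_0}}+\Phi_0^{\dagger_{h_0}}\cdot(\text{scaled})+\Phi_0+\beta'$ with $\beta'$ in the same slice, after the standard $\lambda$-rescaling. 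Both spaces are therefore affine bundles over the fixed point, modelled on the same slice $\mathcal{B}^+:=H^0(C,\bigoplus_{k>0}\mathrm{End}_k(E)\otimes K_C)$. Since $\bar\pa_0$ does not change along the slice, each is in fact a vector space.

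First I will check that $\mathcal{CL}_\hbar$ is well defined on $W^0$ and lands in $W^1$. For $(\bar\pa_E,\Phi)=(\bar\pa_0,\Phi_0+\beta)$, the limit formula gives $\bar\pa_0+\pa_0^{\dagger_{h_0}}+\hbar\Phi_0^{\dagger_{h_0}}+\hbar^{-1}(\Phi_0+\beta)$. This is flat: $[\bar\pa_0+\pa_0^{\dagger},\Phi_0^{\dagger}]=0$ and the Hitchin equation for $h_0$ cancel the degree-zero part of the curvature. The remaining terms $[\Phi_0^{\dagger},\beta]$ vanish because $[\Phi_0^\dagger,\beta]$ is a $(1,1)$-form of weight $\geq 0$, and $\bar\pa_0\beta=0$. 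I still need to verify this weight computation, and it may require using $\beta\in\ker(\mathrm{ad}\,\Phi_0^\dagger)$-type normalization from \cite{CW18}. Conjugating by the $\mathbb{C}^*$-element $\hbar^{\text{grading}}$ then puts this connection in the normal form of $W^1$. Its $\xi\to0$ limit in $\mathcal{M}_{\mathrm{Hod}}$ is the fixed point, since the positive-weight part scales away.

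Second, I will show that the induced map $\mathcal{B}^+\to\mathcal{B}^+$ is $\beta\mapsto \hbar^{-1}\mathrm{Ad}(\hbar^{\text{grading}})\beta$. This is a linear, weight-wise rescaling, hence a holomorphic isomorphism. Independence of representatives follows from the uniqueness of the slice, and holomorphicity from the fact that $h_0$ depends only on the fixed point and not on $\beta$. By \cite{CW18} the slice parametrisations are biholomorphisms onto $W^0$ and $W^1$, so $\mathcal{CL}_\hbar$ is a biholomorphism.

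The main obstacle is justifying that the $R\to0$ limit of $\nabla_{R,\hbar}$ exists and equals the stated formula. This requires the asymptotics of the harmonic metrics $h_R$ for $(\bar\pa_E,R\Phi)$ as $R\to0$, namely convergence to $h_0$ after the gauge $R^{\text{grading}}$. I would quote this analytic input from \cite{CW18}, which is itself a generalisation of Gaiotto's conjecture, rather than reprove it. The remaining verification is the algebraic bookkeeping of weights in the first step.
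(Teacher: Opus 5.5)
The paper gives no proof of this theorem; it is imported from Collier--Wentworth \cite{CW18}. Your outline (slice representatives on both sides, the explicit limit formula, $\mathbb{C}^*$-rescaling, analytic input quoted from \cite{CW18}) follows their architecture. The gap is in the slice you use.

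\textbf{The correct slice.} In \cite{CW18} a point of $W^0(\bar\pa_0,\Phi_0)$ is written as $(\bar\pa_0+\eta,\ \Phi_0+\psi)$, with respect to the frame $L_1\oplus\cdots\oplus L_n$. Here $\eta\in\Omega^{0,1}$ is strictly upper triangular and $\psi\in\Omega^{1,0}$ is weakly upper triangular. The pair is subject to the Higgs equation $\bar\pa_0\psi+[\eta,\Phi_0+\psi]=0$ \emph{and} the gauge condition $\pa_{h_0}\eta+[\Phi_0^{\dagger_{h_0}}\wedge\psi]=0$, where $\pa_{h_0}$ is the $(1,0)$-part of the Chern connection of $(\bar\pa_0,h_0)$. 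The limit is then $\bar\pa_0+\eta+\pa_{h_0}+\hbar\Phi_0^{\dagger_{h_0}}+\hbar^{-1}(\Phi_0+\psi)$.

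\textbf{First problem: dropping $\eta$.} You set $\eta=0$ and drop the gauge condition, but the holomorphic structure genuinely moves along $W^0$. Here $W^0=\mathbb{L}_H(L_1,D)$, whose bundles are iterated extensions with subbundle $L_1$ and are in general non-split. In rank $2$ the extension classes fill $\ker(s_D)\subset H^1(C,L^2)$ by \Cref{lem-Serre-pairing}, and this is nonzero once $h^0(C,\mathcal{O}_C(D))\ge 2$. So your parametrisation misses points, and the strata are not parametrised by $H^0(C,\bigoplus_{k>0}\mathrm{End}_k(E)\otimes K_C)$ as you claim. Note also that the true slice equation is quadratic, not linear.

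\textbf{Second problem, fatal for Step 1: flatness.} The formula for $\mathcal{CL}_\hbar$ is not gauge invariant: $\pa_{h_0}$ and $\Phi_0^{\dagger_{h_0}}$ stay fixed while $(\bar\pa_E,\Phi)$ is conjugated. So it computes the limit only on the correct representative. On your representative $(\bar\pa_0,\Phi_0+\beta)$ the curvature is $[\Phi_0^{\dagger_{h_0}}\wedge\beta]$, and having weight $\ge 1$ does not make it vanish.

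Concretely, in rank $2$ take $\Phi_0$ with lower-left entry $s$ and $\beta=\begin{pmatrix}a&b\\0&-a\end{pmatrix}$. The curvature is strictly upper triangular with entry $\pm 2\,a\wedge s^{\dagger_{h_0}}$, which is nonzero whenever $a\in H^0(C,K_C)$ is nonzero. The residual holomorphic gauge only shifts $a$ by elements of $H^0(C,K_C(-D))$. Hence generic points of $W^0$ have no representative on $\bar\pa_0$ for which your output is flat. Imposing $[\Phi_0^{\dagger}\wedge\beta]=0$ instead only cuts out a proper subvariety.

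\textbf{The correct curvature check.} Expand the curvature in powers of $\hbar$:
\begin{itemize}
\item the $\hbar^{-1}$-part is exactly the Higgs equation;
\item the $\hbar$-part is $\pa_{h_0}\Phi_0^{\dagger}+[\eta\wedge\Phi_0^{\dagger}]$, which vanishes by holomorphicity of $\Phi_0$ and by type;
\item the $\hbar^0$-part is $F_{h_0}+[\Phi_0\wedge\Phi_0^{\dagger}]+\pa_{h_0}\eta+[\Phi_0^{\dagger}\wedge\psi]$.
\end{itemize}
After Hitchin's equation, flatness \emph{is} the gauge condition; it is not a weight argument.

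\textbf{What survives.} With the correct slice your Step 2 goes through: the induced map is the $\mathbb{C}^*$-action $(\eta,\psi)\mapsto(\mathrm{Ad}(\hbar^{\text{grading}})\eta,\ \hbar^{-1}\mathrm{Ad}(\hbar^{\text{grading}})\psi)$ on the slice. Be aware, though, that the $W^1$-slice theorem you quote is essentially the whole content of the statement. Your argument is therefore ultimately a citation of \cite{CW18}, just as in the paper.
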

This theorem enables among other things the study of the geometry of $\mathbb L_{dR}(D)$ through the use of coordinates on the Dolbeaut moduli space and was crucial in the proof of the following conjecture 
\begin{conjecture}[Simpson's conjecture] 
The Lagrangians $W^1(\bar\pa_0,\Phi_0)$ are complete subvarieties of $M_{dR}$
\end{conjecture}

\vspace{5pt}
\noindent
in rank $2$ by the first author \cite{DS24}. It would be interesting to know if there exists a map similar to the conformal limit which relates $\mathbb L_H(D)$ to $\mathbb L_{dR}(D)$ for arbitrary divisor $D$. It is also tempting to ask whether the Lagrangians $\mathbb L_{dR}(D)$ are closed for arbitrary divisor $D$ and arbitrary rank $n$.

\subsection{Properties of Lagrangian subvarieties and Lagrangian correspondences in rank-$2$ cases}
\label{sect-Lag-rk2}

We now discuss properties of the Lagrangians $\mathbb{L}_H(D)$ and $\mathbb{L}_{dR}(D)$ in the case of $\mathrm{SL}_2$ Higgs bundles and holomorphic connections.

\paragraph{Projections to the spaces of Hodge bundles}
Consider the space of Hodge bundles
\[ \mathrm{Hod}_2 = \mathcal{M}_H(2, \mathcal{O}_C)^{\mathbb{C}^\ast} \cong \mathrm{Bun}_{2, \mathcal{O}_C} \sqcup \left( \sqcup_{i=0}^{g-1} X_i \right), \]
where $X_i$, for $0 \leq i \leq g-1$, is the connected component parametrizing Hodge bundles of the form 
\[\mathcal{E}_L = \left( L \oplus L^{-1}, \left( \begin{smallmatrix} 0 & 0 \\ s & 0\end{smallmatrix}\right)\right), \qquad \deg(L) = g-1-i. \]  
Given an effective divisor $D$ of even degree $d$, denote by 
\begin{equation}\label{eqn-graded-pieces}
\mathrm{gr}_H: \mathbb{L}_H(D) \rightarrow \mathrm{Hod}_2, \qquad \qquad \mathrm{gr}_{dR}: \mathbb{L}_{dR}(D) \rightarrow \mathrm{Hod}_2, 
\end{equation}
the map defined by taking graded pieces, or equivalently by taking the limit $\underset{t \rightarrow 0}{\lim} t. [E, \phi] \in \mathrm{Hod}_2$.
Note that the composition of \eqref{eqn-graded-pieces} with the projection to $\mathrm{Bun}_{2, \mathcal{O}_C}$ is the rational forgetful map $(E, \phi) \mapsto E$ for $E$ stable.

Consider the moduli spaces $\mathcal{M}_H(2, \mathcal{O}_C)$ and $\mathcal{M}_{dR}(2, \mathcal{O}_C)$, and the respective Lagrangians $\mathbb{L}_H(L, D)$ and $\mathbb{L}_{dR}(L, D)$ for some $L$ satisfying $L^2 \cong K_C(-D)$. 
Our following results concern properties of these Lagrangian subvarieties as $d = \deg(D)$ varies.

\begin{proposition} \label{prop-rank-2-all-range}
Let $D$ be an effective divisor on $C$ of even degree $d$, and $L$ a line bundle with $L^2 \cong K_C(-D)$. \\
(a) Let $d \in [1, 2g-2)$. Then there exists a unique Hodge bundle 
\[\mathcal{E}_L = \left( L \oplus L^{-1}, \left( \begin{smallmatrix} 0 & 0 \\ s_D & 0\end{smallmatrix}\right)\right) \in X_{d/2}, \]    
such that 
\[ \mathbb{L}_H(L, D) = W^0(\mathcal{E}), \qquad \qquad \mathbb{L}_{dR}(L, D) = W^1(\mathcal{E}). \]
For each $\hbar\in \mathbb{C}^\ast$, the $\hbar$-conformal limit defines a biholomorphism $\mathbb{L}_H(L, D) \overset{\simeq}{\rightarrow} \mathbb{L}_{dR}(L, D)$. Furthermore, $\mathbb{L}_H(L, D)$ is a closed subvariety if and only if $D$ is reduced (i.e. $\mathcal{E}_L$ is a very stable Higgs bundle), while $\mathbb{L}_{dR}(L, D)$ is closed for any $D$. \\
(b) Let $d \in (2g-2, 3g-3]$ and $D$ be generic. 
Let $\mathrm{Bun}_{H}(L, D)$ (respectively, $\mathrm{Bun}_{dR}(L, D)$) be respectively the image along the rational forgetful map $H^1(C, L^2) \dashrightarrow \mathrm{Bun}_{2, \mathcal{O}_C}$ of     
\[ \ker(s_D) = \{[E] \in H^1(C, L^2) \mid \langle [E], s_D \rangle = 0 \},  \]
(respectively, the complement of $\ker(s_D)$ in $H^1(C, L^2)$).
Then there exists an open dense subset of $\mathbb{L}_H(L, D)$ (respectively, $\mathbb{L}_{dR}(L, D)$) that is dominant over $\mathrm{Bun}_{H}(L, D)$ (respectively, $\mathrm{Bun}_{dR}(L, D)$).  \\
(c) Let $d > 8g-8$, or $D$ be generic with $d \in [4g-3, 8g-8]$. Then $\mathbb{L}_H(L, D)$ projects to a divisor on $\mathrm{Bun}_{2, \mathcal{O}_C}$; $\mathbb{L}_{dR}(L, D)$ dominates $\mathrm{Bun}_{2, \mathcal{O}_C}$ and in particular is a multi-valued rational section of the rational forgetful map $\mathcal{M}_{dR}(2, \mathcal{O}_C) \dashrightarrow \mathrm{Bun}_{2, \mathcal{O}_C}$.       
\end{proposition}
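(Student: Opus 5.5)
We treat the three degree ranges separately. In each range, the Higgs side is handled through the Hecke description of \Cref{p: relation to Hecke}. The de Rham side is handled through the oper/differential-operator description \eqref{e:ident spaces}. Throughout, $n=2$, so $E$ is an extension
\[0\to L\to E\to L^{-1}\to 0,\qquad L^2\cong K_C(-D),\]
with class $[E]\in H^1(C,L^2)$. The section $s_i(\phi)$ (resp.\ $s_i(\nabla)$) is the composite $L\to E\xrightarrow{\phi}EK_C\to L^{-1}K_C$, a section of $L^{-2}K_C=\mathcal O(D)$, and it equals $s_D$ up to scale.

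For (a), take $d<2g-2$, so $\deg L=g-1-d/2>0$. We first show that every $(E,\phi)\in\mathbb L_H(L,D)$ has $\mathbb C^*$-limit $\mathcal E_L$. Under $t\to0$ the filtration $L\subset E$ is preserved, since $L$ is destabilizing-type of positive degree. The graded pieces converge to $L\oplus L^{-1}$ with off-diagonal entry $s_D$. Stability of $\mathcal E_L$ holds because the only $\phi_0$-invariant subbundle is $L^{-1}$, which has negative degree. Conversely, any point of $W^0(\mathcal E_L)$ admits $L\hookrightarrow E$ with $D_i(\phi)=D$ by the Simpson/Collier--Wentworth description of upward flows as filtered objects with fixed associated graded. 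This gives $\mathbb L_H(L,D)=W^0$, and the de Rham statement $\mathbb L_{dR}(L,D)=W^1$ follows in the same way. This is the $n=2$ case of \Cref{Dbounds}(b),(c) and the identification inferred from \cite{CW18}.

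The biholomorphism then follows from the conformal limit theorem quoted above. Closedness of $W^1$ is Simpson's conjecture, proved in rank $2$ in \cite{DS24}. For $\mathbb L_H$, closedness of $W^0(\mathcal E)$ is equivalent to very stability of $\mathcal E$ by \cite{HH22}. Hausel--Hitchin show that a type $(1,1)$ Hodge bundle is very stable exactly when the divisor of $s_D$ is reduced. This step needs care with their conventions, $\mathfrak D=D$ for $n=2$.

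For (b), take $2g-2<d\le 3g-3$, so $\deg L<0$ and $L^2$ has negative degree. Generic extensions $E$ are then stable, and $H^1(C,L^2)$ has dimension $d-g+1\ge 1$. On the Higgs side, a Higgs field with $i\wedge\phi\circ i=s_D$ projects to a map $E\to L^{-1}K_C$ whose restriction to $L$ is $s_D$. Lifting this map to $\phi:E\to EK_C$ is obstructed by the pairing $\langle[E],s_D\rangle$ under Serre duality $H^1(L^2)\times H^0(KL^{-2})\to\mathbb C$. Concretely, the diagonal entries must solve $\bar\partial a = \beta s_D$, where $\beta$ represents $[E]$. So the Higgs field exists iff $[E]\in\ker(s_D)$. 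The remaining freedom is an affine space, and by \Cref{c: non-empty and half dim Higgs} its dimension is correct for dominance onto the image. On the de Rham side, the analogous equation is inhomogeneous. The diagonal $\bar\partial$-equation acquires the constant term coming from $\partial$ of the extension class, via the Atiyah class of the line subbundle. Solvability therefore requires $\langle[E],s_D\rangle=c\neq0$, with the value fixed after rescaling. This gives the complement of $\ker(s_D)$. Genericity of $D$ is needed to invoke \Cref{p: generic finite dR} and to make the forgetful map from the projectivized extension space generically finite, as in \cite{DT23}.

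For (c) we use dimension counts. In the stated range, a generic stable $E$ admits line subbundles $L$ with $L^2\cong K_C(-D)$, that is of degree $g-1-d/2$. We count pairs $(E,i)$ by Riemann--Roch; the thresholds $4g-3$ and $8g-8$ mark where $h^0(E\otimes L^{-1})$ becomes generically positive and where the genericity of $D$ can be dropped. For $\mathbb L_{dR}$, the image contains an open set of $\mathrm{Bun}$: for each such pair we solve for a connection by the inhomogeneous equation above, which is always solvable off a proper closed set. Finiteness over $\mathrm{Bun}$ then follows from half-dimensionality (\Cref{dim count for dR}), giving a multivalued rational section. For $\mathbb L_H$, the homogeneous condition $\langle[E],s_D\rangle=0$ cuts out exactly one condition on $E$. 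Its image is therefore a divisor, provided we show the condition is nontrivial on the image of the relevant Quot-type scheme.

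The main obstacle is this last step, verifying that the Serre-duality obstruction is a nonconstant function on the family of pairs, uniformly in $d$. We would first try to reduce it to the non-vanishing of a Wronskian-type pairing in the fibers of Drinfeld's dual projective bundle.
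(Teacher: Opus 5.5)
Your (a) is the same citation-based argument as the paper: Collier--Wentworth for $\mathbb{L}_H(L,D)=W^0$, $\mathbb{L}_{dR}(L,D)=W^1$ and the conformal limit, the rank-$2$ Simpson conjecture for closedness of $W^1$, and Hausel--Hitchin for the very-stable criterion. Your (b) is a Dolbeault-cocycle version of \Cref{lem-Serre-pairing} and \Cref{lemma-im-s_i}(a), and it works in outline. However, ``the Higgs field exists iff $[E]\in\ker(s_D)$'' skips a step. Once $a$ is found (up to $H^0(K_C)$), the off-diagonal entry $b$ must solve $\bar\partial b=2a\beta$ (up to sign, with an extra $\partial\beta$ term on the de Rham side). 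The obstruction to this lies in $H^1(C,K_CL^2)\cong H^0(C,L^{-2})^\vee$. That space vanishes because $\deg L^{-2}=d-2g+2\le g-1$ and $\mathcal O_C(D)$ is general in $\mathrm{Pic}^d(C)$. This, and not \Cref{p: generic finite dR}, is where genericity of $D$ and the bound $d\le 3g-3$ enter; it is what gives $h^0(L^{-1}E)=1$, which the paper takes from Lange--Narasimhan.

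The genuine gap is (c), where you reuse this mechanism. There $d\ge 4g-2$, so $h^0(L^{-2})=d-3g+3>g$, and the secondary obstruction space exceeds the $g$-dimensional freedom in $a$. Equivalently, by \Cref{lemma-im-s_i}(a), $\operatorname{im}(s_i)$ has codimension $h^0(L^{-1}E)=d-4g+4+h^0(E^\vee LK_C)\ge 2$, not $1$.
\begin{itemize}
\item For fixed $(E,i)$, the divisors $D_i(\nabla)$ fill a subvariety of dimension at most $3g-3$ of the $(d-g)$-dimensional system $|K_CL^{-2}|$. So for general $D$, a general pair $(E,i)$ admits no connection with $D_i(\nabla)=D$, and ``always solvable off a proper closed set'' is false.
\item Likewise, $\langle[E]_i,s_D\rangle=0$ is one condition on the pair $(E,i)$, not on $E$, and it is not sufficient. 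Since $i$ ranges over $\mathbb PH^0(L^{-1}E)$ of dimension $\ge d-4g+3\ge 1$, counting only this condition would predict that $\mathbb L_H(L,D)$ dominates.
\end{itemize}
So neither half of (c) follows, and the obstacle you isolate at the end is not the real one.

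What is missing is the vanishing $H^0(C,E^\vee LK_C)=0$. By \eqref{eqn-h1-Dtilde} this space has dimension $h^1(\widetilde C,\mathcal O_{\widetilde C}(\widetilde D))$. That vanishes for generic $\widetilde D$ once $d\ge g(\widetilde C)=4g-3$, and for all $\widetilde D$ once $d>2g(\widetilde C)-2=8g-8$; this, not positivity of $h^0(E\otimes L^{-1})$, is what the thresholds encode. The vanishing makes $s_i$ injective, and the paper then argues infinitesimally.
\begin{itemize}
\item For $\mathbb L_{dR}(L,D)$: with $i$ fixed, a vertical tangent vector $\psi$ of $\mathbb L_{dR}(L,D)\to\mathrm{Bun}_{2,\mathcal O_C}$ satisfies $s_i(\psi)\in\mathbb C s_D$. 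Since $\operatorname{im}(s_i)\subset\ker\langle[E]_i,\cdot\rangle$ while $\langle[E]_i,s_D\rangle\neq 0$ by \Cref{lem-Serre-pairing}(b), this forces $s_i(\psi)=0$, hence $\psi=0$. An injective differential between two $(3g-3)$-dimensional spaces then gives dominance.
\item For $\mathbb L_H(L,D)$, the idea you are missing is $\mathbb C^\ast$-invariance. Since $(E,t\phi)$ stays in $\mathbb L_H(L,D)$, every fibre over $\mathrm{Bun}_{2,\mathcal O_C}$ is at least one-dimensional. Injectivity of $s_i$ shows the vertical directions are exactly $\mathbb C\phi$, so the image has dimension $3g-4$, i.e.\ it is a divisor.
\end{itemize}
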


\paragraph{Lagrangians from wobbly bundles}
\begin{proposition} \label{prop-wobbly-lag}
Let $D$ be an effective divisor of degree $2g-2 < d = \deg(D) \leq 4g-4$, and suppose that there exists a quadratic differential $q \in H^0(C, K_C^2)$ such that $\mathrm{div}(q) - D$ is effective. Then there exists an open dense subvariety $U_D \subset \mathbb{L}_H(D)$ such that 
\[ U_D = \{ (E,\phi) \in \mathbb{L}_H(D) \mid E \text{ is wobbly}\} \]
Such wobbly bundles $E$ admit nonzero nilpotent Higgs fields that vanish at $\mathrm{div}(q) - D$.
\end{proposition}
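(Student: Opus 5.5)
The plan is to construct the nilpotent Higgs field directly from the triple, and then to show that the underlying bundle is stable on an open dense subset of $\mathbb{L}_H(D)$. We work with a component $\mathbb{L}_H(L,D)$, where $L^2\cong K_C(-D)$ and $\deg L = g-1-d/2<0$ because $d>2g-2$. A point of this component is a triple $(i:L\hookrightarrow E,\phi)$, and $E$ sits in an extension
\[
0\to L\to E\xrightarrow{\,p\,} L^{-1}\to 0 .
\]
By \Cref{prop-filtration-Higgs}, the component of $\phi$ mapping $L\to L^{-1}K_C$ is $s_i(\phi)=s_D\in H^0(C,K_CL^{-2})=H^0(C,\mathcal{O}_C(D))$.

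\emph{Step 1 (the nilpotent field).} By hypothesis $\mathrm{div}(q)\ge D$, so $t:=q/s_D$ is a holomorphic section of $K_C^2(-D)\cong L^2K_C$. Its divisor is exactly $\mathrm{div}(q)-D$. View $t$ as a map $L^{-1}\to LK_C$ and set
\[
\psi := (i\otimes 1)\circ t\circ p : E\longrightarrow E\otimes K_C .
\]
Then $\psi$ is nonzero. It is nilpotent, since $\psi^2$ factors through $p\circ i=0$. It is traceless, and it vanishes precisely along $\mathrm{div}(q)-D$. This construction works for every point of $\mathbb{L}_H(L,D)$, and only uses $L$ and $D$, not $\phi$.

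\emph{Step 2 (identifying $U_D$).} By definition a wobbly bundle is stable, so the set in the statement equals $U_D:=\{(E,\phi)\in\mathbb{L}_H(D)\mid E \text{ stable}\}$:
\begin{itemize}
\item if $E$ is stable, Step 1 shows that $E$ is wobbly;
\item conversely, a wobbly $E$ is stable by definition.
\end{itemize}
Stability is an open condition, so $U_D$ is open. It remains to show that $U_D$ is dense, i.e.\ nonempty on each irreducible component of $\mathbb{L}_H(L,D)$.

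\emph{Step 3 (density; the main obstacle).} Suppose $E$ is not stable, and let $M\subset E$ be a line subbundle with $\deg M\ge 0$. Since $\deg L<0$, $M$ is not $L$, so $M$ maps nonzero to $L^{-1}$. Hence $M=L^{-1}(-Z)$ with $Z$ effective and $\deg Z\le -2\deg L=d-2g+2$. In that case the extension class $e\in H^1(C,L^2)$ lies in $\ker\bigl(H^1(L^2)\to H^1(L^2(Z))\bigr)$, i.e.\ in the secant variety of $\deg Z$-secant planes to $C$ in $\mathbb{P}H^1(L^2)\cong\mathbb{P}H^0(K_C^2(-D))^\vee$. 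Its affine dimension is at most $2\deg Z\le 2d-4g+4$.

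On the other hand, the extension classes realized by $\mathbb{L}_H(L,D)$ fill out (an open subset of) the hyperplane $\ker(s_D)\subset H^1(C,L^2)$. This is because the existence of a Higgs field with lower-left entry $s_D$ is exactly the pairing condition $\langle e,s_D\rangle=0$, as used in \Cref{prop-rank-2-all-range}(b). The hyperplane has dimension $h^1(L^2)-1=d+g-2$, computed by Riemann--Roch.

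Since $d\le 4g-4<5g-6$, we get $2d-4g+4<d+g-2$. So the union of the secant loci, taken over all admissible $Z$, cannot contain the hyperplane, and a generic extension class in $\ker(s_D)$ gives a stable $E$. Finally, the fibres of $\mathbb{L}_H(L,D)\to\ker(s_D)$ (the remaining upper-triangular entries of $\phi$) form affine spaces, so genericity of $e$ implies genericity in $\mathbb{L}_H(L,D)$.

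The delicate points are two:
\begin{itemize}
\item justifying that the image of $\mathbb{L}_H(L,D)$ really is the whole hyperplane $\ker(s_D)$ and not a smaller subvariety, throughout the range up to $4g-4$ and not only up to $3g-3$;
\item checking that the hyperplane is not swallowed by a secant locus in degenerate cases.
\end{itemize}
I would handle both by a direct dimension count of the Higgs fields compatible with each extension, using $h^0(K_C)$ and $h^0(K_C^2(-D))$.
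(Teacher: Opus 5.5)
Your Steps 1 and 2 are exactly what the paper does. Its proof idea is the construction $\psi=i\circ t\circ p$ from a section of $K_CL^2\cong\mathcal{O}_C(\mathrm{div}(q)-D)$, and it defers openness and density of the stable locus to \cite{D24-wobbly}. So everything rests on your Step 3, and that is where the argument fails.

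Some numbers there are wrong, though fixably:
\begin{itemize}
\item By Serre duality $H^1(C,L^2)^\vee\cong H^0(C,K_CL^{-2})=H^0(C,\mathcal{O}_C(D))$, not $H^0(K_C^2(-D))$. This is why $s_D$ cuts out a hyperplane at all.
\item Since $h^0(L^2)=0$, one has $h^1(L^2)=d-g+1$, so $\ker(s_D)$ has dimension $d-g$, not $d+g-2$.
\item A destabilizing $M=L^{-1}(-Z)$ has $\deg Z\le-\deg L=d/2-g+1$, not $-2\deg L$.
\end{itemize}
With your bound the comparison $2d-4g+4<d-g$ fails once $d\ge 3g-4$. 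With the sharp bound the secant cones have dimension $\le d-2g+2<d-g$, which is fine for $g\ge3$.

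The genuine gap is the sentence ``the fibres \dots\ form affine spaces, so genericity of $e$ implies genericity in $\mathbb{L}_H(L,D)$''. The fibre over $e$ is a torsor under $H^0(C,E_e^\vee LK_C)$. Its dimension, $4g-4-d+h^0(C,L^{-1}E_e)$, jumps on special loci of extension classes, including non-stable ones such as $e=0$. Over such loci the triples can fill out entire irreducible components of $\mathbb{L}_H(L,D)$ on which $E$ is never stable. Density therefore requires showing that every jump stratum has total dimension $<3g-3$. Surjectivity onto the hyperplane, which is what you flag as delicate, does not address this.

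The danger is real. Take $g=2$ with $\Lambda=\mathcal{O}_C$ as in your setup; then $d=4$, $D=\mathrm{div}(q)$ and $L^2\cong K_C^{-1}$. On $E=L\oplus L^{-1}$ consider the fields $\phi=\left(\begin{smallmatrix} a & b\\ q & -a\end{smallmatrix}\right)$ with $a\in H^0(K_C)$ and $0\neq b\in H^0(K_CL^2)\cong\mathbb{C}$. These are stable Higgs bundles with $D_i(\phi)=D$, and $-\det\phi=a^2+bq$ sweeps out a dense subset of $H^0(K_C^2)$. They therefore form a $3=(3g-3)$-dimensional part of $\mathbb{L}_H(L,D)$ whose underlying bundle is unstable, since $\deg L^{-1}=1$. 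So $U_D$ is not dense there, and the statement itself fails at $g=2$.

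A correct argument must assume $g\ge3$ and bound each stratum. The split stratum is easy: it has dimension $g+h^0(K_C^2(-D))\le 2g-1<3g-3$ by Clifford. The strata with $e$ on low secant varieties still need an honest estimate, because there $L^{-1}(-Z)\subset E$ can force extra sections of $L^{-1}E$.
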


\paragraph{Fibers of Lagrangian correspondences}
We now concern ourselves with the properties of the fibers of the forgetful maps 
\[ \mathbb{L}_H(d) \overset{\mathrm{pr}_2}{\longrightarrow} \mathcal{M}_H^{s}(2, \Lambda), \qquad \qquad 
\mathbb{L}_{dR}(d) \overset{\mathrm{pr}_2'}{\longrightarrow} \mathcal{M}_H^{s}(2, \mathcal{O}_C). \]
The following results hold for both $\mathrm{pr}_2$ and $\mathrm{pr}_2'$; for simplicity, we only state the results concerning $\mathrm{pr}_2$. 
Let us denote by $\mathcal{M}_H^{vs}(2, \Lambda) \subset \mathcal{M}_H(2, \Lambda)$ the open dense subvariety parametrizing Higgs bundles $(E, \phi)$ with $E$ very stable. 
Let $\mathcal{M}_H^{s}(2, \Lambda) \equiv T^\ast\mathrm{Bun}_{2, \Lambda}$ denote the subvariety parametrizing $(E, \phi)$ with $E$ stable.

\begin{proposition} \label{prop-fiber-Lag-cor}
Consider the moduli space $\mathcal{M}_H(2, \Lambda)$ and let $d \equiv \ell = \deg(\Lambda) \pmod{2}$. 
\\
(a) Let $0 < d < 2g-2$. Then the fiber of $\mathrm{pr}_2$ over any $(E, \phi) \in \mathrm{im}(\mathrm{pr}_2)$ consists of exactly $1$ point. Consequently, $\mathrm{im}(\mathrm{pr}_2)$ fibers over $\mathrm{Hilb}^d(C)$ with fibers over $D$ being $\mathbb{L}_H(D)$.
\\
(b) Let $2g-2 < d \leq 3g-4$. Then the fiber of $\mathrm{pr}_2$ over a generic $(E, \phi) \in \mathrm{im}(\mathrm{pr}_2)$ consists of exactly $1$ point. Consequently, there exists an open dense subset of $\mathrm{im}(\mathrm{pr}_2)$ that fibers over $\mathrm{Hilb}^d(C)$ with fibers over $D$ being $\mathbb{L}_H(D)$.
\\
(c) Let $g \equiv \ell-1 \pmod{2}$ and $d = 3g-3$. Then the fiber of $\mathrm{pr}_2$ over any $(E,\phi) \in \mathcal{M}_H^{vs}(2, \Lambda)$ consists of exactly $2^g$ points.
\\ 
(d) Let $g \equiv \ell \pmod{2}$ and $d = 3g-2$. Then the fiber of $\mathrm{pr}_2$ over a generic $(E,\phi) \in \mathcal{M}_H^{vs}(2, \Lambda)$ is $1$-dimensional.
\\
(e) Let $D$ be an effective divisor of degree $d = 4g-3$. Then the fiber of $\mathrm{pr}_2$ over any $(E,\phi) \in \mathcal{M}_H^{s}(2, \Lambda)$ is smooth, irreducible and has dimension $g$. 
\end{proposition}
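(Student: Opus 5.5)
The plan is to reduce everything to a count of line subsheaves of the underlying bundle $E$. By \Cref{prop-Nm(D)}, a point of $\mathrm{pr}_2^{-1}(E,\phi)$ is the same as an embedding $i\colon L\hookrightarrow E$, up to scaling, with $\deg L = l := (\ell+2g-2-d)/2$. This is because $\widetilde D_i(\phi)$ determines $i$ and conversely. The determinant condition $L^2\cong \Lambda K_C(-\pi_\ast\widetilde D)$ is automatic by taking norms of $\mathcal L\cong\pi^\ast L(\widetilde D)$, and $s_i(\phi)\neq 0$ is automatic by \Cref{l:section not zero}.

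Equivalently, on the spectral side, the fiber is
\[
\bigsqcup_{L\in \mathrm{Pic}^l(C)} \mathbb P H^0\bigl(\widetilde C,\mathcal L\otimes\pi^\ast L^{-1}\bigr)=\bigsqcup_{L}\mathbb P H^0(C, E\otimes L^{-1}),
\]
cut down to those sections that are saturated, i.e. that give subbundles. Riemann--Roch gives $\chi(E\otimes L^{-1}) = d-4g+4$. Hence the expected dimension of this Quot-type scheme is $g+(d-4g+4)-1 = d-3g+3$. This single formula organizes all five cases. Since the Higgs field plays no role once $\widetilde C$ is smooth, I will argue purely with $E$ (respectively with $\nabla$ via \Cref{SL diff oper} for $\mathrm{pr}_2'$).

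For (a), suppose there were two distinct subbundles $L_1,L_2$ of degree $l$. Then $L_1\oplus L_2\to E$ would be generically injective, so $2l\le \ell$, i.e. $d\ge 2g-2$. Thus for $d<2g-2$ the fiber has at most one point. The fibration over $\mathrm{Hilb}^d(C)$ then follows from $D=\pi_\ast\widetilde D$ being a function of the unique $L$.

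For (b), I would use the Lange--Narasimhan description of the Segre stratification. For $E$ generic in the image, the scheme of degree-$l$ subbundles has the expected dimension $d-3g+3\le -1$ away from the image of $\mathrm{pr}_2$. On the image it is therefore generically a single reduced point; any second subbundle would put $E$ in a smaller stratum of positive codimension.

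For (c), with $d=3g-3$, the expected dimension is $0$. Here I invoke the classical count (Lange--Narasimhan, Oxbury--Pauly--Previato) of $2^g$ maximal subbundles. To make it hold for every very stable $E$, rather than just generic $E$, I would show that $\mathrm{pr}_2$ is finite over $\mathcal M_H^{vs}$. The point is that a positive-dimensional family of maximal subbundles would, by a deformation/tangent-space argument, produce a nonzero nilpotent Higgs field on $E$, contradicting very stability. The degree is then constant, equal to the generic value $2^g$; this agrees with the degree of $\mathrm{SoV}$ in \cite{DT23}.

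For (d), $d=3g-2$ gives expected dimension $1$. I would check that for generic very stable $E$ the Quot scheme is of expected dimension, using the same nilpotent-field obstruction to rule out excess components.

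For (e), with $d=4g-3=g(\widetilde C)$, I would use the spectral model. The map $L\mapsto \mathcal L\otimes\pi^\ast L^{-1}$ embeds $\mathrm{Pic}^l(C)$ into $\mathrm{Pic}^{4g-3}(\widetilde C)$, and the fiber is the preimage of this $g$-dimensional translate under the Abel--Jacobi map $\mathrm{Sym}^{4g-3}\widetilde C\to\mathrm{Pic}^{4g-3}(\widetilde C)$. That map is birational, with fibers $\mathbb P H^0$. Irreducibility and smoothness reduce to showing that $h^0(C,E\otimes L^{-1})=1$ for every $L$, equivalently $h^0(C,E^\vee\otimes LK_C)=0$, and that all these sections are saturated.

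I expect this last vanishing, required for every stable $E$, to be the main obstacle: a naive slope bound does not give it. I would first try to exploit stability of $E$ together with $\deg(LK_C)=(\ell+2g-3)/2$ and a Clifford-type estimate. Failing that, I would instead compute the differential of the Abel--Jacobi composite directly, to show it is an immersion along the fiber.
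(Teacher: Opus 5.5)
Your treatment of (a)--(d) is essentially sound, and (a)--(b) follow the paper's route. Your identification of $\mathrm{pr}_2^{-1}(E,\phi)$ with the space of pairs $(L,[i])$, $[i]\in\mathbb{P}H^0(C,L^{-1}E)$, is exactly the paper's $S^d_E$. Your argument for (a) is the uniqueness of the maximal destabilizing subbundle, and (b) rests on the same Lange--Narasimhan result the paper cites. Note, though, that a second maximal subbundle of the same degree does not move $E$ into a smaller Segre stratum; what you need is their uniqueness statement for the generic member of the stratum.

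For (c) and (d) you replace the citations (Gronow, Lange--Narasimhan) by a deformation argument. That works, but state it via tangent and obstruction spaces, not dimensions. At $[L\subset E]$ these are $H^0$ and $H^1$ of $\Lambda L^{-2}$, and $H^1(C,\Lambda L^{-2})^\vee\cong H^0(C,K_CL^2\Lambda^{-1})$ is precisely the space of nilpotent Higgs fields $E\to\Lambda L^{-1}\to LK_C\hookrightarrow E\otimes K_C$. For very stable $E$ this space vanishes, and $E$ has no line subbundle of degree $>l$, which gives properness. So in (c), where $\chi(\Lambda L^{-2})=0$, the scheme of maximal subbundles is \'etale and proper over the very stable locus. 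You need this, not just finiteness, because a finite map of degree $2^g$ can have fewer than $2^g$ points in a fibre. In (d) the same computation gives a smooth $1$-dimensional fibre for every very stable $E$. Separately, the $\mathrm{SoV}$ map has degree $2^{2g}$, while $2^g$ is the degree of $\mathrm{pr}_2$, so your consistency remark about $\mathrm{SoV}$ is off.

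The genuine gap is in (e). Here $\ell$ is odd, $l=(\ell-2g+1)/2$, and $\chi(E\otimes L^{-1})=1$. Your reduction needs $h^0(C,E^\vee\otimes LK_C)=0$ for every $L\in\mathrm{Pic}^l(C)$, and this fails for every $E$. By Nagata's bound, $E$ has a line subbundle $N$ with $\ell-2\deg N\le g$. By parity and $g\ge 2$ this gives $\ell-2\deg N\le 2g-3$, i.e.\ $\deg(E/N)\le(\ell+2g-3)/2=\deg(LK_C)$. Now choose $L$ with $LK_C\cong(E/N)(A)$ for some effective $A$. The composite $E\to E/N\to LK_C$ is nonzero, so $h^0(C,E\otimes L^{-1})\ge 2$. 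Consistently, the locus $\{L: h^0(C,E\otimes L^{-1})\ge 2\}$ has expected codimension $2$ in $\mathrm{Pic}^l(C)$ and Porteous class $2\theta^2\neq 0$.

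Over such $L$, the fibre of $\mathrm{pr}_2$ contains all of $\mathbb{P}H^0(C,L^{-1}E)$, at least a $\mathbb{P}^1$, and this is contracted by $(L,[i])\mapsto L$. So the fibre is never isomorphic to $\mathrm{Pic}^l(C)$. Your fallback fails for the same reason: the Abel--Jacobi composite cannot be an immersion along the fibre. Smoothness and irreducibility instead require controlling these Brill--Noether-type loci uniformly over all stable $E$. That is exactly the content of Debarre's proof of the relevant part of the Kazhdan--Polishchuk conjecture, which the paper invokes. Without that theorem, or an argument of comparable strength, (e) is not established.
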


\paragraph{Proofs of Propositions \ref{prop-rank-2-all-range},  \ref{prop-wobbly-lag} and \ref{prop-fiber-Lag-cor}}
We will need the following results to prove Proposition \ref{prop-rank-2-all-range}.
\begin{lemma}\label{lem-Serre-pairing}
(a) For a rank-2 Higgs bundle connection $(E, \phi)$, $\det(E) = \phi$, and a line subbundle $L \overset{i}{\hookrightarrow} E$, the Serre duality pairing of the induced extension class $[E]_i \in H^1(C, L^2 \Lambda^{-1})$ and $s_i(\nabla_\lambda) \in H^0(C, K_C L^{-2} \Lambda)$ vanishes,
\begin{equation}
    \big\langle [E]_i, s_i(\nabla_\lambda) \big\rangle = 0.
\end{equation}
\\
(b) For a rank-2 $\lambda$-connection $(E, \nabla_\lambda)$, $\det(E) = \mathcal{O}_C$, and a line subbundle $L \overset{i}{\hookrightarrow} E$, the Serre duality pairing of the induced extension class $[E]_i \in H^1(C, L^2)$ and $s_i(\nabla_\lambda) \in H^0(C, K_C L^{-2})$ evaluates to 
\begin{equation}
    \big\langle [E]_i, s_i(\nabla_\lambda) \big\rangle = \lambda \deg(L).
\end{equation}
\end{lemma}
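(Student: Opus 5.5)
The plan is a Dolbeault computation in a $C^\infty$ splitting adapted to $L\subset E$, followed by integration of one matrix entry of the integrability condition $[\bar\partial_E,\nabla_\lambda]=0$. Part (a) is the special case $\lambda=0$, with $\mathcal O_C$ replaced by $\Lambda$ throughout, so I would prove (b) in the form $\langle [E]_i, s_i\rangle=\lambda\deg(L)$ uniformly for the Hodge family. The Higgs case is $\lambda=0$ with determinant $\Lambda$.

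\textbf{Step 1: the splitting.} Write the extension as $0\to L\to E\to L^{-1}\Lambda\to 0$ and choose a $C^\infty$ splitting $E\simeq L\oplus L^{-1}\Lambda$. In this splitting
\[
\bar\partial_E=\begin{pmatrix}\bar\partial_L & \beta\\ 0 & \bar\partial_{L^{-1}\Lambda}\end{pmatrix},
\qquad \beta\in\Omega^{0,1}(C,L^2\Lambda^{-1}),
\]
and the Dolbeault class of $\beta$ is the extension class $[E]_i$. Similarly I would write
\[
\nabla_\lambda=\begin{pmatrix}\lambda\partial_L+a & b\\ s & \lambda\partial_{L^{-1}\Lambda}+a'\end{pmatrix},
\]
with $a,a'$ smooth $(1,0)$-forms and $\partial_L$ the Chern connection of a chosen hermitian metric on $L$. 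The lower-left entry $s$ is the second fundamental form $L\to (E/L)\otimes K_C$. By the rank-$2$ case of the definition \eqref{eqn-def-s-i-phi} (resp.\ \eqref{e:wronskian comp}), $s$ is exactly $s_i(\nabla_\lambda)$, using $\det E\cong\Lambda$. The same check shows $s$ is holomorphic.

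\textbf{Step 2: the $(1,1)$ entry.} I would expand the $(1,1)$ entry of $[\bar\partial_E,\nabla_\lambda]=0$. It reads
\[
\lambda\,[\bar\partial_L,\partial_L]+\bar\partial a+\beta\wedge s=0,
\]
up to the sign convention for $\beta\wedge s$. Here $[\bar\partial_L,\partial_L]=-F_{h}$ is the curvature of the Chern connection, $\bar\partial a$ is exact, and $\beta\wedge s$ is a $(1,1)$-form with values in $L^2\Lambda^{-1}\otimes KL^{-2}\Lambda=K_C$, i.e.\ a genuine $2$-form. Integrating over $C$ kills the exact term and gives
\[
\int_C\beta\wedge s=\lambda\int_C F_h=\lambda\cdot 2\pi i\,\deg(L)
\]
by Chern--Weil.

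\textbf{Step 3: conclusion.} With the Serre duality pairing normalised as $\langle[\beta],s\rangle=\frac{1}{2\pi i}\int_C\beta\wedge s$, this gives $\lambda\deg(L)$. At $\lambda=0$ it gives $0$, which proves (a).

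\textbf{Main obstacle.} The only delicate point is bookkeeping rather than substance. One has to fix the sign and the $2\pi i$ normalisation of Serre duality, and the ordering convention in $\beta\wedge s$, so that the constant comes out exactly as $\lambda\deg(L)$. One also has to confirm that the off-diagonal entry of $\nabla_\lambda$ agrees with $s_i(\nabla_\lambda)$ as defined via the Wronskian in \eqref{e:wronskian comp}, rather than with its negative or a rescaling. I would check this in a local flat frame.
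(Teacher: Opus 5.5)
Your proof is correct. The paper gives no argument of its own for this lemma; it only cites \cite{DT23, D24-lambda}. So your Dolbeault computation is a self-contained substitute rather than a reconstruction of something in the text.

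The natural alternative is cohomological, in the spirit of how the paper uses \eqref{eqn-s.e.s-1} in \Cref{lemma-im-s_i}.
\begin{itemize}
\item For (a), $s_i(\phi)$ lies in the kernel of the connecting map $H^0(C,K_CL^{-2}\Lambda)\to H^1(C,E^\vee LK_C)$. Composing that map with the diagonal projection $H^1(C,E^\vee LK_C)\to H^1(C,K_C)$ gives, up to sign, cup product with $[E]_i$.
\item For (b) one has to redo this with the Atiyah sequence. The extra term is $\lambda$ times the Atiyah class of $L$.
\end{itemize}
Your route packages both parts into one identity. The $(1,1)$ entry of $[\bar\partial_E,\nabla_\lambda]=0$ is exactly that cocycle relation, and Chern--Weil plays the role of the Atiyah class. (In a \v{C}ech treatment the same role is played by the residues of $\lambda\,d\log$ of the transition functions of $L$.) Thus (a) and (b) are literally the cases $\lambda=0$ and $\lambda\neq0$ of a single computation. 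The only cost is the normalisation bookkeeping you already flagged.

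On the signs: with the graded bracket, $[\bar\partial_L,\partial_L]=\bar\partial_L\partial_L+\partial_L\bar\partial_L=+F_h$. Also $\int_C F_h=-2\pi i\deg(L)$, since $c_1(L)=[\tfrac{i}{2\pi}F_h]$. Both of your signs are flipped, but the two slips cancel, so $\int_C\beta\wedge s=2\pi i\,\lambda\deg(L)$ stands.

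You also do not need a flat frame to identify the lower-left entry with $s_i(\nabla_\lambda)$. In rank $2$, $\det\bigl(i(\sigma),\nabla_\lambda i(\sigma)\bigr)$ depends only on $\nabla_\lambda i(\sigma)$ modulo $L$. That class is exactly $s(\sigma)$ under the identification $L\otimes(E/L)\cong\det E$.
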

\begin{proof}
    See \cite{DT23, D24-lambda}. 
\end{proof}

\begin{lemma}\label{lemma-im-s_i}
Let $E$ be a stable rank-2 bundle of determinant $\det(E) \cong \Lambda$, $\deg(\Lambda) = \ell$, with line subbundle $L \overset{i}{\hookrightarrow} E$, and consider the map 
\begin{equation}\label{s_i}
s_i: H^0(C, \mathrm{End}_0(E) \otimes K_C) \longrightarrow H^0(C, K_C L^{-2}\Lambda), 
\qquad \qquad \phi \longmapsto s_i(\phi).     
\end{equation}
Let $d = \ell - 2\deg(L) + 2g-2$. Then: \\
(a) The codimension of $\mathrm{im}(s_i)$ is $h^0(C, L^{-1}E)$. \\
(b) For generic $E \in \mathrm{Bun}_{2, \Lambda}$ and generic $L$ with $d\geq 4g-4$, $s_i$ is injective. \\
(c) For $d > 8g-8$, $s_i$ is injective.
\end{lemma}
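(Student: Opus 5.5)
The plan is to identify $s_i$ with the map on global sections induced by a surjection of vector bundles, and then read off its cokernel from the long exact sequence in cohomology together with Serre duality.

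\emph{Reduction to a bundle map.} In rank $2$, $s_i(\phi)=i\wedge(\phi\circ i)$ is the composition
\[
L\xrightarrow{\,i\,}E\xrightarrow{\,\phi\,}E\otimes K_C\longrightarrow (E/L)\otimes K_C\cong L^{-1}\Lambda K_C .
\]
So $s_i$ is the map on $H^0$ induced by the sheaf morphism
\[
q:\mathrm{End}_0(E)\otimes K_C\to L^{-2}\Lambda K_C,\qquad \psi\mapsto \mathrm{pr}_{E/L}\circ\psi\circ i .
\]
This morphism is fibrewise surjective. Its kernel is $P\otimes K_C$, where $P\subset \mathrm{End}_0(E)$ is the rank-$2$ bundle of traceless endomorphisms preserving $L$.

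\emph{Part (a).} The long exact sequence gives
\[
\operatorname{coker}(s_i)\cong \ker\bigl(H^1(P K_C)\to H^1(\mathrm{End}_0(E)K_C)\bigr).
\]
By Serre duality (using the trace to identify $\mathrm{End}_0(E)$ with its dual), this is dual to $\operatorname{coker}\bigl(H^0(\mathrm{End}_0 E)\to H^0(P^\vee)\bigr)$. Since $E$ is stable, $H^0(\mathrm{End}_0E)=0$, so $\operatorname{codim}\operatorname{im}(s_i)=h^0(P^\vee)$.

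It remains to identify $P^\vee\cong L^{-1}E$. Consider the map
\[
\mathrm{End}_0(E)\to \mathrm{Hom}(L,E)=L^{-1}E,\qquad \psi\mapsto\psi\circ i .
\]
It is surjective. Its kernel consists of traceless $\psi$ with $\psi(L)=0$; such $\psi$ factors through $E/L$, and tracelessness forces its image into $L$. Hence the kernel is $\mathrm{Hom}(E/L,L)\cong L^2\Lambda^{-1}$. This is exactly the annihilator $P^\perp$ of $P$ under the trace pairing, so
\[
L^{-1}E\cong \mathrm{End}_0(E)/P^\perp\cong P^\vee ,
\]
which gives (a).

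\emph{Parts (b) and (c).} Since $h^0(\mathrm{End}_0E\otimes K_C)=3g-3$, the map $s_i$ is injective iff
\[
\dim\operatorname{im}(s_i)=h^0(K_CL^{-2}\Lambda)-h^0(L^{-1}E)=3g-3 .
\]
For $d\ge 4g-4>2g-2$ we have $\deg(L^2\Lambda^{-1})<0$, so Riemann–Roch gives $h^0(K_CL^{-2}\Lambda)=d-g+1$. Thus injectivity is equivalent to $h^0(L^{-1}E)=d-4g+4$. Riemann–Roch also gives $\chi(L^{-1}E)=\ell-2\deg L+2-2g=d-4g+4$, so injectivity is equivalent to
\[
h^1(L^{-1}E)=0,\quad\text{i.e.}\quad \mathrm{Hom}(E,K_CL)=0 .
\]

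For (c) I will use stability. A nonzero map $E\to K_CL$ has image a quotient line bundle of degree $>\ell/2$, so it cannot exist once $\deg(K_CL)=(\ell+6g-6-d)/2\le \ell/2$, i.e.\ once $d\ge 6g-6$. In particular this holds for $d>8g-8$, which gives (c).

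For (b) we need $h^1(L^{-1}E)=0$ for generic $E$ and generic $L$ with $\chi(L^{-1}E)\ge 0$. I expect this to be the main obstacle. I would first try to invoke the known result (Hirschowitz; Raynaud) that a generic stable bundle, twisted by a generic line bundle, has natural cohomology, i.e.\ $h^0h^1=0$. Failing that, I would argue by semicontinuity: exhibit one pair $(E,L)$ with $h^1(L^{-1}E)=0$, for instance a suitable extension or a direct sum of generic line bundles deformed to a stable bundle, and then use openness of the condition $h^1=0$.
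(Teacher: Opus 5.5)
Your parts (a) and (c) are correct.

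In (a) you use the same exact sequence as the paper (its kernel $E^\vee L K_C$ is your $P\otimes K_C$). The difference is that you identify $\operatorname{coker}(s_i)\cong H^0(L^{-1}E)^\vee$ directly by Serre duality, where the paper computes $h^0(E^\vee LK_C)$ by Riemann--Roch. The two are equivalent, and yours is slightly cleaner.

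In (c) your route is genuinely different and stronger. The paper rewrites $h^1(L^{-1}E)$ as $h^1(\widetilde{C},\mathcal{O}_{\widetilde{C}}(\widetilde{D}))$ on a smooth spectral curve and uses $\deg K_{\widetilde{C}}=8g-8$, so it also needs a Higgs field on $E$ with smooth spectral curve. Your stability argument gives $\operatorname{Hom}(E,K_CL)=0$, hence injectivity, for every stable $E$ once $d\ge 6g-6$.

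The gap is in (b). You take $(E,L)$ generic in $\mathrm{Bun}_{2,\Lambda}\times\mathrm{Pic}(C)$, but $s_i$ is only defined when there is an embedding $i\colon L\hookrightarrow E$, i.e.\ a nowhere-vanishing section of $L^{-1}E$. So $h^0(L^{-1}E)\ge 1$ always, and your own reduction gives
$\dim\ker(s_i)=h^1(L^{-1}E)=h^0(L^{-1}E)-(d-4g+4)\ge 4g-3-d$.

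At the endpoint $d=4g-4$ this bound is $\ge 1$. There $s_i$ maps between two $(3g-3)$-dimensional spaces and has nonzero cokernel $H^0(L^{-1}E)^\vee$, so it is never injective, for any stable $E$ and any line subbundle $L$. The natural-cohomology pair supplied by Hirschowitz/Raynaud when $\chi(L^{-1}E)=0$ has $h^0(L^{-1}E)=0$. So $L$ does not embed in $E$ at all, and the result says nothing about $s_i$.

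Hence (b) is false as stated at $d=4g-4$; the correct range is $d\ge 4g-3$. That is the range the paper actually uses later, in Propositions~\ref{prop-rank-2-all-range}(c) and~\ref{prop-fiber-Lag-cor}(e). The paper's own proof makes the same slip. Its table asserts $h^1(\widetilde{C},\mathcal{O}(\widetilde{D}))=0$ for generic $\widetilde{D}$ of degree $4g-4$, but $g(\widetilde{C})=4g-3$ forces $h^1=h^0\ge 1$ for effective $\widetilde{D}$ of that degree. Still, the paper's spectral route works with effective $\widetilde{D}$, so it builds in the nonzero map $L\to E$ that your argument leaves out.

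For $d\ge 4g-3$ your approach can be completed, provided you take genericity over admissible triples rather than over $\mathrm{Bun}_{2,\Lambda}\times\mathrm{Pic}(C)$.
\begin{itemize}
\item Triples $L\overset{i}{\hookrightarrow}E$ with $\det E=\Lambda$ are exactly the extensions $0\to L\to E\to L^{-1}\Lambda\to 0$. These form an irreducible family fibred over $\mathrm{Pic}(C)$, and $h^1(L^{-1}E)=0$ is an open condition on it.
\item The open set is nonempty. From $0\to\mathcal{O}_C\to L^{-1}E\to L^{-2}\Lambda\to 0$ and $H^1(L^{-2}\Lambda)=0$, the condition amounts to surjectivity of cup product with the extension class, $H^0(L^{-2}\Lambda)\to H^1(\mathcal{O}_C)$.
\item A dimension count gives this surjectivity for a generic class once $h^0(L^{-2}\Lambda)=d-3g+3\ge g$.
\end{itemize}
Combined with dominance of this extension family onto $\mathrm{Bun}_{2,\Lambda}$ in this range, this yields (b) for generic $E$ and generic line subbundle $L$.
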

\begin{proof}
The map $s_i$ fits in the long exact sequence induced from 
\begin{equation}\label{eqn-s.e.s-1}
0 \longrightarrow E^\vee L K_C 
\longrightarrow \mathrm{End}_0(E) \otimes K_C \longrightarrow K_C L^{-2} \Lambda  
\longrightarrow 0,    
\end{equation}
where $E^\ast L K_C$ is the bundle of Higgs fields $\phi$ that keep $L$ invariant, i.e. $\phi(L) \subset L\otimes K_C$. A Riemann-Roch calculation shows that $h^0(C, E^\vee L K_C) = 4g-4 - d + h^0(C, L^{-1}E)$. It follows that, for $E$ stable, the dimension of $\mathrm{im}(s_i)$ is
\begin{align*}
    h^0(C, \mathrm{End}_0(E) \otimes K_C) - h^0(C, E^\vee L K_C) &= d - g +1 - h^0(C, L^{-1}E) \\ &= h^0(C, K_C L^{-2}\Lambda) - h^0(C, L^{-1}E).
\end{align*}
This proves part (a) of the Lemma. 
For parts (b) and (c), observe that for $(E, \phi)$ a Higgs bundle with smooth spectral curve $\widetilde{C}$ and $\tilde{D} = \tilde{D}_i(\phi)$,
\[ h^0(C, L^{-1} E) = h^0(\widetilde{C}, \mathcal{O}_{\widetilde{C}}(\tilde{D})) = d - 4g + 4 + h^1(\widetilde{C}, \mathcal{O}_{\widetilde{C}}(\tilde{D})). \]
One then can check that
\begin{align}\label{eqn-h1-Dtilde}
    h^0(C, E^\vee L K_C) &= 2g - 2 + 2 \deg(L) + h^0(L^{-1} E) = h^1(\widetilde{C}, \mathcal{O}_{\widetilde{C}}(\tilde{D})) \nonumber \\
    &= \begin{cases}
    4g - 4 - d &\text{for generic } \tilde{D}, 0 \leq d \leq 4g-4 \\
    0 &\text{for generic } \tilde{D}, 4g-4 \leq d \leq 8g-8  \\
    0 &\text{for } d > 8g-8
    \end{cases} \quad .
\end{align}
This proves part $(c)$.
Note that the spectral line bundle $\mathcal{L}_{(E, \phi)} \cong \pi^\ast(L)(\tilde{D})$, where $L^2 \cong K_C \Lambda(-\pi_\ast(\tilde{D}))$, 
defines a point in the subvariety $\mathrm{Prym}^{2g-2+\ell}_\Lambda(\widetilde{C}) \subset \mathrm{Pic}^{2g-2+\ell}(\widetilde{C})$. This subvariety parametrizes line bundles $\mathcal{L}$ on $\widetilde{C}$ satisfying $\mathcal{L} \otimes \sigma^\ast \mathcal{L} \cong \pi^\ast(K_C \Lambda)$, which is equivalent to the condition $\det(\pi_\ast \mathcal{L}) \cong \Lambda$. 
Via the assignment 
\[ \tilde{D} \longmapsto \pi^\ast(L')(\tilde{D}'), \qquad \qquad (L')^2 \cong K_C \Lambda(-\pi_\ast(\tilde{D})), \]
divisors $\tilde{D} \in \widetilde{C}^{2g-2+\ell}$ determine spectral line bundles up to $2^{2g}$ square-roots of $\mathcal{O}_C$. Since the direct image map $\pi^\ast: \mathrm{Prym}^{2g-2+\ell}_\Lambda(\widetilde{C}) \rightarrow \mathrm{Bun}_{2, \Lambda}$ is generically finite (and in particular finite over the locus of very stable bundles), we can use the estimate \eqref{eqn-h1-Dtilde} for generic bundles $E \in \mathrm{Bun}_{2, \Lambda}$. This proves part (b).
\end{proof}

\begin{proof}[Proof of \Cref{prop-rank-2-all-range}]
Part (a) follows from our discussion on the conformal limit in \Cref{sect-CL} \cite{CW18} and the proof of Simpson conjecture for $\mathrm{SL}_2$ cases \cite{DS24}. 

For part (b), \Cref{lem-Serre-pairing}(a) implies that the projection of $\mathbb{L}_H(L, D)$ to $\mathrm{Bun}_{2, \mathcal{O}_C}$ is contained in $\mathrm{Bun}_{H}(L, D)$.
For the other direction, let $E \in \mathrm{Bun}_{2, \mathcal{O}_C}$ defined by a generic extension class $[E]_i \in H^1(C, L^2)$ via an embedding $L \overset{i}{\hookrightarrow} E$. 
For $2g-2 < d \leq 3g-3$ and $D$ generic, it follows from the work of Lange-Narasimhan \cite{LN83} on secant varieties of $H^1(C, L^2)$ that $E$ admits $L$ as a maximal line subbundle $\deg(L) = \underset{L' \hookrightarrow E}{\max} \{\deg(L') \}$. 
In this case, $h^0(C, L^{-1}E) = 1$. It now follows from \Cref{lem-Serre-pairing}(a) and \Cref{lemma-im-s_i}(a) that the image of the map $s_i$ is precisely the hyperplane $\ker(i) = \{ s\in H^0(C, K_C L^{-2}) \mid \langle [E]_i, s \rangle = 0 \}$. 
We then see that if $[E]_i \in \ker(s_D)$ then $E$ admits Higgs fields $\phi$ with $s_i(\phi) = s_D$, i.e. $(E, \phi) \in \mathbb{L}_H(L, D)$.

As for $\mathbb{L}_{dR}(L, D)$, it follows from \Cref{lem-Serre-pairing}(b) that the projection of $\mathbb{L}_{dR}(L, D)$ to $\mathrm{Bun}_{2, \mathcal{O}_C}$ is contained in $\mathrm{Bun}_{dR}(L, D)$. For the other direction, let $E \in \mathrm{Bun}_{2, \mathcal{O}_C}$ be realized by an extension class $[E]_i$ in the complement of $\ker(s_D)$. 
Let $\nabla_{\mathrm{NS}}$ be the holomorphic connection on $E$ corresponding to the Narasimhan-Seshadri connection, i.e. the non-abelian Hodge correspondence of $(E, 0)$. 
By \Cref{lem-Serre-pairing}(b), $s_i(\nabla_{\mathrm{NS}})$ is not contained in the hyperplane $\ker(s_D)$, and so we can write $s_D = s_i(\nabla_{\mathrm{NS}}) + s'$ for some $s' \in \ker(s_D)$. 
Then $E$ admits a holomorphic connection $\nabla = \nabla_{\mathrm{NS}} + \phi$ where $s_i(\phi) = s$ such that $s_i(\nabla) = s_D$, i.e. $(E, \nabla) \in \mathbb{L}_{dR}(L, D)$.
This completes the proof of part (b).

For part (c), it follows from \Cref{p: generic finite dR} that the kernel of the differential at $(E, \nabla) \in \mathbb{L}_{dR}(L, D)$ of the projection to $\mathrm{Bun}_{2, \mathcal{O}_C}$ is generated by $H^0(C, E^\vee L K_C)$. 
The statement for $\mathbb{L}_{dR}(L, D)$ now follows from \eqref{eqn-h1-Dtilde}.
On the other hand, since $\mathbb{L}_{H}(L, D)$ is invariant with respect to the $\mathbb{C}^\ast$ action, the closure of its projection to $\mathrm{Bun}_{2, \mathcal{O}_C}$ is a divisor.
\end{proof}

\begin{proof}[Proof idea of \Cref{prop-wobbly-lag}]
This follows from \cite{D24-wobbly}. The idea is that a nonzero nilpotent Higgs fields $\psi$ on $E$ with $\ker(\psi) = (L \overset{i}{\hookrightarrow} E)$ is equivalent to a nonzero section of $K_C L^2 \Lambda^{-1}$. Suppose $(E, \phi) \in \mathbb{L}_H(D)$ and there exist effective divisors $D'$ and a quadratic differential $q \in H^0(C, K_C^2)$ such that $D_i(\phi) + D' = \mathrm{div}(q)$. 
Then $E$ admits a nonzero nilpotent Higgs field $\psi$ that corresponds to a section of $\mathcal{O}_C(D') \cong K_C L^2 \Lambda^{-1}$ and vanishes precisely at $D'$.
\end{proof}

\begin{proof}[Proof of \Cref{prop-fiber-Lag-cor}]
The fiber of $\mathrm{pr}_2$ over a Higgs bundle $(E, \phi) \in \mathrm{im}(\mathrm{pr}_2)$ is precisely the scheme 
\[ S_E^d = \left\{ (L, [i]) \mid L \in \mathrm{Pic}^{g-1+(\ell -d)/2}(C), [i] \in \mathbb{P}H^0(C, L^{-1} E) \right\}. \] 
Noting that a maximal line subbundle $L$ of $E$ has a unique embedding up to scaling, i.e. $h^0(C, L^{-1} E) = 1$, we obtain parts (a) -- (d) of the proposition using known results counting maximal line subbundles of rank-2 bundles.
Specifically:
\begin{itemize}
\item If $d < 2g-2$ then $E$ is unstable, and hence $E$ has a unique maximal, destabilizing line subbundle $L$ with unique embedding up to scaling. This proves part (a). 
\item If $0 \leq d < 3g-3$, then the underlying bundle $E$ of a \textit{generic} $(E, \phi) \in \mathrm{im}(\mathrm{pr}_2)$ is stable but not maximally stable, i.e. it is contained in the complement in $\mathrm{Bun}_{2, \Lambda}$ of the top open Segre stratum. In this case $E$ admits a maximal subbundle $L$ of degree $g-1+(\ell -d)/2$. Lange-Narasimhan showed that a generic stable but not maximally stable bundle $E$ admits a unique maximal line subbundle \cite{LN83}. This proves part (b).
\item If $g \equiv \ell -1$ and $E$ is a very stable bundle, then its maximal line subbundles are of degree $g-1+(\ell -d)/2$. It is well-known that a very stable bundle in this case has exactly $2^g$ maximal subbundles; see Gronow's PhD thesis \cite{Gronow97}. This proves part (c).
\item If $g \equiv \ell$ and a generic $E \in \mathrm{Bun}_{2, \Lambda}$ admits a $1$-dimensional family of maximal subbundles of degree $g-1+(\ell -d)/2$ \cite{LN83}. This proves part (d).
\end{itemize}
Part (e), on the other hand, follows directly from part of a conjecture of Kazhdan-Polishchuk \cite{BravermanKazhdan2023, KazhdanPolishchuk2024} that was proved by Debarre \cite{Debarre2024}.
\end{proof}

\printbibliography

\vspace{7pt}
\noindent
Panagiotis Dimakis, University of Maryland, College Park, MD, USA.
\textit{pdimakis12345@gmail.com}

\vspace{7pt}
\noindent 
\dqd, University of Pennsylvania, Philadelphia, PA, USA. \\
University of Khanh Hoa, Nha Trang, Khanh Hoa, Vietnam.
\textit{duongdinh.mp@gmail.com}

\vspace{7pt}
\noindent
Shengjing Xu, University of Pennsylvania, Philadelphia, PA, USA. 
\textit{xsj@upenn.edu}

\end{document}